\documentclass[11pt,reqno]{amsart}
\usepackage{amsmath,amsfonts,amsthm,amssymb}
\usepackage{tikz}
\usetikzlibrary{cd,arrows,decorations.pathmorphing,decorations.markings,backgrounds,positioning,fit,calc,shapes,patterns,nfold}
\usepackage[utf8]{inputenc}
\usepackage[T1]{fontenc}
\usepackage{palatino}
\usepackage{setspace}
\usepackage{fullpage}
\usepackage{mathtools}
\usepackage[shortlabels]{enumitem}
\usepackage[subrefformat=parens, labelfont=up]{subcaption}
\usepackage[colorlinks=true,linkcolor=black,anchorcolor=black,citecolor=black,filecolor=black,menucolor=black,runcolor=black,urlcolor=black,linktoc=all]{hyperref}

\newtheorem{thm}{Theorem}[section]

\newtheorem{innercustomthm}{Theorem}
\newenvironment{mythm}[1]
{\renewcommand\theinnercustomthm{#1}\innercustomthm}
{\endinnercustomthm}
\newtheorem{lemma}[thm]{Lemma}
\newtheorem{porism}[thm]{Porism}

\newtheorem{coro}[thm]{Corollary}
\newtheorem{prop}[thm]{Proposition}
\theoremstyle{definition}
\newtheorem{defn}[thm]{Definition}
\newtheorem{rem}[thm]{Remark}
\newtheorem{ex}[thm]{Example}
\newtheorem{warn}[thm]{Warning}

\newtheorem{nota}[thm]{Notation}

\colorlet{dark purple}{red!35!blue}
\colorlet{dark green}{green!70!black}
\colorlet{dark red}{red!80!black}
\colorlet{dark blue}{blue!80!black!80!cyan}

\tikzstyle{mutable}=[fill=white,inner sep=0.9mm,circle,draw,minimum size=2mm]
\tikzstyle{mutable2}=[fill=white,inner sep=0.9mm,circle,draw,minimum size=1cm]
\tikzstyle{mutable3}=[fill=white,inner sep=0.9mm,circle,draw,minimum size=7mm]
\tikzstyle{frozen}=[fill=white,inner sep=.9mm,rectangle,draw]
\tikzstyle{dot} = [fill=black!25,inner sep=0.5mm,circle,draw,minimum size=.5em]
\tikzstyle{marked}=[inner sep=0.5mm,circle,draw,blue!75!black,fill=blue!50]
\tikzstyle{outline}=[thick,line width=1.5mm,draw=black!10]

\tikzset{bpoint/.style = {shape=circle,fill=black,draw,minimum size=.5em, inner sep=0}}
\tikzset{wpoint/.style = {shape=circle,thick,fill=white,draw,minimum size=.5em, inner sep=0}}

\DeclareMathOperator\Sk{Sk}

\DeclareMathOperator\Hom{Hom}

\DeclareMathOperator\Spec{Spec}

\DeclareMathOperator\Char{char}

\DeclareMathOperator*{\btie}{\bowtie}
\def\TSk{\Sk^{\btie}}
\def\barTSk{\overline{\Sk}^{\btie}}

\def\MM{\mathcal{M}}
\def\bbN{\mathbb{N}}

\def\QQ{\mathbb{Q}}

\def\ZZ{\mathbb{Z}}

\def\SS{\Sigma}
\def\CA{\mathcal{A}}
\def\CU{\mathcal{U}}
\def\cV{\mathcal{V}}

\def\Bt{\widetilde{B}}
\def\Multi{\mathrm{Multi}}
\def\TMulti{\Multi^{\btie}}
\def\Supp{\mathrm{Supp}}

\everymath{\displaystyle}

\allowdisplaybreaks[1]
\title{Deep Points of Cluster Algebras II: Punctured Marked Surfaces}  
\date{2026-09-29}

\author{James Beyer}
\address{Department of Mathematics, University of Oklahoma, Norman, OK, USA}
\address{Instituto de Matem{\'a}ticas, Universidad Nacional Aut{\'o}noma de M{\'e}xico, Ciudad Universitaria, CDMX, M{\'e}xico}
\email{jebeyer@im.unam.mx}

\keywords{Cluster algebra,  cluster variety, deep point}
\subjclass[2020]{
	Primary 13F60, %Cluster algebras
	Secondary 14B05, %Singularities in algebraic geometry,
	14C99 %Cycles and subschemes
}

\begin{document}
	
	\begin{abstract}
		We classify the deep points of cluster varieties of cluster algebras of punctured marked surfaces with boundary.
	\end{abstract}
	
	\maketitle 
	
	\tableofcontents

%============================

\section{Introduction}\label{section: intro}

Cluster algebras are commutative domains with distinguished sets of generators called \emph{cluster variables}, in which the full set of cluster variables can be recovered from \emph{mutation} of overlapping $n$-element subsets called \emph{clusters}. 
For each cluster algebra $\CA$ and field $\Bbbk$, there is an associated \emph{cluster variety (of $\Bbbk$-points of $\CA$)}, defined as \[ V(\CA,\Bbbk) \coloneqq \Hom_{\text{Ring}} (\CA,\Bbbk) = \{\text{ring homomorphisms } p \colon \CA \to \Bbbk\}.\] 
For each cluster, the variety contains a corresponding algebraic torus, called a \emph{cluster torus}. 

We refer to any points that are not contained in any cluster torus as \emph{deep points}. Explicitly, a deep point is a ring homomorphism $\CA \to \Bbbk$ that kills at least one cluster variable in every cluster. The set of all such points, called the \emph{deep locus}, forms a closed (possibly empty) subvariety of $V(\CA,\Bbbk)$.

In \cite{BM25}, the following maxim was proposed:
\[ \text{\emph{Any bad property of a cluster algebra can be blamed on its deep points.}} \]
For example, Proposition 2.5 of \cite{BM25} says that any singular point of a cluster variety is necessarily deep. 
On the other hand, a cluster algebra with no deep points has many desirable properties \cite[Theorem 2.8]{BM25}.

The presence of deep points does not necessarily imply that a cluster algebra is poorly behaved, however. In classifying the deep points of cluster algebras, we are simply identifying the points where bad behavior \emph{could} arise. As cluster algebras continue to find applications in diverse areas of mathematics and physics, locating potentially problematic points could prove to be useful.

As part of our previous work with Greg Muller \cite{BM25}, we classified the deep points of several types of cluster algebras: 
\begin{enumerate}
    \item type $A_n$ cluster algebras for all $n \in \mathbb{N}$,
    \item rank 2 cluster algebras, 
    \item the Markov cluster algebra, and
    \item cluster algebras of unpunctured triangulable surfaces with boundary.
\end{enumerate}  
Concurrent work of Castronovo, Gorsky, Simental, and Speyer \cite{CGSS26} also provided an extensive examination of the deep locus of many types of cluster algebras. Further work on properties of the deep locus has been carried out by Muller \cite{Mul12}, Matherne and Muller \cite{MM15}, Neville and Simental \cite{NevilleSimental}, Tyler \cite{tyler}, and others. 
In this article, we build on \cite{BM25}, classifying the deep points of cluster algebras of punctured surfaces with boundary.

%============================

\subsection{Outline}

In Sections \ref{section: background}, \ref{section: surface type}, and \ref{section: deepmarkedsurfaces}, we recall relevant definitions and results. We also adapt results from \cite{BM25} and \cite{Mul16} for punctured surfaces where necessary.  

In Section \ref{section: 1 punctured disks}, we classify the deep points of once-punctured disks with marked points on the boundary, alternatively referred to as once-punctured polygons. After examining once-punctured bigons, trigons, and quadrilaterals directly, we consider the general case, obtaining the following classification for once-punctured disks with $n \geq 4$ boundary marked points.
	
\begin{mythm}{\ref{thm: Dn with boundary}}
	Let $\SS$ be a once-punctured disk with $n\geq 4$ marked points on the boundary. 
	\begin{enumerate}
		\item If $n$ is odd, then the deep locus of of $V(\CA(\SS),\Bbbk)$ is a single component of dimension $2n-3$.
		\item If $n$ is even, then the deep locus of of $V(\CA(\SS),\Bbbk)$ contains one component of dimension $2n-3$ and four distinct subsets isomorphic to $(\Bbbk^\times)^n$. 
		The intersection of the closures of these five sets is an $(n-1)$-dimensional subvariety. 
	\end{enumerate}
\end{mythm}
    
We then extend the results of Section \ref{section: 1 punctured disks} to more general punctured marked surfaces with boundary, classifying the deep points of cluster algebras of punctured disks with at least one boundary marked point in Section \ref{section: deep punctured disks} and punctured surfaces with at least two boundary marked points in Section \ref{sec: punc surfaces}. We then obtain the following general result. 

\begin{mythm}{\ref{thm: deepsurface punctured all}}
	Let $\SS$ be a connected surface with genus $g\geq 0$, $b\geq 1$ boundary components, $m\geq 2$ marked points on the boundary, and $q\geq 1$ punctures. 
	\begin{enumerate}
		\item If $m$ is odd, then the deep locus of $V(\CA(\SS),\Bbbk)$ is the union of $q$ non-disjoint sets, each isomorphic $V(\CA(\SS_{q-1}),\Bbbk)$. 
		\item If $m$ is even, then the deep locus of $V(\CA(\SS),\Bbbk)$ is the union of: 
        \begin{itemize}
            \item $q$ non-disjoint sets, each isomorphic to $V(\CA(\SS_{q-1}),\Bbbk)$, and
            \item $2^{2g+b+2q+1}$ disjoint sets, each isomorphic to $(\Bbbk^\times)^{2g+b+q+m-2}$.
        \end{itemize}
	\end{enumerate}
\end{mythm}

%============================

\subsection{Acknowledgments}

Many of the results of this article were contained in the author's PhD dissertation at the University of Oklahoma \cite{BeyerDissertation}. The author would like to thank Greg Muller for his guidance in the completion of that work. 

%============================

\subsubsection*{Funding}

This work was partially supported by a DFCAS dissertation completion fellowship from the University of Oklahoma and by Universidad Nacional Aut{\'o}noma de M{\'e}xico Postdoctoral Program. 

%============================

\subsubsection*{AI}

No artificial intelligence was used at any stage of the preparation of this article. 

%============================

\section{Background}\label{section: background}

Cluster algebras were introduced by Fomin and Zelevinsky in a series of papers, the first of which was published in 2002 \cite{FZ02}. For an introduction to the subject, we recommend Fomin, Williams, and Zelevinsky's book; drafts of the first few chapters are available on the arXiv \cite{FWZChapters123,FWZChapters45}.

Given an extended exchange matrix $\Bt$, attach a \textbf{cluster variable} $x_i$ to each row, and let $\mathcal{F} \coloneqq \QQ (x_1, \dotsc, x_m)$ be the field of rational functions on the initial cluster variables $x_1, \dotsc, x_m$.
Collectively, we refer to the cluster variables associated to an extended exchange matrix as a \textbf{cluster} $\mathbf{x} = \{ x_1, \dotsc, x_m \}$, and the tuple $(\Bt, \mathbf{x})$ is called a \textbf{seed}.
\emph{Mutation} of a seed at a mutable index $k$ produces a new seed $(\mu_k(\Bt), \mathbf{x}^\prime)$, where $\mathbf{x^\prime} = \mathbf{x}\setminus\{x_k\} \cup \{x_k^\prime\}$, according to the exchange relation: 
\begin{equation}\label{eq: mutation relation}
	x_k^{}\,x_k^\prime = \prod_{1\leq j\leq m} x_j^{\max \left(b_{jk},~0\right)} ~+~ \prod_{1\leq j\leq m} x_j^{\max \left(-b_{jk},~0\right)}. 
\end{equation}
Let $\mathcal{X}$ be the set of all cluster variables produced through repeated mutation at all mutable indices. The \textbf{cluster algebra} $\CA = \CA(\Bt, \mathbf{x}) $ is the $\ZZ$-subalgebra of $\mathcal{F}$ generated by $\mathcal{X}$ and the inverses of any frozen variables. 

An important property of cluster algebras is the \emph{Laurent phenomenon} \cite[Theorem 3.1]{FZ02}: given any initial seed containing a cluster $\mathbf{x} = \{ x_1,\dotsc, x_m\}$, the cluster variables of all other clusters can be written as Laurent polynomials in this initial cluster; that is, $\CA \hookrightarrow \ZZ[x_1^{\pm 1}, \dotsc, x_m^{\pm 1}]$. As a result, the algebra depends only on the mutation class of $\Bt$.

%============================

Given a cluster algebra $\CA$ and a field $\Bbbk$, the \textbf{cluster variety} is the set
\[ V(\CA,\Bbbk) \coloneqq \Hom_{\mathrm{Ring}}(\CA,\Bbbk) \coloneqq \{\text{ring homomorphisms $p:\CA\rightarrow \Bbbk$}\} \]
This has the \emph{Zariski topology} in which a closed set consists of homomorphisms which send a fixed ideal to 0. 
Each element $a\in \CA$ determines a function $f_a:V(\CA,\Bbbk)\rightarrow \Bbbk$ by the rule that $f_a(p) \coloneqq p(a)$. 

\begin{rem}
	If $\CA$ is finitely generated, then $V(\CA,\Bbbk)$ may be identified with an affine $\Bbbk$-variety. Explicitly, a finite presentation $ \CA \simeq \mathbb{Z}[g_1,g_2,\dotsc ,g_m] / \langle p_1,p_2,\dotsc ,p_n\rangle$ induces a homeomorphism between $V(\CA,\Bbbk)$ and the solution set of the system $\{p_1=0,p_2=0,\dotsc ,p_n=0\}$ inside $\Bbbk^m$. 	
	In general, $V(\CA,\Bbbk)$ is the set of $\Bbbk$-points of the affine scheme $\Spec(\CA)$. While this scheme need not be finite-type, we still refer to $V(\CA,\Bbbk)$ as a `variety' for simplicity. 
\end{rem}

By the Laurent phenomenon, each cluster $\{x_1,x_2,\dotsc ,x_n\}\subset \mathcal{A}$
gives an inclusion of rings
\[ \CA \hookrightarrow \CA[x_1^{-1},x_2^{-1},\dotsc ,x_n^{-1}] =\mathbb{Z} [x_1^{\pm1} ,x_2^{\pm1},\dotsc ,x_n^{\pm1}] \]
A homomorphism $p:\CA\rightarrow \Bbbk$ factors through this localization map if and only if $p(x_1), p(x_2), \dotsc ,$ $p(x_n)$ are non-zero. Therefore, we have an open inclusion
\[
(\Bbbk^\times)^n \simeq \Hom_{\mathrm{Ring}}(\mathbb{Z} [x_1^{\pm1} ,x_2^{\pm1},\dotsc ,x_n^{\pm1}],\Bbbk)\hookrightarrow \Hom_{\mathrm{Ring}}(\CA,\Bbbk) \eqqcolon V(\CA,\Bbbk)
\]
whose image is the set of homomorphisms $p:\CA\rightarrow \Bbbk$ such that $p(x_1),p(x_2), \dotsc ,$ $p(x_n)$ are non-zero; or equivalently, the open subset of $V(\CA,\Bbbk)$ where none of the functions $f_{x_1},f_{x_2},\dotsc ,f_{x_n}$ vanish. 
This image is called the \textbf{cluster torus} associated to the cluster $\{x_1,x_2,\dotsc ,x_n\}$. The union of all cluster tori is commonly referred to as the \emph{cluster manifold}. 

\begin{defn}\label{def: deep point def}
	A point $p\in V(\CA,\Bbbk)$ is a \textbf{deep point} if, for all clusters \[ \{x_1, x_2, \dotsc ,x_n\}\subset \CA,\] there is a cluster variable $x_i\in \{x_1,\dotsc ,x_n\}$ such that $p(x_i)=0$.
    
    The set of all deep points in $V(\CA,\Bbbk)$ is called the \textbf{deep locus} of $V(\CA,\Bbbk)$. Equivalently, the deep locus can be defined as the complement of the cluster manifold in $V(\CA,\Bbbk)$.
\end{defn}

%============================

\section{Surface type cluster algebras and cutting}\label{section: surface type}

Cluster algebras of triangulable surfaces were introduced in \cite{GSV05} and further developed in \cite{FST08} and \cite{FT18}. We will review the necessary parts of these constructions.

A \textbf{marked surface} consists of a smooth, oriented, compact surface-with-boundary $\SS$, together with a finite set of \textbf{marked points} $\MM$.
Marked points in the interior of $\SS$ are called \textbf{punctures}, and so a marked surface is \textbf{unpunctured} if $\MM\subset \partial\SS$. 
A connected marked surface is determined (up to diffeomorphisms that preserve the marked points and orientation) by a few numbers:
\begin{itemize}
	\item The genus $g$ of the underlying surface.
	\item The number $b$ of boundary components.
	\item The number $q$ of punctures.
	\item The unordered numbers $m_1,m_2,\dotsc,m_b$ of marked points on each boundary component.
\end{itemize}

A \textbf{marked curve} in a marked surface $\SS$ is an immersion of a compact, connected curve-with-boundary\footnote{Note that a compact, connected curve-with-boundary must be diffeomorphic to either an interval or a circle.} into $\SS$ such that
\begin{itemize}
	\item any endpoints map to $\MM$,
	\item the interior maps to $\SS\smallsetminus \MM$,
	\item any \emph{crossings} (points in $\SS\smallsetminus \MM$ with more than one preimage) must be simple (exactly two preimages) and transverse (the two preimages have distinct derivatives), and
	\item the set of {crossings} is finite.
\end{itemize}  
A \textbf{marked multicurve} is an immersion of finitely many compact curves-with-boundary satisfying these properties. We consider two marked curves (resp.~multicurves) \textbf{homotopic} if they are related by orientation-reversal and homotopies through the class of marked curves  (resp.~multicurves).

Some additional terminology for marked curves:
\begin{itemize}
	\item A \textbf{marked arc} is a marked curve with two endpoints (i.e.~an immersed interval).
	\item A \textbf{loop} is a marked curve with no end points (i.e.~an immersed circle).
	\item A marked curve is \textbf{simple} if it has no crossings and is not contractible.
	\item A multicurve (i.e.~a finite set of marked curves) is \textbf{compatible} if there are no crossings and every curve is simple.
	\item A \textbf{boundary curve} is a marked curve homotopic to a marked curve contained in $\partial \SS$.
\end{itemize}

A \textbf{triangulation} of a marked surface $\SS$ is a simple marked multicurve which divides the surface into a union of topological triangles, together with the set of boundary arcs. 
If a marked surface admits a triangulation, we say it is \textbf{triangulable}.

\begin{prop}\cite[Proposition 2.10]{FST08}\label{prop: triangulationcount}
	Let $\SS$ be a connected marked surface with genus $g$, $b$-many marked points, $q$-many punctures, and $m$-many marked points on the boundary. Then $\SS$ is triangulable if and only if the set of marked points $M$ is not empty, every component of the boundary $\partial \SS$ contains at least one marked point, and the quantity
	\[ n(\SS) \coloneqq 6g + 3b + 3q + 2m - 6 \]
	is positive. Every triangulation of $\SS$ has $n(\SS)$-many marked arcs (counting boundary arcs).
\end{prop}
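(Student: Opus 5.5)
The plan is to prove the arc count first by an Euler characteristic computation, and then handle the triangulability criterion separately: necessity falls out of the count, and sufficiency needs a construction.

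\emph{Counting arcs.} Fix a triangulation $T$ of $\SS$. Its marked points, arcs (including the boundary arcs) and triangles form a $2$-dimensional CW decomposition of the compact surface $\SS$. A self-folded triangle is allowed; it is still a $2$-cell, and its folded side is counted twice in its boundary. I will write
\begin{itemize}
\item $V=q+m$ for the number of vertices,
\item $E$ for the number of arcs, including the $m$ boundary arcs,
\item $F$ for the number of triangles.
\end{itemize}
Every triangle has three sides counted with multiplicity. Every interior arc is a side of exactly two triangle-sides, and every boundary arc of exactly one. Double-counting the pairs (triangle, side) therefore gives
\[ 3F = 2(E-m)+m = 2E-m. \]
Since the cells of $T$ cover $\SS$, Euler's formula gives $V-E+F=\chi(\SS)=2-2g-b$. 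Substituting $F=(2E-m)/3$ and multiplying by $3$ gives
\[ 3q+3m-3E+2E-m = 6-6g-3b, \]
that is,
\[ E = 6g+3b+3q+2m-6 = n(\SS). \]
The only delicate point is justifying that $T$ really gives a CW structure, since arcs can share endpoints and triangles can be self-folded. This follows from the definition: each complementary region is a topological triangle, so it is an open $2$-cell attached along its sides.

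\emph{Necessity.} If $\SS$ is triangulable, then arcs have endpoints in $\MM$, so $\MM\neq\emptyset$. Each boundary component must be a union of boundary arcs, so it carries at least one marked point. Moreover $F\geq 1$, and then $E=n(\SS)$ is positive by the formula above.

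\emph{Sufficiency.} Here I will build triangulations directly, by induction on $(g,b,q)$.
\begin{enumerate}
\item Start from a polygon with $m\geq 3$ vertices, triangulated by a fan of diagonals.
\item Add a puncture by placing it inside a triangle and coning it off to the three vertices of that triangle. In a once-punctured digon or monogon, use a self-folded triangle instead.
\item Add genus or an extra boundary component by cutting along a triangulated model piece and gluing in a triangulated annulus or handle. For example, realize a genus-$g$ surface as a $4g$-gon with its sides identified, and triangulate that polygon compatibly.
\end{enumerate}

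The main obstacle I expect is the boundary of the hypotheses: small cases where the positivity condition fails, or holds only barely. These include
\begin{itemize}
\item the unpunctured monogon and digon,
\item the once-punctured monogon,
\item spheres with few punctures.
\end{itemize}
I would check each of these by hand against the count $3F=2E-m$ with $F\geq1$. In particular the formula gives $n(\SS)=1$ for the unpunctured digon, which has no triangulation but has $m=2$ boundary arcs. That suggests the intended positivity hypothesis should be read as in \cite{FST08}, with these degenerate cases excluded, and I would state that exclusion explicitly before running the construction.
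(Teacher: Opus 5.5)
Your Euler-characteristic count ($3F=2E-m$ together with $V-E+F=2-2g-b$) and your necessity direction are correct. This is the standard argument behind the cited result; the paper only cites it and gives no proof of its own. You are also right that the statement as printed fails for the unpunctured digon: $n(\Sigma)=1$, but every simple arc is homotopic to a boundary arc and the digon is not a triangle.

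\emph{Your repair is too broad.} The unpunctured digon is the \emph{only} surface satisfying the three hypotheses that is not triangulable, so the correct fix is to exclude just that surface. The other cases on your list agree with the criterion as stated:
\begin{itemize}
\item spheres with one or two punctures have $n(\Sigma)\le 0$;
\item the once-punctured monogon is triangulated by its boundary loop and one radius, which form a self-folded triangle, so $E=2=n(\Sigma)$;
\item the thrice-punctured sphere is triangulated by three arcs bounding two triangles, so $E=3=n(\Sigma)$.
\end{itemize}
The exceptional list of \cite{FST08}, which also removes the unpunctured triangle, is imposed for cluster-theoretic reasons, not because triangulations fail to exist. Adding it as extra conditions would make the ``only if'' direction false.

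\emph{The sufficiency construction has a genuine gap in its base cases.} No move you describe removes a puncture or lowers the number of marked points on an existing boundary component. So every surface you produce keeps either the boundary component of the initial $m$-gon, with at least three marked points, or the puncture coming from the $4g$-gon. You therefore never reach an unpunctured surface whose boundary components each carry at most two marked points. An example is the annulus with one marked point on each boundary circle, where $n(\Sigma)=4$. Punctured spheres likewise need the thrice-punctured sphere as an explicit starting piece. There are two ways to close this.
\begin{itemize}
\item Add the move ``replace a puncture by a boundary circle with one marked point,'' together with inserting marked points on boundary arcs. Under the first move, the triangle containing a chosen corner at that puncture becomes a quadrilateral, and adding a diagonal raises the arc count by exactly $2$, as $n(\Sigma)$ predicts.
\item More cleanly, take a maximal collection of pairwise compatible, non-homotopic simple arcs containing the boundary arcs, and show that every complementary region is a triangle. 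A polygon with at least four sides, or a region containing a puncture, a handle, or another boundary component, admits a new arc. An empty monogon region would give a contractible arc. An empty digon region occurs only when $\Sigma$ is the unpunctured digon, or the twice-punctured sphere, which is already ruled out by $n(\Sigma)>0$.
\end{itemize}
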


\begin{rem}
	The $m$ boundary marked arcs appear in every triangulation. Therefore, every triangulation contains $ 6g + 3b + 3q + m - 6$ non-boundary marked arcs; this matches Proposition 2.10 of \cite{FST08} and is the number of mutable cluster variables in the corresponding cluster algebra.
\end{rem}

%============================

\subsection{Cluster algebras of unpunctured surfaces}

Given a triangulation of an unpunctured surface, we can construct a quiver by the following steps:
\begin{enumerate}
	\item Assign a mutable vertex to each interior arc.
	\item Assign a frozen vertex to each boundary arc.
	\item Add arrows between the sides of each triangle in the clockwise direction, omitting any arrows between frozen vertices.
    \item If there is a pair of arrows with the same endpoints but opposite orientations, remove that pair; repeat until no such pair exists.\footnote{This step may be unnecessary for unpunctured surfaces, but is necessary when punctures are introduced, and should be repeated after each mutation.}
\end{enumerate}
Quiver mutation is then related by a \emph{flip} of a diagonal in any quadrilateral, as pictured in Figure \ref{fig: flip mutation}. 
Seed mutation then has a geometric description: as depicted in Figure \ref{fig: Ptolemy}, the relation between the two diagonals of a given quadrilateral is given by the Ptolemy relation: the product of the diagonals equals the sum of products of opposite sides.

\begin{figure}[tb]
	\captionsetup[subfigure]{justification=centering}
	\begin{subfigure}[b]{0.45\textwidth}
		\centering
		\begin{tikzpicture}[scale=2]
			\draw[thick,fill=black!5] (0,0) circle (1);
			\node[dot] (1) at (72:1) {};
			\node[dot] (2) at (144:1) {};
			\node[dot] (3) at (216:1) {};
			\node[dot] (4) at (288:1) {};
			\node[dot] (5) at (0:1) {};
			
			\draw[thick] (5) to (2);
			\draw[thick] (2) to (4);
			
			\node[mutable3] (x1) at (90:0.33) {$x_1$};
			\node[mutable3] (x2) at (215:0.33) {$x_2$};
			\node[frozen] (x3) at (36:1) {$x_3$};
			\node[frozen] (x4) at (108:1) {$x_4$};
			\node[frozen] (x5) at (180:1) {$x_5$};
			\node[frozen] (x6) at (252:1) {$x_6$};
			\node[frozen] (x7) at (324:1) {$x_7$};
			
			\draw[-angle 90,purple] (x1) to (x4);
			\draw[-angle 90,purple] (x3) to (x1);
			\draw[-angle 90,purple] (x2) to (x1);
			\draw[-angle 90,purple] (x1) to (x7);
			\draw[-angle 90,purple] (x7) to (x2);
			\draw[-angle 90,purple] (x5) to (x2);
			\draw[-angle 90,purple] (x2) to (x6);
			
		\end{tikzpicture}
	\end{subfigure}
	\begin{subfigure}[b]{0.45\textwidth}
		\centering
		\begin{tikzpicture}[scale=2]
			\draw[thick,fill=black!5] (0,0) circle (1);
			\node[dot] (1) at (72:1) {};
			\node[dot] (2) at (144:1) {};
			\node[dot] (3) at (216:1) {};
			\node[dot] (4) at (288:1) {};
			\node[dot] (5) at (0:1) {};
			
			\draw[thick] (5) to (2);
			\draw[thick] (3) to (5);
			
			\node[mutable3] (x1) at (90:0.33) {$x_1$};
			\node[mutable3] (x2) at (285:0.33) {$x_2^\prime$};
			\node[frozen] (x3) at (36:1) {$x_3$};
			\node[frozen] (x4) at (108:1) {$x_4$};
			\node[frozen] (x5) at (180:1) {$x_5$};
			\node[frozen] (x6) at (252:1) {$x_6$};
			\node[frozen] (x7) at (324:1) {$x_7$};
			
			\draw[-angle 90,purple] (x1) to (x4);
			\draw[-angle 90,purple] (x3) to (x1);
			\draw[-angle 90,purple] (x1) to (x2);
			\draw[-angle 90,purple] (x2) to (x5);
			\draw[-angle 90,purple] (x5) to (x1);
			\draw[-angle 90,purple] (x6) to (x2);
			\draw[-angle 90,purple] (x2) to (x7);
			
		\end{tikzpicture}
	\end{subfigure}
	\caption{Quivers (in purple) associated to two triangulations of the unpunctured disk with five marked points on the boundary, related by a flip of the diagonal marked $x_2$.}
	\label{fig: flip mutation}
\end{figure}

\begin{figure}[htb]
	\centering
	\captionsetup[subfigure]{justification=centering}
	\begin{subfigure}[b]{0.48\textwidth}
		\centering
		\begin{tikzpicture}[scale=2]
			\draw[draw=none] (0,0) circle (1);
			\node[dot] (1) at (40:1) {};
			\node[dot] (4) at (320:1) {};
			\node[dot] (3) at (240:1) {};
			\node[dot] (2) at (120:1) {};
			\draw[thick] (1)--(2) {}; 
			\draw[thick] (2)--(3) {}; 
			\draw[thick] (3)--(4) {}; 
			\draw[thick] (4)--(1) {}; 
			\draw[thick] (1)--(3) {}; 
			\draw[thick] (4)--(2) {};
			\node[mutable3] at (80:0.78) {$x_1$};
			\node[mutable3] at (180:0.53) {$x_4$};
			\node[mutable3] at (285:0.8) {$x_3$};
			\node[mutable3] at (0:0.78) {$x_2$};
			\node[mutable3] at (107:0.34) {$x_5$};
			\node[mutable3] at (252:0.34) {$x_5^\prime$};
		\end{tikzpicture}
		\subcaption{
			A Ptolemy relation among variables\\in a cluster algebra
			\\$x_5 x_5^\prime = x_1 x_3 + x_2 x_4$
		}
		\label{fig: Ptolemy cluster variables}
	\end{subfigure}
	\begin{subfigure}[b]{0.48\textwidth}
		\centering
		\begin{tikzpicture}[scale=2]
			\draw[thin,dashed] (0,0) circle (1);
			\draw[thick] (40:1)--(120:1) {}; 
			\draw[thick] (120:1)--(240:1) {}; 
			\draw[thick] (240:1)--(320:1) {}; 
			\draw[thick] (320:1)--(40:1) {}; 
			\draw[thick] (40:1)--(240:1) {}; 
			\draw[thick] (320:1)--(120:1) {};
			\node at (80:0.88) {A};
			\node at (180:0.61) {D};
			\node at (285:0.88) {C};
			\node at (0:0.86) {B};
			\node at (125:0.35) {E};
			\node at (242:0.35) {F};
		\end{tikzpicture}
		\subcaption{
			A Ptolemy relation among lengths\\between four cocircular points
			\\$EF=AC+BD$
		}
		\label{fig: Ptolemy cocircular}
	\end{subfigure}
	
	\caption{The Ptolemy relations}
	\label{fig: Ptolemy}
\end{figure}

%============================

\subsection{Cluster algebras of punctured surfaces}

While all triangulations of unpunctured surfaces are related by flips of diagonals in quadrilaterals, this is no longer true in the presence of punctures. Consider the bigon in Figure \ref{fig: self folded triangle}; there is no way to replace the arc $\alpha$ with a different arc to produce a different triangulation. To fix this issue, we utilize the notion of \emph{tagged triangulations}. 

\begin{defn}\cite[Definition 7.1]{FST08}\label{def: tagged arc}
	An arc $\gamma$ in $\SS$ has two \emph{ends} obtained by arbitrarily cutting the arc into three connected pieces and discarding the middle one. A \textbf{tagged arc} is an arc in which each of its ends has been tagged in one of two ways \textemdash \emph{plain} or \emph{notched} \textemdash so that the following conditions are met:
	\begin{itemize}
		\item the arc does not cut out a once-punctured monogon,
		\item an endpoint at the boundary is tagged plain, and 
		\item if both ends of the arc are at the same marked point (or puncture), then they are tagged in the same way.
	\end{itemize}
\end{defn}

\begin{figure}[htb]
	\captionsetup[subfigure]{justification=centering}
	\begin{subfigure}[b]{0.45\textwidth}
		\centering
		\begin{tikzpicture}[scale=2]
			\node[dot] (1) at (-1,0) {};
			\node[dot] (2) at (1,0) {};
			\draw[thick,out=45,in=135] (1) to (2) {};
			\draw[thick,out=315,in=225] (1) to (2) {};
			\node[dot] (p) at (0.2,0) {};
			\draw[thick] (1) to (p) {};
			\node at (-0.2,0.08) {$\alpha$};
			\node at (0.5,0) {$\gamma$};
			\draw[thick,out=30,in=90] (1) to (0.4,0) {};
			\draw[thick,out=270,in=330] (0.4,0) to (1) {};
		\end{tikzpicture}
		\subcaption{A self-folded triangle.}
		\label{fig: self folded triangle}
	\end{subfigure}
	\begin{subfigure}[b]{0.45\textwidth}
		\centering
		\begin{tikzpicture}[scale=2]
			\node[dot] (1) at (-1,0) {};
			\node[dot] (2) at (1,0) {};
			\draw[thick,out=45,in=135] (1) to (2) {};
			\draw[thick,out=315,in=225] (1) to (2) {};
			\node[dot] (p) at (0.2,0) {};
			\node at (-0.2,0.23) {$\tau(\alpha)$};
			\node at (-0.2,-0.25) {$\tau(\gamma)$};
			\draw[thick,out=20,in=155] (1) to (p) {};
			\draw[thick,out=-20,in=205] (1) to node[pos=0.9,sloped,rotate=90]{$\bowtie$} (p) {};
		\end{tikzpicture}
		\subcaption{A tagged triangulation.}
		\label{fig: notched arc}
	\end{subfigure}
	\caption{A punctured bigon. Plain arcs are drawn without decoration and notched arcs are decorated with a bowtie (i.e. the $\bowtie$ symbol).}
	\label{fig: punctured bigon}
\end{figure}

\noindent
Each ordinary arc $\gamma$ can be replaced with a tagged arc $\tau(\gamma)$ as follows:
\begin{itemize}
	\item If $\gamma$ does not cut out a punctured monogon, then $\tau(\gamma)$ is $\gamma$ with both ends tagged plain.
	\item If $\gamma$ cuts out a punctured monogon, then both ends of $\gamma$ are adjacent to the point $a$. Labeling the puncture $b$, there is a unique ordinary arc $\alpha$ connecting $a$ to $b$ that is compatible with $\gamma$. Then $\tau(\gamma)$ is obtained by tagging $\alpha$ plain at $a$ and notched at $b$.
\end{itemize}

\begin{rem}
	Note that since $\SS$ is triangulable, it is not a sphere with three punctures. As a result, an arc $\gamma$ cannot cut out punctured monogons on both sides, so the punctured monogon cut out by $\gamma$ is unique.
\end{rem}

Referring again to Figure \ref{fig: punctured bigon}, we can see that the arc $\gamma$ in Figure \ref{fig: self folded triangle} is replaced with the arc $\tau(\gamma)$ in Figure \ref{fig: notched arc}, a copy of the ordinary arc $\alpha$ which is notched at the puncture. A \textbf{tagged triangulation} is then a maximal collection of compatible tagged arcs, according to the compatibility relation defined below.

\begin{defn}\cite[Definition 7.4]{FST08}\label{def: tagged arc compatability}
	Two tagged arcs $\alpha, \beta \in \SS$ are called \textbf{compatible} if and only if the following conditions are satisfied:
	\begin{itemize}
		\item the untagged versions of $\alpha$ and $\beta$ are compatible;
		\item if the untagged versions of $\alpha$ and $\beta$ are different, and $\alpha$ and $\beta$ share an endpoint at $a$, then the ends of $\alpha$ and $\beta$ connected to $a$ must have the same tagging;
		\item if the untagged versions of $\alpha$ and $\beta$ coincide, then at least one end of $\alpha$ must be tagged in the same way as the corresponding end of $\beta$.
	\end{itemize}
\end{defn}

\begin{rem}\cite[Remark 7.6]{FST08}
	If two plain arcs $\alpha$ and $\beta$ are compatible, then the tagged arcs $\tau(\alpha)$ and $\tau(\beta)$ are compatible. The converse is false: in a once-punctured bigon, the two curves that cut out once-punctured monogons are not compatible even though the tagged arcs representing them are compatible. 
\end{rem}

Given this definition of compatibility, every non-boundary arc in a triangulation of $\SS$ can be removed and replaced with another arc to produce a different triangulation, resulting in the following theorem:
\begin{thm}\cite[Theorem 7.10]{FST08}\label{thm: tagged flips}
	If $\SS$ is not a closed surface with exactly one puncture, then any two tagged triangulations are connected by a sequence of flips.
\end{thm}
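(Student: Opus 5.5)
The plan is to reduce the statement to the classical flip-connectivity of ordinary (ideal) triangulations. We then handle tags using the symmetries of the tagged arc complex that switch the tags at a puncture.

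The first input is the theorem of Hatcher, Harer and Mosher. It says that any two ideal triangulations of a triangulable marked surface, self-folded triangles allowed, are connected by flips of arcs that are not the interior radius of a self-folded triangle. Under the map $\tau$, such a flip of ideal triangulations becomes a flip of tagged triangulations. This is a local check on the quadrilateral, or on the punctured bigon when the flipped arc is the loop $\gamma$ of a self-folded triangle; in that case $\tau(\gamma)$ is replaced by a plain arc. So any two tagged triangulations of the form $\tau(T)$, with $T$ ideal, are flip-connected.

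Next I will describe an arbitrary tagged triangulation $T$ using Definition \ref{def: tagged arc compatability}. At each puncture $p$, one of two things holds. Either all ends of arcs of $T$ at $p$ carry the same tag. Or exactly two arcs of $T$ are incident to $p$, and they have the same underlying arc $\alpha_p$ but different tags at $p$. Let $P$ be the set of punctures of the first kind where the common tag is notched. Let $\sigma_P$ be the operation that switches every tag at every puncture in $P$. This operation preserves tagged arcs and compatibility, so it is an automorphism of the tagged arc complex and commutes with flips. Moreover, $\sigma_P(T)$ has no puncture at which all ends are notched. One then checks that $\sigma_P(T)=\tau(T_0)$ for an ideal triangulation $T_0$: each mixed pair corresponds to a self-folded triangle. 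Consequently every tagged triangulation is $\sigma_P(\tau(T_0))$ for some ideal $T_0$ and some $P$.

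The key step, and the main obstacle, is removing $\sigma_P$. Here the hypothesis is used: if $\SS$ is not a closed surface with exactly one puncture, then every puncture $p$ can be enclosed in a self-folded triangle whose loop is based at another marked point. By repeated use of the ideal flip-connectivity, we can connect $T_0$ to an ideal triangulation $S$ in which every $p\in P$ lies in a self-folded triangle, and these triangles are disjoint. In $\tau(S)$ the only arcs at each $p\in P$ are the plain and notched copies of $\alpha_p$. Hence $\sigma_P(\tau(S))=\tau(S)$, since the switch only swaps these two arcs. Applying $\sigma_P$ to the flip sequence from $\tau(T_0)$ to $\tau(S)$ connects $T=\sigma_P(\tau(T_0))$ to $\tau(S)$. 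Then $\tau(S)$ is connected to any $\tau(T_0')$ by the first step. Applying the same argument to a second triangulation $T'$ and concatenating the sequences proves the theorem.

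The delicate points to verify carefully are twofold. First, the classification of the local structure at a puncture, i.e. that mixed tags force exactly two arcs at $p$. Second, the existence of the simultaneous self-folded configuration $S$. That existence is exactly what fails for a closed once-punctured surface, where no other marked point is available to base the enclosing loop.
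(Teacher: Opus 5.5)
The paper gives no proof of this theorem; it only cites \cite{FST08}. Your route is the standard one, and your first two steps are sound: transporting flip-connectivity of ideal triangulations through $\tau$, and the normal form $T=\sigma_P(\tau(T_0))$ based on the local structure at a puncture.

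\textbf{The gap.} The third step fails as written. The ideal triangulation $S$ in which \emph{every} $p\in P$ is the interior puncture of a self-folded triangle need not exist. An interior puncture of a self-folded triangle is incident to exactly one arc, the radius $\alpha_p$. The other endpoint of $\alpha_p$ carries both $\alpha_p$ and the loop, so it is not itself such an interior puncture. Consequently, if $P$ is the set of all marked points, no $S$ of your kind exists. This case is covered by the theorem. For example, on a four-punctured sphere take $T=\sigma_{\mathcal{M}}(\tau(T_0))$ with $T_0$ an ideal triangulation having no self-folded triangles. Then every tag is notched and $P=\mathcal{M}$. So your closing diagnosis is wrong: the simultaneous configuration fails not only for once-punctured closed surfaces but for every closed surface when $P$ contains all punctures.

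\textbf{The fix.} Switch one puncture at a time, so the hypothesis is only ever needed for a single puncture. Each single switch $\sigma_p$ is an automorphism of the flip graph, so it permutes connected components. Let $C$ be the component containing all the $\tau(T_0)$, which is a single component by your first step. Since $\Sigma$ is not a closed surface with one puncture, there is an ideal triangulation $S_p$ in which $p$ lies inside a self-folded triangle whose loop is based at some marked point other than $p$. In $\tau(S_p)$ the only arcs at $p$ are the plain and notched copies of $\alpha_p$, so $\sigma_p(\tau(S_p))=\tau(S_p)\in C$, hence $\sigma_p(C)=C$. It follows that $\sigma_P=\prod_{p\in P}\sigma_p$ also preserves $C$, so $T=\sigma_P(\tau(T_0))\in C$. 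Equivalent repairs are to induct on $|P|$, or to split $P$ into two parts, each of which misses some marked point. With this change the hypothesis is used exactly where it should be, and the argument is complete.
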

\noindent
As an example, flips of tagged arcs inside of a punctured bigon are demonstrated in Figure \ref{fig: punctured bigon tagged triangulations}.

\begin{figure}[htb]
	\centering
	\begin{tikzpicture}[scale=1.25]
		\node[dot] (a1) at (-4,0) {};
		\node[dot] (a2) at (-2,0) {};
		\draw[thick,out=45,in=135] (a1) to (a2) {};
		\draw[thick,out=315,in=225] (a1) to (a2) {};
		\node[dot] (ap) at (-3,0) {};
		\draw[thick,out=30,in=150,purple] (a1) to node[pos=0.85,sloped,rotate=90]{$\bowtie$} (ap) {};
		\draw[thick,out=-30,in=210] (a1) to (ap) {};
		
		\node[dot] (b1) at (-1,1) {};
		\node[dot] (b2) at (1,1) {};
		\draw[thick,out=45,in=135] (b1) to (b2) {};
		\draw[thick,out=315,in=225] (b1) to (b2) {};
		\node[dot] (bp) at (0,1) {};
		\draw[thick,purple] (b1) to node[pos=0.85,sloped,rotate=90]{$\bowtie$} (bp) {};
		\draw[thick] (b2) to node[pos=0.85,sloped,rotate=90]{$\bowtie$} (bp) {};
		
		\node[dot] (c1) at (-1,-1) {};
		\node[dot] (c2) at (1,-1) {};
		\draw[thick,out=45,in=135] (c1) to (c2) {};
		\draw[thick,out=315,in=225] (c1) to (c2) {};
		\node[dot] (cp) at (0,-1) {};
		\draw[thick] (c1) to (cp) {};
		\draw[thick,purple] (c2) to (cp) {};
		
		\node[dot] (d1) at (2,0) {};
		\node[dot] (d2) at (4,0) {};
		\draw[thick,out=45,in=135] (d1) to (d2) {};
		\draw[thick,out=315,in=225] (d1) to (d2) {};
		\node[dot] (dp) at (3,0) {};
		\draw[thick,out=150,in=30,purple] (d2) to (dp) {};
		\draw[thick,out=210,in=330] (d2) to node[pos=0.85,sloped,rotate=90]{$\bowtie$} (dp) {};
		
		\draw[thin] (-2,0.25) to (-1,0.75) {};
		\draw[thin] (-2,-0.25) to (-1,-0.75) {};
		\draw[thin] (2,0.25) to (1,0.75) {};
		\draw[thin] (2,-0.25) to (1,-0.75) {};
	\end{tikzpicture}
	\caption{Tagged triangulations of a punctured bigon. Edges connect tagged triangulations related by a flip of a single tagged arc.}
	\label{fig: punctured bigon tagged triangulations}
\end{figure}

Given a triangulation of $\SS$ without self-folded triangles, the construction of a quiver follows in exactly the same way as in the unpunctured case. In the presence of self-folded triangles, the quiver can be generated by treating both arcs of the self-folded triangle as the third side of the other triangle containing the arc cutting out a punctured monogon, as in Figure \ref{fig: folded triangle quivers}. 

\begin{figure}[tb]
	\captionsetup[subfigure]{justification=centering}
	\begin{subfigure}[b]{0.45\textwidth}
		\centering
		\begin{tikzpicture}[scale=2]
			\draw[thick,fill=black!5] (0,0) circle (1);
			\node[dot] (1) at (90:1) {};
			\node[dot] (2) at (270:1) {};
			\node[dot] (3) at (0,0) {};
			
			\node[frozen] (x1) at (135:1) {$x_3$};
			\node[frozen] (x2) at (45:1) {$x_4$};
			\node[mutable] (x3) at (90:0.3) {$x_2$};
			
			\draw[thick,out=120,in=180] (2) to (x3) {};
			\draw[thick,out=0,in=60] (x3) to (2) {};
			\draw[thick] (2) to (3);
			
			\node[mutable] (x4) at (270:0.5) {$x_1$};
			
			\draw[-angle 90,purple] (x1) to (x2);
			\draw[-angle 90,purple] (x2) to (x3);
			\draw[-angle 90,purple] (x3) to (x1);
			\draw[-angle 90,purple] (x2) to (x4);
			\draw[-angle 90,purple] (x4) to (x1);
		\end{tikzpicture}
		\subcaption{A once-punctured bigon.}
	\end{subfigure}
	\begin{subfigure}[b]{0.45\textwidth}
		\centering
		\begin{tikzpicture}[scale=2]
			\draw[thick,fill=black!5] (0,0) circle (1);
			\node[dot] (1) at (270:1) {};
			\node[dot] (2) at (0:0.6) {};
			\node[dot] (3) at (180:0.6) {};
			
			\node[frozen] (x1) at (90:1) {$x_5$};
			\node[mutable] (x2) at (30:0.7) {$x_4$};
			\node[mutable] (x4) at (150:0.7) {$x_2$};
			
			\draw[thick] (1) to (2) {};
			\draw[thick] (1) to (3) {};
			\draw[thick,out=165,in=220] (1) to (x4) {};
			\draw[thick,out=100,in=350] (1) to (x4) {};
			\draw[thick,out=80,in=190] (1) to (x2) {};
			\draw[thick,out=15,in=320] (1) to (x2) {};
			
			\node[mutable] (x3) at (315:0.55) {$x_3$};
			\node[mutable] (x5) at (225:0.55) {$x_1$};
			
			\draw[-angle 90,purple] (x1) to (x2);
			\draw[-angle 90,purple] (x1) to (x3);
			\draw[-angle 90,purple] (x2) to (x4);
			\draw[-angle 90,purple] (x2) to (x5);
			\draw[-angle 90,purple] (x3) to (x4);
			\draw[-angle 90,purple] (x3) to (x5);
			\draw[-angle 90,purple] (x4) to (x1);
			\draw[-angle 90,purple] (x5) to (x1);
		\end{tikzpicture}
		\subcaption{A twice-punctured monogon.}
	\end{subfigure}
	\caption{Quivers (in purple) associated to triangulations of a once-punctured bigon and a twice-punctured monogon.}
	\label{fig: folded triangle quivers}
\end{figure}

While the Ptolemy relations from triangulations of unpunctured surfaces still hold for tagged triangulations of punctured surfaces, they can become more complicated. As noted in \cite[Definition 8.4]{FT18}, some of the arcs bounding the quadrilateral may bound a once-punctured monogon, and thus, may not be present in all triangulations involving the other arcs. In such a case, we should replace every such arc by the product of the two tagged arcs that it encloses. See Figure \ref{fig: Ptolemy 4-punctured sphere} (reproduced from Figure 19 of \cite[Chapter 8]{FT18}) for an example of a Ptolemy relation in a tagged triangulation of a four-punctured sphere.

\begin{figure}[htb]
	\centering
	\begin{tikzpicture}[scale=3]
		\node[dot] (1) at (-1,0) {};
		\node[dot] (2) at (1,0) {};
		\node[dot] (3) at (-1,1) {};
		\node[dot] (4) at (1,1) {};
		\draw[thick] (1) to (2) {};
		\draw[thick] (1) to node[pos=0.85,sloped,rotate=90]{$\bowtie$} (3) {};
		\draw[thick] (2) to node[pos=0.85,sloped,rotate=90]{$\bowtie$} (4) {};
		\draw[thick,out=75,in=345] (1) to (3) {};
		\draw[thick,out=75,in=345] (2) to (4) {};
		\draw[thick,out=160,in=180] (1) to (-1.5,1.6) {};
		\draw[thick,out=0,in=160] (-1.5,1.6) to (2) {};
		\draw[thick,out=20,in=180] (1) to (1.5,1.6) {};
		\draw[thick,out=0,in=20] (1.5,1.6) to (2) {};
		
		\node[mutable] (x1) at (-1.05,0.5) {$x_1$};
		\node[mutable] (x2) at (-0.8,0.6) {$x_2$};
		\node[mutable] (x3) at (0.95,0.5) {$x_3$};
		\node[mutable] (x4) at (1.2,0.6) {$x_4$};
		\node[mutable] (x5) at (0,0) {$x_5$};
		\node[mutable] (x6) at (-0.6,1.25) {$x_6$};
		\node[mutable] (x7) at (0.55,1.25) {$x_7$};
	\end{tikzpicture}
	\begin{equation} x_6 x_7 = x_1 x_2 x_3 x_4 + x_5^2 \end{equation}
	\caption{Ptolemy relation for a tagged flip in a 4-punctured sphere.}
	\label{fig: Ptolemy 4-punctured sphere}
\end{figure}

In addition to the necessary adaptation of the Ptolemy relations in the case where one of the bounding arcs also bounds a punctured monogon, we also must adapt the Ptolemy relation in the case of a punctured bigon. As depicted in Figure \ref{fig: bigon relation}, the product of two incompatible arcs inside a punctured bigon is equal to the sum of the two bounding arcs \cite[Definition 8.5]{FT18}.

\begin{figure}[htb]
	\centering
	\begin{tikzpicture}[scale=2.5]
		\node[dot] (1) at (-1,0) {};
		\node[dot] (2) at (1,0) {};
		\draw[thick,out=45,in=135] (1) to (2) {};
		\draw[thick,out=315,in=225] (1) to (2) {};
		\node[dot] (p) at (0,0) {};
		\draw[thick] (1) to node[pos=0.9,sloped,rotate=90]{$\bowtie$} (p) {};
		\draw[thick] (2) to (p) {};
		
		\node[mutable] (x1) at (0,0.42) {$x_1$};
		\node[mutable] (x2) at (0,-0.42) {$x_2$};
		\node[mutable] (x3) at (-0.6,0) {$x_3$};
		\node[mutable] (x4) at (0.6,0) {$x_4$};
	\end{tikzpicture}
	\begin{equation} x_3 x_4 = x_1 + x_2 \end{equation}
	
	\caption{Ptolemy relation for a punctured bigon.}
	\label{fig: bigon relation}
\end{figure}

%============================

\subsection{Kauffman skein algebras of marked surfaces}

The connection between cluster algebras of unpunctured marked surfaces with boundary and \emph{Kauffman skein algebras} was explored in \cite{MW13,MSW13,Mul16}. 
In many cases it is more practical to work with the skein algebra of the marked surface, which is generated by marked curves with topologically-defined relations.

Given a marked surface $\SS$, let $\mathbb{Z}\Multi(\SS)$ be the abelian group of $\mathbb{Z}$-linear combinations of marked multicurves in $\SS$ up to homotopy.  
Define the \textbf{skein algebra} $\Sk_1(\SS)$ to be the quotient of $\mathbb{Z}\Multi(\SS)$ by the subgroup generated by the families of relations depicted in Figure \ref{fig: skeinrel}.\footnote{The \emph{skein algebra} $\Sk_1(\SS)$ as defined here is more accurately called the \emph{marked Kauffman skein algebra of $(\SS,\MM)$ at $q=1$}.}  In each figure, the dashed circle represents a small contractible neighborhood in $\SS$, and all elements considered agree outside this circle.
We denote the element of $\Sk_1(\SS)$ corresponding to a marked multicurve $\gamma$ by $\langle\gamma\rangle$.

Given two marked multicurves $\gamma$ and $\lambda$, their \textbf{superposition} $\gamma\cup \lambda$ is the union of the corresponding immersed curves. This may not be a marked multicurve, as the crossings may no longer be transverse or have more than two preimages. To resolve this, $\gamma$ and $\lambda$ can be replaced with equivalent marked multicurves $\gamma'$ and $\lambda'$ whose superposition is a marked multicurve. While the resulting multicurve $\gamma' \cup \lambda'$ depends on the choice of $\gamma'$ and $\lambda'$, the element $\langle \gamma' \cup \lambda'\rangle $ in the skein algebra does not \cite[Proposition 3.5]{Mul16}. This gives a well-defined \textbf{superposition product} on the skein algebra $\Sk_1(\SS)$.

\begin{thm}{\cite[Corollary 6.16]{Mul16}}
	Let $\SS$ be an unpunctured, triangulable marked surface. Then the superposition product makes $\Sk_1(\SS)$ into a commutative domain. 
\end{thm}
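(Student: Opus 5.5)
The statement is Muller's result \cite[Corollary 6.16]{Mul16}: for an unpunctured triangulable marked surface, the superposition product makes $\Sk_1(\SS)$ a commutative domain. The plan has two parts: commutativity, which is essentially topological, and the absence of zero divisors, which I would get by embedding $\Sk_1(\SS)$ into a Laurent polynomial ring.

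\emph{Commutativity.} At $q=1$ the Kauffman skein relations do not distinguish over- and under-crossings. The superposition $\gamma'\cup\lambda'$ is a marked multicurve with only transverse double crossings. By \cite[Proposition 3.5]{Mul16}, its class is independent of the chosen representatives. Hence $\langle\gamma\rangle\langle\lambda\rangle=\langle\gamma'\cup\lambda'\rangle=\langle\lambda'\cup\gamma'\rangle=\langle\lambda\rangle\langle\gamma\rangle$. Associativity and bilinearity follow the same way, so $\Sk_1(\SS)$ is a commutative ring.

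\emph{No zero divisors.} Fix a triangulation $\Delta=\{\delta_1,\dots,\delta_N\}$ of $\SS$, including the boundary arcs.

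1. The arcs of $\Delta$ are pairwise compatible, so the monomials in the $\langle\delta_i\rangle$ are represented by simple multicurves. These monomials are linearly independent: I would use the basis of $\Sk_1(\SS)$ given by simple multicurves with no contractible components, which comes from a diamond-lemma argument on the skein relations.

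2. Every marked multicurve times a large enough monomial in $\Delta$ lies in the subring $\ZZ[\langle\delta_1\rangle,\dots,\langle\delta_N\rangle]$. To see this, superimpose $\gamma$ with copies of triangulation arcs and resolve crossings with the skein relations; this is the skein-theoretic Laurent phenomenon. I would induct on the total number of crossings between $\gamma$ and $\Delta$. Resolving a crossing of $\gamma$ with an arc $\delta_i$ gives two multicurves, each with strictly fewer crossings with $\Delta$. Curves with no crossings with $\Delta$ must themselves be arcs of $\Delta$ or contractible, and contractible curves evaluate to scalars.

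3. Step 2 means that $\Sk_1(\SS)[\langle\delta_i\rangle^{-1}]$ is generated by the $\langle\delta_i\rangle^{\pm1}$. By step 1 these are algebraically independent, so this localization is isomorphic to the Laurent ring $\ZZ[\langle\delta_1\rangle^{\pm1},\dots,\langle\delta_N\rangle^{\pm1}]$.

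4. The map $\Sk_1(\SS)\to\Sk_1(\SS)[\langle\delta_i\rangle^{-1}]$ is injective. This holds because multiplication by each $\langle\delta_i\rangle$ is injective on the simple multicurve basis: for a simple multicurve $m$, the product $\langle\delta_i\rangle\cdot m$ has a "leading term" that can be tracked by crossing number with $\delta_i$.

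5. A subring of a Laurent polynomial ring over $\ZZ$ is a domain, which finishes the proof.

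The main obstacle is step 4, the injectivity of multiplication by $\langle\delta_i\rangle$. This is where the unpunctured hypothesis and the boundary matter; at punctures the notched arcs break the argument. My first attempt would be a filtration by geometric intersection number with $\Delta$. Superposing $\delta_i$ with a simple multicurve $m$ in minimal position and resolving all crossings the same way produces a unique term of maximal "$\Delta$-degree." That term is nonzero in the basis, so the associated graded ring is a domain, and injectivity follows from this leading-term argument.
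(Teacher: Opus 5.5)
The paper gives no proof of this statement; it quotes it from Muller. Judged on its own, your commutativity argument and Steps 1--3 and 5 are sound. Step 2 works once you resolve all crossings of $\delta_i\cup\gamma$ simultaneously and then pass to minimal position. That is Muller's Lemma 4.11 and Corollary 4.13, which the paper adapts as Lemma~\ref{lemma: reducing tagged crossings} and Corollary~\ref{coro: reducing tagged crossings}.

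Notice, though, what these steps actually yield: $\Sk_1(\SS)/K\hookrightarrow\ZZ[\langle\delta_1\rangle^{\pm1},\dots,\langle\delta_N\rangle^{\pm1}]$, where $K$ is the ideal of elements annihilated by some monomial in $\Delta$. So the entire content of ``domain'' is $K=0$, i.e.\ your Step~4, and the argument you sketch there does not work.

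The filtration by geometric intersection number with $\Delta$ is incompatible with a leading-term argument. The element $\langle\delta_i\rangle$ has degree $0$. By the very lemma driving Step~2, if $\mu(\delta_i,X)>0$ then every term $Z$ of $\langle\delta_i\rangle\langle X\rangle$ satisfies $\mu(\delta_i,Z)\le\mu(\delta_i,X)-1$, and $\mu(\delta_j,Z)\le\mu(\delta_j,X)$ for $j\neq i$. So the product drops strictly in filtration degree. Hence $\overline{\langle\delta_i\rangle}\cdot\overline{\langle X\rangle}=0$ in the associated graded ring, which is therefore not a domain.

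Even the weaker claim of a unique term of maximal $\Delta$-degree is false. There are always two uniform resolutions: every crossing smoothed one way, or every crossing smoothed the other way. The simplest case exhibits the tie. If $\delta_i'$ is the flip of $\delta_i$ in a quadrilateral with sides $a,b,c,d$, then $\langle\delta_i\rangle\langle\delta_i'\rangle=\langle a\cup c\rangle+\langle b\cup d\rangle$. Both terms are unions of arcs of $\Delta$, so both have $\Delta$-degree $0$. At generic $q$ a power of $q$ could break such ties; at $q=1$ nothing in your setup does.

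Closing the gap needs a genuinely new input. One route is a pointedness statement. Choose a total order on Laurent monomials in $\Delta$, for instance one built from shear coordinates or $g$-vectors relative to $\Delta$ using the frozen boundary arcs. Then show that the expansion of each simple multicurve has a maximal monomial with nonzero coefficient, and that distinct simple multicurves have distinct maximal monomials. This makes the images of the simple-multicurve basis linearly independent, which is exactly the injectivity of $\Sk_1(\SS)\to\Sk_1(\SS)[\Delta^{-1}]$.

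Alternatively, a leading-term argument on $\langle\delta_i\rangle\langle X\rangle$ can work only after two things are supplied. You must find a homotopy invariant that separates the two uniform resolutions. You must then prove that the resulting map $X\mapsto L(X)$ is injective and triangular with respect to it. As written, Step~4 is asserted rather than proved.
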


\begin{figure}[htb]
	\centering
	\begin{tabular}{|c|c|}
		\hline
		\multicolumn{2}{|c|}{
			$
			\begin{tikzpicture}[scale=.15,baseline=(invisible.base)]
				\path[use as bounding box] (0,0) circle (5);
				\draw[fill=black!5,dashed] (0,0) circle (4);
				\draw[thick] (-2.83,-2.83) to (2.83,2.83);
				\draw[thick] (-2.83,2.83) to (2.83,-2.83);
				\node[opacity=0] (invisible) at (0,0) {$a$};
			\end{tikzpicture}
			=
			\begin{tikzpicture}[scale=.15,baseline=(invisible.base)]
				\path[use as bounding box] (0,0) circle (5);
				\draw[fill=black!5,dashed] (0,0) circle (4);
				\draw[thick] (-2.83,-2.83) to [out=45,in=-45] (-2.83,2.83);
				\draw[thick] (2.83,-2.83) to [out=135,in=-135] (2.83,2.83);
				\node[opacity=0] (invisible) at (0,0) {$a$};
			\end{tikzpicture}
			+
			\begin{tikzpicture}[scale=.15,baseline=(invisible.base)]
				\path[use as bounding box] (0,0) circle (5);
				\draw[fill=black!5,dashed] (0,0) circle (4);
				\draw[thick] (-2.83,-2.83) to [out=45,in=135] (2.83,-2.83);
				\draw[thick] (-2.83,2.83) to [out=-45,in=-135] (2.83,2.83);
				\node[opacity=0] (invisible) at (0,0) {$a$};
			\end{tikzpicture}
			$} \\
		\multicolumn{2}{|c|}{
			The skein relation at a crossing
		}
		\\
		\hline
		$
		\begin{tikzpicture}[scale=.15,baseline=(invisible.base)]
			\path[use as bounding box] (0,0) circle (5);
			\draw[fill=black!5,dashed] (0,0) circle (4);
			\draw[thick] (0,0) circle (2);
			\node[opacity=0] (invisible) at (0,0) {$a$};
		\end{tikzpicture}
		=-2
		\begin{tikzpicture}[scale=.15,baseline=(invisible.base)]
			\path[use as bounding box] (0,0) circle (5);
			\draw[fill=black!5,dashed] (0,0) circle (4);
			\node[opacity=0] (invisible) at (0,0) {$a$};
		\end{tikzpicture}
		$
		&
		$
		\begin{tikzpicture}[scale=.15,baseline=(invisible.base)]
			\path[use as bounding box] (0,0) circle (5);
			\clip (0,0) circle (4);
			\draw[fill=black!5,thick] (-5,-3) to [in=180,out=30] (0,-2) to [in=150,out=0] (5,-3) to [line to] (5,5) to (0,5) to (-5,5);
			\node (1) at (0,-2) [dot] {};
			\draw[thick] (1) to [out=45,in=0] (0,2) to [out=180,in=135] (1);
			\draw[dashed] (0,0) circle (4);
			\node[opacity=0] (invisible) at (0,0) {$a$};
		\end{tikzpicture}
		=0
		$
		\\
		A contractible loop
		&
		A contractible arc
		\\
		\hline
	\end{tabular}
	\caption{The $q=1$ marked Kauffman skein relations
	}
	\label{fig: skeinrel}
\end{figure}

Extension of the Kauffman skein relations to punctured surfaces has been explored in \cite{MSW13, Thu14,FT18}, and more recently in \cite{Wilson26,glw2024banglefunctionsgenericbasis,mandel2023braceletsbasesthetabases,bwk2024skeinrelationspuncturedsurfaces}. In addition to the relations of Figure \ref{fig: skeinrel}, the tagged skein algebra requires two additional relations pictured in Figure \ref{fig: tagged skeinrel}. 

Similar to the unpunctured case, let $\mathbb{Z}\TMulti(\SS)$ denote the abelian group of $\mathbb{Z}$-linear combinations of tagged marked multicurves in $\SS$ up to homotopy. Define the \textbf{tagged skein algebra} $\barTSk (\SS)$ to be the quotient of $\mathbb{Z}\TMulti(\SS)$ by the subgroup generated by the families of relations depicted in Figures \ref{fig: skeinrel} and \ref{fig: tagged skeinrel}. Further, define a version of the tagged skein algebra localized at the boundary, denoted $\TSk (\SS) \coloneqq \barTSk (\SS) [\partial\SS^{\pm 1}]$.

\begin{figure}[tb]
	\centering
	\begin{tabular}{|c|c|}
		\hline
		$
		\begin{tikzpicture}[scale=.15,baseline=(invisible.base)]
			\path[use as bounding box] (0,0) circle (5);
			\draw[fill=black!5,dashed] (0,0) circle (4);
			\draw[thick] (0,0) circle (2.7);
			\node[dot] (0,0) {};
		\end{tikzpicture}
		=2
		\begin{tikzpicture}[scale=.15,baseline=(invisible.base)]
			\path[use as bounding box] (0,0) circle (5);
			\draw[fill=black!5,dashed] (0,0) circle (4);
			\node[dot] (0,0) {};
		\end{tikzpicture}
		$
		&
		$
		\begin{tikzpicture}[scale=.15,baseline=(invisible.base)]
			\path[use as bounding box] (0,0) circle (5);
			\draw[fill=black!5,dashed] (0,0) circle (4);
			\node[dot] (p) at (0,0) {};
			\draw[thick] (-4,0) to node[pos=0.67,sloped,rotate=90]{$\bowtie$} (p) {};
			\draw[thick] (p) to (4,0) {};
		\end{tikzpicture}
		=
		\begin{tikzpicture}[scale=.15,baseline=(invisible.base)]
			\path[use as bounding box] (0,0) circle (5);
			\draw[fill=black!5,dashed] (0,0) circle (4);
			\node[dot] (p) at (0,0) {};
			\draw[thick, out=45,in=135] (-4,0) to (4,0) {};
		\end{tikzpicture}
		+
		\begin{tikzpicture}[scale=.15,baseline=(invisible.base)]
			\path[use as bounding box] (0,0) circle (5);
			\draw[fill=black!5,dashed] (0,0) circle (4);
			\node[dot] (p) at (0,0) {};
			\draw[thick, out=315, in=225] (-4,0) to (4,0) {};
		\end{tikzpicture}
		$
		\\
		A loop around a puncture
		&
		Incompatible taggings
		\\
		\hline
	\end{tabular}
	\caption{The $q=1$ tagged Kauffman skein relations}
	\label{fig: tagged skeinrel}
\end{figure}

The reader should refer to \cite{mandel2023braceletsbasesthetabases} for more details on tagged skein algebras. We will introduce details only as needed, beginning with the following result.

\begin{lemma}\cite[Lemma 3.10]{mandel2023braceletsbasesthetabases}
	Simultaneously changing all tags at a puncture $p$ induces an involutive automorphism $i_p$ of $\TSk (\SS)$.
\end{lemma}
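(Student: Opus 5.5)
The statement to prove is that simultaneously switching all tags at a puncture $p$ gives an involutive automorphism $i_p$ of $\TSk(\SS)$.

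I will first define $i_p$ on generators and then check that it respects every relation. Let $\mathbb{Z}\TMulti(\SS)$ be as above. For a tagged marked multicurve $\gamma$, let $i_p(\gamma)$ be the same underlying immersed multicurve, with the tag at every end incident to $p$ changed (plain $\leftrightarrow$ notched) and all other ends left alone. This is well defined on homotopy classes, because homotopies of marked multicurves fix endpoints and do not involve tags. It sends tagged arcs to tagged arcs: the three conditions of Definition \ref{def: tagged arc} are preserved, since $p$ is a puncture so no boundary end is changed, and two ends at $p$ are switched together. Extending $\mathbb{Z}$-linearly gives an endomorphism of $\mathbb{Z}\TMulti(\SS)$. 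It fixes boundary arcs, whose endpoints are boundary marked points, so it also extends over the localization $[\partial\SS^{\pm1}]$. Since $i_p\circ i_p=\mathrm{id}$ already on $\mathbb{Z}\TMulti(\SS)$, involutivity, and hence bijectivity, is automatic once $i_p$ descends to the quotient.

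Next I will check that the relation subgroup is mapped into itself, one family at a time; each relation is local to a small disk $D$.
\begin{itemize}
\item The crossing relation and the contractible-loop relation live in a disk away from marked points, so the curves agree at their ends outside $D$. Applying $i_p$ changes the same ends on every term, and the result is again an instance of the same relation.
\item The contractible-arc relation involves an arc with both ends at one marked point. If that point is $p$, both tags flip; a contractible arc with switched tags is still zero, because the relation is stated regardless of tagging (both ends carry equal tags).
\item The loop-around-a-puncture relation involves no ends at all, so it is fixed.
\item The incompatible-taggings relation at a puncture $r$ is the main case. If $r\neq p$, the relation is carried to another instance of the same relation with outside ends altered. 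If $r=p$, the left side becomes the two arcs meeting at $p$ with the notch moved to the other arc, and the right side is unchanged. This is the same relation read with the roles of the two arcs exchanged, i.e.\ the mirror image of the pictured one.
\end{itemize}
So each relation maps to a relation, and $i_p$ descends to $\TSk(\SS)$.

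Finally, $i_p$ is multiplicative. The superposition product is the union of the underlying curves, and tags at $p$ are per-end data. Changing tags at $p$ before or after forming $\gamma\cup\lambda$ therefore gives the same multicurve, so $i_p(\gamma\lambda)=i_p(\gamma)i_p(\lambda)$ on representatives. It is unital, since the empty multicurve is fixed.

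The main obstacle is the incompatible-taggings case at $p$, together with multicurves in which several arcs share ends at $p$ with mixed tags. For these I must confirm that the relation family is stated symmetrically in which strand carries the notch, and I would check this against the conventions of \cite{mandel2023braceletsbasesthetabases}.
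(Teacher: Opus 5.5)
Your argument is correct. The paper gives no proof of this statement: it imports it from \cite[Lemma 3.10]{mandel2023braceletsbasesthetabases}. Your proposal is therefore a self-contained generators-and-relations verification, not a variant of an argument in the text.

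The steps you list are exactly what is needed:
\begin{itemize}
\item $i_p$ is well defined on $\mathbb{Z}\TMulti(\SS)$ and squares to the identity there.
\item It commutes with superposition.
\item It fixes boundary arcs, so it passes to $\TSk(\SS)=\barTSk(\SS)[\partial\SS^{\pm1}]$.
\item It preserves the relation subgroup; your case analysis covers all five relations of Figures \ref{fig: skeinrel} and \ref{fig: tagged skeinrel}.
\end{itemize}
Citing the lemma gives it in the generality of Mandel--Qin's setting. Your check makes the paper self-contained, but it is tied to the specific $q=1$ relation list used here.

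The point you leave open can be closed without consulting anyone's conventions. Rotate the local disk of the incompatible-taggings relation by $\pi$ about the puncture. This rotation is orientation-preserving and moves the notched strand to the other side. It also interchanges the resolution passing above the puncture with the one passing below. The right-hand side is the sum of both resolutions, so the rotated picture is literally the relation with the two tags swapped, which is what $i_p$ produces. Any further ends at $p$, or elsewhere, occur identically on both sides and are flipped identically, so mixed tags among other ends cause no difficulty.

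Two small remarks. First, use this rotation rather than a ``mirror image'': a reflection is orientation-reversing, and one would then have to argue separately that the relation family is closed under reflections (true at $q=1$, but an extra step). Second, in this paper the contractible-arc relation is drawn at a boundary marked point. So the sub-case where its endpoint is the puncture $p$ never arises, and your claim about how that relation treats tags is not needed.
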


%============================

\subsubsection{Relationship between cluster algebras and skein algebras}

The cluster algebra of an unpunctured marked surface can be defined as a localization of the skein algebra.

\begin{thm}\cite[Theorem 6.9]{BM25}
	Let $\SS$ be an unpunctured, triangulable marked surface. Then the localization 
	\[ \Sk_1(\SS)[\partial\SS^{-1}] \]
	of the skein algebra $\Sk_1(\SS)$ at the set of boundary arcs $\partial \SS$ is a cluster algebra. The frozen cluster variables are the boundary arcs, the mutable cluster variables are the simple non-boundary arcs, and the clusters are the triangulations of $\SS$.
\end{thm}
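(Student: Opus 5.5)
The plan is to build an explicit isomorphism between the cluster algebra $\CA(\SS)$ attached to any triangulation and the localized skein algebra $\Sk_1(\SS)[\partial\SS^{-1}]$. Injectivity will come from the Laurent phenomenon together with Muller's result that $\Sk_1(\SS)$ is a domain. Surjectivity will come from a generation argument in the skein algebra.

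\emph{Step 1: the map.} Fix a triangulation $T$ and send each cluster variable of the seed of $T$ to the class $\langle\gamma\rangle$ of the corresponding arc. Boundary arcs are invertible in the localization, so frozen variables and their inverses land in $\Sk_1(\SS)[\partial\SS^{-1}]$.
\begin{itemize}
\item A flip in a quadrilateral corresponds to the skein relation at the single crossing of the two diagonals. Resolving that crossing gives exactly the two products of opposite sides, so the Ptolemy relation of Figure \ref{fig: Ptolemy} holds in the skein algebra.
\item By induction along flip sequences, every cluster variable is sent to the class of a simple arc. Every simple non-boundary arc arises this way, since all triangulations of an unpunctured surface are connected by flips.
\item Clusters go to triangulations. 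This gives a ring map $\CA(\SS)\to \Sk_1(\SS)[\partial\SS^{-1}]$.
\end{itemize}

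\emph{Step 2: injectivity.} I would show that $\Sk_1(\SS)[\partial\SS^{-1}]$ embeds in the Laurent ring $\ZZ[x_\gamma^{\pm1}:\gamma\in T]$ compatibly with the map above.
\begin{itemize}
\item Every element of $\Sk_1(\SS)[\partial\SS^{-1}]$ becomes a Laurent polynomial in the arcs of $T$ once those arcs are inverted. To see this, superpose a multicurve with a large product of arcs of $T$ and resolve all crossings; this reduces everything to multicurves compatible with $T$, hence to monomials in $T$.
\item Since $\Sk_1(\SS)$ is a domain, localization is injective.
\item Both rings then sit inside the same Laurent ring, with $\CA(\SS)$ embedded via the Laurent phenomenon, so the map of Step 1 is injective.
\end{itemize}

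\emph{Step 3: surjectivity.} The localized skein algebra is generated, over the inverted boundary arcs, by classes of single marked curves: simple arcs, non-simple arcs, and loops (simple or not). Contractible pieces are killed or give scalars. Applying the crossing relation to self-crossings and inducting on the number of crossings reduces everything to simple arcs and simple loops.
\begin{itemize}
\item Simple arcs lie in $\CA(\SS)$ by Step 1.
\item For a simple non-contractible loop $\ell$, choose a marked point $p$ on a boundary component and an arc $\alpha$ from $p$ that travels to $\ell$, winds once around it, and returns to $p$. This arc has one self-crossing.
\item Resolving that crossing writes $\alpha$ as $\ell$ times an arc based at $p$, plus a simple arc. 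When the arc based at $p$ is homotopic to a boundary arc, it is invertible, and we can solve for $\ell$ in terms of cluster variables and inverted frozen variables.
\end{itemize}

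The main obstacle is this last step in the degenerate configurations. When the boundary component containing $p$ has only one marked point, or when $\ell$ separates $p$ from other boundary, the arc produced by the resolution is not a boundary arc. It may instead be a contractible arc (which is zero) or a non-boundary arc (which is not invertible), and then the relation does not let us solve for $\ell$.

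These are precisely the non-locally-acyclic cases in which \cite{Mul16} only obtains containments. Here I would first choose $p$ and the winding of $\alpha$ so that the resolved arc encircles a whole boundary component. If that boundary component carries a single marked point $p$, the resolved arc is the boundary arc from $p$ to itself, which is invertible. If no such choice is available, I would instead pair $\ell$ with two arcs crossing it once each. Combining the two resulting skein relations, the aim is to obtain a relation whose coefficient on $\ell$ is a product of boundary arcs, so that $\ell$ can be isolated.
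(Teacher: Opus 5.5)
\textbf{There is a genuine gap, in Step 3, and the loop-elimination argument there is circular.} Steps 1 and 2 are fine in outline. They reproduce Muller's containment $\CA(\SS)\subseteq\Sk_1(\SS)[\partial\SS^{-1}]$; you should also invoke the fact that simple multicurves form a basis, so that Laurent expansions in $T$ are well defined.

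The arc $\alpha$ you build has a self-crossing, so it is not a cluster variable. The identity $\langle\alpha\rangle=\langle\ell\rangle\langle a\rangle+\langle s\rangle$, obtained by resolving that crossing, is exactly the relation you used a few lines earlier to reduce non-simple curves to simple ones. Solving it gives $\langle\ell\rangle=(\langle\alpha\rangle-\langle s\rangle)\langle a\rangle^{-1}$. This lies in $\CA(\SS)$ only if $\langle\alpha\rangle$ does, and $\langle\alpha\rangle\in\CA(\SS)$ is equivalent to $\langle\ell\rangle\langle a\rangle\in\CA(\SS)$. So no loop has been placed in the cluster algebra, whether or not $a$ is a boundary arc.

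What you actually need is a polynomial identity among \emph{simple} arcs, i.e.\ cluster variables, whose skein expansion contains $\ell$ with an invertible coefficient. For example, take the annulus with one marked point on each boundary component, with bridging arcs $\gamma_n$, boundary arcs $b_1,b_2$ and core loop $\ell$. From $\ell\gamma_k=\gamma_{k-1}+\gamma_{k+1}$ and $\gamma_{k-1}\gamma_{k+1}=\gamma_k^2+b_1b_2$ one gets $\gamma_{n-1}\gamma_{n+2}-\gamma_n\gamma_{n+1}=b_1b_2\,\ell$. Your last paragraph only states the aim of finding such identities. The configurations you flag are exactly where producing them is hard.

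\textbf{How the paper handles surjectivity.} It does not use skein manipulations. The statement is quoted from \cite{BM25}, and the proof of the punctured analogue, Theorem~\ref{thm: tagged skein cluster algebra}, shows the intended route. It sandwiches the algebra: $\CA(\SS)\subseteq\Sk_1(\SS)[\partial\SS^{-1}]\subseteq\CU(\SS)$ by \cite{Mul16}, and then $\CA(\SS)=\CU(\SS)$ by local acyclicity \cite{Mul13} together with \cite{Mul14}, under the hypothesis of at least two boundary marked points in each component.

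Your Step 2, run over every triangulation at once, already gives the second containment. All the localizations live in one fraction field, so the skein algebra lies in every Laurent ring and hence in $\CU(\SS)$. The only missing ingredient in your approach is therefore $\CA=\CU$.

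Among connected unpunctured triangulable surfaces, the hypothesis of \cite{Mul13} fails only when there is a single boundary marked point, i.e.\ genus $\geq1$ and one boundary component. There $\CA=\CU$ requires a separate theorem (Canakci--Lee--Schiffler), not a one-crossing resolution. So your remark that the degenerate configurations are ``precisely the non-locally-acyclic cases'' is also inaccurate: the annulus with one marked point on each boundary component is of acyclic Kronecker type.
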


When the surface is punctured, there may not necessarily be an isomorphism between the cluster algebra and the localized commutative tagged skein algebra $\TSk_1 (\SS)$. We can show that this is an isomorphism in many cases, however.

\begin{thm}\label{thm: tagged skein cluster algebra}
	Let $\SS$ be a triangulable marked surface (possibly with punctures) such that at least one of the following is true:
	\begin{enumerate}
		\item $\SS$ is a disk.
		\item $\SS$ embeds into a disk. 
		\item $\SS$ has at least two boundary marked points in each connected component. 
	\end{enumerate}. 
	Then the localized tagged skein algebra $\TSk(\SS)$ is a cluster algebra. The frozen cluster variables are the boundary arcs, the mutable cluster variables are the simple (tagged) non-boundary arcs, and the clusters are the triangulations of $\SS$.
\end{thm}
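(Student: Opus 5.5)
Following the strategy of \cite[Theorem 6.9]{BM25} for unpunctured surfaces, I would compare $\TSk(\SS)$ with the cluster algebra $\CA(\SS)$ and its upper cluster algebra inside a common fraction field.

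First, I would fix a tagged triangulation $T$ of $\SS$ and define a map $\CA(\SS) \to \TSk(\SS)$ by sending each cluster variable to the corresponding tagged arc. By Theorem \ref{thm: tagged flips} (which applies because each listed surface has nonempty boundary, so it is not a closed once-punctured surface), every tagged arc is reached from $T$ by flips. Well-definedness then reduces to checking that the tagged skein relations (Figures \ref{fig: skeinrel} and \ref{fig: tagged skeinrel}) reproduce the exchange relations. These are the generalized Ptolemy relations of \cite[Definitions 8.4, 8.5]{FT18}, namely the ordinary Ptolemy relation, its variant in which an arc cutting out a punctured monogon is replaced by the product of the two tagged arcs it encloses, and the punctured bigon relation $x_3x_4=x_1+x_2$. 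Each follows by resolving the crossing with the skein relation, applying the incompatible-tagging relation at the puncture, and using the puncture-loop relation. The automorphism $i_p$ of \cite[Lemma 3.10]{mandel2023braceletsbasesthetabases} reduces the verification to plain tags at each puncture. This yields a homomorphism whose image is the subalgebra generated by tagged arcs and inverted boundary arcs.

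Second, I would prove surjectivity by showing that $\TSk(\SS)$ is generated by simple tagged arcs and $\partial\SS^{\pm1}$. Any multicurve reduces, via the crossing relation, to crossingless multicurves. It remains to handle simple loops and arcs with self-crossings through punctures. Here the three hypotheses enter. In a disk, or a surface embedding in a disk, every essential loop separates off a set of punctures. Resolving its crossing with a suitable arc from the boundary then writes the loop as a polynomial in arcs divided by a boundary arc. Under hypothesis (3), two boundary marked points give two distinct endpoints, so the loop can be expressed by the standard ``loop times arc'' resolution, with the boundary arc in the denominator. This step mirrors \cite{Mul16}.

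Third, I would prove injectivity. This is the main obstacle, since I do not know in advance that $\TSk(\SS)$ is a domain. My first attempt is to construct a map the other way: send $\TSk(\SS)$ into the Laurent ring of a fixed cluster, using the Musiker--Schiffler--Williams expansion \cite{MSW13}. Alternatively, I would use the bangle/bracelet basis of \cite{mandel2023braceletsbasesthetabases}, which is a $\ZZ$-basis of the skein algebra whose images are linearly independent Laurent polynomials. The composite $\CA(\SS)\to\TSk(\SS)\to\ZZ[x_T^{\pm1}]$ is the Laurent phenomenon embedding, hence injective, so the first map is injective. Linear independence of the basis images then shows the second map is injective, so $\TSk(\SS)$ is a domain isomorphic to $\CA(\SS)$.

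Finally, I would read off the description of the seeds. The clusters are the tagged triangulations, the frozen variables are the boundary arcs, and the mutable variables are the non-boundary tagged arcs, by \cite{FST08}.
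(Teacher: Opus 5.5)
Your Steps 1 and 3, suitably reorganized, give at most the containments $\CA(\SS)\subseteq\TSk(\SS)\subseteq\CU(\SS)$, which the paper cites from \cite[Proposition 3.18]{mandel2023braceletsbasesthetabases}.

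Step 1 cannot be done as stated. $\CA(\SS)$ is a subring of a field, not a polynomial ring modulo exchange relations, so checking exchange relations does not define a homomorphism out of it. You must first embed $\TSk(\SS)$ into the Laurent ring of a triangulation (your Step 3), and then observe that arcs go to cluster variables.

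The real gap is Step 2, the reverse inclusion $\TSk(\SS)\subseteq\CA(\SS)$: every essential loop (and bracelet) must be a polynomial in arcs and inverse boundary arcs. Your mechanism does not produce this.
\begin{itemize}
\item An essential loop $L$ can be isotoped off every boundary arc, so there is no crossing with a boundary arc to resolve.
\item Resolving against a non-boundary arc $a$ that meets $L$ once gives $L\,a=a'+a''$. This only places $L$ in $\TSk(\SS)[a^{-1}]$.
\item The natural refinement takes arcs $x,y$ based at the two ends $m_1,m_2$ of a boundary arc $b$, each running out to $L$ and once around it. This yields $xy=b^2+z^2+b\,z\,L$, where $z$ runs from $m_2$ around $L$ back to $m_1$. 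So $L=(xy-b^2-z^2)/(bz)$.
\item Here $z$ is a boundary arc only when $L$ is parallel to a boundary component whose only marked points are $m_1,m_2$.
\end{itemize}
A loop enclosing two of three punctures in a disk already escapes this. So neither hypothesis (1) nor hypothesis (3) does what you say it does; having ``two distinct endpoints'' is exactly what is not enough.

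The missing ingredient is $\CA(\SS)=\CU(\SS)$, and this is where the paper actually uses the hypotheses. Under (1)--(3), $\CA(\SS)$ is locally acyclic by \cite[Lemma 10.4, Theorems 10.5 and 10.6]{Mul13}. Locally acyclic cluster algebras satisfy $\CA=\CU$ by \cite[Theorem 2]{Mul14}. Combined with the sandwich, all three algebras coincide, and surjectivity (generation by arcs) follows for free rather than being proved by hand.

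Your proposal never invokes anything equivalent to this. That would mean, for example, a covering of the cluster variety by acyclic or isolated localizations, or an explicit coprimality argument showing each loop needs only frozen denominators. As written, the generation step, and hence the theorem, is unproven. Replacing Step 2 with the local-acyclicity input, and discarding the direct loop-resolution argument, turns your outline into the paper's proof.
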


\begin{proof}
	Let $\CA(\SS)$ be the Fomin-Shapiro-Thurston cluster algebra of $\SS$ and let $\CU(\SS)$ be its upper cluster algebra. Then $\CA(\SS)$ is locally acyclic by Lemma 10.4, Theorem 10.5, and Theorem 10.6 of \cite{Mul13}. Consequently, $\CA(\SS) = \CU(\SS)$ by \cite[Theorem 2]{Mul14}.
	
	By \cite[Proposition 3.18]{mandel2023braceletsbasesthetabases},
	there are containments 
	\[\mathcal{A}(\SS)\subseteq \TSk_1 (\SS) \subseteq \mathcal{U}(\SS).\]
	Therefore, all three algebras are equal.
\end{proof}	

We now have the following characterization, analogous to \cite[Corollary 6.10]{BM25}.

\begin{coro}
	Let $\SS$ be a triangulable marked surface (possibly with punctures) such that at least one of the following is true:
	\begin{enumerate}
		\item $\SS$ is a disk.
		\item $\SS$ embeds into a disk. 
		\item $\SS$ has at least two boundary marked points in each connected component. 
	\end{enumerate} 
	A point $p\in V(\CA(\SS),\Bbbk)$ is equivalent to a map 
	\[ p:\{\text{simple marked curves in $\SS$ up to homotopy}\} \rightarrow \Bbbk\]
	such that the tagged skein relations hold (Figures \ref{fig: skeinrel} and \ref{fig: tagged skeinrel}) and boundary arcs are not sent to $0$. Such a point is deep if and only if every triangulation contains an arc which is sent to $0$.
\end{coro}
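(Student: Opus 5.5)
The plan is to deduce this corollary directly from Theorem~\ref{thm: tagged skein cluster algebra}, which gives an identification $\CA(\SS)\cong \TSk(\SS)=\barTSk(\SS)[\partial\SS^{\pm1}]$. Under this identification, cluster variables correspond to simple tagged arcs, frozen variables to boundary arcs, and clusters to tagged triangulations. A point of $V(\CA(\SS),\Bbbk)$ is therefore the same as a ring homomorphism $\TSk(\SS)\to\Bbbk$. I would unpack such a homomorphism in three steps, in the order below.

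\textbf{Step 1: homomorphisms out of the unlocalized algebra.} By the universal property of localization, homomorphisms $\TSk(\SS)\to\Bbbk$ are exactly the homomorphisms $\barTSk(\SS)\to\Bbbk$ that send each boundary arc to a nonzero element. By definition, $\barTSk(\SS)$ is the quotient of $\ZZ\TMulti(\SS)$ by the relations of Figures~\ref{fig: skeinrel} and~\ref{fig: tagged skeinrel}, with multiplication given by superposition. So a homomorphism out of $\barTSk(\SS)$ is an additive map on multicurves that kills those relations and is multiplicative for superposition.

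\textbf{Step 2: reduce to values on simple curves.} Any marked multicurve is the superposition of its components, so multiplicativity determines a homomorphism by its values on single curves. Repeatedly applying the crossing relation expresses any curve with self-crossings as a polynomial in curves with fewer crossings. Contractible pieces reduce to scalars, and loops around punctures reduce via the tagged relation. Hence every element is a polynomial in simple curves, and a homomorphism is determined by its restriction to simple marked curves up to homotopy.

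Conversely, given a function on simple curves satisfying the skein relations, I would extend it multiplicatively to multicurves and use the relations to extend it to all of $\ZZ\TMulti(\SS)$. The relations then guarantee that this extension descends to the quotient. I expect this converse direction, namely well-definedness of the extension independent of the order of resolutions, to be the main obstacle. I would handle it by citing the presentation of the tagged skein algebra in \cite{mandel2023braceletsbasesthetabases}: the relations listed there define the algebra, so compatibility with them is exactly what is needed. I would phrase the correspondence as ``a map on curves respecting the tagged skein relations,'' in exact analogy with \cite[Corollary 6.10]{BM25}, and follow that argument verbatim with the tagged relations added.

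\textbf{Step 3: the deep criterion.} This is immediate from Definition~\ref{def: deep point def}. Under the isomorphism of Theorem~\ref{thm: tagged skein cluster algebra}, clusters are tagged triangulations. The frozen (boundary) arcs are never sent to $0$, so a cluster contains a variable killed by $p$ exactly when some non-boundary arc of the triangulation is sent to $0$. Therefore $p$ is deep if and only if every triangulation contains an arc sent to $0$.
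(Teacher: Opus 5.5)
Your proposal is correct and is essentially the paper's argument. The paper gives no separate proof: it presents the corollary as an immediate consequence of Theorem~\ref{thm: tagged skein cluster algebra}, in exact analogy with \cite[Corollary 6.10]{BM25}, and that is precisely the unpacking you carry out (localization at the boundary arcs, generation by simple curves, and clusters as tagged triangulations whose frozen arcs never vanish). The order-of-resolution worry in your converse can be handled without citing a presentation: the unique support decomposition into simple multicurves, quoted just before Definition~\ref{defn: intersection pairing}, lets you define the homomorphism on that basis, and then the skein relations for products of simple curves are exactly the multiplicativity condition.
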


From this point on, we identify points of $V(\CA(\SS),\Bbbk)$ with such functions without comment.

%============================

\subsection{Cutting and gluing}\label{section: cutting}

Given a simple marked arc $s$ in a marked surface $\SS$, we may construct a new marked surface $\SS\smallsetminus s$ by \textbf{cutting} along $s$; that is, removing $s$ and adding two copies of $s$ (one to each side of the original $s$). 
More generally, given a compatible collection of non-boundary marked arcs $S$, we construct $\SS\smallsetminus S$ by cutting along the arcs in $S$ in any order.

For unpunctured surfaces, the cluster algebras of $\SS$ and $\SS\smallsetminus S$ are related as follows.

\begin{prop}\cite[Proposition 6.11]{BM25}\label{prop: cutting algebras}
	Let $\SS$ be an unpunctured, triangulable marked surface, and let $S$ be a compatible collection of non-boundary marked arcs in $\SS$. Then the map $\SS\smallsetminus S\rightarrow \SS$ induces an isomorphism
	\[  \CA(\SS\smallsetminus S)/\langle s'-s'', \forall s\in S\rangle \xrightarrow{\sim} \CA(\SS)[S^{-1}] \]
	Here, $s'$ and $s''$ denote the two preimages of $s$ in $\SS\smallsetminus S$.
\end{prop}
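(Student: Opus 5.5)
The plan is to work entirely on the skein side, using \cite[Theorem 6.9]{BM25} to identify $\CA(\SS)=\Sk_1(\SS)[\partial\SS^{-1}]$ and $\CA(\SS\smallsetminus S)=\Sk_1(\SS\smallsetminus S)[\partial(\SS\smallsetminus S)^{-1}]$. First I construct the map. The gluing map $\pi\colon\SS\smallsetminus S\to\SS$ sends marked multicurves to marked multicurves and homotopies to homotopies. The relations of Figure \ref{fig: skeinrel} are local, so $\pi$ induces a ring homomorphism $\Sk_1(\SS\smallsetminus S)\to\Sk_1(\SS)$. Boundary arcs of $\SS\smallsetminus S$ are sent either to boundary arcs of $\SS$ or to some $s\in S$. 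Since both kinds are invertible in $\CA(\SS)[S^{-1}]$, the map extends to the localization $\CA(\SS\smallsetminus S)\to\CA(\SS)[S^{-1}]$. Both $s'$ and $s''$ go to $s$, so the map factors through the quotient by $\langle s'-s''\rangle$.

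Next I prove surjectivity by induction on the total number of crossings of a simple marked curve $\gamma$ in $\SS$ with the arcs of $S$. If there are no crossings, then $\gamma$ lifts to $\SS\smallsetminus S$, or $\gamma\in S$ and is the image of $s'$. Otherwise $\gamma$ crosses some $s\in S$, and I apply the skein relation at one crossing of the superposition $\gamma\cup s$. This writes $\gamma s$ as a sum of two multicurves, each with strictly fewer crossings with $S$ after homotopy, and possibly with contractible pieces that evaluate to $0$ or $-2$. Hence $\gamma=s^{-1}(\gamma s)$ lies in the image by induction. Since curves generate $\Sk_1(\SS)$, and boundary arcs and $S^{-1}$ are clearly in the image, the map is onto.

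The main obstacle is injectivity, and for this I will use bases. By \cite{Mul16}, $\Sk_1(\SS)$ is free over $\ZZ$ on simple multicurves, where the simple loops and boundary arcs are allowed. So $\CA(\SS\smallsetminus S)$ has a $\ZZ$-basis of simple multicurves $\mu$ without boundary components, times Laurent monomials in the boundary arcs. Modding out by $s'-s''$ identifies the exponents of $s'$ and $s''$. The quotient is therefore spanned by elements $\mu\cdot\partial^{a}\cdot\prod_{s\in S} s^{k_s}$ with $k_s\in\ZZ$. The first thing to check is that the quotient is genuinely free on these elements. This holds because the algebra is a Laurent polynomial ring over the span of the $\mu$ in the variables $s',s''$, and the quotient of $R[s'^{\pm1},s''^{\pm1}]$ by $s'-s''$ is $R[s^{\pm1}]$.

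It then suffices to show that the images $\pi(\mu)\,\partial^{a}\prod s^{k_s}$ are linearly independent in $\CA(\SS)[S^{-1}]$. Clearing denominators by a large power of $\prod s$ reduces this to showing that the products $\pi(\mu)\,\partial^{a}\prod s^{k_s}$ with $k_s\geq 0$ are distinct basis elements of $\Sk_1(\SS)$. These products are the disjoint unions of the simple multicurve $\pi(\mu)$ with parallel copies of arcs in $S$ and in $\partial\SS$. Since $\pi(\mu)$ is disjoint from $S$, each such product is a simple multicurve, and the data $(\mu,a,k)$ is recovered from it. So these are distinct basis elements, which gives injectivity. I will take some care with the case where a loop or arc of $\mu$ becomes homotopic in $\SS$ to an arc of $S$ or to a boundary arc. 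This only changes how the exponents are recorded, not distinctness, because homotopy classes of arcs disjoint from $S$ in $\SS\smallsetminus S$ map injectively to homotopy classes in $\SS$.
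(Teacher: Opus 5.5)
Your architecture is the right one: the gluing map on skein algebras, surjectivity by reducing crossings with $S$ (a hands-on version of \cite[Corollary 4.13]{Mul16}, which is what the cited proof uses), and injectivity via simple-multicurve bases. Your computation that the quotient is free on the elements $\mu\,\partial^a\prod s^{k_s}$ is also correct. The gap is in the last paragraph, exactly where the topology of the cut enters.

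You say that if a component of $\mu$ becomes homotopic in $\SS$ to an arc of $S$ or to a boundary arc, this ``only changes how the exponents are recorded, not distinctness.'' That is false. Suppose a non-boundary simple arc $\alpha$ of $\SS\smallsetminus S$ had $\pi(\alpha)\simeq s$. Then $(\alpha\sqcup\mu_0)\,\partial^a s^k$ and $\mu_0\,\partial^a s^{k+1}$ would be two different elements of your basis of the quotient with the same image, and the map would not be injective. So this case has to be ruled out, not absorbed into the exponents. Your justification, that homotopy classes of arcs disjoint from $S$ map injectively, is false as stated, since $s'$ and $s''$ both go to $s$. It also says nothing about loops. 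Once corrected, it is the entire content of the injectivity step, and you assert it without proof.

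What you need is the following:
(i) every simple curve of $\SS\smallsetminus S$ that is neither contractible nor a boundary arc maps to a curve of $\SS$ that is non-contractible and homotopic neither to a boundary arc of $\SS$ nor to an arc of $S$;
(ii) the map is injective on homotopy classes of such arcs and loops.

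Both are true, and one way to prove them is through the universal cover $\widetilde{\SS}$. There, the lifts of the arcs of $S$ are disjoint separating arcs that cut $\widetilde{\SS}$ into simply connected regions arranged in a tree.
\begin{itemize}
\item Two lifted arcs with the same endpoints lying in the closure of one region are homotopic inside it, hence in $\SS\smallsetminus S$.
\item If they lie in different regions, their common endpoints must be the endpoints of a single lift $\tilde s$ separating them. This forces each to be homotopic to $s'$ or $s''$ in $\SS\smallsetminus S$, i.e.\ to be a boundary arc there.
\item For loops, $\pi_1(\SS)$ is the free product of the fundamental groups of the components of $\SS\smallsetminus S$ with a free group, because the edge spaces are arcs. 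Nontrivial elements of factors that are conjugate in a free product lie in the same factor and are conjugate there.
\end{itemize}
With (i) and (ii), the simple multicurve $\pi(\mu)\sqcup(\text{copies of }S\text{ and }\partial\SS)$ determines $(\mu,a,k)$, and your argument closes. Clearing denominators also uses that $\Sk_1(\SS)$ is a domain.

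There is also a smaller repair needed in the surjectivity step. The two multicurves obtained by resolving one crossing are not simple, so the induction as you state it does not apply to them. Either run the induction over all transverse multicurves (resolving their self-crossings away from $S$ does not change their intersection points with $S$), or resolve all crossings with $s$ at once as in \cite[Lemma 4.11]{Mul16}.
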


The isomorphism from Proposition \ref{prop: cutting algebras} fits into the following diagram, in which the map on the left is the localization map and the map on the right is the quotient map.
\begin{equation}\label{eq: cuttingalgebras}
	\begin{tikzcd}
		{\CA(\SS)} & {\CA(\SS\smallsetminus S)} \\
		{\CA(\SS)[S^{-1}]} & {\CA(\SS\smallsetminus S)/\langle s'-s'', \forall s\in S\rangle}
		\arrow["\sim"', from=2-2, to=2-1]
		\arrow[two heads, from=1-2, to=2-2]
		\arrow[hook, from=1-1, to=2-1]
	\end{tikzcd}
\end{equation}
The isomorphism can be translated to cluster varieties via the following terminology.
\begin{itemize}
	\item A point in $V(\CA(\SS),\Bbbk)$ is \textbf{non-zero on $S$} if it is non-zero on each arc in $S$.
	\item A point in $V(\CA(\SS\smallsetminus S),\Bbbk)$ is \textbf{gluable along $S$} if it has the same value on both preimages of each edge in $S$.
\end{itemize}
The first type of points are in bijection with homomorphisms $\CA(\SS)[S^{-1}]\rightarrow \Bbbk$ and the second type of points are in bijection with homomorphisms $\CA(\SS\smallsetminus S)/\langle s'-s'', \forall s\in S\rangle\rightarrow \Bbbk.$ Hence, Proposition \ref{prop: cutting algebras} implies the following.

\begin{prop}\cite[Proposition 6.13]{BM25}\label{prop: cutting subvarieties}
	Let $\SS$ be an unpunctured, triangulable marked surface, and let $S$ be a compatible collection of non-boundary marked arcs in $\SS$.
	Then restricting values from marked curves in $\SS$ to marked curves in $\SS\smallsetminus S$ gives an isomorphism between:
	\begin{enumerate}
		\item The open subvariety of $V(\CA(\SS),\Bbbk)$ consisting of points which are non-zero on $S$.
		\item The closed subvariety of $V(\CA(\SS\smallsetminus S),\Bbbk)$ consisting of points which are {gluable along $S$}.\qedhere
	\end{enumerate}
\end{prop}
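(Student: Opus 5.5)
The plan is to deduce the statement directly from Proposition \ref{prop: cutting algebras} by applying the functor $\Hom_{\mathrm{Ring}}(-,\Bbbk)$. The work is in identifying the two sides of that isomorphism with the two subvarieties in the statement, and in checking that the induced map is ``restriction of values.''

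\emph{Step 1: the open side.} Since $S$ is a compatible collection, it is finite. Set $f=\prod_{s\in S}s\in\CA(\SS)$, so that $\CA(\SS)[S^{-1}]=\CA(\SS)[f^{-1}]$. By the universal property of localization, composing with the localization map $\CA(\SS)\hookrightarrow\CA(\SS)[S^{-1}]$ gives a bijection
\[ \Hom(\CA(\SS)[S^{-1}],\Bbbk)\ \longrightarrow\ \{p\in V(\CA(\SS),\Bbbk) : p(s)\neq 0 \text{ for all } s\in S\}. \]
The target is the principal open set $\{p(f)\neq 0\}$. As in the discussion of cluster tori, this identification is a homeomorphism onto that open set in the Zariski topology; scheme-theoretically, $\Spec \CA(\SS)[f^{-1}]$ is the principal open subscheme $D(f)$.

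\emph{Step 2: the closed side.} Precomposition with the surjection $\CA(\SS\smallsetminus S)\twoheadrightarrow\CA(\SS\smallsetminus S)/\langle s'-s''\rangle$ identifies $\Hom$ of the quotient with the points $p$ of $V(\CA(\SS\smallsetminus S),\Bbbk)$ that kill every $s'-s''$. These are exactly the points that are gluable along $S$, and they form the closed set cut out by that ideal. Again this is a homeomorphism, and in fact a closed immersion $V(\mathcal I)\hookrightarrow\Spec$.

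\emph{Step 3: composing.} The ring isomorphism of Proposition \ref{prop: cutting algebras} induces an isomorphism of affine schemes, hence a bijection on $\Bbbk$-points that is a homeomorphism and an isomorphism of the corresponding subvarieties. The remaining point is to describe this map concretely. By the commutative square \eqref{eq: cuttingalgebras}, the isomorphism sends the class of a marked curve $\gamma$ in $\SS\smallsetminus S$ to its image in $\SS$ under the gluing map $\SS\smallsetminus S\to\SS$. So a point $p$ of $\SS$ that is nonzero on $S$ is sent to the function $\gamma\mapsto p(\text{image of }\gamma)$. Under the identification of points with functions on marked curves, this is precisely restricting the values of $p$ to curves in $\SS\smallsetminus S$. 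Gluability of the result is automatic, since $s'$ and $s''$ both map to $s$.

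The main obstacle I expect is this last bookkeeping step: making sure the isomorphism in Proposition \ref{prop: cutting algebras} really is induced by the map $\SS\smallsetminus S\to\SS$ on generators. That is what licenses describing the bijection as restriction rather than as some abstract isomorphism. If Proposition \ref{prop: cutting algebras} is taken as stated, with diagram \eqref{eq: cuttingalgebras} commuting, this is immediate. Everything else is the standard correspondence between localizations and principal opens, and between quotients and closed subsets.
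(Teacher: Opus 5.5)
Your proposal is correct and follows the paper's own route: identify $\Hom(\CA(\SS)[S^{-1}],\Bbbk)$ with the points non-zero on $S$, identify $\Hom$ of the quotient with the gluable points, and transport along the ring isomorphism of Proposition~\ref{prop: cutting algebras}. One small correction: the description of the map as restriction of values comes from the phrase ``induced by the map $\SS\smallsetminus S\to\SS$'' in that proposition. It does not come from diagram~\eqref{eq: cuttingalgebras}, which has no arrow between $\CA(\SS)$ and $\CA(\SS\smallsetminus S)$ and so is not a commutative square.
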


\noindent We refer to this as the \textbf{cutting isomorphism} between the two subvarieties:
\begin{equation}\label{eq: cutting}
	\begin{Bmatrix} \text{points in }V(\CA(\SS),\Bbbk) \\ \text{non-zero on }S \end{Bmatrix} 
	\xrightarrow{\sim}
	\begin{Bmatrix} \text{points in }V(\CA(\SS\smallsetminus S),\Bbbk) \\ \text{gluable along }S \end{Bmatrix}
\end{equation}
We refer to the action of the inverse map as \textbf{gluing} points.

%============================

\subsubsection{Cutting and gluing punctured surfaces}

We now wish to extend these cutting and gluing results to punctured surfaces. In order to do so, we must extend \cite[Lemma 4.11]{Mul16} to tagged multicurves in punctured surfaces.

We define a \textbf{simple (tagged) multicurve} $X$ in $\SS$ as a transverse (tagged) multicurve with with no crossings, no contractible loops, and no contractible arcs; let $[X]$ denote the corresponding element in $\TSk (\SS)$.  Additionally, for each element $x \in \TSk (\SS)$, there is a unique subset $\Supp (x)$ of simple multicurves (called the \emph{support} of $x$) and unique $\lambda_Y \in \ZZ$ for each $Y \in \Supp (x)$ such that \[ x = \sum_{Y \in \Supp (x)} \lambda_Y [Y]. \]

\begin{defn}\label{defn: intersection pairing}
	Given simple multicurves $X$ and $Y$, define the \textbf{intersection pairing} of $X$ and $Y$ as $\mu (X,Y) \coloneqq A+D$, where 
	\begin{itemize}
		\item $A$ is the minimum number of crossings between all $X^\prime$ and $Y^\prime$, over all transverse pairs $(X^\prime, Y^\prime)$ homotopic to $(X,Y)$; note that intersections at marked points are not counted, and 
		\item $D \coloneqq D(X,Y) \cdot D(Y,X)$, where $D(\alpha,\beta)$ is the number of ends of $\beta$ that are incident to an end of $\alpha$ and are tagged differently than $\alpha$ at that puncture.\footnote{A similar definition of intersection pairing appears in \cite[Definition 8.4]{FST08} with two additional (non-positive) summands. In that definition, $D$ is simply $D(X,Y)$, so the intersection pairing defined here will always be at least as large as that one.}
	\end{itemize} 
\end{defn}

We can now adapt the proof of \cite[Lemma 4.11]{Mul16} to the setting of tagged arcs.
\begin{lemma}\label{lemma: reducing tagged crossings}
	If $x$ is a simple tagged arc in $\SS$, then for all $y \in \TSk (\SS)$ such that $\mu ([x],y) > 0$, \[ \mu([x],[x]y) \leq \mu ([x],y)-1.\]
\end{lemma}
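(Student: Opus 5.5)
We follow the strategy of \cite[Lemma 4.11]{Mul16}. First reduce to the case where $y$ is a single simple tagged multicurve. Write $y=\sum_{Y\in\Supp(y)}\lambda_Y[Y]$. Since $\mu([x],\cdot)$ of a skein element is the maximum of $\mu([x],[Y])$ over its support, the general statement follows once we know that $\mu([x],[x][Y])\le \mu([x],[Y])-1$ whenever $\mu([x],[Y])>0$, and that $\mu([x],[x][Y])=\mu([x],[Y])=0$ otherwise. The second claim holds because $x$ and $Y$ are then compatible, so $[x][Y]=[x\cup Y]$ is simple and meets $x$ nowhere. The first claim uses that the support of $[x]y$ lies inside the union of the supports of the $[x][Y]$.

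Now fix a simple multicurve $Y$ with $\mu([x],[Y])=A+D>0$, and isotope $x$ and $Y$ into minimal position, so that they have exactly $A$ transverse crossings. Expand the superposition $x\cup Y$ in $\TSk(\SS)$. We use the crossing skein relation of Figure \ref{fig: skeinrel} at each of the $A$ crossings. At each shared puncture where $x$ and an end of $Y$ carry different tags, we use the incompatible-tagging relation of Figure \ref{fig: tagged skeinrel}. Each resulting state $Z$ is a multicurve with no crossings. After discarding contractible arcs (which give $0$) and replacing contractible loops and loops around punctures by the scalars $-2$ and $2$, each state is a scalar multiple of a simple multicurve.

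The key step is a count for each state $Z$. Take a parallel copy $x'$ of $x$ pushed slightly off, and count the crossings of $x'$ with $Z$ together with the tag-discrepancy term. Every resolved crossing of $x$ with $Y$ produces strands that run alongside $x$ and can be slid off $x'$. Going through the resolution at a given crossing loses the contribution of that crossing. Resolving the incompatible tags at a puncture $p$ replaces the notched end with arcs that go around $p$ on either side. These arcs lie in a small neighbourhood of $x$, so they contribute no crossings with $x'$ beyond those already counted, and they remove at least one notched-versus-plain incidence. As a result, $D(x,Z)\cdot D(Z,x)$ strictly decreases, or one of its factors becomes $0$. The remaining crossings of $x'$ with $Z$ come from crossings of $Y$ with $x$ that are not yet used, giving $\mu(x,Z)\le A+D-1$. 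Since $\mu$ is defined as a minimum over homotopic representatives, this bound passes to the simple multicurve representing $Z$.

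The main obstacle is the tagged term $D$, which is a product. We must check that resolving a single discrepancy reduces the product by at least one, and that the new curves wrapping around the puncture create no new crossings with $x'$ and no new tag discrepancies. We handle this by choosing $x'$ so that it leaves $p$ on the side away from the resolved arcs. When $D(x,Y)=0$ the product is already zero and only crossings matter. When $A=0$ and $D>0$, we check directly on a punctured-bigon neighbourhood of the shared ends (Figure \ref{fig: bigon relation}) that the resolved arcs are compatible with $x$. Self-folded and monogon situations are covered by the convention that such arcs are replaced by the products of their tagged replacements.
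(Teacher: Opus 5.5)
Your overall plan (reduce to a single simple multicurve $Y$, expand $[x][Y]$ with the crossing and incompatible-tagging relations, then bound the pairing of each state with $x$) is the same as the paper's, following Muller. The gap is that the step which actually produces the inequality is not established by what you wrote.

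In every state all $A$ crossings have been smoothed, so there are no ``crossings not yet used.'' Moreover, $Z$ contains the pieces of $x$ itself. A push-off $x'$ kept on one side of $x$ therefore still meets, at each smoothed crossing, the strand that turns toward that side, so the honest local count is $A$, not $A-1$. Taken literally, ``each resolution loses the contribution of that crossing'' would give $0$, which is false. For example, take two disjoint arcs of $Y$ crossing a diagonal $x$ of a polygon. The state joining the upper half of one arc to the lower half of the other must cross $x$.

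The missing idea is the segment-by-segment analysis the paper uses. The crossings cut $x$ into $A-1$ interior segments and two end segments. Consider the strand of $Z$ running along an interior segment: it arrives from one side of $x$ and leaves to one side. After an isotopy it meets $x$ once if the two sides differ, and not at all if they agree. The strand along an end segment runs into the endpoint of $x$ (a boundary point or a puncture) and can be pushed off $x$ entirely. Only this bookkeeping gives $A(x,Z)\le A-1$.

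The tagged part has the same problem in sharper form. You claim the arcs produced by the incompatible-tagging relation lie near $x$ and create no new crossings with $x'$. That is false once $x$ or $Y$ has several ends at the puncture, as the paper's own local pictures show. Suppose $x$ has both ends at $p$ and $Y$ has one end there with the other tag. Then $A=0$ and $D=1\cdot 2=2$. In one of the two resolutions the new strand must sweep around $p$ past the other end of $x$, which gives a state meeting $x$ once, with $D=0$. With two such ends of $Y$ ($D=4$), the paper gets states with $A+D=1+2$. Since $x'$ has to enter $p$ along both ends of $x$, no choice of side avoids these crossings. What makes $\mu$ drop is that the product $D$ falls, roughly halving, by more than $A$ rises. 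Checking this requires the case analysis by how many ends of $x$ and of $Y$ meet at $p$, which you only assert.

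Your last sentence also points the wrong way. Suppose an arc $\ell$ cutting out a once-punctured monogon were replaced by $r\cdot r^\vee$. Then the lemma would already fail for $x=r$, $y=[r^\vee]$: the multicurve $r\cup r^\vee$ still has a notched end where $r$ is plain, so its pairing with $[r]$ is still $1=\mu([r],[r^\vee])$. In $\TSk(\SS)$ the incompatible-tagging relation gives $[r][r^\vee]=[\ell]$, and $\ell$ must be kept as the simple multicurve in the support; its pairing with $[r]$ is $0$.
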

\begin{proof}
	First consider the case when $y$ is a simple multicurve $Y$ so that $x \cdot Y$ is transverse, and $x \cdot Y$ has $\mu(x, Y)$ crossings (the minimal number, up to homotopy). 
	Consider the set $I$ of multicurves which can be obtained by applying some combination of the following local relations to each crossing in $x \cdot Y$:
	\begin{center}
		\begin{tikzpicture}[scale=.8]
			\begin{scope}[xshift=-2.7in]
				\begin{scope}[xshift=-.5in,scale=.15]
					\draw[fill=black!5,dashed] (0,0) circle (4);
					\draw[thick] (-2.83,-2.83) to (2.83,2.83);
					\draw[thick] (-2.83,2.83) to (-.71,.71);
					\draw[thick] (.71,-.71) to (2.83,-2.83);
				\end{scope}
				\node (=) at (0,0) {$\mapsto$};
				\begin{scope}[xshift=.5in,scale=.15]
					\draw[fill=black!5,dashed] (0,0) circle (4);
					\draw[thick] (-2.83,-2.83) to [out=45,in=-45] (-2.83,2.83);
					\draw[thick] (2.83,-2.83) to [out=135,in=-135] (2.83,2.83);
				\end{scope}
			\end{scope}
			
			\begin{scope}[xshift=-0.9in]
				\begin{scope}[xshift=-.5in,scale=.15]
					\draw[fill=black!5,dashed] (0,0) circle (4);
					\draw[thick] (-2.83,-2.83) to (2.83,2.83);
					\draw[thick] (-2.83,2.83) to (-.71,.71);
					\draw[thick] (.71,-.71) to (2.83,-2.83);
				\end{scope}
				\node (=') at (0,0) {$\mapsto$};
				\begin{scope}[xshift=.5in,scale=.15]
					\draw[fill=black!5,dashed] (0,0) circle (4);
					\draw[thick] (-2.83,-2.83) to [out=45,in=135] (2.83,-2.83);
					\draw[thick] (-2.83,2.83) to [out=-45,in=-135] (2.83,2.83);
				\end{scope}
			\end{scope}
			\begin{scope}[xshift=0.9in]
				\begin{scope}[xshift=-.5in,scale=.15]
					\draw[fill=black!5,dashed] (0,0) circle (4);
					\node[dot] (p) at (0,0) {};
					\draw[thick] (-4,0) to node[pos=0.67,sloped,rotate=90]{$\bowtie$} (p) {};
					\draw[thick] (p) to (4,0) {};
				\end{scope}
				\node (=') at (0,0) {$\mapsto$};
				\begin{scope}[xshift=.5in,scale=.15]
					\draw[fill=black!5,dashed] (0,0) circle (4);
					\draw[fill=black!5,dashed] (0,0) circle (4);
					\node[dot] (p) at (0,0) {};
					\draw[thick, out=30,in=150] (-3.9,0.87) to (3.9,0.87) {};
				\end{scope}
			\end{scope}
			\begin{scope}[xshift=2.7in]
				\begin{scope}[xshift=-.5in,scale=.15]
					\draw[fill=black!5,dashed] (0,0) circle (4);
					\node[dot] (p) at (0,0) {};
					\draw[thick] (-4,0) to node[pos=0.67,sloped,rotate=90]{$\bowtie$} (p) {};
					\draw[thick] (p) to (4,0) {};
				\end{scope}
				\node (=') at (0,0) {$\mapsto$};
				\begin{scope}[xshift=.5in,scale=.15]
					\draw[fill=black!5,dashed] (0,0) circle (4);
					\draw[fill=black!5,dashed] (0,0) circle (4);
					\node[dot] (p) at (0,0) {};
					\draw[thick, out=330, in=210] (-3.9,-0.87) to (3.9,-0.87) {};
				\end{scope}
			\end{scope}
		\end{tikzpicture}
	\end{center}
	Since the simple multicurves in the support $\Supp([x][Y])$ come from applying the above relations to the crossings in $x \cdot Y$, we have $\Supp([x][Y])\subset I$.
	
	Consider a simple multicurve $Z\in I$.  For two adjacent crossings in $x \cdot Y$ along $x$, there are two local possibilities for $Z$, up to reflection across $x$.
	\begin{center}
		\begin{tikzpicture}[scale=.8]
			\begin{scope}[xshift=-1.5in]
				\begin{scope}[xshift=-.75in,scale=.15]
					\draw[fill=black!5,dashed] (-4,-4)  to (4,-4) arc (-90:90:4) to (-4,4) arc (90:270:4);
					\draw[thick] (-4,4) to (-4,-4);
					\draw[thick] (4,4) to (4,-4);
					\draw[outline] (-8,0) to (8,0);
					\draw[thick] (-8,0) to (8,0);
				\end{scope}
				\node (=) at (0,0) {$\mapsto$};
				\begin{scope}[xshift=.75in,scale=.15]
					\draw[fill=black!5,dashed] (-4,-4)  to (4,-4) arc (-90:90:4) to (-4,4) arc (90:270:4);
					\draw[thick] (-8,0) to [out=0,in=270] (-4,4);
					\draw[thick] (-4,-4) to [out=90,in=180] (0,0) to [out=0,in=270] (4,4);
					\draw[thick] (4,-4) to [out=90,in=180] (8,0);
				\end{scope}
			\end{scope}
			
			\begin{scope}[xshift=1.5in]
				\begin{scope}[xshift=-.75in,scale=.15]
					\draw[fill=black!5,dashed] (-4,-4)  to (4,-4) arc (-90:90:4) to (-4,4) arc (90:270:4);
					\draw[thick] (-4,4) to (-4,-4);
					\draw[thick] (4,4) to (4,-4);
					\draw[outline] (-8,0) to (8,0);
					\draw[thick] (-8,0) to (8,0);
				\end{scope}
				\node (=) at (0,0) {$\mapsto$};
				\begin{scope}[xshift=.75in,scale=.15]
					\draw[fill=black!5,dashed] (-4,-4)  to (4,-4) arc (-90:90:4) to (-4,4) arc (90:270:4);
					\draw[thick] (-8,0) to [out=0,in=270] (-4,4);
					\draw[thick] (-4,-4) to [out=90,in=180] (0,0) to [out=0,in=90] (4,-4);
					\draw[thick] (4,4) to [out=270,in=180] (8,0);
				\end{scope}
			\end{scope}
		\end{tikzpicture}
	\end{center}
	In this local picture, the first case is homotopic to a multicurve with crossing $x$ once, and the second is homotopic to a multicurve which does not cross $x$.
	
	Similarly, between a crossing in $x \cdot Y$ and an end of $x$ at a boundary, there is one local possibility for $Z$, up to reflection across $x$.
	\begin{center}
		\begin{tikzpicture}[scale=.8]
			\begin{scope}[xshift=-1.5in]
				\begin{scope}[xshift=-.75in,scale=.15]
					\begin{scope}
						\clip (-4,-4)  to (4,-4) arc (-90:90:4) to (-4,4) arc (90:270:4);
						\draw[thick, fill=black!5] (-12,-8) to [out=0,in=270] (-4,0) to [out=90,in=0] (-12,8) to [line to] (12,8) to (12,-8) to (-12,-8);
						\node[dot] (1) at (-4,0) {};
						\draw[thick] (4,4) to (4,-4);
						\draw[outline] (1) to (8,0);
						\draw[thick] (1) to (8,0);
					\end{scope}
					\draw[thick, dashed] (-4,-4)  to (4,-4) arc (-90:90:4) to (-4,4) arc (90:270:4);
				\end{scope}
				\node (=) at (0,0) {$\mapsto$};
				\begin{scope}[xshift=.75in,scale=.15]
					\begin{scope}
						\clip (-4,-4)  to (4,-4) arc (-90:90:4) to (-4,4) arc (90:270:4);
						\draw[thick, fill=black!5] (-12,-8) to [out=0,in=270] (-4,0) to [out=90,in=0] (-12,8) to [line to] (12,8) to (12,-8) to (-12,-8);
						\node[dot] (1) at (-4,0) {};
						\draw[thick] (4,-4) to [out=90,in=180] (8,0);
						\draw[thick] (1) to (0,0) to [out=0,in=270] (4,4);
					\end{scope}
					\draw[thick, dashed] (-4,-4)  to (4,-4) arc (-90:90:4) to (-4,4) arc (90:270:4);
				\end{scope}
			\end{scope}
		\end{tikzpicture}
	\end{center}
	This local picture is homotopic to one which does not cross $x$.
	
	Between a crossing in $x \cdot Y$ and an end of $x$ at a puncture, there is again one local possibility for $Z$, up to reflection across $x$.
	\begin{center}
		\begin{tikzpicture}[scale=.8]
			\begin{scope}[xshift=-1.5in]
				\begin{scope}[xshift=-.75in,scale=.15]
					\begin{scope}
						\clip (-4,-4)  to (4,-4) arc (-90:90:4) to (-4,4) arc (90:270:4);
						\draw[fill=black!5,dashed] (-4,-4)  to (4,-4) arc (-90:90:4) to (-4,4) arc (90:270:4);
						\node[dot] (1) at (-4,0) {};
						\draw[thick] (4,4) to (4,-4);
						\draw[outline] (1) to (8,0);
						\draw[thick] (1) to (8,0);
					\end{scope}
					\draw[thick, dashed] (-4,-4)  to (4,-4) arc (-90:90:4) to (-4,4) arc (90:270:4);
				\end{scope}
				\node (=) at (0,0) {$\mapsto$};
				\begin{scope}[xshift=.75in,scale=.15]
					\begin{scope}
						\clip (-4,-4)  to (4,-4) arc (-90:90:4) to (-4,4) arc (90:270:4);
						\draw[fill=black!5,dashed] (-4,-4)  to (4,-4) arc (-90:90:4) to (-4,4) arc (90:270:4);
						\node[dot] (1) at (-4,0) {};
						\draw[thick] (4,-4) to [out=90,in=180] (8,0);
						\draw[thick] (1) to (0,0) to [out=0,in=270] (4,4);
					\end{scope}
					\draw[thick, dashed] (-4,-4)  to (4,-4) arc (-90:90:4) to (-4,4) arc (90:270:4);
				\end{scope}
			\end{scope}
		\end{tikzpicture}
	\end{center}
	This local picture is homotopic to one which does not cross $x$.
	
	Finally, we consider the case where $x \cdot Y$ has incompatible taggings at an end of $x$ at a puncture, so $D>0$, which we will break into multiple cases. 
	
	\begin{enumerate}
		\item Suppose that $x$ and $Y$ both have a single end incident to the puncture and they are tagged differently. Again, there is one local possibility for $Z$, up to reflection across $x$.
		\begin{center}
			\begin{tikzpicture}[scale=.8]
				\begin{scope}[xshift=-1.5in]
					\begin{scope}[xshift=-.75in,scale=.15]
						\begin{scope}
							\clip (-4,-4)  to (4,-4) arc (-90:90:4) to (-4,4) arc (90:270:4);
							\draw[fill=black!5,dashed] (-4,-4)  to (4,-4) arc (-90:90:4) to (-4,4) arc (90:270:4);
							\node[dot] (1) at (0,0) {};
							\draw[thick] (-8,0) to node[pos=0.67,sloped,rotate=90]{$\bowtie$} (1) {};
							\draw[thick] (1) to (8,0);
						\end{scope}
						\draw[thick, dashed] (-4,-4)  to (4,-4) arc (-90:90:4) to (-4,4) arc (90:270:4);
					\end{scope}
					\node (=) at (0,0) {$\mapsto$};
					\begin{scope}[xshift=.75in,scale=.15]
						\begin{scope}
							\clip (-4,-4)  to (4,-4) arc (-90:90:4) to (-4,4) arc (90:270:4);
							\draw[fill=black!5,dashed] (-4,-4)  to (4,-4) arc (-90:90:4) to (-4,4) arc (90:270:4);
							\node[dot] (1) at (0,0) {};
							\draw[thick] (-8,0) to (-6,0) to [out=0, in=180] (0,2.5) to [out=0, in=180] (6,0) to (8,0);
						\end{scope}
						\draw[thick, dashed] (-4,-4)  to (4,-4) arc (-90:90:4) to (-4,4) arc (90:270:4);
					\end{scope}
				\end{scope}
			\end{tikzpicture}
		\end{center}
		This local picture is homotopic to one which does not cross $x$.
		
		\item Suppose that one end of $x$ is at the puncture, but $Y$ has at least $2$ ends at the puncture tagged differently than $x$. Here, the local contribution to $\mu(x,Y)$ is the value of $D\geq 2$. When $D=2$, there are two local possibilities, up to reflection across $x$.
		\begin{center}
			\begin{tikzpicture}[scale=.8]
				\begin{scope}[xshift=-1.5in]
					\begin{scope}[xshift=-.75in,scale=.15]
						\begin{scope}
							\clip (-4,-4)  to (4,-4) arc (-90:90:4) to (-4,4) arc (90:270:4);
							\draw[fill=black!5,dashed] (-4,-4)  to (4,-4) arc (-90:90:4) to (-4,4) arc (90:270:4);
							\node[dot] (1) at (0,0) {};
							\draw[thick] (-8,0) to node[pos=0.67,sloped,rotate=90]{$\bowtie$} (1) {};
							\draw[thick] (1) to (4,4);
							\draw[thick] (1) to (4,-4);
						\end{scope}
						\draw[thick, dashed] (-4,-4)  to (4,-4) arc (-90:90:4) to (-4,4) arc (90:270:4);
					\end{scope}
					\node (=) at (0,0) {$\mapsto$};
					\begin{scope}[xshift=.75in,scale=.15]
						\begin{scope}
							\clip (-4,-4)  to (4,-4) arc (-90:90:4) to (-4,4) arc (90:270:4);
							\draw[fill=black!5,dashed] (-4,-4)  to (4,-4) arc (-90:90:4) to (-4,4) arc (90:270:4);
							\node[dot] (1) at (0,0) {};
							\draw[thick] (-8,0) to (-6,0) to [out=0, in=180] (0,2) to [out=0,in=240] (4,4);
							\draw[thick] (1) to (4,-4);
						\end{scope}
						\draw[thick, dashed] (-4,-4)  to (4,-4) arc (-90:90:4) to (-4,4) arc (90:270:4);
					\end{scope}
				\end{scope}
				
				\begin{scope}[xshift=1.5in]
					\begin{scope}[xshift=-.75in,scale=.15]
						\begin{scope}
							\clip (-4,-4)  to (4,-4) arc (-90:90:4) to (-4,4) arc (90:270:4);
							\draw[fill=black!5,dashed] (-4,-4)  to (4,-4) arc (-90:90:4) to (-4,4) arc (90:270:4);
							\node[dot] (1) at (0,0) {};
							\draw[thick] (-8,0) to node[pos=0.67,sloped,rotate=90]{$\bowtie$} (1) {};
							\draw[thick] (1) to (4,4);
							\draw[thick] (1) to (4,-4);
						\end{scope}
						\draw[thick, dashed] (-4,-4)  to (4,-4) arc (-90:90:4) to (-4,4) arc (90:270:4);
					\end{scope}
					\node (=) at (0,0) {$\mapsto$};
					\begin{scope}[xshift=.75in,scale=.15]
						\begin{scope}
							\clip (-4,-4)  to (4,-4) arc (-90:90:4) to (-4,4) arc (90:270:4);
							\draw[fill=black!5,dashed] (-4,-4)  to (4,-4) arc (-90:90:4) to (-4,4) arc (90:270:4);
							\node[dot] (1) at (0,0) {};
							\draw[thick] (-8,0) to (-6,0) to [out=0, in=180] (0,-2) to [out=0, in=250] (4,4);
							\draw[thick] (1) to (4,-4);
						\end{scope}
						\draw[thick, dashed] (-4,-4)  to (4,-4) arc (-90:90:4) to (-4,4) arc (90:270:4);
					\end{scope}
				\end{scope}
			\end{tikzpicture}
		\end{center}
		In this local picture, both cases are homotopic to a multicurve intersecting $x$ once. For $D>2$, the same patterns (with additional arcs at the puncture tagged differently than $x$) hold, giving us multicurves that have strictly fewer intersections with $x$. 
		
		\item Suppose that $x$ has both ends at the puncture, and $Y$ has a single end at the puncture which is tagged differently than $x$. Here, the local contribution to $\mu(x,Y)$ is the value of $D=2$. 
		\begin{center}
			\begin{tikzpicture}[scale=.8]
				\begin{scope}[xshift=-1.5in]
					\begin{scope}[xshift=-.75in,scale=.15]
						\begin{scope}
							\clip (-4,-4)  to (4,-4) arc (-90:90:4) to (-4,4) arc (90:270:4);
							\draw[fill=black!5,dashed] (-4,-4)  to (4,-4) arc (-90:90:4) to (-4,4) arc (90:270:4);
							\node[dot] (1) at (0,0) {};
							\draw[thick] (8,0) to node[pos=0.67,sloped,rotate=90]{$\bowtie$} (1) {};
							\draw[thick] (1) to (-4,4);
							\draw[thick] (1) to (-4,-4);
						\end{scope}
						\draw[thick, dashed] (-4,-4)  to (4,-4) arc (-90:90:4) to (-4,4) arc (90:270:4);
					\end{scope}
					\node (=) at (0,0) {$\mapsto$};
					\begin{scope}[xshift=.75in,scale=.15]
						\begin{scope}
							\clip (-4,-4)  to (4,-4) arc (-90:90:4) to (-4,4) arc (90:270:4);
							\draw[fill=black!5,dashed] (-4,-4)  to (4,-4) arc (-90:90:4) to (-4,4) arc (90:270:4);
							\node[dot] (1) at (0,0) {};
							\draw[thick] (8,0) to (6,0) to [out=180, in=0] (0,2) to [out=180,in=330] (-4,4);
							\draw[thick] (1) to (-4,-4);
						\end{scope}
						\draw[thick, dashed] (-4,-4)  to (4,-4) arc (-90:90:4) to (-4,4) arc (90:270:4);
					\end{scope}
				\end{scope}
				
				\begin{scope}[xshift=1.5in]
					\begin{scope}[xshift=-.75in,scale=.15]
						\begin{scope}
							\clip (-4,-4)  to (4,-4) arc (-90:90:4) to (-4,4) arc (90:270:4);
							\draw[fill=black!5,dashed] (-4,-4)  to (4,-4) arc (-90:90:4) to (-4,4) arc (90:270:4);
							\node[dot] (1) at (0,0) {};
							\draw[thick] (8,0) to node[pos=0.67,sloped,rotate=90]{$\bowtie$} (1) {};
							\draw[thick] (1) to (-4,4);
							\draw[thick] (1) to (-4,-4);
						\end{scope}
						\draw[thick, dashed] (-4,-4)  to (4,-4) arc (-90:90:4) to (-4,4) arc (90:270:4);
					\end{scope}
					\node (=) at (0,0) {$\mapsto$};
					\begin{scope}[xshift=.75in,scale=.15]
						\begin{scope}
							\clip (-4,-4)  to (4,-4) arc (-90:90:4) to (-4,4) arc (90:270:4);
							\draw[fill=black!5,dashed] (-4,-4)  to (4,-4) arc (-90:90:4) to (-4,4) arc (90:270:4);
							\node[dot] (1) at (0,0) {};
							\draw[thick] (8,0) to (6,0) to [out=180, in=0] (0,-2) to [out=180, in=290] (-4,4);
							\draw[thick] (1) to (-4,-4);
						\end{scope}
						\draw[thick, dashed] (-4,-4)  to (4,-4) arc (-90:90:4) to (-4,4) arc (90:270:4);
					\end{scope}
				\end{scope}
			\end{tikzpicture}
		\end{center}
		In this local picture, the first case is homotopic to a multicurve that does not intersect $x$, and the second case is homotopic to a multicurve that intersects $x$ once.
		
		\item Suppose that $x$ has both ends at the puncture, and $Y$ has at least 2 ends at the puncture tagged differently than $x$. Here, the local contribution to $\mu(x,Y)$ is the value of $D\geq 4$. When $D=4$, there are six possibilities, up to reflection and rotation. 
		\begin{center}
			\begin{tikzpicture}[scale=.8]
				\begin{scope}[xshift=-1.5in]
					\begin{scope}[xshift=-.75in,scale=.15]
						\begin{scope}
							\clip (-4,-4)  to (4,-4) arc (-90:90:4) to (-4,4) arc (90:270:4);
							\draw[fill=black!5,dashed] (-4,-4)  to (4,-4) arc (-90:90:4) to (-4,4) arc (90:270:4);
							\node[dot] (1) at (0,0) {};
							\draw[thick] (4,4) to node[pos=0.67,sloped,rotate=90]{$\bowtie$} (1) {};
							\draw[thick] (4,-4) to node[pos=0.67,sloped,rotate=90]{$\bowtie$} (1) {};
							\draw[thick] (1) to (-4,4);
							\draw[thick] (1) to (-4,-4);
						\end{scope}
						\draw[thick, dashed] (-4,-4)  to (4,-4) arc (-90:90:4) to (-4,4) arc (90:270:4);
					\end{scope}
					\node (=) at (0,0) {$\mapsto$};
					\begin{scope}[xshift=.75in,scale=.15]
						\begin{scope}
							\clip (-4,-4)  to (4,-4) arc (-90:90:4) to (-4,4) arc (90:270:4);
							\draw[fill=black!5,dashed] (-4,-4)  to (4,-4) arc (-90:90:4) to (-4,4) arc (90:270:4);
							\node[dot] (1) at (0,1.5) {};
							\draw[thick] (4,4) to [in=75] (3.25,-1.5) to [out=255,in=0] (0,-3.25) to [out=180,in=285] (-3.25,-1.5) to [out=105] (-4,4);
							\draw[thick] (4,-4) to node[pos=0.8,sloped,rotate=90]{$\bowtie$} (1) {};
							\draw[thick] (1) to (-4,-4);
						\end{scope}
						\draw[thick, dashed] (-4,-4)  to (4,-4) arc (-90:90:4) to (-4,4) arc (90:270:4);
					\end{scope}
				\end{scope}
				
				\begin{scope}[xshift=1.5in]
					\begin{scope}[xshift=-.75in,scale=.15]
						\begin{scope}
							\clip (-4,-4)  to (4,-4) arc (-90:90:4) to (-4,4) arc (90:270:4);
							\draw[fill=black!5,dashed] (-4,-4)  to (4,-4) arc (-90:90:4) to (-4,4) arc (90:270:4);
							\node[dot] (1) at (0,0) {};
							\draw[thick] (4,4) to node[pos=0.67,sloped,rotate=90]{$\bowtie$} (1) {};
							\draw[thick] (4,-4) to node[pos=0.67,sloped,rotate=90]{$\bowtie$} (1) {};
							\draw[thick] (1) to (-4,4);
							\draw[thick] (1) to (-4,-4);
						\end{scope}
						\draw[thick, dashed] (-4,-4)  to (4,-4) arc (-90:90:4) to (-4,4) arc (90:270:4);
					\end{scope}
					\node (=) at (0,0) {$\mapsto$};
					\begin{scope}[xshift=.75in,scale=.15]
						\begin{scope}
							\clip (-4,-4)  to (4,-4) arc (-90:90:4) to (-4,4) arc (90:270:4);
							\draw[fill=black!5,dashed] (-4,-4)  to (4,-4) arc (-90:90:4) to (-4,4) arc (90:270:4);
							\node[dot] (1) at (0,-0.25) {};
							\draw[thick] (4,4) to [out=240,in=300] (-4,4);
							\draw[thick] (1) to (-4,-4);
							\draw[thick] (4,-4) to node[pos=0.67,sloped,rotate=90]{$\bowtie$} (1) {};
						\end{scope}
						\draw[thick, dashed] (-4,-4)  to (4,-4) arc (-90:90:4) to (-4,4) arc (90:270:4);
					\end{scope}
				\end{scope}
			\end{tikzpicture}
		\end{center}
		
		\begin{center}
			\begin{tikzpicture}[scale=.8]
				\begin{scope}[xshift=-1.5in]
					\begin{scope}[xshift=-.75in,scale=.15]
						\begin{scope}
							\clip (-4,-4)  to (4,-4) arc (-90:90:4) to (-4,4) arc (90:270:4);
							\draw[fill=black!5,dashed] (-4,-4)  to (4,-4) arc (-90:90:4) to (-4,4) arc (90:270:4);
							\node[dot] (1) at (0,0) {};
							\draw[thick] (4,4) to node[pos=0.67,sloped,rotate=90]{$\bowtie$} (1) {};
							\draw[thick] (4,-4) to node[pos=0.67,sloped,rotate=90]{$\bowtie$} (1) {};
							\draw[thick] (1) to (-4,4);
							\draw[thick] (1) to (-4,-4);
						\end{scope}
						\draw[thick, dashed] (-4,-4)  to (4,-4) arc (-90:90:4) to (-4,4) arc (90:270:4);
					\end{scope}
					\node (=) at (0,0) {$\mapsto$};
					\begin{scope}[xshift=.75in,scale=.15]
						\begin{scope}
							\clip (-4,-4)  to (4,-4) arc (-90:90:4) to (-4,4) arc (90:270:4);
							\draw[fill=black!5,dashed] (-4,-4)  to (4,-4) arc (-90:90:4) to (-4,4) arc (90:270:4);
							\node[dot] (1) at (0,0) {};
							\draw[thick] (4,4) to node[pos=0.67,sloped,rotate=90]{$\bowtie$} (1) {};
							\draw[thick] (1) to (-4,-4);
							\draw[thick] (-4,4) to [in=135] (-2,-1.25) to [out=315] (4,-4);
						\end{scope}
						\draw[thick, dashed] (-4,-4)  to (4,-4) arc (-90:90:4) to (-4,4) arc (90:270:4);
					\end{scope}
				\end{scope}
				
				\begin{scope}[xshift=1.5in]
					\begin{scope}[xshift=-.75in,scale=.15]
						\begin{scope}
							\clip (-4,-4)  to (4,-4) arc (-90:90:4) to (-4,4) arc (90:270:4);
							\draw[fill=black!5,dashed] (-4,-4)  to (4,-4) arc (-90:90:4) to (-4,4) arc (90:270:4);
							\node[dot] (1) at (0,0) {};
							\draw[thick] (4,4) to node[pos=0.67,sloped,rotate=90]{$\bowtie$} (1) {};
							\draw[thick] (4,-4) to node[pos=0.67,sloped,rotate=90]{$\bowtie$} (1) {};
							\draw[thick] (1) to (-4,4);
							\draw[thick] (1) to (-4,-4);
						\end{scope}
						\draw[thick, dashed] (-4,-4)  to (4,-4) arc (-90:90:4) to (-4,4) arc (90:270:4);
					\end{scope}
					\node (=) at (0,0) {$\mapsto$};
					\begin{scope}[xshift=.75in,scale=.15]
						\begin{scope}
							\clip (-4,-4)  to (4,-4) arc (-90:90:4) to (-4,4) arc (90:270:4);
							\draw[fill=black!5,dashed] (-4,-4)  to (4,-4) arc (-90:90:4) to (-4,4) arc (90:270:4);
							\node[dot] (1) at (-1,-1) {};
							\draw[thick] (4,4) to node[pos=0.8,sloped,rotate=90]{$\bowtie$} (1) {};
							\draw[thick] (1) to (-4,-4);
							\draw[thick] (-4,4) to [out=330,in=165] (2.5,2.5) to [out=345] (4,-4);
						\end{scope}
						\draw[thick, dashed] (-4,-4)  to (4,-4) arc (-90:90:4) to (-4,4) arc (90:270:4);
					\end{scope}
				\end{scope}
			\end{tikzpicture}
		\end{center}
		
		\begin{center}
			\begin{tikzpicture}[scale=.8]
				\begin{scope}[xshift=-1.5in]
					\begin{scope}[xshift=-.75in,scale=.15]
						\begin{scope}
							\clip (-4,-4)  to (4,-4) arc (-90:90:4) to (-4,4) arc (90:270:4);
							\draw[fill=black!5,dashed] (-4,-4)  to (4,-4) arc (-90:90:4) to (-4,4) arc (90:270:4);
							\node[dot] (1) at (0,0) {};
							\draw[thick] (4,4) to node[pos=0.67,sloped,rotate=90]{$\bowtie$} (1) {};
							\draw[thick] (-4,-4) to node[pos=0.67,sloped,rotate=90]{$\bowtie$} (1) {};
							\draw[thick] (1) to (-4,4);
							\draw[thick] (1) to (4,-4);
						\end{scope}
						\draw[thick, dashed] (-4,-4)  to (4,-4) arc (-90:90:4) to (-4,4) arc (90:270:4);
					\end{scope}
					\node (=) at (0,0) {$\mapsto$};
					\begin{scope}[xshift=.75in,scale=.15]
						\begin{scope}
							\clip (-4,-4)  to (4,-4) arc (-90:90:4) to (-4,4) arc (90:270:4);
							\draw[fill=black!5,dashed] (-4,-4)  to (4,-4) arc (-90:90:4) to (-4,4) arc (90:270:4);
							\node[dot] (1) at (0,1.5) {};
							\draw[thick] (4,4) to [in=75] (3.25,-1.5) to [out=255,in=0] (0,-3.25) to [out=180,in=285] (-3.25,-1.5) to [out=105] (-4,4);
							\draw[thick] (4,-4) to (1) {};
							\draw[thick] (-4,-4) to node[pos=0.8,sloped,rotate=90]{$\bowtie$} (1) {};
						\end{scope}
						\draw[thick, dashed] (-4,-4)  to (4,-4) arc (-90:90:4) to (-4,4) arc (90:270:4);
					\end{scope}
				\end{scope}
				
				\begin{scope}[xshift=1.5in]
					\begin{scope}[xshift=-.75in,scale=.15]
						\begin{scope}
							\clip (-4,-4)  to (4,-4) arc (-90:90:4) to (-4,4) arc (90:270:4);
							\draw[fill=black!5,dashed] (-4,-4)  to (4,-4) arc (-90:90:4) to (-4,4) arc (90:270:4);
							\node[dot] (1) at (0,0) {};
							\draw[thick] (4,4) to node[pos=0.67,sloped,rotate=90]{$\bowtie$} (1) {};
							\draw[thick] (-4,-4) to node[pos=0.67,sloped,rotate=90]{$\bowtie$} (1) {};
							\draw[thick] (1) to (-4,4);
							\draw[thick] (1) to (4,-4);
						\end{scope}
						\draw[thick, dashed] (-4,-4)  to (4,-4) arc (-90:90:4) to (-4,4) arc (90:270:4);
					\end{scope}
					\node (=) at (0,0) {$\mapsto$};
					\begin{scope}[xshift=.75in,scale=.15]
						\begin{scope}
							\clip (-4,-4)  to (4,-4) arc (-90:90:4) to (-4,4) arc (90:270:4);
							\draw[fill=black!5,dashed] (-4,-4)  to (4,-4) arc (-90:90:4) to (-4,4) arc (90:270:4);
							\node[dot] (1) at (0,-0.25) {};
							\draw[thick] (4,4) to [out=240,in=300] (-4,4);
							\draw[thick] (4,-4) to (1) {};
							\draw[thick] (-4,-4) to node[pos=0.67,sloped,rotate=90]{$\bowtie$} (1) {};
						\end{scope}
						\draw[thick, dashed] (-4,-4)  to (4,-4) arc (-90:90:4) to (-4,4) arc (90:270:4);
					\end{scope}
				\end{scope}
			\end{tikzpicture}
		\end{center}
		In this local picture, all cases are homotopic to a multicurve that has strictly fewer intersections with $x$. In the left column, we have $A+D=1+2=3 < 4$; in the right column, we have $A+D = 0+2 =2 < 4$. For $D>4$, the same patterns (with additional arcs at the puncture tagged differently than $x$) hold, giving us multicurves that have strictly fewer intersections with $x$. In particular, the value of $D$ will be cut in half, while $A$ will increase by a number less than the number of ends of $Y$ incident to the puncture, so the total number of intersections has decreased.
	\end{enumerate}
	
	Hence, $Z$ is homotopic to a simple multicurve $Z'$, such that $x \cdot Z'$ is transverse and $x\cdot Z'$ has strictly fewer crossings than $x \cdot Y$.  Since the latter already has $\mu(x,Y)$ crossings,
	\[ \mu([x],[Z'])\leq\mu(x,Y)-1.\]
	Because $\Supp([x][Y])\subset I$,
	\[ \mu([x],[x][Y])\leq\mu([x],[Y])-1.\]
	The general form of the lemma follows from this case.	
\end{proof}

\begin{coro}\label{coro: reducing tagged crossings}
	If $x$ is a simple tagged arc in $\SS$, then for all $y \in \TSk (\SS)$, \[ \mu ([x],[x]^{\mu([x],y)}y) = 0. \]
\end{coro}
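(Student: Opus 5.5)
The plan is to derive the corollary from Lemma \ref{lemma: reducing tagged crossings} by induction on $k \coloneqq \mu([x],y)$, applying the lemma once per factor of $[x]$. First we fix the meaning of $\mu([x],z)$ for a general $z \in \TSk(\SS)$. In the proof of the lemma it is the maximum of $\mu(x,Y)$ over $Y \in \Supp(z)$, and it is $0$ when $z=0$. With this convention $\mu([x],z)$ is a non-negative integer. Note also that $\mu([x],[x]z)$ is never larger than $\mu([x],z)$, because multiplying by the simple arc $[x]$ creates no new crossings with $x$: the lemma handles the case where $\mu([x],z) > 0$, and when $\mu([x],z)=0$ we handle it separately below.

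The induction goes as follows. The base case $k=0$ is immediate, since $[x]^0 y = y$. For $k>0$, set $y_j \coloneqq [x]^j y$. The lemma gives $\mu([x],y_{j+1}) \le \mu([x],y_j)-1$ whenever $\mu([x],y_j)>0$. This is a strictly decreasing sequence of non-negative integers starting at $k$, so within $k$ steps some $y_j$ satisfies $\mu([x],y_j)=0$.

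It remains to show that the value $0$ persists under further multiplication by $[x]$. If $\mu([x],y_j)=0$, every $Y\in\Supp(y_j)$ can be isotoped to be disjoint from $x$ with compatible taggings. Then $x\cdot Y$ is itself a simple multicurve $x\cup Y$, with no crossings to resolve and no tagging conflicts. Hence $\Supp([x]y_j)$ consists of the multicurves $x\cup Y$. Each of these has zero intersection pairing with $x$, since $x$ has no self-crossings and is compatible with itself. So $\mu([x],y_{j+1})=0$, and by induction $\mu([x],y_k)=0$.

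The only delicate point is the convention for $\mu$ on non-simple elements, together with the case where the leading coefficients cancel, for example $[x]^j y = 0$. Defining $\mu$ as a maximum over the support disposes of both: the support of a product lies inside the set $I$ of resolutions described in the lemma, and an empty support gives $\mu=0$.
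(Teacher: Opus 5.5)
Your proof is correct and follows the same route as the paper, which simply iterates Lemma~\ref{lemma: reducing tagged crossings} until the pairing reaches zero. You also check explicitly that $\mu([x],\cdot)=0$ persists under multiplication by $[x]$, because $[x][Y]=[x\cup Y]$ is already simple when $\mu(x,Y)=0$. The paper's one-line iteration leaves this step implicit, and so does its passage from simple multicurves to general $y$ in the lemma.
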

The proof is identical to that of \cite[Corollary 4.13]{Mul16}. 
\begin{proof}
	By iterating Lemma \ref{lemma: reducing tagged crossings}, if $i \leq \mu([x],[x]^i y)$, then $\mu([x],[x]^i y) \leq \mu([x], y) -i$. In particular, $\mu ([x], [x]^{\mu ([x],y)} y) \leq 0$, so it is zero. 
\end{proof}

By replacing the reference to \cite[Corollary 4.13]{Mul16} in the proof of Proposition \ref{prop: cutting algebras} with a reference to Proposition \ref{coro: reducing tagged crossings}, and replacing $\Sk_1 (\SS)$ with $\TSk (\SS)$, we now have the following results for punctured surfaces.

\begin{prop}\label{prop: cutting algebras punctured}
	Let $\SS$ be a triangulable marked surface, and let $S$ be a compatible collection of non-boundary (tagged) marked arcs in $\SS$. Then the map $\SS\smallsetminus S\rightarrow S$ induces an isomorphism
	\[  \CA(\SS\smallsetminus S)/\langle s'-s'', \forall s\in S\rangle \xrightarrow{\sim} \CA(\SS)[S^{-1}] \]
	Here, $s'$ and $s''$ denote the two preimages of $s$ in $\SS\smallsetminus S$.
\end{prop}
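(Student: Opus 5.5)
We follow the proof of Proposition \ref{prop: cutting algebras} from \cite{BM25}, replacing $\Sk_1(\SS)$ with the localized tagged skein algebra $\TSk(\SS)$ throughout. By Theorem \ref{thm: tagged skein cluster algebra} we identify $\CA(\SS)=\TSk(\SS)$ and $\CA(\SS\smallsetminus S)=\TSk(\SS\smallsetminus S)$. Cutting may leave components with fewer boundary marked points, so this identification on the cut surface must be checked, or assumed component by component as in the hypotheses of that theorem. By induction on $|S|$ (cutting one arc at a time) it suffices to treat $S=\{s\}$ with $s$ a single simple tagged non-boundary arc.

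\emph{The map.} The map $\SS\smallsetminus s\to\SS$ sends tagged multicurves to tagged multicurves and preserves the local relations of Figures \ref{fig: skeinrel} and \ref{fig: tagged skeinrel}. Hence it gives a ring homomorphism $\TSk(\SS\smallsetminus s)\to\TSk(\SS)$. Boundary arcs of $\SS\smallsetminus s$ are sent to boundary arcs of $\SS$ or to $s$, so the map extends after inverting $s$, giving
\[\TSk(\SS\smallsetminus s)\to\TSk(\SS)[s^{-1}].\]
Both preimages $s',s''$ go to $s$, so the map factors through the quotient by $\langle s'-s''\rangle$. Tags at punctures are unaffected by cutting along $s$: if $s$ ends at a puncture, that puncture becomes a boundary point on the cut surface. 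This requires care, because ends at a former puncture must become plain. We would handle it by noting that in $\SS$, arcs compatible with $s$ at that puncture carry the same tag as $s$.

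\emph{Surjectivity.} Let $y\in\TSk(\SS)$. By Corollary \ref{coro: reducing tagged crossings}, $[s]^{\mu([s],y)}y$ has zero intersection pairing with $s$. Each simple multicurve in its support therefore has no crossings with $s$ and no incompatible tags with $s$, so it can be isotoped off $s$ and lifts to $\SS\smallsetminus s$. Thus $y=[s]^{-k}\cdot(\text{image})$ with $k=\mu([s],y)$. Since $[s]$ is the image of $s'$, which is invertible (a boundary arc) in $\TSk(\SS\smallsetminus s)$, $y$ is in the image.

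\emph{Injectivity.} This is the main obstacle. Following \cite{Mul16,BM25}, we would use the basis of simple tagged multicurves. Simple multicurves in $\SS\smallsetminus s$, modulo identifying the boundary arcs $s'$ and $s''$, correspond bijectively to simple multicurves in $\SS$ disjoint from $s$ and compatible with it. This correspondence should be checked using tagged compatibility (Definition \ref{def: tagged arc compatability}). It gives a $\ZZ$-basis of $\TSk(\SS\smallsetminus s)/\langle s'-s''\rangle$ consisting of monomials $s'^{\,k}\cdot X$, with $k \in \ZZ$ and $X$ a simple multicurve avoiding $s'$ and $s''$. These map to $[s]^k[X]$, which are linearly independent in $\TSk(\SS)[s^{-1}]$, since simple multicurves form a basis of $\TSk(\SS)$ and multiplying by $[s]$ is injective on curves compatible with $s$ (the algebra is a domain). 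The delicate point, where we expect most of the work, is verifying that the basis statement for tagged skein algebras from \cite{mandel2023braceletsbasesthetabases} survives these identifications, in particular that no loop-around-puncture relation collapses distinct lifts.
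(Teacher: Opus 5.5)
Your proposal takes the same route as the paper, whose proof consists of rerunning the argument for Proposition~\ref{prop: cutting algebras} from \cite{BM25} with $\TSk$ in place of $\Sk_1$ and Corollary~\ref{coro: reducing tagged crossings} in place of \cite[Corollary 4.13]{Mul16}; your surjectivity step is exactly that substitution, and your basis argument for injectivity spells out what the paper leaves implicit. One bookkeeping point affects your basis correspondence: an arc of $\SS\smallsetminus s$ that runs around an endpoint of $s$ which was a puncture maps to an arc cutting out a once-punctured monogon containing $s$, and by the incompatible-tagging relation this equals $[s]\,[s^\vee]$ (with $s^\vee$ the arc $s$ retagged at that puncture) rather than a basis element, so the correspondence only holds after absorbing these extra factors of $[s]$---this is harmless for linear independence, and it, rather than the loop-around-puncture relation, is where tagging actually enters the injectivity step.
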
    

\begin{prop}\label{prop: cutting subvarieties punctured}
	Let $\SS$ be a triangulable marked surface, and let $S$ be a compatible collection of non-boundary (tagged) marked arcs in $\SS$.
	Then restricting values from (tagged) marked curves in $\SS$ to (tagged) marked curves in $\SS\smallsetminus S$ gives an isomorphism between:
	\begin{enumerate}
		\item The open subvariety of $V(\CA(\SS),\Bbbk)$ consisting of points which are non-zero on $S$.
		\item The closed subvariety of $V(\CA(\SS\smallsetminus S),\Bbbk)$ consisting of points which are {gluable along $S$}.
	\end{enumerate}
\end{prop}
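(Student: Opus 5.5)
This is a formal consequence of Proposition \ref{prop: cutting algebras punctured}, exactly as Proposition \ref{prop: cutting subvarieties} follows from Proposition \ref{prop: cutting algebras}. The plan is to apply the functor $\Hom_{\mathrm{Ring}}(-,\Bbbk)$ to the commutative square (\ref{eq: cuttingalgebras}), now read with $\CA(\SS)$ and $\CA(\SS\smallsetminus S)$ identified with the localized tagged skein algebras via Theorem \ref{thm: tagged skein cluster algebra}. Points are then functions on simple tagged marked curves.

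\textbf{Step 1: the two pieces of the square.}
\begin{itemize}
\item Homomorphisms $\CA(\SS)[S^{-1}]\to\Bbbk$ are the same as homomorphisms $p:\CA(\SS)\to\Bbbk$ with $p(s)\neq 0$ for all $s\in S$, by the universal property of localization. Their image in $V(\CA(\SS),\Bbbk)$ is the complement of the vanishing loci of the finitely many functions $f_s$, hence an open subvariety.
\item Homomorphisms $\CA(\SS\smallsetminus S)/\langle s'-s''\rangle\to\Bbbk$ are exactly the points of $V(\CA(\SS\smallsetminus S),\Bbbk)$ with $p(s')=p(s'')$ for every $s\in S$. These are the gluable points, a closed subvariety cut out by an ideal.
\end{itemize}

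\textbf{Step 2: the isomorphism.} The ring isomorphism of Proposition \ref{prop: cutting algebras punctured} induces a bijection between these two sets. It is a homeomorphism in the Zariski topology, and an isomorphism of (possibly non-finite-type) varieties, because it comes from a ring isomorphism of coordinate rings.

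\textbf{Step 3: identifying the map as restriction.} The isomorphism is induced by the map $\SS\smallsetminus S\to\SS$, which sends a tagged curve in the cut surface to its image in $\SS$. The induced map on points therefore sends $p$ to the function $\gamma\mapsto p(\text{image of }\gamma)$. This is the restriction of values from tagged marked curves in $\SS$ to those in $\SS\smallsetminus S$.

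Two details need checking in this step:
\begin{itemize}
\item Tags at punctures are preserved under cutting.
\item Boundary arcs of $\SS\smallsetminus S$ that are copies $s',s''$ go to $s$. The value $p(s)$ is nonzero, so these copies are legitimately invertible frozen variables in the cut surface.
\end{itemize}

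The main obstacle lies upstream. One must make sure that Theorem \ref{thm: tagged skein cluster algebra} applies to both $\SS$ and each component of $\SS\smallsetminus S$, so that "point" means the same thing (a function on tagged curves) on both sides. It must also hold that $\SS\smallsetminus S$ remains triangulable. If a component of the cut surface falls outside the hypotheses of Theorem \ref{thm: tagged skein cluster algebra}, I would first try to argue through the cluster algebras directly, using $\CA=\CU$ from local acyclicity, and only then translate to curve values. With the algebraic isomorphism granted, the remainder is purely formal.
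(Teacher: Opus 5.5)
Your proposal is correct and follows the paper's route. The paper also obtains this statement formally from Proposition~\ref{prop: cutting algebras punctured}, identifying points non-zero on $S$ with homomorphisms out of $\CA(\SS)[S^{-1}]$ and gluable points with homomorphisms out of $\CA(\SS\smallsetminus S)/\langle s'-s''\rangle$, exactly as in the unpunctured case.

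One small correction to your Step~3 check: tags are not literally preserved at a puncture that is an endpoint of an arc in $S$, because that puncture becomes a boundary marked point of $\SS\smallsetminus S$ (cf.\ Figure~\ref{fig: cutting radius 2 step}). This does not affect the formal argument.
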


%========================================================

\section{Unpunctured marked surfaces}\label{section: deepmarkedsurfaces}

In this section, we recall the characterization of deep points of cluster algebras of unpunctured marked surfaces with boundary from \cite{BM25}. Many of these results will be referenced in the following examination of punctured surfaces.  
The first step in characterizing deep points of $\CA(\SS)$ is to characterize which marked arcs can simultaneously vanish. 

A \textbf{triangle of arcs} in $\SS$ is a compatible triple of marked arcs $\{x,y,z\}$ which bound a disk in $\SS$. The following lemma shows that, for connected surfaces, a deep point must kill an odd number of arcs in each triangle.

\begin{lemma}\cite[Lemma 7.1]{BM25}\label{lemma: trianglesurf}
Let $\SS$ be a connected, unpunctured, triangulable marked surface, let $p\in V(\CA(\SS),\Bbbk)$, and let $\{x,y,z\}$ be a triangle of arcs in $\SS$.
\begin{enumerate}
	\item If $p$ kills any two of $\{x,y,z\}$, then $p$ also kills the third.
	\item If $p$ is deep, then $p$ kills at least one of $\{x,y,z\}$.\qedhere
\end{enumerate}
\end{lemma}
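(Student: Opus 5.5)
The plan is to work inside the skein description of $V(\CA(\SS),\Bbbk)$, where a point is a function on simple marked curves satisfying the skein relations, and to apply those relations inside the disk bounded by the triangle $\{x,y,z\}$. Since $\SS$ is connected, triangulable and unpunctured, a triangle of arcs bounding a disk has three distinct corners, which are marked points on the boundary. Because $\SS$ is triangulable, each of these corners has an adjacent boundary arc. We will build auxiliary curves by resolving crossings between the sides of the triangle and arcs emanating from the corners, and then read off the values of $p$.

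For part (1), suppose $p(x)=p(y)=0$, where $x$ and $y$ share the corner $v$ and $z$ is the side opposite $v$. Take a boundary arc $\beta$ at $v$; it lies outside the triangle. Consider the curve $w$ obtained by running from the far endpoint of $\beta$ along $\beta$ to $v$, then pushing slightly off $v$ so that it crosses into the triangle and meets $z$. We would then apply the skein relation to the superposition of $z$ with a curve that enters the triangle near $v$, namely a loop-type arc around the corner $v$.

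A cleaner route is to use the Ptolemy relation of the quadrilateral formed by the triangle $\{x,y,z\}$ and an adjacent triangle along $z$. Since $p$ kills $x$ and $y$, the product expressions should force $p(z)\cdot(\text{something nonvanishing})=0$. The quantity that must be nonvanishing should be a boundary arc, or a product of boundary arcs, since $p$ never kills those.

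Concretely, I would look for a relation of the form
\[ z\cdot c = x\cdot a + y\cdot b, \]
where $c$ is a product of boundary arcs. One candidate is the skein product of $z$ with the curve that goes around the corner $v$ hugging the boundary: resolving the two crossings with $z$ produces terms containing $x$ or $y$ as factors, and one term is $z$ times boundary arcs. The main obstacle is choosing this auxiliary curve correctly so that every term except one visibly contains $x$ or $y$, with the remaining term being a unit times $z$.

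For part (2), suppose $p$ is deep but $p(x),p(y),p(z)$ are all non-zero. Apply the cutting isomorphism, Proposition~\ref{prop: cutting subvarieties}, with $S$ the set of non-boundary arcs among $\{x,y,z\}$. This gives a gluable point on $\SS\smallsetminus S$, one of whose components is a triangle disk with no mutable arcs. I would argue that deepness is detected componentwise: a triangulation of $\SS$ containing $S$ is the union of $S$ with triangulations of the cut pieces, and the values agree. This reduces the problem to the other components, which is a smaller surface.

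Thus part (2) does not follow from a single triangle, and an induction is needed. I would first try to show, by induction on $n(\SS)$, that every point non-zero on some triangle can be extended to a non-deep point. To do this, I would mutate into a triangle adjacent to the given one and use part (1) together with the exchange relations to find a further non-vanishing arc, continuing until a full triangulation on which $p$ is non-zero is obtained. I expect this propagation step, showing that some adjacent triangle also avoids zeros, to be the crux of the argument.
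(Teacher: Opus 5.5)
The paper quotes this lemma from \cite{BM25} without proof, so your argument has to stand on its own, and as written it has a gap in each part. In (1) you never produce the relation. Gluing a second triangle along $z$ makes $z$ a diagonal and yields only $p(z)p(z')=0$, where the other diagonal $z'$ may also be killed. A curve hugging the boundary around $v$ crosses $x$ and $y$, not $z$. Your target shape $z\beta=y\epsilon-x\delta$, with $\beta$ a boundary arc, is right, but it comes from gluing along $y$ instead. Take the following three arcs:
\begin{itemize}
\item $\beta$, the boundary arc at $v$ on the side of $y$ away from the triangle;
\item $\delta$, the arc from the far end of $y$ running along $y$ and then along $\beta$;
\item $\epsilon$, the arc from the far end of $x$ running along $x$, around $v$ (crossing $y$ once), and then along $\beta$.
\end{itemize}
Then $x,\beta,\delta,z$ bound a quadrilateral with $z$ opposite $\beta$. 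The skein relation at the crossing of its diagonals is $y\,\epsilon=x\,\delta+z\,\beta$. Killing $x$ and $y$ gives $p(z)p(\beta)=0$, so $p(z)=0$. The relation is local, so coincident endpoints cause no trouble.

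Part (2) has a real hole. After cutting along the sides of the triangle $\tau$, deepness of $\bar p$ only says that \emph{some} component is deep, and a component other than $\tau$ can be deep while $p$ is not. For example, in $\Delta_{10}$, glue a deep point of the octagon with vertices $4,5,\dots,10,1$ to a point of the quadrilateral $1,2,3,4$ that has the same value on $x_{14}$ and $x_{24}\neq 0$. The result is non-zero on the triangle $\{x_{12},x_{24},x_{14}\}$, and every triangulation containing that triangle has a zero. Yet it is not deep, since by Theorem~\ref{thm: deeppolygon} deep points of $\Delta_{10}$ kill $x_{24}$. So nothing reduces to the other components, and a non-zero triangle need not extend to a non-zero triangulation. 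Any propagation must therefore abandon $\tau$, and a purely local exchange-relation argument has no evident monovariant.

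What is missing is a global input. One route:
\begin{enumerate}
\item Cut along the non-boundary sides of $\tau$.
\item Give each other component a polygonal dissection on which $p$ is non-zero, using Lemma~\ref{lemma: polydissavoid}; its hypothesis follows from part (1) applied to that component.
\item Re-glue along a spanning tree of the dual graph, which is connected because $\SS$ is. This produces a polygonal dissection $D$ with $p$ non-zero on $D$ and $\tau$ inside the single polygon $\SS\smallsetminus D$.
\item In that polygon, set $i\sim j$ iff $i=j$ or $\bar p(x_{ij})=0$. Part (1) makes this an equivalence relation, adjacent vertices are inequivalent, and $\tau$ supplies at least three classes.
\item If some vertex is alone in its class, the fan at it has all diagonals non-zero. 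Otherwise pick a vertex with inequivalent neighbours (one exists, or else only two classes alternate), cut off its ear (its class survives elsewhere), and induct.
\end{enumerate}
This gives a triangulation of the polygon on which $\bar p$ is non-zero, and adding $D$ gives a non-zero triangulation of $\SS$.
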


Let $\Delta_n$ denote the unpunctured disk with $n$ marked points on the boundary.  Then as a result of Lemma \ref{lemma: trianglesurf}, we have the following characterization of deep points in $V(\CA(\Delta_n),\Bbbk)$.

\begin{thm}\cite[Theorem 3.5]{BM25}\label{thm: deeppolygon}
Let $n \geq 3$.
\begin{enumerate}
    \item If $n$ is odd, then the cluster variety $V(\CA(\Delta_n),\Bbbk)$ has no deep points.
    \item If $n$ is even, then a choice of values on the edges of $\Delta_n$
    \[ p:\{\text{edges in $\Delta_n$}\} \rightarrow \Bbbk^\times\]
    extends to a deep point of $V(\CA(\Delta_n),\Bbbk)$ iff
    \begin{equation}\label{eq: alternating product}
    \frac{p(x_{1,2})p(x_{3,4}) \dotsm p(x_{n-1,n})}{p(x_{1,n})p(x_{2,3}) \dotsm p(x_{n-2,n-1})} = (-1)^{\frac{n+2}{2}}
    \end{equation}
    and this extension is unique, with values on an arbitrary diagonal $x_{i,j}$ given by
    \begin{equation}\label{eq: arbitrary diagonal}
        p(x_{i,j}) = \left\{\begin{array}{cc}
        (-1)^{\frac{j-i-1}{2}} \frac{p(x_{i,i+1})p(x_{i+2,i+3})\dotsm p(x_{j-1,j})}{p(x_{i+1,i+2})\dotsm p(x_{j-2,j-1})} & \text{if $i \not\equiv j\bmod{2}$} \\
        0 & \text{if $i \equiv j\bmod{2}$}
        \end{array}\right\}
    \end{equation}   
    As a consequence, the deep locus of $V(\CA(\Delta_n),\Bbbk)$ freely determined by its values on all but one edge, and is therefore isomorphic to $(\Bbbk^\times)^{n-1}$.\qedhere
\end{enumerate}
\end{thm}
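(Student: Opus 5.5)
The theorem is cited from \cite{BM25}. I would prove it by combining Lemma \ref{lemma: trianglesurf} with the Ptolemy relations in $\Delta_n$, working in the coordinates $p(x_{i,j})$. Vertices are labelled $1,\dots,n$ cyclically, and $x_{i,j}$ is the arc joining $i$ and $j$, with boundary arcs $x_{i,i+1}$ nonzero.

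\emph{Vanishing pattern.} Let $p$ be deep. I first show that $p(x_{i,j})=0$ exactly when $i\equiv j \pmod 2$.
\begin{itemize}
\item Every triangle of arcs contains an odd number of killed arcs. It has at least one by part (2) of Lemma \ref{lemma: trianglesurf}. It cannot have exactly two, by part (1). Hence it has one or three.
\item Consider the triangle $\{x_{i,i+1},x_{i+1,i+2},x_{i,i+2}\}$. Its two boundary arcs are nonzero, so $p(x_{i,i+2})=0$.
\item By induction on $j-i$, a diagonal $x_{i,j}$ is killed precisely when $j-i$ is even. Use the triangle $\{x_{i,j-1},x_{j-1,j},x_{i,j}\}$: the boundary arc $x_{j-1,j}$ is nonzero, so exactly one of $x_{i,j-1},x_{i,j}$ is killed. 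Thus killed-ness alternates as $j$ increases.
\end{itemize}

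\emph{Odd $n$.} Going around the polygon, this alternation forces $x_{i,i-1}=x_{i-1,i}$ to be killed, because $i-1-i\equiv n-1$ is even when $n$ is odd. But $x_{i-1,i}$ is a boundary arc, so it is nonzero. This contradiction shows there are no deep points. This is the parity obstruction.

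\emph{Even $n$, the formula.} Conversely, the vanishing pattern is consistent when $n$ is even. Apply the Ptolemy relation to the quadrilateral $i<i+1<j-1<j$ with $j-i$ odd:
\[
p(x_{i,j-1})\,p(x_{i+1,j}) = p(x_{i,i+1})\,p(x_{j-1,j}) + p(x_{i,j})\,p(x_{i+1,j-1}).
\]
The left side vanishes because both diagonals have even span. Hence
\[
p(x_{i,j}) = -\,\frac{p(x_{i,i+1})\,p(x_{j-1,j})}{p(x_{i+1,j-1})}.
\]
Inducting on $j-i$ from the boundary case gives \eqref{eq: arbitrary diagonal}, including the sign $(-1)^{(j-i-1)/2}$. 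Applying the same relation with the roles of diagonal and boundary swapped, for $x_{1,n}$, gives the closing condition \eqref{eq: alternating product}.

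\emph{Even $n$, converse.} This is the main obstacle: every Ptolemy relation must hold, not only the ones used above. Take a quadrilateral $a<b<c<d$. Each side of the relation involves arcs whose parities are determined, so every term is either identically zero or given by the alternating product formula. The verification then reduces to a telescoping identity among the alternating ratios, together with the closing condition. By the Corollary identifying points with skein-relation-respecting assignments, this produces a point of the variety.

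Finally, the point is deep. In any triangulation of a polygon with an even number of vertices, some triangle has two vertices of the same parity, so one of its arcs is killed. Then $p$ is determined by $n-1$ free boundary values, giving the deep locus $\cong(\Bbbk^\times)^{n-1}$.
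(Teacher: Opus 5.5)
Your argument is correct and follows the route the paper itself indicates, since it presents this theorem as a consequence of Lemma~\ref{lemma: trianglesurf}: the triangle lemma forces the parity pattern of zeros, which is impossible for odd $n$, and the Ptolemy relation on the quadrilateral $i,i+1,j-1,j$ gives \eqref{eq: arbitrary diagonal} and, applied to $x_{1,n}$, the closing condition \eqref{eq: alternating product}. The converse you only sketch does hold: for $a<b<c<d$ the relation is nontrivial only when $a\equiv c,\,b\equiv d$, or $a\equiv b,\,c\equiv d$, or $a\equiv d,\,b\equiv c$, each case reduces to a telescoping identity of the alternating ratios with matching signs, and the closing condition is used exactly when $(a,d)=(1,n)$. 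You should, however, state explicitly that verifying the three-term Ptolemy relations suffices because they generate all relations among the arcs of $\Delta_n$.
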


To consider general unpunctured marked surfaces with boundary, we want to reduce to the polygonal case. To do so, we identify collections of arcs in $\SS$ whose cutting is a polygon. 

\begin{prop}\cite[Proposition 7.2]{BM25}\label{prop: polydiss}
Let $\SS$ be a connected, triangulable marked surface. Then there exists a compatible collection of marked arcs $D$ such that the cutting $\SS\smallsetminus D$ (as defined in Section \ref{section: cutting}) is a polygon. We call such a set of arcs a \textbf{polygonal dissection} of $\SS$. 
\end{prop}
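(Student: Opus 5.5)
The plan is to start from a triangulation of $\SS$ and cut along the complement of a spanning tree of its dual graph. Since $\SS$ is triangulable, Proposition \ref{prop: triangulationcount} gives a triangulation $T$; I would work with an ordinary (plain) triangulation in the unpunctured setting of this section. Let $\Gamma$ be the \emph{dual graph} of $T$. Its vertices are the triangles of $T$, and each non-boundary arc of $T$ gives an edge joining the two triangles on either side of it. Loops and multiple edges are allowed, since an arc may border the same triangle twice or two triangles may share several arcs.

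\textbf{Step 1: $\Gamma$ is connected.} The interior of $\SS$ is the union of the closed triangles of $T$. If the triangles split into two nonempty classes with no shared interior arc, the two unions would meet only in marked points. Removing the finitely many marked points from the interior of a connected surface leaves it connected, so this is impossible and $\Gamma$ is connected. Choose a spanning tree $\mathcal{T}\subseteq\Gamma$.

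\textbf{Step 2: the candidate dissection.} Let $D$ be the set of non-boundary arcs of $T$ corresponding to edges of $\Gamma$ not in $\mathcal{T}$. The arcs in $D$ belong to a single triangulation, so they form a compatible collection of non-boundary marked arcs, and the cutting $\SS\smallsetminus D$ is defined. I would identify $\SS\smallsetminus D$ with the space built as follows:
\begin{itemize}
\item take the disjoint union of the triangles of $T$, each regarded as an honest triangle with three distinct corners and three distinct sides (even if some of its sides or corners are identified in $\SS$);
\item glue these triangles only along the arcs corresponding to edges of $\mathcal{T}$.
\end{itemize}
This is the main point to check. Cutting along an arc $s$ replaces $s$ by two boundary copies and separates everything glued only across $s$. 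Doing this for every arc of $D$ undoes exactly the identifications coming from non-tree edges. The marked points of $\SS\smallsetminus D$ are the corners of the triangles, identified only through tree gluings.

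\textbf{Step 3: induction on the tree.} I would show by induction on the number of triangles that gluing triangles along a tree yields a disk whose marked points all lie on the boundary, that is, a polygon. Remove a leaf of $\mathcal{T}$. By induction the remaining triangles glue to a polygon $P$. The leaf triangle meets $P$ along exactly one side $e$, which is a boundary segment of $P$ joining two marked points. Gluing a triangle to a disk along one boundary segment again gives a disk. All marked points remain on its boundary, and the new corner is a new boundary marked point. The number of edges of the polygon rises by one each time, so $\SS\smallsetminus D$ is a polygon with $m+2|D|$ edges.

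The expected obstacle is Step 2: making precise that iterated cutting along $D$ really produces the tree-glued complex. The subtle cases are an arc with both ends at the same marked point, and a triangle with a repeated side or corner. In such cases the local picture at a marked point is a cyclic fan of triangle corners, and cutting along arcs in $D$ breaks that fan into segments. I would handle this by working locally near each marked point and each arc. Around a marked point, cutting at the arcs of $D$ splits the cyclic fan into linear fans, and each linear fan becomes a single boundary marked point of the cut surface. This matches the tree-glued picture, where corners are identified exactly across shared tree arcs.
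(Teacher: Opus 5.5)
Your argument is correct. Choosing a spanning tree $\mathcal{T}$ of the dual graph of a triangulation and cutting along the arcs not dual to $\mathcal{T}$ is the standard proof of this fact. The paper does not reprove the statement; it quotes it from \cite{BM25}, so there is no proof in the paper to compare against. Your leaf induction also yields the side count directly: with $t$ triangles, $3t = m + 2|D| + 2(t-1)$, so the polygon has $t+2 = m+2|D|$ sides. Combined with the arc count of Proposition \ref{prop: triangulationcount}, this recovers $d(\SS)$ and $\delta(\SS)$ from Proposition \ref{prop: polydiss arcs}.

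One point of scope. The statement is for arbitrary triangulable marked surfaces, and the paper later uses polygonal dissections of punctured surfaces. Proposition \ref{prop: polydiss arcs} counts the punctures, and Proposition \ref{prop: benevolent bijection surface} starts from an arbitrary polygonal dissection of a punctured surface. You, however, announce that you restrict to the unpunctured case. That restriction is unnecessary; your argument goes through for an ordinary triangulation with self-folded triangles allowed.

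\begin{itemize}
\item The inner arc of a self-folded triangle borders the same triangle on both sides. So it is a loop of $\Gamma$, never lies in $\mathcal{T}$, and is automatically cut.
\item Each puncture receives at least one end of an arc of $D$, since otherwise it would survive as an interior point of $\SS\smallsetminus D$, contradicting your Step 3. Directly: the corners around a puncture form a closed walk in $\Gamma$ with no backtracking, and a tree contains no such walk. A backtrack would make two adjacent sides of one triangle the same arc, i.e.\ a self-folded triangle, whose inner arc is a loop and not a tree edge.
\end{itemize}

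Conversely, in the unpunctured case you actually treat, $\Gamma$ has no loops, since an orientable surface can only fold a triangle at a puncture. Also, the fan at a boundary marked point is linear rather than cyclic: cutting at $k$ ends of arcs of $D$ splits it into $k+1$ pieces. So your remarks about loops and cyclic fans only become relevant once punctures are allowed.
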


\begin{prop}\cite[Proposition 7.3]{BM25}\label{prop: polydiss arcs}
Let $\SS$ be a connected, triangulable marked surface with genus $g$, $b$-many boundary components, $q$-many punctures, and $m$-many boundary marked points. Then every polygonal dissection of $\SS$ cuts along 
\[
d(\SS) \coloneqq 2g + b + q - 1
\]
many arcs and produces a polygon with 
\[
\delta(\SS) \coloneqq 4g + 2b + 2q + m - 2 
\]
sides.
\end{prop}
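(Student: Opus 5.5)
The plan is to argue by a counting argument on a polygonal dissection built one cut at a time, and then use the Euler characteristic to pin down the arc and side counts. By Proposition~\ref{prop: polydiss}, some polygonal dissection $D$ exists. Set $k=|D|$ and let $N$ be the number of sides of the polygon $\SS\smallsetminus D$; the task is to compute $k$ and $N$.

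\textbf{Side count.} Each cut along an arc $s$ replaces it by the two copies $s'$ and $s''$, which become boundary arcs. The original $m$ boundary arcs survive unchanged. So the polygon has $N=m+2k$ sides, and it suffices to compute $k$.

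\textbf{Arc count via Euler characteristic.} Regard $\SS$ as a cell complex. The $0$-cells are the marked points, the $1$-cells are the boundary arcs together with the arcs of $D$, and there is a single $2$-cell, namely the interior of the polygon. One point needs care here: punctures are interior points of the compact surface, so they count as vertices, and the polygon's interior is an open disk.

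\begin{enumerate}
\item Because $\SS\smallsetminus D$ is a single disk, its boundary, after gluing, covers exactly the boundary arcs and the arcs of $D$. This gives a genuine CW decomposition of the compact surface $\SS$.
\item Since every boundary component carries a marked point, the boundary arcs subdivide $\partial\SS$, and the count $V-E+F$ makes sense.
\item Compute $\chi(\SS)=2-2g-b$ from the cell counts: $V=m+q$, $E=m+k$, and $F=1$.
\item Solving $m+q-(m+k)+1=2-2g-b$ gives $k=2g+b+q-1=d(\SS)$.
\end{enumerate}

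Substituting back, $N=m+2d(\SS)=4g+2b+2q+m-2=\delta(\SS)$. The result holds for every dissection $D$, since nothing in the computation depended on which $D$ was chosen.

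The step I expect to be the main obstacle is justifying that the cutting produces an honest CW structure. Arcs in $D$ may share endpoints, and a single arc may appear twice on the polygon's boundary. Both the Euler count and the identification of the polygon's sides with $\partial\SS\cup D$ (each arc of $D$ contributing exactly two sides) must survive this.

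To handle it, I would note that the cutting construction in Section~\ref{section: cutting} duplicates each arc exactly once. The resulting surface is connected and a disk by hypothesis, so the quotient map from the closed polygon to $\SS$ is a cellular attaching map for the single $2$-cell. The boundary marked points and punctures are exactly the images of the polygon's vertices, because every marked point is an endpoint of some arc or boundary arc.
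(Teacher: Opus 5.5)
Your proof is correct. Taking all $m$ boundary marked points and all $q$ punctures as $0$-cells, the $m$ boundary arcs and the $k$ arcs of $D$ as $1$-cells, and the interior of the polygon as the unique $2$-cell gives a CW structure on the compact surface. The count $(m+q)-(m+k)+1=2-2g-b$ then gives $k=d(\Sigma)$, and $N=m+2k$ gives $\delta(\Sigma)$.

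The one assertion you should derive rather than state is that every puncture is an endpoint of an arc of $D$. A puncture missed by $D$ would remain an interior marked point of $\Sigma\smallsetminus D$, contradicting that the cut surface is an unpunctured polygon. This is precisely what makes $V=m+q$. Also, invoking Proposition~\ref{prop: polydiss} is unnecessary, since you work with an arbitrary $D$.

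The paper gives no proof of its own here; it quotes the statement from \cite{BM25}. For comparison, an equally short alternative uses Proposition~\ref{prop: triangulationcount}. A triangulation of the $N$-gon $\Sigma\smallsetminus D$ has $2N-3$ arcs counting sides. Gluing it along $D$ gives a triangulation of $\Sigma$, possibly with self-folded triangles, with $2N-3-k$ arcs. So $2N-3-k=n(\Sigma)$, and together with $N=m+2k$ this gives the same two formulas.

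That route hides the Euler characteristic inside the Fomin--Shapiro--Thurston count and avoids checking the CW structure by hand. Yours is self-contained and does not need triangulations at all.
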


\begin{lemma}\cite[Lemma 7.2]{BM25}\label{lemma: polydissavoid}
Let $\SS$ be a connected, unpunctured, triangulable marked surface, and let $\cV$ be a set of simple, non-boundary marked arcs which does not contain exactly two of the three arcs in any triangle in $\SS$. Then there exists a polygonal dissection $D$ of $\SS$ consisting of arcs not in $\cV$.
\end{lemma}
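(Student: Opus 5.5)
The plan is induction on $d(\SS)=2g+b-1$, the number of arcs in any polygonal dissection (Proposition \ref{prop: polydiss arcs} with $q=0$). At each step I cut along a single non-separating arc that is not in $\cV$; this lowers $d$ by one and keeps the surface connected. In the base case $d(\SS)=0$, so $g=0$ and $b=1$ and $\SS$ is already a polygon; the empty collection is a polygonal dissection and trivially avoids $\cV$.

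It helps to restate the hypothesis in terms of the complement. Let $W$ be the set of all simple marked arcs not in $\cV$, including boundary arcs; since $\cV$ contains only non-boundary arcs, every boundary arc lies in $W$. The hypothesis says $\cV$ contains $0$, $1$ or $3$ arcs of each triangle. Equivalently, $W$ contains $3$, $2$ or $0$ arcs of each triangle. In particular, a triangle with a boundary side has at least one of its other two sides in $W$.

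\emph{Key step: a non-separating arc in $W$.} Suppose $d(\SS)\geq 1$. Then $\SS$ is not a disk, so $g\geq 1$ or $b\geq 2$, and there is a non-separating simple arc $a$ with endpoints at boundary marked points $P$ and $Q$. If $a\in W$ we use $a$. Otherwise, let $e$ be a boundary arc at $Q$, running from $Q$ to the next marked point $Q'$ along the boundary. Sliding the endpoint of $a$ at $Q$ along $e$ gives an arc $a'$ from $P$ to $Q'$, and $\{a,e,a'\}$ is a triangle of arcs, since the thin region between them is a disk. Because $e\in W$ and $a\notin W$, the count forces $a'\in W$.

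We then need to check that $a'$ is simple, non-boundary, and non-separating. It is non-separating because $\SS\smallsetminus a'$ deformation retracts onto a space homotopy equivalent to $\SS\smallsetminus a$ with the triangle glued along an edge, which is still connected. If $a'$ degenerates into a boundary arc or becomes contractible, this happens only in a few small configurations; there I would slide from $P$ instead, or along the other boundary arc at $Q$.

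\emph{Inductive step.} Cut along the chosen arc $s\in W$ to get $\SS'=\SS\smallsetminus s$. This surface is connected, unpunctured and triangulable, and $d(\SS')=d(\SS)-1$; for example, cutting an arc joining two boundary components lowers $b$ by one, and the other cases lower $g$ or adjust $b$ so that $2g+b$ drops by one. Let $\cV'$ be the set of non-boundary arcs of $\SS'$ whose image in $\SS$ lies in $\cV$. A triangle of arcs in $\SS'$ maps to a triangle of arcs in $\SS$, because a disk in $\SS'$ maps to a disk in $\SS$. Boundary arcs of $\SS'$ map either to boundary arcs of $\SS$ or to $s$, and neither lies in $\cV$. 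So $\cV'$ again never contains exactly two arcs of a triangle.

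Induction then gives a polygonal dissection $D'$ of $\SS'$ avoiding $\cV'$. The images of the arcs of $D'$, together with $s$, form a compatible collection of arcs not in $\cV$ whose cutting is the same polygon, so this is the desired dissection $D$ of $\SS$.

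The main obstacle is the key step. The delicate points are that the slid arc $a'$ stays non-separating and non-degenerate, and that $\{a,e,a'\}$ really is a triangle of arcs when $a$ meets the boundary at $Q$ from different sides. I would settle these by choosing $a$ as an edge of a fixed triangulation that joins two boundary points, so that the triangle adjacent to $a$ makes the slide explicit.
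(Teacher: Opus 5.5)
Your proof is correct. This paper does not prove the lemma; it imports it from \cite{BM25}, so there is no in-paper argument to compare your route against. Judged on its own, your argument is complete and self-contained: induct on $d(\SS)$, produce one non-separating arc outside $\cV$ by sliding an endpoint of an arbitrary non-separating arc along a boundary arc, cut along it, and check that the ``never exactly two of a triangle'' condition descends to $\SS\smallsetminus s$.

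The points you flag as delicate are in fact automatic, and you can settle them in a line each.
\begin{itemize}
\item \textbf{Non-separation of $a'$.} Your retraction argument is loosely stated. Say instead that $\SS\smallsetminus a'$ is obtained from $\SS\smallsetminus a$ in two steps. First shave off the thin triangle, which meets the boundary along one copy of $a$ together with $e$. Then reglue it along the other copy of $a$. Neither step changes the homeomorphism type. Equivalently, $[a']=[a]+[e]=[a]\neq 0$ in $H_1(\SS,\partial\SS;\mathbb{Z}/2)$.
\item \textbf{The degenerate cases never occur.} A contractible arc, or an arc homotopic to a boundary arc, cuts off a disk and therefore separates. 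So once $a'$ is non-separating it is automatically simple and non-boundary. Your fallback of sliding from $P$, or along the other boundary arc at $Q$, is never needed.
\item \textbf{The count in the inductive step.} The two copies $s',s''$ have no common endpoint in $\SS\smallsetminus s$, so they never lie in a common triangle. Also, because $s$ is non-separating, distinct non-boundary arcs of $\SS\smallsetminus s$ map to distinct non-boundary arcs of $\SS$. This is what makes the number of $\cV'$-arcs in a triangle of $\SS\smallsetminus s$ equal the number of $\cV$-arcs in its image triangle.
\end{itemize}
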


The special case of $\cV\coloneqq\{a \text{ such that }p(a)=0\}$ is the following.

\begin{coro}\cite[Corollary 7.4]{BM25}\label{coro: deeppolydiss}
Let $\SS$ be a connected, unpunctured, triangulable marked surface. For every point $p\in V(\CA(\SS),\Bbbk)$, there is a polygonal dissection $D$ of $\SS$ on which $p$ is non-zero.
\end{coro}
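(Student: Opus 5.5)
The statement to prove is Corollary \ref{coro: deeppolydiss}: for every $p\in V(\CA(\SS),\Bbbk)$ there is a polygonal dissection on which $p$ is non-zero. I will obtain it by applying Lemma \ref{lemma: polydissavoid} to the set
\[ \cV \coloneqq \{ a \text{ a simple non-boundary marked arc} : p(a)=0\}. \]
The lemma outputs a polygonal dissection $D$ whose arcs all lie outside $\cV$, and such a $D$ is exactly a dissection on which $p$ is non-zero. So the whole task is to check the lemma's hypothesis: $\cV$ never contains exactly two of the three arcs of a triangle of arcs in $\SS$.

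The hypothesis follows from Lemma \ref{lemma: trianglesurf}(1). Let $\{x,y,z\}$ be a triangle of arcs and suppose two of them, say $x$ and $y$, lie in $\cV$. Then $p(x)=p(y)=0$, so the lemma gives $p(z)=0$.

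There is one subtlety. A triangle of arcs may include boundary arcs, whereas $\cV$ contains only non-boundary arcs. If $z$ were a boundary arc, we would have $p(z)=0$, which is impossible because boundary arcs are invertible in $\CA(\SS)$ and so $p$ cannot vanish on them. Hence $z$ is a non-boundary arc with $p(z)=0$, and therefore $z\in\cV$. The same argument shows that if one of $x,y$ were a boundary arc it could not have been killed in the first place. Thus $\cV$ contains either at most one, or all three, arcs of every triangle.

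With the hypothesis verified, Lemma \ref{lemma: polydissavoid} applies and produces the required $D$.

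The only point needing care is this handling of boundary arcs in triangles, which rests on the frozen variables being inverted in the localized algebra. Everything else is a direct citation of the two earlier results, so I expect no real obstacle.
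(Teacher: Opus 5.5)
Your proposal is correct and follows the paper's own argument: the corollary is obtained by applying Lemma~\ref{lemma: polydissavoid} to $\cV=\{a : p(a)=0\}$, with the hypothesis supplied by Lemma~\ref{lemma: trianglesurf}(1). The paper leaves implicit the point you make explicitly, namely that two killed arcs cannot lie in a triangle whose third side is a boundary arc because frozen variables are invertible.
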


By Proposition \ref{prop: cutting subvarieties}, each polygonal dissection $D$ determines an open embedding
\begin{equation}\label{eq: polydissinclusion}
\{ \text{points in $V(\CA(\SS\smallsetminus D),\Bbbk)$ gluable along $D$}\}\hookrightarrow V(\CA(\SS),\Bbbk).
\end{equation}
Corollary \ref{coro: deeppolydiss} is equivalent to the fact that these open charts collectively cover the cluster variety.

Every point in the cluster variety $V(\CA(\SS),\Bbbk)$ is then non-zero on some polygonal dissection, and is therefore contained in an open neighborhood given by gluing the cluster variety of a polygon (Equation \eqref{eq: polydissinclusion}). Since Theorem \ref{thm: deeppolygon} gives a characterization of the deep points in the latter, we need to understand how cutting and gluing affect deepness.
The immediate result is the following.

\begin{prop}\cite[Proposition 7.5]{BM25}\label{prop: deep cutting inclusion}
The cutting isomorphism restricts to an inclusion on deep points; that is,
\[ 
\begin{Bmatrix} \text{deep points in } V(\CA(\SS),\Bbbk) \\ \text{non-zero on }S \end{Bmatrix} 
\hookrightarrow
\begin{Bmatrix}\text{deep points in } V(\CA(\SS\smallsetminus S),\Bbbk) \\ \text{gluable along } S \end{Bmatrix}
\]
\end{prop}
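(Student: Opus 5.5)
The plan is to argue directly from Definition \ref{def: deep point def}, using the fact that triangulations of $\SS\smallsetminus S$ correspond to triangulations of $\SS$ that contain $S$. Let $p\in V(\CA(\SS),\Bbbk)$ be deep and non-zero on every arc of $S$. Let $\tilde p\in V(\CA(\SS\smallsetminus S),\Bbbk)$ be its image under the cutting isomorphism of Proposition \ref{prop: cutting subvarieties}. Then $\tilde p$ is gluable along $S$, since both preimages $s',s''$ of each $s\in S$ take the value $p(s)$. So the only thing to prove is that $\tilde p$ is deep.

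Fix an arbitrary triangulation $T'$ of $\SS\smallsetminus S$. Each arc of $T'$ that is not a boundary arc maps under $\SS\smallsetminus S\to\SS$ to a non-boundary arc of $\SS$, and distinct compatible arcs map to distinct compatible arcs. This holds because the arcs lie in the interior of the cut surface, so they do not cross each other or cross $S$. The boundary arcs of $\SS\smallsetminus S$ map either to boundary arcs of $\SS$ or to arcs of $S$. Let $T$ be the image of $T'$ together with $S$. The triangles of $T'$ map to triangles of $\SS$ whose sides lie in $T$, and these cover $\SS$. Hence $T$ is a triangulation of $\SS$ that contains $S$. Alternatively, the count from Proposition \ref{prop: triangulationcount} shows that $T$ is maximal. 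In the punctured setting, where we use Proposition \ref{prop: cutting subvarieties punctured}, the same argument applies to tagged arcs. Compatibility (Definition \ref{def: tagged arc compatability}) is a local condition at crossings and at shared endpoints, and cutting along arcs of $S$ does not change the taggings at punctures.

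Since $p$ is deep, some cluster variable $x\in T$ satisfies $p(x)=0$. By hypothesis $x\notin S$. Also, boundary arcs of $\SS$ are frozen and inverted in $\CA(\SS)$, so $p$ cannot vanish on them. Therefore $x$ is the image of a non-boundary arc $x'\in T'$. The cutting isomorphism is given by restricting values, so $\tilde p(x')=p(x)=0$. Hence $\tilde p$ kills a cluster variable in the cluster $T'$. Since $T'$ was arbitrary, $\tilde p$ is deep. Injectivity of the resulting map is inherited from the cutting isomorphism.

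The main obstacle is the claim that the image of $T'$ together with $S$ is a genuine (tagged) triangulation of $\SS$. In the punctured case this means checking that self-folded triangles and notched arcs in $\SS\smallsetminus S$ still give compatible tagged arcs after gluing. I would handle this by working with ordinary triangulations and applying $\tau$ compatibly on both sides, noting that $\tau$ commutes with the gluing map away from $S$.
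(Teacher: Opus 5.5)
Your proof is correct and is essentially the standard argument for this result, which the paper cites from BM25 without reproving: a triangulation of $\SS\smallsetminus S$ together with $S$ gives a triangulation of $\SS$, and since $p$ is non-zero on $S$ and on boundary arcs, the arc it kills must come from the cut surface. One small correction in your optional punctured extension: after cutting along a plain radius $r$, the diagonal $\ell'$ of $\SS\smallsetminus r$ joining the two copies of the boundary endpoint of $r$ glues to the loop $\ell$ with $\tau(\ell)=r^\vee$, whose skein value is $r\,r^\vee$, so you get $\tilde p(\ell')=p(r)\,p(r^\vee)$ rather than $p(r^\vee)$; this still vanishes exactly when $p(r^\vee)$ does, because $p(r)\neq 0$.
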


Unfortunately, this restriction is not always a bijection. If $\SS$ is a disjoint union of two components $\SS_1$ and $\SS_2$, then a point $p$ of $V(\CA(\SS),\Bbbk)$ is equivalent to a pair of points $p_1$ and $p_2$ of $V(\CA(\SS_1),\Bbbk)$ and $V(\CA(\SS_2),\Bbbk)$, respectively; that is,
\[ V(\CA(\SS), \Bbbk) \simeq V(\CA(\SS_1),\Bbbk) \times V(\CA(\SS_2),\Bbbk) \]
Since a triangulation of $\SS$ is a union of triangulations of $\SS_1$ and $\SS_2$, we see that a point $p$ of $V(\CA(\SS),\Bbbk)$ is deep if and only if \emph{either} of the points $p_1$ or $p_2$ are deep. 

We introduce a definition which behaves better for disconnected surfaces and which won't be used beyond the following lemma. Let us say a point $p$ in $V(\CA(\SS),\Bbbk)$ is \textbf{deep-on-components} if the restriction of $p$ to each connected component of $\SS$ is deep; that is, it kills a marked arc in each triangulation of each connected component. Every deep-on-components point is deep, but the converse fails.

Many prior results extend to disconnected surfaces by replacing \emph{deep} with \emph{deep-on-components}. 
For example, applying Lemma \ref{lemma: trianglesurf} to each connected component of $\SS$, we see that a point $p$ of $V(\CA(\SS),\Bbbk)$ is deep-on-components if and only if it kills an odd number of arcs in any triangle in $\SS$.

\begin{lemma}\cite[Lemma 7.3]{BM25}\label{lemma: deepcut}
Let $\SS$ be an unpunctured, triangulable marked surface. For any compatible collection of non-boundary arcs $S$ in $\SS$, the cutting isomorphism \eqref{eq: cutting} restricts to a bijection between:
\begin{enumerate}
\item The deep-on-components points of $V(\CA(\SS),\Bbbk)$ which are non-zero on $S$.
\item The deep-on-components points of $V(\CA(\SS\smallsetminus S),\Bbbk)$ which are gluable along $S$.\qedhere
\end{enumerate}
\end{lemma}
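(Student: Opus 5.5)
We need to prove Lemma \ref{lemma: deepcut}: the cutting isomorphism restricts to a bijection on deep-on-components points. The plan rests on the criterion recorded just above the statement. By Lemma \ref{lemma: trianglesurf} applied componentwise, a point is deep-on-components if and only if it kills an odd number of arcs in every triangle of arcs. So it suffices to show that, under the cutting isomorphism of Proposition \ref{prop: cutting subvarieties}, the condition ``kills an odd number of arcs in every triangle'' on $\SS$ (for points non-zero on $S$) matches the same condition on $\SS\smallsetminus S$ (for gluable points).

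The key steps are as follows.

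First, we record how triangles correspond. Any triangle of arcs $\{x,y,z\}$ in $\SS\smallsetminus S$ maps to a triangle of arcs in $\SS$, since it bounds a disk that maps to a disk. Each side of this image is either an arc of $\SS$ or a copy $s'$ or $s''$ of some $s\in S$. Boundary arcs of $\SS\smallsetminus S$ that come from $S$ are sent to $s$.

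Second, we handle the direction from $\SS$ to $\SS\smallsetminus S$. Take $p$ deep-on-components on $\SS$ and non-zero on $S$, and let $\tilde p$ be its image. For a triangle in $\SS\smallsetminus S$, the value of $\tilde p$ on each side equals the value of $p$ on the image side. The image is a triangle in $\SS$, so the number of killed sides is odd. Hence $\tilde p$ is deep-on-components. This direction is essentially Proposition \ref{prop: deep cutting inclusion} run componentwise.

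Third, we handle the converse. Take $\tilde p$ deep-on-components and gluable, and let $p$ be the glued point, which is non-zero on $S$. Let $\{x,y,z\}$ be a triangle in $\SS$.
\begin{itemize}
\item If the triangle crosses no arc of $S$, it lifts to a triangle in $\SS\smallsetminus S$, and the parity condition transfers directly.
\item Otherwise, isotope the triangle to minimize its crossings with $S$ and induct on the number of crossings. Choose an arc $s\in S$ crossing the triangle. Then $s$ cuts the disk into smaller polygons, and we can triangulate using $s$ together with arcs from a vertex of the triangle. Concretely, we find an auxiliary arc $w$ (a side of two smaller triangles, one containing $s$ as a side) such that $\{x,y,w\}$-type and $\{w,z,s\}$-type triangles each have fewer crossings with $S$.
\item Since $p(s)\neq 0$, Lemma \ref{lemma: trianglesurf}(1) in the form ``killing two forces killing the third'' propagates parity. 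Alternatively, we use the skein/Ptolemy relations directly: $p$ kills an odd number of the sides of each smaller triangle, and the parity combines to give odd parity on $\{x,y,z\}$.
\end{itemize}

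We expect the main obstacle to be the parity-propagation step in the third part: showing that odd-kill conditions on adjacent triangles sharing a non-vanishing arc force the odd-kill condition on the big triangle. One would first try the local computation in a quadrilateral $\{x,y,z,s\}$ with diagonals $w,w'$. There, the Ptolemy relation $p(w)p(w')=p(x)p(z)+p(y)p(s)$, with $p(s)\neq0$, gives the needed parity, via a case check exactly as in the proof of Theorem \ref{thm: deeppolygon}.

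A cleaner alternative avoids triangles crossing $S$ altogether. Cut further along a polygonal dissection disjoint from the zero set, which exists by Lemma \ref{lemma: polydissavoid}. This reduces both sides to the polygon case, where deep points are explicitly described by Theorem \ref{thm: deeppolygon}. Since gluing preserves that description, we then check deepness via the explicit formula \eqref{eq: arbitrary diagonal}.
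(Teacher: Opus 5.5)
\textbf{There is a genuine gap in the converse direction.} Your forward direction is fine. So is the case of a triangle compatible with $S$. The problem is the converse for a triangle $\{x,y,z\}$ of $\SS$ that crosses $S$.

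The disk bounded by $\{x,y,z\}$ has no interior marked points. An arc $s\in S$ therefore meets it only in segments running from side to side, or from a vertex to the opposite side. So $s$ does not cut the triangle into smaller triangles of arcs. Moreover, any arc inside the disk with endpoints at its vertices is homotopic to a side. Hence the auxiliary arc $w$ with triangles $\{x,y,w\}$ and $\{w,z,s\}$ does not exist in general.

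Worse, the proposed parity propagation gives the wrong answer. Write $k(\cdot)\in\{0,1\}$ for ``killed''. Odd parity on $\{x,y,w\}$ and on $\{w,z,s\}$, together with $k(s)=0$, forces $k(x)+k(y)+k(z)\equiv 0 \pmod 2$, not $1$. Two triangles sharing $w$ form a quadrilateral with sides $x,y,z,s$, and neither its parity count nor its Ptolemy relation says anything about a triangle $\{x,y,z\}$.

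Your ``cleaner alternative'' starts correctly, but it stops exactly where the difficulty lies. Lemma~\ref{lemma: polydissavoid} does apply to each component of $\SS\smallsetminus S$, by Lemma~\ref{lemma: trianglesurf}, and the forward direction then gives deep points on the resulting polygons. However, a triangle of $\SS$ generally crosses $S\cup D$ many times, and its sides are not arcs of any single polygon, so Theorem~\ref{thm: deeppolygon} says nothing about them. The phrase ``gluing preserves that description'' only makes sense when the glued object is again a polygon. $\SS$ is not a polygon, since polygons may be glued to themselves.

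The missing idea is to pass to the universal cover, as this paper does in the proof of Proposition~\ref{prop: benevolent bijection surface} (``as in the proof of Lemma~\ref{lemma: deepcut}'').
\begin{itemize}
\item The cover is tiled by lifts of the polygons. A lift of the compact triangle lies in a finite union $\SS'$ of tiles, and $\SS'$ is itself a polygon.
\item The tilewise deep points glue to a point of $V(\CA(\SS'),\Bbbk)$, and you must show this point is deep. This is where Theorem~\ref{thm: deeppolygon} is used.
\item Glue an even $n_1$-gon and an even $n_2$-gon along an edge $e$. The two alternating-product conditions multiply to $(-1)^{(n_1+n_2)/2}$, which is exactly the condition for the $(n_1+n_2-2)$-gon.
\item The unique deep extension on the glued polygon is non-zero on $e$, because $e$ splits the boundary into two odd-length pieces. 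By uniqueness, it restricts to the given deep points on the two pieces.
\item Lemma~\ref{lemma: trianglesurf} on $\SS'$ then gives odd parity on the lifted triangle. This descends to $\{x,y,z\}$, because the point on $\SS'$ is obtained from $p$ by pushing curves forward along $\SS'\to\SS$.
\end{itemize}
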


In what follows, we only need a specialization of Lemma \ref{lemma: deepcut} to the case of polygonal dissections.

\begin{coro}\cite[Corollary 7.7]{BM25}\label{coro: deepcutpoly}
Let $\SS$ be a connected, unpunctured, triangulable marked surface. For any polygonal dissection $D$ in $\SS$, the cutting isomorphism \eqref{eq: cutting} restricts to a bijection between:
\begin{enumerate}
\item The deep points in $V(\CA(\SS),\Bbbk)$ which are non-zero on $D$.
\item The deep points in $V(\CA(\SS\smallsetminus D),\Bbbk)$ which are gluable along $D$.\qedhere
\end{enumerate}
\end{coro}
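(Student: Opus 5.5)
The plan is to obtain this as a direct specialization of Lemma \ref{lemma: deepcut}, applied with $S \coloneqq D$. The only gap between the two statements is that Lemma \ref{lemma: deepcut} is phrased in terms of \emph{deep-on-components} points, whereas the corollary is phrased in terms of \emph{deep} points. So the argument reduces to checking that these two notions coincide on both sides of the cutting isomorphism \eqref{eq: cutting}.

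First, on the $\SS$ side: $\SS$ is connected by hypothesis. For a connected surface, a point is deep-on-components exactly when its restriction to the single component $\SS$ is deep, which is just the definition of deep. Hence the deep-on-components points of $V(\CA(\SS),\Bbbk)$ that are non-zero on $D$ are exactly the deep points that are non-zero on $D$.

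Second, on the cut side: since $D$ is a polygonal dissection (Proposition \ref{prop: polydiss}), $\SS\smallsetminus D$ is a polygon, in particular a disk, and hence connected. By the same reasoning, deep-on-components points of $V(\CA(\SS\smallsetminus D),\Bbbk)$ coincide with deep points. The arcs of $D$ form a compatible collection of non-boundary arcs, so Lemma \ref{lemma: deepcut} applies. Substituting the two identifications into its bijection gives the claimed bijection.

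There is no real obstacle here. The only point requiring care is confirming that $\SS\smallsetminus D$ is connected, and this is immediate from the definition of a polygonal dissection: cutting produces a single polygon rather than several pieces. This connectedness is precisely why the corollary is stated for polygonal dissections and not for arbitrary compatible collections $S$, for which $\SS\smallsetminus S$ may be disconnected and deep no longer agrees with deep-on-components.
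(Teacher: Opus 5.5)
Your proposal is correct and matches the paper's approach. The paper presents this corollary as the specialization of Lemma \ref{lemma: deepcut} to $S = D$, and "deep-on-components" coincides with "deep" on both sides precisely because $\SS$ and the polygon $\SS\smallsetminus D$ are connected.
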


While Corollary \ref{coro: deepcutpoly} allows us to construct deep points of $\CA(\SS)$ by gluing deep points of polygons, a single deep point of $\CA(\SS)$ may be non-zero on many polygonal dissections, and can therefore be constructed via many different gluings.

In this section, we use relative cohomology classes in $H^1(\SS,\MM;\mathbb{Z}_2)$ to characterize when a deep point is non-zero on a polygonal dissection, and when two polygonal dissections determine the same set of deep points.
Note that every marked arc in $\SS$ is a 1-cycle relative to the marked points $\MM$, and so elements of $H^1(\Sigma,\MM;\mathbb{Z}_2)$ have a well-defined value on each marked arc in $\SS$.

\begin{prop}\cite[Proposition 7.8]{BM25}
Let $\SS$ be a connected, unpunctured, triangulable marked surface with an even number of marked points.
\begin{enumerate}
\item Given a deep point $p$ of $V(\CA(\SS),\Bbbk)$, there is a unique cohomology class\\
$
\chi_p\in H^1(\SS,\MM;\mathbb{Z}_2)
$
such that the value of $\chi_p$ on each simple marked arc $a$ in $\SS$ is
\begin{equation}\label{eq: cohomologyclass}
	\chi_p(a) \coloneqq
	\begin{cases}
		0 & \text{if $p(a)=0$} \\
		1 & \text{if $p(a)\neq0$}
	\end{cases}
\end{equation}
\item Given a polygonal dissection $D$ of $\Sigma$, there is a unique cohomology class 
$
\chi_D\in H^1(\SS,\MM;\mathbb{Z}_2)
$
such that $\chi_D$ is non-zero on each marked arc in $D$ and each boundary arc of $\SS$.
\item A deep point $p$ is non-zero on a polygonal dissection $D$ if and only if $\chi_p=\chi_D$.\qedhere
\end{enumerate}
\end{prop}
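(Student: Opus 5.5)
The plan is to compute $H^1(\SS,\MM;\mathbb{Z}_2)$ using a triangulation and to reduce all three parts to parity conditions on triangles. Fix a triangulation $T$ of $\SS$, including the boundary arcs. It gives a $\Delta$-complex structure on $\SS$ whose $0$-cells are exactly $\MM$. The relative cellular cochain group in degree $0$ is therefore zero, so
\[ H^1(\SS,\MM;\mathbb{Z}_2) \cong \{\, c:\text{arcs of }T\to \mathbb{Z}_2 \;:\; c(x)+c(y)+c(z)=0 \text{ for each triangle } \{x,y,z\} \text{ of } T\,\}. \]
In particular a class is determined by its values on the arcs of any one triangulation. Its value on an arbitrary simple marked arc $a$ is computed by choosing a triangulation containing $a$: the relative $1$-cycles of $a$ and of a path of arcs of $T$ differ by a boundary of triangles.

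For part (1), define $c_T(e)\coloneqq\chi_p(e)$ on the arcs $e$ of $T$. By Lemma \ref{lemma: trianglesurf}, a deep point kills an odd number (one or three) of the arcs in each triangle of arcs. Hence the number of arcs on which $p$ is non-zero is even, and $c_T$ is a cocycle. Let $\chi_p$ denote its class. It remains to check that this class has value $\chi_p(a)$ on every simple arc $a$; this is the main point of the proof.

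Since $\SS$ is unpunctured, any two triangulations are connected by ordinary flips. So it suffices to show that the classes built from $T$ and from a flip $T'=T\setminus\{e\}\cup\{e'\}$ agree. The new diagonal $e'$ forms a triangle of arcs with two sides $u,v$ of the flipped quadrilateral. The cohomology class then has value $c(u)+c(v)$ on $e'$. Applying Lemma \ref{lemma: trianglesurf} to the triangle $\{u,v,e'\}$ shows $\chi_p(e')=\chi_p(u)+\chi_p(v)$ as well, so the two classes coincide. Every simple arc lies in some triangulation reachable from $T$, which gives the required value on every arc. Uniqueness follows because the arcs of $T$ span $H_1(\SS,\MM;\mathbb{Z}_2)$.

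For part (2), cut along $D$ to get the polygon $\SS\smallsetminus D$. By Proposition \ref{prop: polydiss arcs} it has $\delta(\SS)=4g+2b+m-2$ sides, which is even because $m$ is even. Label the polygon's vertices alternately $0,1$ around the boundary, and give each diagonal or side the sum of the labels of its endpoints. This is a cocycle on any triangulation of the polygon, and it equals $1$ on every side. Both copies of each arc $d\in D$ are sides, so both get value $1$. The cocycle therefore descends along the gluing to a cocycle on the triangulation of $\SS$ formed by $D$, the boundary arcs, and the polygon's diagonals, and this defines $\chi_D$.

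For uniqueness in part (2), any class that is $1$ on $D$ and on $\partial\SS$ pulls back to a cocycle on the polygon triangulation with all sides equal to $1$. By triangle parity, such a cocycle is forced on every diagonal, working inward from the ears. Hence the class is unique.

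For part (3), suppose $p$ is deep and non-zero on $D$. Then $\chi_p$ is $1$ on $D$, and also on the boundary arcs, since points never vanish on boundary arcs. By the uniqueness in part (2), $\chi_p=\chi_D$. Conversely, if $\chi_p=\chi_D$, then $\chi_p(d)=1$ for each $d\in D$, which by the definition in part (1) means $p(d)\neq0$.
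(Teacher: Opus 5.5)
Your proof is correct and complete. The paper gives no argument for this statement; it is quoted from \cite{BM25}. So there is no in-paper proof to compare against, and your argument stands as a self-contained proof. Its ingredients are:
\begin{itemize}
\item the cellular model $H^1(\SS,\MM;\mathbb{Z}_2)\cong Z^1$ coming from a triangulation whose vertex set is $\MM$;
\item triangle parity from Lemma \ref{lemma: trianglesurf};
\item flip-connectivity of triangulations;
\item the alternating $0/1$ vertex labelling of the even polygon $\SS\smallsetminus D$, with ear-peeling for uniqueness.
\end{itemize}

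Two points are worth stating explicitly when you write it up. First, the claim that $[c_T]$ takes the value $c_T(u)+c_T(v)$ on $e'$ rests on $[e']=[u]+[v]$ in $H_1(\SS,\MM;\mathbb{Z}_2)$. This holds because $\{u,v,e'\}$ bounds a triangle of $T'$ and $u,v$ are arcs of $T$. Second, in an oriented unpunctured surface no triangle has a repeated side, since that would force a self-folded triangle around a puncture. So each cocycle condition really involves three distinct arcs.

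The evenness of $m$ is used only in part (2), to make the alternating labelling close up around the $\delta(\SS)=4g+2b+m-2$ vertices. Part (3) then follows from the uniqueness in part (2) exactly as you say.
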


As a consequence, two polygonal dissections can be used to construct the same deep points if and only if they have the same cohomology class. This can be used to define an equivalence relation.

\begin{defn}\label{defn: congruent}
Let $\SS$ be as in \cite[Proposition 7.8]{BM25}.
Then two polygonal dissections of $\SS$ are \textbf{congruent} if their relative cohomology classes are equal.
\end{defn}

\begin{prop}\cite[Proposition 7.10]{BM25}\label{prop: polydisscount}
Let $\SS$ be a connected, unpunctured, triangulable marked surface with an even number of marked points. Then the function $D\mapsto \chi_D$ defines a bijection between
\begin{enumerate}
\item polygonal dissections $D$ of $\SS$ up to congruence, and
\item elements $\chi$ in $H^1(\SS,\MM;\mathbb{Z}_2)$ with value $1$ on every boundary marked arc in $\SS$.
\end{enumerate}
The size of each set is $2^{2g+b-1}$, where $g$ is the genus and $b$ is the number of boundary components.
\end{prop}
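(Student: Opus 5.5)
The plan is to prove the two halves of the bijection and then count. \emph{Well-definedness.} By part (2) of \cite[Proposition 7.8]{BM25}, each polygonal dissection $D$ determines a unique class $\chi_D\in H^1(\SS,\MM;\mathbb{Z}_2)$. This class takes the value $1$ on every boundary arc. By Definition \ref{defn: congruent}, congruent dissections have equal classes, so $D\mapsto\chi_D$ descends to a well-defined and injective map from congruence classes into the set of classes that are $1$ on all boundary arcs. It therefore remains to prove surjectivity and to count the target.

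\emph{Surjectivity.} Fix $\chi$ with value $1$ on every boundary arc. I would build a point from $\chi$ and apply Corollary \ref{coro: deeppolydiss}. The first step is to choose a polygonal dissection $D_0$ (Proposition \ref{prop: polydiss}). Cutting along it gives a polygon with $\delta(\SS)=4g+2b+m-2$ sides, which is even because $m$ is even.

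The second step is to choose values $\pm1$ or arbitrary nonzero scalars on the sides of this polygon, paired so that the two copies of each arc in $D_0$ agree, and such that the alternating product condition \eqref{eq: alternating product} holds. This should be possible because one edge is free. Theorem \ref{thm: deeppolygon} then gives a deep point of the polygon, and Corollary \ref{coro: deepcutpoly} glues it to a deep point $p_0$ of $\SS$ with $\chi_{p_0}=\chi_{D_0}$.

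The third step is to reach an arbitrary $\chi$. Two classes that are both $1$ on boundary arcs differ by a class vanishing on boundary arcs, and such classes come from $H^1(\SS,\partial\SS;\mathbb{Z}_2)$-type data. I would try to realize each such difference by a twisting construction: along a simple closed curve or arc $c$, rechoose the gluing so that arcs crossing $c$ an odd number of times swap between zero and nonzero. Rather than twisting points, it is perhaps cleaner to modify the dissection directly, rerouting arcs of $D_0$ around a handle or boundary component so that their parity of intersection with a chosen cycle changes.

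Once a deep point $p$ with $\chi_p=\chi$ exists, Corollary \ref{coro: deeppolydiss} provides a dissection $D$ on which $p$ is nonzero. Part (3) of \cite[Proposition 7.8]{BM25} then gives $\chi_D=\chi_p=\chi$, which proves surjectivity.

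\emph{Counting.} The target is an affine coset of the subgroup of classes vanishing on all boundary arcs, provided the target is nonempty; nonemptiness follows from $D_0$. From the long exact sequence of the pair, $H^1(\SS,\MM;\mathbb{Z}_2)$ has dimension $2g+b-1+|\MM|=2g+b+m-1$, since $\SS$ has the homotopy type of a graph with $1-\chi(\SS)=2g+b-1$ loops. The map restricting a class to its values on the $m$ boundary arcs is surjective onto the parity-constrained vectors. The constraint is that the sum of the values around each boundary component is zero, and it is automatically met by the all-ones vector when each $m_i$ is even; I would need to check this count carefully. The image therefore has dimension $m$, so the kernel has dimension $2g+b-1$, and the coset has $2^{2g+b-1}$ elements.

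The main obstacle is the surjectivity step, namely producing a dissection, or a deep point, realizing every admissible class. I would attempt it first by induction on $2g+b$, cutting along a single nonseparating arc.
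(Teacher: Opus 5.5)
\textbf{Surjectivity is not established.} Your well-definedness and injectivity paragraph is fine, since injectivity is literally the definition of congruence. Step 2 is also harmless. But step 3 is only a plan: the twisting, the rerouting of arcs around handles, and the induction on $2g+b$ are never carried out, so nothing shows that every admissible class is realized. The detour through deep points is also circular in practice. The natural way to build a deep point $p$ with $\chi_p=\chi$ is to glue a deep polygon point along a dissection $D$ with $\chi_D=\chi$, and that dissection is exactly what you are trying to find.

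The missing observation is that you never need a point. Corollary \ref{coro: deeppolydiss} is just the special case $\cV=p^{-1}(0)$ of Lemma \ref{lemma: polydissavoid}, and that lemma applies verbatim to $\cV\coloneqq\{a \text{ simple non-boundary arc} : \chi(a)=0\}$. To check its hypothesis, take a triangle of arcs $\{x,y,z\}$. The relative cycle $x+y+z$ bounds a disk, so $\chi(x)+\chi(y)+\chi(z)=0$ in $\mathbb{Z}_2$. Hence an odd number of the three sides have value $0$. Since $\chi=1$ on boundary arcs, $\cV$ never contains exactly two sides of a triangle. The lemma then gives a polygonal dissection $D$ with $\chi=1$ on every arc of $D$ and on every boundary arc. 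The uniqueness in part (2) of \cite[Proposition 7.8]{BM25} forces $\chi_D=\chi$, which is surjectivity.

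\textbf{The count contains two errors that happen to cancel.} First, $\dim H^1(\SS,\MM;\mathbb{Z}_2)=2g+b+m-2$, not $2g+b+m-1$. The map $H^0(\SS)\to H^0(\MM)$ is injective, so it contributes only $m-1$; for a disk with $m$ marked points one gets $m-1$.

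Second, the boundary values are not constrained component by component. The sum of $\chi$ over the arcs of one boundary circle is $\chi$ evaluated on that circle's class in $H_1(\SS;\mathbb{Z}_2)$, and this is nonzero in general when $b\geq 2$. The only relation is that the sum over \emph{all} $m$ boundary arcs vanishes, because $\partial\SS$ bounds. So the restriction map has image of dimension $m-1$; its cokernel is $H^2(\SS,\partial\SS;\mathbb{Z}_2)\cong\mathbb{Z}_2$ in the long exact sequence of the triple $(\SS,\partial\SS,\MM)$.

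The all-ones vector lies in this image exactly when the \emph{total} $m$ is even, and this is where the hypothesis is used. Your componentwise condition would make the target empty for an annulus with one marked point on each boundary circle, even though a dissection exists there. With both corrections, the kernel has dimension $(2g+b+m-2)-(m-1)=2g+b-1$. More directly, the kernel is the injective image of $H^1(\SS,\partial\SS;\mathbb{Z}_2)\cong H_1(\SS;\mathbb{Z}_2)$, and the set has $2^{2g+b-1}$ elements.
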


These results can now be combined to characterize the deep locus of $V(\CA(\SS),\Bbbk)$.

\begin{thm}\cite[Theorem 7.12]{BM25}
Let $\SS$ be a connected, unpunctured, triangulable marked surface with genus $g$, $b$-many boundary components, and $m$-many marked points.
\begin{enumerate}
\item If $m$ is odd, then $V(\CA(\SS),\Bbbk)$ has no deep points.
\item If $m$ is even, then each deep point of $V(\CA(\SS),\Bbbk)$ is non-zero on some polygonal dissection. 
\begin{enumerate}
	\item 
	Given a polygonal dissection $D$ of $\SS$, a function
	\[ p:\{\text{boundary arcs in $\SS$}\} \cup D \rightarrow \Bbbk^\times\]
	extends to a (necessarily unique) deep point of $V(\CA(\SS),\Bbbk)$ which is non-zero on $D$ if and only if the alternating product of the values of $p$ around the boundary of $\SS\smallsetminus D$ is $(-1)^{b+\frac{m}{2}}$.
	\item Two polygonal dissections of $\SS$ parameterize the same deep points if and only if they are congruent (see Definition \ref{defn: congruent}); otherwise they parameterize disjoint sets of deep points.
\end{enumerate}
Therefore, the deep locus of $V(\CA(\SS),\Bbbk)$ consists of $2^{2g+b-1}$-many disjoint copies of $(\Bbbk^\times)^{2g+b+m-2}$.\qedhere
\end{enumerate}
\end{thm}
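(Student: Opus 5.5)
The plan is to follow the structure of \cite{BM25}, gluing deep points of a polygon. Fix a deep point $p$ of $V(\CA(\SS),\Bbbk)$, with $\SS$ connected, unpunctured and triangulable.

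\emph{Step 1: odd $m$.} By Corollary \ref{coro: deeppolydiss}, $p$ is non-zero on some polygonal dissection $D$. Corollary \ref{coro: deepcutpoly} then makes $p$ correspond to a deep point of $V(\CA(\SS\smallsetminus D),\Bbbk)$. By Proposition \ref{prop: polydiss arcs}, $\SS\smallsetminus D$ is a polygon with $\delta(\SS)=4g+2b+m-2$ sides, and this number has the parity of $m$. When $m$ is odd, Theorem \ref{thm: deeppolygon} gives no deep points on the polygon, and hence none on $\SS$.

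\emph{Step 2: part (a).} When $m$ is even, the first claim of (2) is again just Corollary \ref{coro: deeppolydiss}. Given $D$, the boundary edges of the polygon $\SS\smallsetminus D$ are the boundary arcs of $\SS$ together with two copies $d',d''$ of each $d\in D$. A function on boundary arcs $\cup D$ therefore determines edge values on the polygon that are automatically gluable. By Theorem \ref{thm: deeppolygon}, these extend uniquely to a deep point of the polygon exactly when the alternating product equals $(-1)^{(\delta+2)/2}=(-1)^{2g+b+m/2}=(-1)^{b+m/2}$.

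It remains to check that the resulting polygon point is gluable along $D$, i.e.\ that the values it assigns to the interior diagonals do not conflict with the gluing. Gluability only concerns the preimages of arcs in $D$, which are polygon edges, so it holds by construction. Corollary \ref{coro: deepcutpoly} then yields a unique deep point of $\SS$ that is non-zero on $D$.

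\emph{Step 3: part (b).} This comes from the cohomology classes. By \cite[Proposition 7.8]{BM25}, $p$ is non-zero on $D$ if and only if $\chi_p=\chi_D$. Hence congruent dissections parameterize the same set of deep points. Non-congruent ones parameterize disjoint sets, since a single $p$ has only one class $\chi_p$.

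\emph{Step 4: counting.} By Proposition \ref{prop: polydisscount}, there are $2^{2g+b-1}$ congruence classes. Each class contributes a copy of the solution set in $(\Bbbk^\times)^{m+d(\SS)}$ of a single monomial equation. This set is isomorphic to $(\Bbbk^\times)^{m+2g+b-2}$ after solving for one coordinate, using $d(\SS)=2g+b-1$.

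I expect the main obstacle to be the sign bookkeeping in Step 2. One has to verify that the alternating product around $\SS\smallsetminus D$ is well defined, meaning the orientation and starting edge only invert it or leave it unchanged, which is harmless since the target is $\pm1$. One also has to check that the exponent $(\delta+2)/2$ reduces mod $2$ to $b+m/2$. Everything else is direct citation of the results recalled above.
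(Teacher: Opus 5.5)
Your proposal is correct and is the intended argument: the paper gives no separate proof, and, as in \cite{BM25}, the theorem is assembled from Corollaries \ref{coro: deeppolydiss} and \ref{coro: deepcutpoly}, Theorem \ref{thm: deeppolygon}, \cite[Proposition 7.8]{BM25} and Proposition \ref{prop: polydisscount}, exactly as you do. Two small points should be made explicit: in Step 4 you must solve for a boundary arc of $\SS$, which occurs exactly once in the alternating product with exponent $\pm1$, whereas an arc of $D$ can occur squared (e.g.\ $b_1b_2/d^2=-1$ for the annulus with one marked point on each boundary component); and the ``only if'' in (b) needs each parameterized set to be nonempty, which follows from (a).
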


%============================

\section{Once-punctured marked disks}\label{section: 1 punctured disks}

Recall that a \textbf{puncture} in a surface $\SS$ is a distinguished (or marked) point in the interior of $\SS$. A (tagged) \textbf{radius} is a simple arc with one end at the boundary and the other end at a puncture (tagged plain or notched). In a disk $\SS$ with one puncture, there is a triangulation consisting entirely of boundary arcs and plain radii, so any deep point $p \in V(\CA(\SS),\Bbbk)$ must kill at least one plain radius; likewise, there is a triangulation with all notched radii, so $p$ must kill at least one notched radius.

There are also deep points that kill all radii of one or both taggings. We will refer to a deep point that kills all radii of both taggings as a \textbf{totally radial deep point}. If a deep point kills all plain radii (but not all notched radii), we will refer to it as a \textbf{plain deep point}; similarly, if a deep point kills all notched radii (but not all plain radii), we will refer to it as a \textbf{notched deep point}. 

Unfortunately, many of the tools used in the classification of deep points of cluster algebras of unpunctured surfaces will not directly apply to cluster algebras of punctured surfaces. In this section, we will consider disks with a single puncture with the goal of later generalizing results. We begin with some low-complexity examples.

%============================

\subsection{Minimal examples}\label{section: minimal once-punctured disks}

\begin{nota}
	Throughout this section and the next, we will denote the cluster variable associated to the plain tagging of a radius as $x_j$ and the variable associated to the notched tagging of that radius as $x_j^\vee$.
\end{nota}

%============================

\subsubsection{Once-punctured bigon}

The once-punctured bigon, a punctured disk with two boundary marked points as depicted in Figure \ref{fig: once-punctured bigon}, has 4 triangulations with a total of 4 mutable arcs, as demonstrated in Figure \ref{fig: punctured bigon tagged triangulations}. There are no arrows between the radii in any of the four quivers, but there are arrows to the frozen variables which give us the relations
\[ x_1^{} x_2^\vee = x_3 + x_4 \quad \text{and} \quad x_2^{} x_1^\vee = x_3 + x_4.\]
Any deep point $p \in V(\CA(\SS),\Bbbk)$ would need to kill at least one of the plain radii and at least one of the notched radii, and consequently satisfy $0 = p(x_3) + p(x_4)$. Further, $p$ also needs to kill at least one radius in every triangulation (see Figure \ref{fig: punctured bigon tagged triangulations}), so $p$ must kill mismatched taggings. Finally, both $p(x_1) p(x_2^\vee) = 0$ and $p(x_1^\vee) p(x_2) = 0$, so at most of one of the four radii can take a non-zero value.

\begin{figure}[htb]
	\centering
	\captionsetup[subfigure]{justification=centering}
	\begin{subfigure}[b]{0.45\textwidth}
		\centering
		\begin{tikzpicture}[scale=2]
			\draw[thick,fill=black!5] (0,0) circle (1);
			\node[dot] (1) at (90:1) {};
			\node[dot] (2) at (270:1) {};
			\node[dot] (3) at (0,0) {};
			\draw[thick] (2) to (3);
			\draw[thick] (1) to (3);
			
			\node[mutable] (x1) at (90:0.5) {$x_1$};
			\node[mutable] (x2) at (270:0.5) {$x_2$};
			\node[frozen] (x3) at (180:1) {$x_3$};
			\node[frozen] (x4) at (0:1) {$x_4$};
		\end{tikzpicture}
		\subcaption{Triangulation of a\\once-punctured bigon.}
		\label{fig: once-punctured bigon triang}
	\end{subfigure}
	\begin{subfigure}[b]{0.45\textwidth}
		\centering
		\begin{tikzpicture}[scale=1.5]
			\node[mutable] (x1) at (-1,0) {$x_1$};
			\node[mutable] (x2) at (1,0) {$x_2$};
			\node[frozen] (x3) at (0,-1) {$x_3$};
			\node[frozen] (x4) at (0,1) {$x_4$};
			\draw[-angle 90] (x1) to (x4);
			\draw[-angle 90] (x4) to (x2);
			\draw[-angle 90] (x2) to (x3);
			\draw[-angle 90] (x3) to (x1);
		\end{tikzpicture}
		\subcaption{The associated quiver.}
		\label{fig: one-punctured bigon quiver}
	\end{subfigure}
	\caption{Initial triangulations of a once-punctured bigon.}
	\label{fig: once-punctured bigon}
\end{figure}

This gives us four subsets of deep points, with $p(x_1, x_1^\vee, x_2, x_2^\vee, x_3, x_4)$ taking the values \[ (a,0,0,0,c,-c), \quad (0,a,0,0,c,-c), \quad (0,0,a,0,c,-c), \quad \text{and} \quad (0,0,0,a,c,-c)\] for $a\in\Bbbk$ and $c\in\Bbbk^\times$. These two-dimensional subsets then intersect along the one-dimensional algebraic torus with coordinates $(0,0,0,0,c,-c)$.

\begin{rem}
	Observe that the mutable part of the quiver is the disjoint union of two $A_1$ quivers, so $\CA (\SS)$ is a cluster algebra of type $A_1 \times A_1$. If we specialize the two frozen boundary variables to 1, we get relations of the form $x_i^{} x_j^\vee = 2$, so	the cluster variety of the type $A_1 \times A_1$ cluster algebra with trivial coefficients has no deep points if $\Char(\Bbbk) \neq 2$.     
	If $\Char(\Bbbk) = 2$, there are four lines of deep points, 
	\[ (a,0,0,0,1,1), \quad (0,a,0,0,1,1), \quad (0,0,a,0,1,1), \quad \text{and} \quad (0,0,0,a,1,1),\]
	which intersect at the point $(0,0,0,0,1,1)$.
\end{rem}

%============================

\subsubsection{Once-punctured trigon}

The once-punctured trigon, a punctured disk with three boundary marked points, has nine mutable arcs (as pictured in Figure \ref{fig: punctured trigon}) and a total of 14 triangulations. Examining the figure, we see that the three mutable vertices form a quiver in the mutation class of the $A_3$ quiver.

\begin{figure}[htb]
	\centering
	\begin{subfigure}[b]{0.3\textwidth}
		\centering
		\begin{tikzpicture}[scale=2]
			\draw[thick,fill=black!5] (0,0) circle (1);
			\node[dot] (1) at (90:1) {};
			\node[dot] (2) at (210:1) {};
			\node[dot] (3) at (330:1) {};
			\node[dot] (p) at (0,0) {};
			\node[frozen] (x4) at (45:1) {$x_{4}$};
			\node[frozen] (x5) at (135:1) {$x_{5}$};
			\node[frozen] (x6) at (270:1) {$x_{6}$};
			\draw[thick,bend left] (1) to (p) {};
			\draw[thick,bend right] (1) to node[pos=0.9,sloped,rotate=90]{$\bowtie$} (p) {};
			\draw[thick] (1) to [out=330,in=60] (330:0.4) to [out=240,in=0] (2) {};
			\node[mutable3] at (69:0.5) {$x_1$};
			\node[mutable3] at (116:0.6) {$x_1^\vee$};
			\node[mutable3] at (330:0.44) {$x_{12}$};
		\end{tikzpicture}
	\end{subfigure}
	\begin{subfigure}[b]{0.3\textwidth}
		\centering
		\begin{tikzpicture}[scale=2]
			\draw[thick,fill=black!5] (0,0) circle (1);
			\node[dot] (1) at (90:1) {};
			\node[dot] (2) at (210:1) {};
			\node[dot] (3) at (330:1) {};
			\node[dot] (p) at (0,0) {};
			\node[frozen] (x4) at (45:1) {$x_{4}$};
			\node[frozen] (x5) at (135:1) {$x_{5}$};
			\node[frozen] (x6) at (270:1) {$x_{6}$};
			\draw[thick,bend left] (2) to (p) {};
			\draw[thick,bend right] (2) to node[pos=0.9,sloped,rotate=90]{$\bowtie$} (p) {};
			\draw[thick] (2) to [out=90,in=180] (90:0.4) to [out=0,in=105] (3) {};
			\node[mutable3] at (190:0.5) {$x_2$};
			\node[mutable3] at (236:0.63) {$x_2^\vee$};
			\node[mutable3] at (90:0.42) {$x_{23}$};
		\end{tikzpicture}
	\end{subfigure}
	\begin{subfigure}[b]{0.3\textwidth}
		\centering
		\begin{tikzpicture}[scale=2]
			\draw[thick,fill=black!5] (0,0) circle (1);
			\node[dot] (1) at (90:1) {};
			\node[dot] (2) at (210:1) {};
			\node[dot] (3) at (330:1) {};
			\node[dot] (p) at (0,0) {};
			\node[frozen] (x4) at (45:1) {$x_{4}$};
			\node[frozen] (x5) at (135:1) {$x_{5}$};
			\node[frozen] (x6) at (270:1) {$x_{6}$};
			\draw[thick,bend right] (3) to node[pos=0.9,sloped,rotate=90]{$\bowtie$} (p) {};
			\draw[thick,bend left] (3) to (p) {};
			\draw[thick] (3) to [out=210,in=300] (210:0.4) to [out=120,in=225] (1) {};
			\node[mutable3] at (307:0.46) {$x_3$};
			\node[mutable3] at (354:0.6) {$x_3^\vee$};
			\node[mutable3] at (210:0.42) {$x_{13}$};
		\end{tikzpicture}
	\end{subfigure}
	\caption{Arcs in the punctured trigon.}
	\label{fig: punctured trigon}
\end{figure}

Let us choose the triangulation on the left of Figure \ref{fig: punctured trigon} as our initial seed, giving us an initial quiver: 
\[ \begin{tikzpicture}
	\node[mutable2] (1) at (-2,0) {$x_1^\vee$};
	\node[mutable2] (2) at (0,0) {$x_{12}$};
	\node[mutable2] (3) at (2,0) {$x_1$};
	\node[frozen] (4) at (-1.4,1.3) {$x_4$};
	\node[frozen] (6) at (1.4,1.3) {$x_6$};
	\node[frozen] (5) at (0,-1.5) {$x_5$};
	\draw[-angle 90] (1) to (2);
	\draw[-angle 90] (3) to (2);
	\draw[-angle 90] (2) to (4);
	\draw[-angle 90] (6) to (2);
	\draw[-angle 90] (2) to (5);
	\draw[-angle 90] (5) to (1);
	\draw[-angle 90] (5) to (3);
\end{tikzpicture}. \] 
The cluster algebra is generated by the cluster variables $x_1^{}, x_2^\vee, x_1^\vee, x_2^{}, x_{12}^{}$ and $x_{13}^{}$, along with the frozen variables and their inverses; the ideal of relations is generated by:
\[ 
x_1^{} x_2^\vee = x_1^\vee x_2^{} = x_{12} + x_5 \quad \text{ and } \quad x_{12} x_{13} = x_1^{} x_1^\vee x_6 + x_4 x_5. \]

We know that any deep point must kill at least one plain radius and at least one notched radius. Suppose $p(x_3) = 0$, but $p(x_2) \neq 0$ and $p(x_1) \neq 0$. Then in order to kill at least one arc in each triangulation, $p(x_{12}) = 0$. The relations $x_1^{} x_2^\vee = x_2^{} x_1^\vee = x_{12}^{} + x_5^{}$ then imply that $p(x_2^\vee) \neq 0$ and $p(x_1^\vee) \neq 0$ as well, so $p(x_3^\vee) = 0$. Then from the triangulations pictured in Figure \ref{fig: punctured trigon}, we now see that $p$ must kill $x_{23}$ and $x_{13}$; the relation $x_{13}^{} x_{23}^{} = x_3^{} x_3^\vee x_5^{} + x_4^{} x_6^{}$ then implies that $p(x_4) p(x_6) = 0$, which is impossible. Hence, $p$ cannot kill a single plain radius while sparing the other two.

Next, suppose $p$ kills both $x_1$ and $x_2$. From the relation $x_1 x_{23} = x_2 x_4 + x_3 x_5$, we get $p(x_3) p(x_5) = 0$, which implies that $p(x_3) = 0$ as well, so $p$ must kill all plain radii. Repeating these arguments with notched radii, we see that a deep point $p$ must kill all six radii; that is, $p$ is totally radial.

From the above relations, we see that sending the 6 radii to 0 gives us the following relations:
\[ 0 = p(x_{12}) + p(x_5), \quad\quad 0 = p(x_{23}) + p(x_6), \quad\quad 0 = p(x_{13}) + p(x_4). \]  
Hence, any deep point must kill all radii and send each pair of arcs which bound a punctured bigon to additive inverses. While six variables are sent to non-zero values, there are only three possible choices for their values. Therefore, the deep locus is isomorphic to the algebraic torus $(\Bbbk^\times)^3$.

\begin{rem}
	As in Theorem \ref{thm: deeppolygon}, if we specialize the frozen boundary variables to 1, there is a unique deep point which is realized here by sending $x_{12}, x_{13}, $ and $x_{23}$ to a value of $-1$. 
\end{rem}

%============================

\subsection{Characterization of deep points}\label{sec: char once-punct disks}

As noted, many of the tools used in the classification of deep points in the unpunctured case will not directly apply to punctured surfaces. In this section, we characterize some local results for once-punctured disks with $n\geq 4$ marked points on the boundary.

%============================

\subsubsection{Vanishing Lemmata}

Consider Figure \ref{fig: radial quadrilateral} below. To make the following arguments easier to follow, we will conflate the cluster variables with their image (i.e. we will write $a$ instead of $p(x_i) = a$). We will use the convention that if $r_j$ is the value of a plain radius, $r_j^\vee$ will be the value of the notched version of the same radius. The dashed arc labeled $a$ is not a boundary arc when $n\geq 5$, but is a boundary arc when $n=4$. Arcs labeled $c_{j}$ cut out a punctured bigon along with the boundary arc $b_j$. Arcs labeled $d_{ij}$ cut out punctured trigons along with boundary arcs $b_i$ and $b_j$. 

\begin{figure}[h!tb]
	\begin{subfigure}[b]{0.49\textwidth}
		\centering
		\begin{tikzpicture}[scale=1.2]
			\path[fill=black!5] (-3,1) to (3,1) to (2,-1) to (-2,-1) to (-3,1);
			\draw[thick] (-3,1) to (-2,-1) to (2,-1) to (3,1);
			\draw[dashed] (-3,1) to (3,1);
			\node[dot] (1) at (3,1) {};
			\node[dot] (2) at (2,-1) {};
			\node[dot] (3) at (-2,-1) {};
			\node[dot] (4) at (-3,1) {};
			\node[dot] (p) at (0,0) {};
			\draw[thick] (p) to (1) (p) to (2) (p) to (3) (p) to (4) {};
			\node[mutable] (r1) at (1.5,0.5) {$r_1$};
			\node[mutable] (r2) at (1,-0.5) {$r_2$};
			\node[mutable] (r3) at (-1,-0.5) {$r_3$};
			\node[mutable] (r4) at (-1.5,0.5) {$r_4$};
			\node[frozen] (b1) at (2.5,0) {$b_1$};
			\node[frozen] (b2) at (0,-1) {$b_2$};
			\node[frozen] (b3) at (-2.5,0) {$b_3$};
			\node[mutable] (a) at (0,1) {$a$};
		\end{tikzpicture}
	\end{subfigure}
	\begin{subfigure}[b]{0.49\textwidth}
		\centering
		\begin{tikzpicture}[scale=1.2]
			\path[fill=black!5] (-3,1) to (3,1) to (2,-1) to (-2,-1) to (-3,1);
			\draw[thick] (-3,1) to (-2,-1) to (2,-1) to (3,1);
			\draw[dashed] (-3,1) to (3,1);
			\node[dot] (1) at (3,1) {};
			\node[dot] (2) at (2,-1) {};
			\node[dot] (3) at (-2,-1) {};
			\node[dot] (4) at (-3,1) {};
			\node[dot] (p) at (0,0) {};
			\draw[thick] (3) to [out=60,in=180] (0,0.2) to [out=0,in=120] (2) {}; 
			\node[mutable] (c2) at (1.5,-0.4) {$c_{2}$};
			\draw[thick,out=80,in=200] (3) to (1) {};
			\node[mutable] (d12) at (1.7,0.6) {$d_{12}$};
			\node[frozen] (b1) at (2.5,0) {$b_1$};
			\node[frozen] (b2) at (0,-1) {$b_2$};
			\node[frozen] (b3) at (-2.5,0) {$b_3$};
			\node[mutable] (a) at (0,1) {$a$};
		\end{tikzpicture}
	\end{subfigure}
	\caption{Arcs in a punctured quadrilateral.}
	\label{fig: radial quadrilateral}
\end{figure}

\begin{lemma}\label{lemma: kill neigbor radii}
	Let $\SS$ be a once-punctured disk with $n\geq 4$ marked points on the boundary, and let $p\in V(\CA (\SS),\Bbbk)$ have image as above. If $r_2=0$ and $r_3=0$, then $r_1=0$ and $r_4=0$.
\end{lemma}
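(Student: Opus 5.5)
The plan is to get the conclusion from two Ptolemy (exchange) relations, each obtained by flipping one of the two vanishing radii. We then use that boundary arcs are never sent to $0$. Throughout I identify points of $V(\CA(\SS),\Bbbk)$ with functions on tagged arcs satisfying the tagged skein relations, as in the corollary to Theorem~\ref{thm: tagged skein cluster algebra}. Since $\SS$ is a disk, that corollary applies.

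\emph{First relation.} Let $P_1,P_2,P_3,P_4$ be the boundary marked points in Figure~\ref{fig: radial quadrilateral}, with $r_j$ the plain radius from the puncture to $P_j$, $b_1=P_1P_2$, $b_2=P_2P_3$ and $b_3=P_3P_4$. The arcs $r_1,b_1,b_2,r_3$ bound a quadrilateral with vertices the puncture, $P_1$, $P_2$, $P_3$, and $r_2$ is one of its diagonals. These arcs can be completed to a triangulation of $\SS$ by plain arcs, for instance by using plain radii to all remaining boundary points. Flipping $r_2$ in this quadrilateral replaces it by the arc $e_{13}$ from $P_1$ to $P_3$ that cuts off $P_2$ away from the puncture. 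This arc is a genuine non-boundary arc because $n\geq 4$. It is not an arc cutting out a punctured monogon, so no tagged modifications from \cite[Definition 8.4]{FT18} enter, and the Ptolemy relation reads
\[ r_2\, e_{13} = r_1\, b_2 + r_3\, b_1 . \]
Substituting $r_2=r_3=0$ gives $r_1 b_2 = 0$. Since $b_2$ is a boundary arc, $p(b_2)\neq 0$, and therefore $r_1=0$.

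\emph{Second relation.} Symmetrically, the quadrilateral with vertices the puncture, $P_2$, $P_3$, $P_4$ has sides $r_2,b_2,b_3,r_4$ and diagonal $r_3$. Flipping $r_3$ gives
\[ r_3\, e_{24} = r_2\, b_3 + r_4\, b_2 . \]
With $r_2=r_3=0$ this becomes $r_4 b_2=0$, hence $r_4=0$.

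The only point needing care is checking that these really are ordinary Ptolemy relations. This requires that $P_1\neq P_4$, which holds since $n\geq 4$; when $n=4$ the dashed arc $a$ is simply a boundary arc and plays no role. It also requires that none of the quadrilateral's sides is an arc enclosing a punctured monogon, which is true since all sides are radii or boundary arcs. Equivalently, both relations can be derived directly from the skein relation of Figure~\ref{fig: skeinrel} by resolving the single crossing of $r_2$ with $e_{13}$, and likewise of $r_3$ with $e_{24}$.
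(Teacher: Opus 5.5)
Your proof is correct, but it takes a different route from the paper's. The paper flips the arc $c_1$ (which cuts out a once-punctured bigon with $b_1$) in two different quadrilaterals. The first flip gives $r_3 c_1 = r_1 b_2 + r_2 d_{12}$, hence $r_1=0$. The second gives $r_4 c_1 = r_2 a + r_1 d_{1a}$, which with $r_1=r_2=0$ only yields $r_4 c_1 = 0$. To finish, the paper needs the tagged bigon relation $r_1 r_2^\vee = c_1 + b_1$ to get $c_1=-b_1\neq 0$.

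You instead flip the vanishing radii $r_2$ and $r_3$ themselves inside the fan triangulation by plain radii. In each resulting Ptolemy relation the hypotheses kill two of the three terms at once, leaving a boundary arc times the desired radius. This gives you two independent, symmetric steps that use only ordinary Ptolemy relations among plain arcs. No tagged relation and no auxiliary arcs $c_1$, $d_{12}$, $d_{1a}$ are needed, and the notched case follows identically from the all-notched fan.

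Your argument only needs $P_1\neq P_3$ and $P_2\neq P_4$; the condition $P_1\neq P_4$ you mention is not actually required. So it already works for $n\geq 3$. Indeed, it is exactly the relation $x_1x_{23}=x_2x_4+x_3x_5$ that the paper itself uses for the once-punctured trigon.

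What the paper's route makes visible along the way is the identity $c_1=-b_1$, recorded as Porism \ref{por: c_i non-zero} and used later for cutting along $c_i$. With your proof, that porism would instead follow afterwards from Corollary \ref{cor: kill all radii} together with $r_i r_{i+1}^\vee = c_i + b_i$.
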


\begin{proof}
	First consider the mutation relation $r_3 c_1 = r_1 b_2 + r_2 d_{12}$, which implies that $r_1 b_2 = 0$. Because it corresponds to a boundary arc, $b_2 \neq 0$, so $r_1 = 0$. 
	
	Next, consider the mutation relation $r_4 c_1 = r_2 a + r_1 d_{1a}$, which implies that $ r_4 c_1 = 0$, and the relation $r_1 r_2^\vee = c_1 + b_1$, which implies $c_1 + b_1 = 0$. Since $b_1$ corresponds to a boundary arc, $b_1\neq 0$, so $c_1 \neq 0$ as well. Therefore, $r_4 = 0$.
\end{proof}
That is, if a point $p$ kills two radially adjacent radii, then it must also kill the adjacent radius on either side. To obtain the result for notched radii, simply replace all $r_i$ in Lemma \ref{lemma: kill neigbor radii} with $r_i^\vee$. 

\begin{coro}\label{cor: kill all radii}
	Let $\SS$ be a once-punctured disk with $n\geq 2$ marked points on the boundary, and let $p\in V(\CA (\SS),\Bbbk)$. If $p$ kills two radially adjacent plain (resp. notched) radii, then $p$ kills all plain (resp. notched) radii.
\end{coro}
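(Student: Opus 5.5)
The plan is to iterate Lemma \ref{lemma: kill neigbor radii} around the puncture, and to handle the small cases $n=2,3$ separately, since the lemma is stated only for $n\geq 4$.

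Label the boundary marked points $1,\dotsc,n$ in cyclic order, and let $r_j$ be the plain radius from the puncture to point $j$. For $n\geq 4$, suppose $p$ kills two radially adjacent plain radii $r_j$ and $r_{j+1}$. Four consecutive boundary points $j-1,j,j+1,j+2$, together with the puncture, form a configuration as in Figure \ref{fig: radial quadrilateral}. Here $r_j,r_{j+1}$ play the roles of $r_3,r_2$, and the dashed arc $a$ is a diagonal, or a boundary arc when $n=4$. Lemma \ref{lemma: kill neigbor radii} then gives $p(r_{j-1})=p(r_{j+2})=0$. So $p$ kills the adjacent pairs $(r_{j-1},r_j)$ and $(r_{j+1},r_{j+2})$. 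Applying the lemma again to the shifted quadrilaterals and inducting on the number of killed consecutive radii, the set of killed radii grows by one on each side at each step until it exhausts all $n$ radii.

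For notched radii, the same argument applies after replacing each $r_i$ by $r_i^\vee$, as the paper remarks after the lemma. Alternatively, apply the tag-changing automorphism $i_p$ at the puncture: $p\circ i_p$ is again a point of the cluster variety, and it kills the corresponding plain radii.

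For $n=3$, the argument from the once-punctured trigon subsection applies directly. The exchange relation $x_1x_{23}=x_2x_4+x_3x_5$ shows that if $p(x_1)=p(x_2)=0$, then $p(x_3)p(x_5)=0$. Since $p(x_5)\neq 0$, this gives $p(x_3)=0$. Any two of the three radii are adjacent, and by the rotational symmetry of the trigon the same relation works for every pair.

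For $n=2$, the two radii already exhaust all plain radii, so the statement is trivial.

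The only step needing care is checking that the quadrilateral picture, and the exchange relations used in the proof of Lemma \ref{lemma: kill neigbor radii} (namely $r_3c_1=r_1b_2+r_2d_{12}$, $r_4c_1=r_2a+r_1d_{1a}$, and $r_1r_2^\vee=c_1+b_1$), exist for every cyclic position of the pair. This holds by rotational symmetry of the once-punctured $n$-gon. It is also where the hypothesis $n\geq 4$ of the lemma is needed, and it is why $n=2$ and $n=3$ are treated separately.
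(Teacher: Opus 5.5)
Your proposal is correct and is essentially the argument the paper intends. The paper states the corollary without a separate proof, as the result of iterating Lemma~\ref{lemma: kill neigbor radii} around the puncture (with $r_i$ replaced by $r_i^\vee$ for the notched case), and the cases $n=2,3$ are covered by the direct computations for the once-punctured bigon and trigon, which use exactly the relation $x_1x_{23}=x_2x_4+x_3x_5$ you cite. Your alternative handling of the notched case via the tag-switching automorphism is also valid, since that automorphism fixes boundary arcs and $\CA(\SS)\cong\TSk(\SS)$ for disks.
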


We also have the following porism from the proof of Lemma \ref{lemma: kill neigbor radii}, which emphasizes an idea we will use later. 
\begin{porism}\label{por: c_i non-zero}
	Let $\SS$ be a once-punctured disk with $n\geq 4$ marked points on the boundary, and let $p\in V(\CA (\SS),\Bbbk)$ have image as above. If $p$ kills two radially adjacent radii, either both plain or both notched, then for all $c_i$ which cut out a once-punctured bigon along with a boundary arc $b_i$, $c_i = -b_i$, so $c_i \neq 0$. 
\end{porism}

Next, we want to examine what happens to the plain (resp. notched) radii when all notched (resp. plain) radii are killed by a deep point. 

\begin{lemma}\label{lemma: opposite radii}
	Let $\SS$ be a once-punctured disk with $n\geq 4$ marked points on the boundary, and let $p\in V(\CA (\SS),\Bbbk)$ be a deep point with image as above, such that $p$ kills all notched radii; that is, $r_i^\vee = 0$ for all $i$.
	\begin{enumerate}
		\item $b_1 b_3 - a b_2 = 0$ if and only if $d_{23} = d_{1a} = 0$ or $d_{12} = d_{3a} = 0$.
		\begin{enumerate}
			\item If $d_{23} = d_{1a} = 0$, then $r_2=0$ if and only if $r_4 = 0$.
			\item If $d_{12} = d_{3a} = 0$, then $r_1=0$ if and only if $r_3=0$. 
		\end{enumerate}
		\item if $b_1 b_3 - a b_2 \neq 0$ and $r_2=0$, then $p$ does not kill any $d_{ij}$, and $p$ kills all plain radii. 
	\end{enumerate}
\end{lemma}
\begin{proof}
	By assumption, $p$ kills all notched radii, so Porism \ref{por: c_i non-zero} gives us $c_i = -b_i$. From the relations $r_1 r_3^\vee = d_{12} + d_{3a} = 0$ and $r_2 r_4^\vee = d_{23} + d_{1a} = 0$, we also have $d_{12} = -d_{3a}$ and $d_{23} = -d_{1a}$. Substituting, we get the relation $d_{23} d_{12} = b_1 b_3 + ac_2 = b_1 b_3 - ab_2$. 
	\begin{enumerate}
		\item If $b_1 b_3 - a b_2 = 0$, then either $d_{23}=0$ or $d_{12}=0$. Thus, if $b_1 b_3 - a b_2 = 0$, then either $d_{23} = d_{1a} = 0$ or $d_{12} = d_{3a} = 0$. Conversely, if either $d_{23} = 0$ or $d_{3a} = 0$, then $b_1 b_3 - a b_2 = 0$.
		
		\begin{enumerate}
			\item Suppose $d_{23}=d_{1a}=0$. We have the simplification $-b_2 r_4 = b_3 r_2$; since $b_i \neq 0$ for all $i$, $r_4=0$ if and only if $r_2=0$.
			
			\item Next, suppose $d_{12}=d_{3a}=0$. We have the simplification $-b_2 r_1 = b_1 r_3$; since $b_i \neq 0$ for all $i$, $r_3=0$ if and only if $r_1=0$.
		\end{enumerate}
		
		\item Now suppose that $b_1 b_3 - ab_2 \neq 0$ and $r_2 = 0$. Since $d_{23} d_{12} = b_1 b_3 - ab_2$, none of the $d_{ij}$ are killed by $p$. Simplifying the exchange relations, we have
	    \[ r_3 = \frac{-b_2}{b_1} r_1 = \frac{-b_2}{d_{23}} r_4. \]     
		Then substituting into the equation $c_3 r_1 = d_{3a} r_4 + r_3 a$, we get 
		\[ c_3 r_1 =  \frac{d_{23} d_{3a} r_1}{b_1} - \frac{a b_2 r_1}{b_1}, \] 
		so $ b_1 b_3 r_1 = (a b_2 - d_{23} d_{3a}) r_1$ and $ b_1 b_3 r_1 = (a b_2 - b_1 b_3 + a b_2) r_1$, giving us $2(b_1 b_3 - a b_2) r_1 = 0$. Since $b_1 b_3 - a b_2 \neq 0$, it must be that $r_1 = 0$, so all plain radii are killed by $p$ as a result of Corollary \ref{cor: kill all radii}.\qedhere
	\end{enumerate}
\end{proof}

By instead killing all plain radii, we immediately get the analogous result with the taggings swapped.

\begin{coro}\label{cor: opposite radii}
	Let $\SS$ be a once-punctured disk with $n\geq 4$ marked points on the boundary, and let $p\in V(\CA (\SS),\Bbbk)$ be a deep point with image as above, such that $p$ kills all plain radii; that is, $r_i = 0$ for all $i$.
	\begin{enumerate}
		\item $b_1 b_3 - a b_2 = 0$ if and only if $d_{23} = d_{1a} = 0$ or $d_{12} = d_{3a} = 0$.
		\begin{enumerate}
			\item If $d_{23} = d_{1a} = 0$, then $r_2^\vee=0$ if and only if $r_4^\vee = 0$.
			\item If $d_{12} = d_{3a} = 0$, then $r_1^\vee=0$ if and only if $r_3^\vee=0$. 
		\end{enumerate}
		\item if $b_1 b_3 - a b_2 \neq 0$ and $r_2^\vee=0$, then $p$ does not kill any $d_{ij}$, and $p$ kills all notched radii. 
	\end{enumerate}
\end{coro}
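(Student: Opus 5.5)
The plan is to apply the tag-changing involution $i_p$ at the puncture (the automorphism of $\TSk(\SS)$ from \cite[Lemma 3.10]{mandel2023braceletsbasesthetabases}) and transport Lemma \ref{lemma: opposite radii} across it. By Theorem \ref{thm: tagged skein cluster algebra}, $\TSk(\SS)=\CA(\SS)$ for a once-punctured disk. So $i_p$ is an automorphism of $\CA(\SS)$, and precomposition with it gives an automorphism $p\mapsto p\circ i_p$ of $V(\CA(\SS),\Bbbk)$.

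First I will check which curves $i_p$ moves. It exchanges each plain radius $r_j$ with its notched version $r_j^\vee$. It fixes every arc with no end at the puncture: the boundary arcs $b_i$, the dashed arc $a$, the arcs $c_j$ cutting out punctured bigons, and the arcs $d_{ij}$ cutting out punctured trigons. Since $i_p$ permutes cluster variables and sends tagged triangulations to tagged triangulations, $p$ kills an arc in every triangulation if and only if $p\circ i_p$ does. Hence $p$ is deep exactly when $p\circ i_p$ is deep.

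Next, given a deep point $p$ that kills all plain radii, set $p' \coloneqq p\circ i_p$. Then $p'$ is deep and kills all notched radii. It takes the same values as $p$ on $b_1,b_2,b_3,a,d_{12},d_{23},d_{1a},d_{3a}$, and its value on $r_j$ is $p$'s value on $r_j^\vee$. Applying Lemma \ref{lemma: opposite radii} to $p'$ and translating each conclusion back through this dictionary gives statements (1), (1a), (1b) and (2) of the corollary verbatim.

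The only real point to verify is that $i_p$ fixes the non-radial arcs labeled in Figure \ref{fig: radial quadrilateral}. This holds because none of them has an end at the puncture, so there are no tags for $i_p$ to change.

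As a fallback, in case one prefers not to rely on $i_p$ being compatible with the cluster structure, I would redo the proof of Lemma \ref{lemma: opposite radii} directly. Every exchange relation used there has a mirror relation with all taggings at the puncture reversed, for example $r_1^\vee r_3 = d_{12}+d_{3a}$ and $r_3^\vee c_1 = r_1^\vee b_2 + r_2^\vee d_{12}$. Porism \ref{por: c_i non-zero} is already stated for both taggings, so the identical computation goes through.
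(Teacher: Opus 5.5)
Your proof is correct and is essentially the paper's own argument. The paper simply says the corollary follows from Lemma~\ref{lemma: opposite radii} with the taggings swapped, and your involution $i_p$ of $\CA(\SS)=\TSk(\SS)$ (which exchanges $r_j$ and $r_j^\vee$, fixes $b_i$, $a$, $c_j$, $d_{ij}$, and permutes tagged triangulations, hence preserves deepness) is the rigorous form of that symmetry.

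Keep $i_p$ as your primary route rather than the fallback, since $i_p$ needs only the \emph{statement} of the lemma. Re-running the lemma's part~(2) computation verbatim would inherit a sign slip: because $d_{3a}=-d_{12}$, one actually has $d_{23}d_{3a}=-(b_1b_3-ab_2)$, so the displayed identity collapses to $b_1b_3r_1=b_1b_3r_1$. A correct argument for that part must use deepness. In your tagging, for example, if $r_3^\vee\neq0$ then $r_4^\vee=r_3^\vee d_{1a}/b_2\neq0$, so cutting along $c_3=-b_3$ leaves a non-deep bigon and forces the deep $n$-gon restriction to kill $d_{23}$, contradicting $d_{23}d_{12}=b_1b_3-ab_2\neq0$.
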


\begin{coro}\label{cor: kill all radii both taggings}
	Let $\SS$ be a once-punctured disk with $n\geq 4$ marked points on the boundary, and let $p\in V(\CA (\SS),\Bbbk)$ be a deep point with image as above, such that $p$ kills all notched (resp. plain) radii. If $b_1 b_3 - a b_2 \neq 0$, then $p$ kills all plain (resp. notched) radii as well.
\end{coro}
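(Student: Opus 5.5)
Corollary \ref{cor: kill all radii both taggings} follows almost directly from Lemma \ref{lemma: opposite radii}(2) and Corollary \ref{cor: opposite radii}(2). We treat the case where $p$ kills all notched radii; the plain case is the same argument with Corollary \ref{cor: opposite radii} in place of Lemma \ref{lemma: opposite radii}, or equivalently follows by applying the tag-changing automorphism $i_p$.

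The only extra input needed is that $p$ kills some plain radius. Since $p$ is deep, it kills an arc in the triangulation of $\SS$ made of the boundary arcs and all plain radii. Boundary arcs are never killed, so $p$ kills some plain radius $\rho$.

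Lemma \ref{lemma: opposite radii} is stated for the labeled punctured quadrilateral of Figure \ref{fig: radial quadrilateral} with $r_2=0$. So we relabel so that $\rho$ plays the role of $r_2$, and then check that the hypothesis $b_1b_3-ab_2\neq 0$ refers to that same configuration. The statement is phrased for "the image as above'', so we read it as fixing a quadrilateral with radii $r_1,\dots,r_4$ around the puncture.

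\begin{itemize}
\item If $r_2=0$ in that quadrilateral, Lemma \ref{lemma: opposite radii}(2) gives at once that $p$ kills all plain radii.
\item The main obstacle is when the killed plain radius is some $r_i$ with $i\neq 2$, or lies outside the quadrilateral. Then we must still produce the conclusion from $b_1b_3-ab_2\neq0$.
\end{itemize}

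For $r_3=0$, the reflection of Figure \ref{fig: radial quadrilateral} swapping $r_2\leftrightarrow r_3$, $b_1\leftrightarrow b_3$ and fixing $b_2,a$ leaves the expression $b_1b_3-ab_2$ unchanged. Lemma \ref{lemma: opposite radii}(2) then applies verbatim.

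For $r_1=0$ or $r_4=0$, we redo the computation in the proof of Lemma \ref{lemma: opposite radii}. With $r_i^\vee=0$ and $c_i=-b_i$ we have $d_{12}=-d_{3a}$, $d_{23}=-d_{1a}$, and $d_{23}d_{12}=b_1b_3-ab_2\neq0$, so no $d_{ij}$ vanishes. The linear relations $r_3b_1=-b_2r_1$ and $r_3d_{23}=-b_2r_4$ used there then propagate the vanishing. For example, $r_1=0$ forces $r_3=0$, and $r_3=0$ forces $r_4=0$. This gives two adjacent killed radii, and Corollary \ref{cor: kill all radii} finishes the argument.

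If the statement is intended for every quadrilateral, we simply choose the quadrilateral in which the killed radius is $r_2$, and the first bullet applies directly.
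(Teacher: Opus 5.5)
Your opening step (the triangulation by plain radii forces $p$ to kill some plain radius) together with your closing sentence is exactly the paper's proof. There you choose the quadrilateral so that the killed radius sits in the $r_2$ position and apply Lemma~\ref{lemma: opposite radii}(2), or Corollary~\ref{cor: opposite radii}(2) for the other tagging. The paper tacitly reads $b_1b_3-ab_2\neq 0$ as a condition on that chosen quadrilateral. Your reflection argument for $r_3=0$ is also fine, since the mirror symmetry fixes $b_2$ and $a$ and swaps $b_1$ and $b_3$.

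The genuine gap is your treatment of a \emph{fixed} quadrilateral when the killed radius is $r_1$ or $r_4$. The relations $r_3b_1=-b_2r_1$ and $r_3d_{23}=-b_2r_4$ are not available there. They are the Ptolemy relations $r_3c_1=r_1b_2+r_2d_{12}$ and $r_3d_{1a}=r_4b_2+r_2b_3$, with $c_1=-b_1$ and $d_{1a}=-d_{23}$, after substituting $r_2=0$. That is precisely what you do not know in this case.

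Without that substitution, nothing inside the quadrilateral forces the conclusion. If $r_1=0$, the relations $r_3c_1=r_1b_2+r_2d_{12}$ and $r_4c_1=r_2a+r_1d_{1a}$ give only $r_3=-r_2d_{12}/b_1$ and $r_4=-ar_2/b_1$. The remaining exchange relations among arcs of the quadrilateral (e.g.\ $c_3r_1=d_{3a}r_4+r_3a$ and $r_3d_{1a}=r_4b_2+r_2b_3$) then reduce to identities, using $d_{12}d_{23}=b_1b_3-ab_2$. So they are consistent with $r_2\neq0$.

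You also name a killed radius lying outside the quadrilateral (possible when $n\geq5$) as an obstacle, but never handle it.

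For $n=4$ you could repair this by rotating the labels, which replaces $b_1b_3-ab_2$ by its negative. For $n\geq5$ the fixed-quadrilateral version is true, but it needs information from outside the quadrilateral. The global analysis in Proposition~\ref{prop: non-totally radial deep-on-components} supplies it: a deep point killing all notched but not all plain radii kills every $d_{ij}$, so $b_1b_3-ab_2=d_{12}d_{23}=0$ in every quadrilateral.

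So either state explicitly that the quadrilateral is chosen with the killed radius at $r_2$ and drop the case analysis, or replace that analysis by this global argument.
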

\begin{proof}
	Since $p$ is a deep point, it must kill at least one plain radius in $\SS$. Choose a punctured quadrilateral subsurface matching Figure \ref{fig: radial quadrilateral} such that the plain radius being killed is in the $r_2$ position. Since $b_1 b_3 - a b_2 \neq 0$, Lemma \ref{lemma: opposite radii} implies that all plain radii in $\SS$ are killed by $p$. To prove for notched radii, replace Lemma \ref{lemma: opposite radii} with Corollary \ref{cor: opposite radii}.
\end{proof}

Next, we will consider the deep points that do not kill all radii of a given tagging.

\begin{lemma}\label{lemma: non-radial deep quad}
	Let $\SS$ be a once-punctured disk with $n\geq 4$ marked points on the boundary, and let $p \in V(\CA(\SS),\Bbbk)$ be a deep point such that $r_2 = 0$ but $r_3 \neq 0$. Then 
	\begin{enumerate}
		\item $r_2^\vee = 0$.
		\item $r_1 \neq 0$.
		\item $r_4 = 0$ if and only if $d_{1a} = d_{23} = 0$.
		\item if $r_4 = 0$, then $b_1 b_3 - a b_2 = 0$. 
	\end{enumerate} 
\end{lemma}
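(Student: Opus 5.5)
The plan is to read each claim off one of the exchange (Ptolemy) relations for the punctured quadrilateral of Figure \ref{fig: radial quadrilateral}. Those relations were already used in the proofs of Lemma \ref{lemma: kill neigbor radii} and Lemma \ref{lemma: opposite radii}. Throughout, boundary arcs $b_i$ are nonzero and $r_3\neq 0$ by hypothesis.

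\emph{Claims (1) and (2).} For (2): if $r_1=0$, then $r_1$ and $r_2$ would be two radially adjacent plain radii killed by $p$. Corollary \ref{cor: kill all radii} would then force all plain radii to vanish, contradicting $r_3\neq 0$. Hence $r_1\neq 0$. For (1), use the punctured-bigon relations attached to $c_2$. The arc $c_2$ cuts out a once-punctured bigon together with $b_2$, and the two radii in that bigon are $r_2$ and $r_3$. As in the once-punctured bigon example, this gives
\[ r_2\, r_3^\vee = r_3\, r_2^\vee = c_2 + b_2 . \]
Since $r_2=0$, we get $r_3 r_2^\vee=0$, and $r_3\neq0$ gives $r_2^\vee=0$. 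As a byproduct, $c_2=-b_2$, which will be used in (4).

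\emph{Claim (3).} Two relations are needed. The first comes from the unpunctured quadrilateral with vertices the puncture and the three marked points joined by $b_2,b_3$. Its sides are $r_2, b_2, b_3, r_4$, and its diagonals are $r_3$ and the arc from the endpoint of $r_2$ to the endpoint of $r_4$ that keeps the puncture on the far side, namely $d_{1a}$. The Ptolemy relation reads, up to the correct pairing of opposite sides,
\[ r_3\, d_{1a} = r_2\, b_3 + r_4\, b_2 . \]
With $r_2=0$ this becomes $r_3 d_{1a}=r_4 b_2$. Since $r_3,b_2\neq0$, we get $r_4=0$ if and only if $d_{1a}=0$. The second relation is the tagged relation $r_2 r_4^\vee = d_{23}+d_{1a}$ from the proof of Lemma \ref{lemma: opposite radii}, the two arcs enclosing the puncture on opposite sides. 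Since $r_2=0$, it gives $d_{23}=-d_{1a}$. Hence $d_{1a}=0$ if and only if $d_{1a}=d_{23}=0$, which proves (3).

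\emph{Claim (4).} If $r_4=0$, then (3) gives $d_{23}=0$. Substitute into the flip relation $d_{23}d_{12}=b_1b_3+a c_2$ (again from the proof of Lemma \ref{lemma: opposite radii}), together with $c_2=-b_2$ from the proof of (1). This yields $0=b_1b_3-ab_2$.

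The main obstacle is bookkeeping rather than ideas. One has to check that each relation quoted really is the exchange relation for the correct quadrilateral or bigon in Figure \ref{fig: radial quadrilateral}, especially which of $d_{1a}$ and $d_{23}$ is the puncture-free diagonal of the quadrilateral with sides $r_2,b_2,b_3,r_4$, and which sides are opposite. One also has to confirm that these relations hold uniformly for $n\ge4$, including $n=4$, where $a$ is a boundary arc. An error in either identification would swap the roles of $d_{1a}$ and $d_{23}$, but (3) is symmetric in them via $d_{23}=-d_{1a}$, so the conclusions are robust.
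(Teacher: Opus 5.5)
Your proof is correct and follows the paper's argument essentially step for step: the bigon relation $r_2 r_3^\vee = r_3 r_2^\vee = c_2+b_2$ gives (1), Corollary \ref{cor: kill all radii} gives (2), and the Ptolemy relation $r_3 d_{1a} = r_2 b_3 + r_4 b_2$ combined with $d_{1a}+d_{23}=0$ gives (3). The differences are only cosmetic: you get $d_{1a}+d_{23}=0$ from $r_2 r_4^\vee$ rather than $r_4 r_2^\vee$, and for (4) you flip $d_{12}$ across the side $c_2=-b_2$, giving $d_{23}d_{12}=b_1b_3+ac_2$, where the paper uses the side $c_1=-b_1$, giving $d_{1a}d_{12}=ab_2+c_1b_3$; both yield $b_1b_3-ab_2=0$.
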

\begin{proof}
	From the relation $r_2 r_3^\vee = b_2 + c_2 = 0$, we have $r_3 r_2^\vee = 0$ as well, so $r_2^\vee = 0$. 
	From the contrapositive of Corollary \ref{cor: kill all radii}, since $r_3 \neq 0$, $p$ cannot kill both $r_1$ and $r_2$; consequently, $r_1 \neq 0$.
	
	Next, consider the relation $r_2 b_3 + r_4 b_2 = r_3 d_{1a}$, which simplifies to $r_4 b_2 = r_3 d_{1a}$. Since $r_3 \neq 0$, we have $r_4 = 0$ if and only if $d_{1a} = 0$. From the relation $r_4 r_2^\vee = 0 = d_{1a} + d_{23}$, we likewise have that $d_{1a} = 0$ if and only if $d_{23} = 0$. 
	Combining with the relation $d_{1a} d_{12} = ab_2 + c_1 b_3 = ab_2 - b_1 b_3$, if $r_4 = 0$ then $b_1 b_3 - ab_2 = 0$. 
\end{proof}

%============================

\subsubsection{Cutting and gluing}\label{section: cutting once-punc disks}

To further classify the deep points of once-punctured disks, we will want to extend and utilize the idea of cutting and gluing. 

\begin{lemma}\label{lemma: suitable c_i}
	Let $\SS$ be a once-punctured disk with $n\geq 4$ marked points on the boundary, and let $p \in V(\CA(\SS),\Bbbk)$ be deep. There is an index $i$ such that $c_i \neq 0$, $r_i = 0$, and $r_i^\vee = 0$. 
\end{lemma}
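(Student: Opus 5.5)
The plan is to find an index $i$ with $r_i=r_i^\vee=0$, and then obtain $c_i\neq 0$ for free from a bigon relation. The relation I have in mind is the one used in the proofs of Lemma \ref{lemma: kill neigbor radii} and Lemma \ref{lemma: non-radial deep quad}: the Ptolemy relation for the punctured bigon cut out by $c_i$ together with the boundary arc $b_i$, namely
\[ r_i\, r_{i+1}^\vee = c_i + b_i \]
(e.g.\ $r_2 r_3^\vee = b_2 + c_2$ in Figure \ref{fig: radial quadrilateral}). If $r_i=0$, this gives $c_i=-b_i$, and $b_i\neq 0$ because it is a boundary arc, so $c_i\neq 0$. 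The same conclusion follows from the mirrored relation $r_i^\vee r_{i+1}=c_i+b_i$ when $r_i^\vee=0$. So the whole statement reduces to showing that a deep point kills both taggings of some common radius.

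I would split into two cases according to whether $p$ kills all plain radii.

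\textbf{Case 1: $p$ kills every plain radius.} Since $p$ is deep, it kills at least one notched radius, because there is a triangulation consisting of boundary arcs and notched radii. Take $i$ to be the index of such a notched radius. Then $r_i=r_i^\vee=0$, and $c_i=-b_i\neq 0$ by the relation above. (Alternatively, Porism \ref{por: c_i non-zero} gives $c_i\neq 0$ directly here.)

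\textbf{Case 2: $p$ does not kill every plain radius.} Since $p$ is deep, it kills some plain radius, because there is a triangulation consisting of boundary arcs and plain radii. Going around the puncture, there must therefore be a killed plain radius that is radially adjacent to a plain radius not killed by $p$. Choose a punctured quadrilateral as in Figure \ref{fig: radial quadrilateral}, using the reflection symmetry of the picture if needed, so that the killed radius sits in position $r_2$ and the unkilled neighbour in position $r_3$. Then Lemma \ref{lemma: non-radial deep quad}(1) gives $r_2^\vee=0$. Taking $i=2$, the relation $r_2r_3^\vee=b_2+c_2$ yields $c_2=-b_2\neq 0$.

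The only point needing care is the indexing convention: $c_i$ must be the arc bounding the punctured bigon containing $b_i$, with $b_i$ adjacent to the radius $r_i$, so that the bigon relation really involves $r_i$ (or $r_i^\vee$). Applying the reflection that puts the unkilled neighbour on the correct side handles this. I do not expect a genuine obstacle; the main content is already contained in Lemma \ref{lemma: non-radial deep quad} and Corollary \ref{cor: kill all radii}.
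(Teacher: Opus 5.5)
Your proof is correct and is essentially the paper's argument: the paper fixes a killed plain radius $r_2$ and splits on whether $r_3=0$ (where Lemma~\ref{lemma: kill neigbor radii} forces all plain radii to vanish, which is your Case 1) or $r_3\neq 0$ (where Lemma~\ref{lemma: non-radial deep quad} gives $r_2^\vee=0$, which is your Case 2), and in both cases reads off $c_i=-b_i\neq 0$ from the bigon relation. Splitting instead on whether all plain radii vanish only lets you skip Lemma~\ref{lemma: kill neigbor radii}, and the reflection is unnecessary, since going around the puncture in a fixed direction there is always a killed plain radius immediately followed by a spared one.
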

\begin{proof}
	Since $p$ is deep, it must kill at least one plain radius, so choose an indexing such that $r_2 = 0$. If $r_3 = 0$, then $p$ must kill all plain radii by Lemma \ref{lemma: kill neigbor radii}. Then for all $i$, we have $r_i r_{i+1}^\vee = c_i + b_i = 0$, so $c_i \neq 0$ for $1 \leq i \leq n$. Since $p$ must also kill at least one notched radius, there is an index $i$ such that $r_i^\vee = 0$, $r_i = 0$, and $c_i \neq 0$. 
	
	If $r_3 \neq 0$, then by Lemma \ref{lemma: non-radial deep quad}, we have $r_2^\vee = 0$. Then $r_2 r_1^\vee = c_2 + b_2 = 0$, and we have $c_2 \neq 0$. 
\end{proof}

Cutting $\SS$ along $c_i$ produces a disconnected surface $\SS\smallsetminus c_i = \SS_1 \sqcup \SS_n$, where $\SS_1$ is a once-punctured bigon and $\SS_n$ is homeomorphic to the unpunctured $n$-gon $\Delta_n$. Recall that a deep point  $p \in V(\CA(\SS\smallsetminus c_i),\Bbbk)$ must kill at least one arc in every triangulation of $\SS\smallsetminus c_i$, so it must kill at least one arc in every triangulation of either $\SS_1$ or $\SS_n$, but not necessarily both. For convenience, let us denote the separate restrictions as $p_1 \coloneqq p \vert_{\SS_1}^{}$ and $p_n \coloneqq p \vert_{\SS_n}^{}$.

Recall from Proposition \ref{prop: deep cutting inclusion} that 
\[ 
\begin{Bmatrix} \text{deep points in } V(\CA(\SS),\Bbbk) \\ \text{non-zero on }S \end{Bmatrix} 
\hookrightarrow
\begin{Bmatrix}\text{deep points in } V(\CA(\SS\smallsetminus S),\Bbbk) \\ \text{gluable along } S \end{Bmatrix}.
\]
As noted in the discussion following that proposition, this is not always a bijection, so we must show that there is a bijection for $S = \{c_i\}$. 

\begin{prop}\label{prop: cutting totally radial}
	Let $\SS$ be a once-punctured disk with $n\geq 4$ marked points on the boundary. There is a bijection 
	\[ 
	\begin{Bmatrix} \text{totally radial deep} \\ \text{points in } V(\CA(\SS),\Bbbk) \\ \text{non-zero on }c_i \end{Bmatrix} 
	\longleftrightarrow
	\begin{Bmatrix}\text{totally radial deep} \\ \text{points in } V(\CA(\SS\smallsetminus c_i),\Bbbk) \\ \text{gluable along } c_i \end{Bmatrix}.
	\]
	Further, the subvariety of totally radial deep points of $V(\CA(\SS),\Bbbk)$ is isomorphic to the cluster variety $V(\CA(\Delta_n),\Bbbk)$.
\end{prop}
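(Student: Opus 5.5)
The plan is to use the cutting isomorphism of Proposition~\ref{prop: cutting subvarieties punctured} with $S=\{c_i\}$. It already identifies points of $V(\CA(\SS),\Bbbk)$ non-zero on $c_i$ with points of $V(\CA(\SS_1\sqcup\SS_n),\Bbbk)$ gluable along $c_i$. What remains is to show that this bijection carries totally radial deep points to totally radial deep points, in both directions.

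\emph{Forward direction.} Let $p$ be totally radial deep and non-zero on $c_i$. Every radius of $\SS_1$ is a radius of $\SS$, so the restriction $p_1$ kills all four radii of the once-punctured bigon $\SS_1$. By the bigon analysis every triangulation of $\SS_1$ consists of two radii, so $p_1$ is deep. Hence the cut point is deep, and it is totally radial in the sense that it kills every radius of $\SS\smallsetminus c_i$.

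\emph{Converse.} Let $p'$ on $\SS_1\sqcup\SS_n$ be gluable along $c_i$ and kill all radii of $\SS_1$. After gluing we get $p$ on $\SS$ with $p(r_i)=p(r_i^\vee)=p(r_{i+1})=p(r_{i+1}^\vee)=0$, since these are exactly the bigon radii. Corollary~\ref{cor: kill all radii} then shows that $p$ kills all plain radii and all notched radii, because $r_i,r_{i+1}$ are radially adjacent. So $p$ is totally radial. It is also deep: every tagged triangulation of a once-punctured disk contains at least one radius, since the puncture must be an endpoint of some arc (a self-folded triangle also gives a tagged radius). Thus $p$ kills an arc in every triangulation, and the bijection follows. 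Note that no condition on $p_n$ is needed.

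\emph{Isomorphism with $V(\CA(\Delta_n),\Bbbk)$.} By the bigon computation, the totally radial points of $V(\CA(\SS_1),\Bbbk)$ form the torus $\{(0,0,0,0,c,-c)\}$. Here one boundary arc of $\SS_1$ is $b_i$ and the other is the copy of $c_i$, so $p_1$ is determined by $p(b_i)$ via $c_i=-b_i$. Gluing requires the value on the $c_i$-side of $\SS_n$ to equal $-p(b_i)$; this is a frozen value of $\Delta_n$ and is automatically non-zero. Therefore the totally radial points non-zero on $c_i$ are in bijection with all of $V(\CA(\SS_n),\Bbbk)\cong V(\CA(\Delta_n),\Bbbk)$.

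\emph{Main obstacle.} By Porism~\ref{por: c_i non-zero}, every totally radial point is non-zero on all $c_i$. The isomorphism from a single fixed $c_i$ therefore already covers the whole totally radial locus, and it is an isomorphism of varieties because the cutting isomorphism is one and the extra constraint $c_i=-b_i$ is closed. The step I expect to need the most care is checking that the value $-p(b_i)$ for the cut edge imposes no constraint beyond what $\Delta_n$'s frozen variable allows. I would verify this using the quadrilateral relations, e.g.\ $r_ir_{i+1}^\vee=c_i+b_i$, which are automatically satisfied once all radii vanish.
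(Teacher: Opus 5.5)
Your proposal is correct and follows essentially the same route as the paper. You cut along $c_i$, note that killing the four bigon radii makes the restriction to $\SS_1$ deep, use Corollary~\ref{cor: kill all radii} for the converse, and identify the totally radial locus with $V(\CA(\SS_n),\Bbbk)\cong V(\CA(\Delta_n),\Bbbk)$ via the bigon relation $c_i=-b_i$. You also fill in two steps the paper leaves implicit: that a point killing all radii is automatically deep, and, via Porism~\ref{por: c_i non-zero}, that the single chart where $c_i\neq 0$ already contains every totally radial point.
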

\begin{proof}
	Suppose that $p\in V(\CA(\SS),\Bbbk)$ is a totally radial deep point, so it kills all radii. Then its image under the gluing isomorphism $\bar{p}$ must also kill all radii in $\SS\smallsetminus c_i$. 
	Since $\SS \setminus c_i$ is disconnected, $\bar{p} \in V(\CA(\SS\smallsetminus c_i),\Bbbk)$ is deep if either the restriction $p_1$ to the punctured bigon is deep, the restriction $p_n$ to the unpunctured $n$-gon is deep, or both.  
	Since $p_1$ kills all radii in the punctured bigon, it is deep, and $\bar{p}$ is deep.  
	
	Now, suppose that $\bar{p} \in V(\CA(\SS\smallsetminus c_i),\Bbbk)$ is a totally radial deep point, so it kills all radii. Since there are no radii in $\SS_n$, there are no conditions on $p_n$, but $p_1$ kills all radii in $\SS_1$. Lifting to $p \in V(\CA(\SS),\Bbbk)$, we see that $p$ kills $r_i$, $r_i^\vee$, $r_{i+1}$, and $r_{i+1}^\vee$. By Corollary \ref{cor: kill all radii}, $p$ must kill all radii of both taggings, so $p$ is a totally radial deep point. 
	
	Since the boundary values of $p_1$ on $\SS_1$ are determined by the boundary values of $p_n$ on $\SS_n$, every totally radial point $p$ is determined by a point $p_n \in V(\CA(\SS_n),\Bbbk) \cong V(\CA(\Delta_n),\Bbbk)$. On the other hand, totally radial deep points on $\SS$ have no restriction on boundary values, so every point of $V(\CA(\SS_n),\Bbbk) \cong V(\CA(\Delta_n),\Bbbk)$ can be realized as the restriction $p_n$ of a totally radial deep point $\bar{p}$. 
\end{proof}

Proposition \ref{prop: cutting totally radial} tells us that the points of $V(\CA(\Delta_n),\Bbbk)$ are in bijection with the totally radial deep points of $V(\CA(\SS),\Bbbk)$, so there is an inclusion of the cluster variety of the unpunctured polygon $V(\CA(\Delta_n),\Bbbk)$ into the deep locus of $V(\CA(\SS),\Bbbk)$. Restricting $\CA(\Delta_n)$ to $\CA(A_{n-3})$ by specializing the frozen boundary variables to 1, we also get an inclusion of the cluster variety $V(\CA(A_{n-3}),\Bbbk)$ into the deep locus of $V(\CA(\SS),\Bbbk)$. 

\begin{coro}\label{coro: inclusions disks}
	Let $\Delta_n$ be an unpunctured disk with $n\geq 4$ marked points on the boundary, and let $\SS$ denote the same disk with a distinguished marked point (puncture) in the interior. There are inclusions of:
	\begin{itemize}
		\item the cluster variety $V(\CA(\Delta_n),\Bbbk)$ into the deep locus of $V(\CA(\SS),\Bbbk)$, and
		\item the cluster variety $V(\CA(A_{n-3}),\Bbbk)$ into the deep locus of $V(\CA(\SS),\Bbbk)$. 
	\end{itemize}
\end{coro}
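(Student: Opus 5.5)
The plan is to derive both inclusions directly from Proposition~\ref{prop: cutting totally radial}, making the map explicit so that it is clearly an injective morphism landing in the deep locus. Fix an index $i$. Cutting $\SS$ along $c_i$ gives $\SS\smallsetminus c_i=\SS_1\sqcup\SS_n$, where $\SS_n\cong\Delta_n$. In $\SS_n$, the preimage of $c_i$ is the boundary arc replacing $b_i$.

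Given $q\in V(\CA(\Delta_n),\Bbbk)$, I will define a point $\bar p$ of $V(\CA(\SS\smallsetminus c_i),\Bbbk)$ as follows.
\begin{enumerate}
\item Set $p_n\coloneqq q$ on $\SS_n$.
\item On the once-punctured bigon $\SS_1$, let $p_1$ kill all four radii, send the preimage of $c_i$ to $q(c_i)$, and send the boundary arc $b_i$ to $-q(c_i)$.
\end{enumerate}
The explicit description of the deep points of the once-punctured bigon shows that $p_1$ is a legitimate point, and in fact a totally radial deep one; these are the points $(0,0,0,0,c,-c)$. Since $q(c_i)$ is the value of a frozen variable of $\Delta_n$, it is nonzero. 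Hence $\bar p$ is gluable along $c_i$, and the glued point $p$ is nonzero on $c_i$.

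Next I will apply Proposition~\ref{prop: cutting subvarieties punctured} to obtain $p\in V(\CA(\SS),\Bbbk)$. Proposition~\ref{prop: cutting totally radial} then says that $p$ is a totally radial deep point. The alternative is to argue directly: $p$ kills $r_i,r_i^\vee,r_{i+1},r_{i+1}^\vee$, so by Corollary~\ref{cor: kill all radii} it kills every radius, and every triangulation of $\SS$ contains a radius.

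To see that $q\mapsto p$ is injective and a morphism, I will note two facts. First, every arc of $\Delta_n\cong\SS_n$ is an arc of $\SS$, so restricting $p$ to these arcs recovers $q$. Second, by the uniqueness in the cutting isomorphism, the values of $p$ on all other arcs are determined by $\bar p$, and $\bar p$ is built polynomially from $q$. This gives the first inclusion, with image exactly the totally radial deep locus.

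For the second inclusion, specializing the frozen variables of $\CA(\Delta_n)$ to $1$ gives a surjective ring homomorphism $\CA(\Delta_n)\to\CA(A_{n-3})$. Pulling back along it realizes $V(\CA(A_{n-3}),\Bbbk)$ as the closed subvariety of $V(\CA(\Delta_n),\Bbbk)$ on which all boundary arcs take the value $1$. Composing this with the first inclusion gives the second.

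The step I expect to require the most care is this specialization claim. One must check that setting the frozen variables to $1$ is compatible with the localization at the frozen variables (it is, since $1$ is invertible), and that its image is precisely the coefficient-free type $A_{n-3}$ cluster algebra. If needed, I will argue this through the Laurent phenomenon in a fixed initial seed, observing that the exchange relations specialize to those of $A_{n-3}$. The gluing compatibility itself is routine once the bigon deep points are written down.
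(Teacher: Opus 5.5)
Your proposal is correct and follows the paper's route. The first inclusion is exactly the bijection of Proposition \ref{prop: cutting totally radial}, which you make explicit by gluing the bigon point $(0,0,0,0,c,-c)$ to $q$ and invoking Corollary \ref{cor: kill all radii}. The second comes from specializing the frozen variables to $1$, and the composite lands in boundary values $(-1,1,\dotsc,1)$, as in the paper's Warning. One small point: injectivity of the pullback only needs surjectivity of the specialization map, so you do not need your stronger claim that its image is \emph{exactly} the locus where all boundary arcs equal $1$. That claim requires the kernel to be generated by the elements (frozen variable)$-1$; this is true here, e.g.\ via the acyclic-seed presentation, but it is unnecessary.
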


\begin{warn}
Recall that specializing the frozen boundary variables of $\SS$ to 1 forces one of the frozen boundary variables of $\SS_n$ to be -1. Thus, $\mathcal{F}$ does not induce an inclusion of $V(\CA(A_{n-3}),\Bbbk)$ into the deep locus of $V(\CA(D_n),\Bbbk)$, but rather a deep subvariety of $V(\CA(\SS),\Bbbk)$ corresponding to points assigning boundary values $(-1, 1, 1, \dotsc, 1)$.    
\end{warn}

Next, let us characterize the deep points that do not kill all radii. Recall that a point $p$ in $V(\CA(\SS\smallsetminus c_i),\Bbbk)$ is called \emph{deep-on-components} if its restriction to each component is deep. 

\begin{prop}\label{prop: non-totally radial deep-on-components}
	Let $\SS$ be a once-punctured disk with $n\geq 4$ marked points on the boundary. There is a bijection 
	\[ 
	\begin{Bmatrix} \text{non-totally radial deep} \\ \text{points in } V(\CA(\SS),\Bbbk) \\ \text{non-zero on }c_i \end{Bmatrix} 
	\longleftrightarrow
	\begin{Bmatrix}\text{non-totally radial} \\ \text{deep-on-components} \\ \text{points in } V(\CA(\SS\smallsetminus c_i),\Bbbk) \\ \text{gluable along } c_i \end{Bmatrix}.
	\]    
\end{prop}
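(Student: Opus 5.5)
The map in both directions is the cutting isomorphism of Proposition \ref{prop: cutting subvarieties punctured} with $S=\{c_i\}$. So the task is to show that, for points non-zero on $c_i$ (equivalently gluable along $c_i$), the two conditions match. On $\SS$ the condition is ``deep and not totally radial''. On $\SS\smallsetminus c_i = \SS_1\sqcup\SS_n$ it is ``deep-on-components and not totally radial''.

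Being totally radial concerns only radii. Every radius of $\SS_1$ is a radius of $\SS$. Conversely, every radius of $\SS$ either lies in $\SS_1$ (namely $r_i,r_i^\vee,r_{i+1},r_{i+1}^\vee$) or crosses $c_i$. So I would first check that ``not totally radial'' is preserved in both directions.
\begin{itemize}
\item If $p$ on $\SS$ is not totally radial but $\bar p$ killed all four radii of $\SS_1$, then $p$ kills two adjacent plain and two adjacent notched radii. Corollary \ref{cor: kill all radii} then forces $p$ to be totally radial, a contradiction.
\item The converse is immediate, since the radii of $\SS_1$ are radii of $\SS$.
\end{itemize}

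\emph{Forward direction.} Let $p$ be deep, not totally radial, and non-zero on $c_i$. I must show $p_1$ and $p_n$ are each deep.
\begin{itemize}
\item For $p_n$: every triangulation $T_n$ of $\SS_n$ extends to a triangulation of $\SS$ by adding $c_i$ and a triangulation $T_1$ of the bigon. Choose $T_1$ to consist of radii that $p$ does not kill, if possible. Because $p$ is not totally radial, and by the case analysis of Lemma \ref{lemma: non-radial deep quad} and Lemma \ref{lemma: opposite radii}, I expect that at position $i$ either both $r_i,r_{i+1}$ or both $r_i^\vee,r_{i+1}^\vee$ are nonzero. This is the delicate point. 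If it holds, deepness of $p$ forces some arc of $T_n$ to vanish, so $p_n$ is deep. Then $\SS_n\cong\Delta_n$ with $n$ even, and $p_n$ is given by Theorem \ref{thm: deeppolygon}.
\item For $p_1$: if instead all four bigon radii were nonzero, I would argue as follows. The relations $r_ir_{i+1}^\vee=c_i+b_i$ give $c_i\neq -b_i$. I would then propagate using Lemma \ref{lemma: kill neigbor radii} and Porism \ref{por: c_i non-zero} to a contradiction with the choice of $i$ from Lemma \ref{lemma: suitable c_i}. Deepness of $p_1$ means it kills a radius in each of the four bigon triangulations; this is mismatched-tagging vanishing, which I would verify from the relations as in the bigon example.
\end{itemize}

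\emph{Backward direction.} Suppose $\bar p$ is deep-on-components and gluable. Proposition \ref{prop: deep cutting inclusion} only goes one way, so I would use Lemma \ref{lemma: deepcut}-style reasoning. Take any tagged triangulation $T$ of $\SS$. If $T$ contains $c_i$, it restricts to triangulations of both components and is killed, since $p_n$ is deep. If $T$ does not contain $c_i$, I would use the structure of $p_n$ from Theorem \ref{thm: deeppolygon}: the diagonals $x_{j,k}$ with $j\equiv k$ vanish. I would then show $T$ contains a radius or arc whose value is forced to vanish. To do this I would compute values of arcs crossing $c_i$ via the Ptolemy and skein relations (Figures \ref{fig: skeinrel}, \ref{fig: tagged skeinrel}) in terms of the glued data; for example, a plain radius $r_j$ is expressible through $r_i, r_{i+1}$ and polygon diagonals. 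This expression should vanish for one parity class of radii.

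The main obstacle is this last step: handling arbitrary tagged triangulations not containing $c_i$. My fallback is to mutate such a triangulation toward one containing $c_i$ while tracking a parity invariant of vanishing arcs. This would be analogous to the $\chi_p$ cohomology class, which for a once-punctured disk lives in $H^1(\SS,\MM;\mathbb{Z}_2)$ with a puncture contribution.
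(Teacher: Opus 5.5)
Your forward direction fails at exactly the step you flag as delicate. The expectation that, at position $i$, both $r_i,r_{i+1}$ or both $r_i^\vee,r_{i+1}^\vee$ are non-zero is false for every non-totally radial deep point. Such a point kills all radii of one tagging and alternate radii of the other. For instance, for $n=4$ the notched deep points have $(x_1,x_2^\vee,x_1^\vee,x_2)\mapsto(0,0,0,u)$, so in the bigon cut off by $c_1$ only $r_2$ survives.

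The statement itself forces this. Its right-hand side requires $p_1$ to be deep, i.e.\ each of $\{r_i,r_{i+1}\}$, $\{r_i^\vee,r_{i+1}^\vee\}$, $\{r_i,r_i^\vee\}$, $\{r_{i+1},r_{i+1}^\vee\}$ contains a killed radius. An all-non-zero $T_1$ exists precisely when $p_1$ is \emph{not} deep. So your two forward bullets contradict each other, and the triangulation $T_n\cup\{c_i\}\cup T_1$ you need never exists. Deepness of $p$ on $\SS$ gives no direct information about triangulations of $\SS_n$, because every triangulation of $\SS$ through $c_i$ is already killed on the bigon side. The $p_1$ bullet is also inadequate on its own terms. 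Excluding ``all four bigon radii non-zero'' is much weaker than deepness of $p_1$. Also, $i$ here is an arbitrary index with $p(c_i)\neq 0$, not the one supplied by Lemma \ref{lemma: suitable c_i}.

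What is missing is the dichotomy that Proposition \ref{prop: deep cutting inclusion} does give you: $\bar p$ is deep on $\SS_1\sqcup\SS_n$, so at least one of $p_1,p_n$ is deep. The work is then to show, via the exchange relations rather than by completing triangulations, that each forces the other. The paper normalizes to $r_i=0\neq r_{i+1}$, so $c_i=-b_i$ from $r_ir_{i+1}^\vee=c_i+b_i$, and $r_i^\vee=0$ by Lemma \ref{lemma: non-radial deep quad}.
\begin{itemize}
\item If $p_1$ is deep, then $r_{i+1}^\vee=0$, so Corollary \ref{cor: kill all radii} kills every notched radius. Lemma \ref{lemma: opposite radii}(2) then forces $b_1b_3-ab_2=0$ in the local quadrilateral. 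Re-cutting at the neighbouring $c_{i+1}$ (whose bigon is not deep if $r_{i+2}\neq 0$), together with Lemma \ref{lemma: non-radial deep quad}, shows that the plain radii alternate. Hence $n$ is even, and telescoping the Ptolemy relations across the killed radii yields the alternating-product condition on the boundary of $\SS_n$. Theorem \ref{thm: deeppolygon} is then used to conclude that $p_n$ is deep.
\item Conversely, if $p_n$ is deep, Theorem \ref{thm: deeppolygon} kills the even-length arcs. Lemma \ref{lemma: non-radial deep quad}, together with the incompatible-tagging relations for $r_jr_{j+2}^\vee$, propagates this to the same killing pattern, so $p_1$ is deep.
\end{itemize}

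Your backward direction remains only a plan. A useful first step: a deep, non-totally radial $p_1$ has exactly one non-zero bigon radius, because $r_ir_{i+1}^\vee=c_i+b_i=r_i^\vee r_{i+1}$. So Corollary \ref{cor: kill all radii} already kills every radius of one tagging on $\SS$. Only triangulations whose radii all have the other tagging then remain to be checked against the values forced by $p_n$ and the exchange relations.
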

\begin{proof}
    Let $p \in V(\CA(\SS),\Bbbk)$ be a deep point such that $r_2 = 0$ and $r_3 \neq 0$. First, since $r_2 r_3^\vee = 0 = c_2 + b_2$, we have $c_2 = -b_2$ and $c_2 \neq 0$. From Lemma \ref{lemma: non-radial deep quad}, we have $r_2^\vee = 0$ as well. 
    Recall from Proposition \ref{prop: deep cutting inclusion} that, since $c_2 \neq 0$, the image of $p$ under the cutting isomorphism is a deep point $\bar{p} \in V(\CA(\SS\smallsetminus c_2), \Bbbk)$ that is gluable along $c_2$. Using the same notation as above, since $\bar{p}$ is deep, we know that either $p_1$ is deep on the once-punctured bigon, $p_n$ is deep on the unpunctured $n$-gon, or both. 

    If $p_1$ is deep on the once-punctured bigon, then at least one arc in the triangulation $\{r_3, r_3^\vee\}$ must be killed, so we know that $r_3^\vee = 0$. By Corollary \ref{cor: kill all radii}, this means that $p$ must kill all notched radii. Since $r_3 \neq 0$, the contrapositive of Lemma \ref{lemma: opposite radii} part 2 gives us $b_1 b_3 - a b_2 = 0$, so $b_1 b_3 + a c_2 = 0$. 
    	\begin{itemize}
    		\item \textbf{Claim:} $r_4 = 0$.
    		
    		\emph{Proof of Claim:} Seeking a contradiction, suppose $r_4 \neq 0$. Since all notched radii are killed by $p$, we have $r_3 r_4^\vee = c_3 + b_3 = 0$, so $c_3 \neq 0$. If we cut along $c_3$ instead of $c_2$, $\widehat{p} \in V(\CA(\SS\smallsetminus c_3),\Bbbk)$ must be deep. Since both $r_3 \neq 0$ and $r_4 \neq 0$, the restriction of $\widehat{p}$ to the bigon is not deep, so it must instead be deep on the unpunctured $n$-gon. Since $\widehat{p}$ is deep on the unpunctured $n$-gon, $p$ must kill both $d_{23}$ and $d_{3a}$; since all notched radii are killed, we have $d_{23} = d_{1a} = 0$ and $d_{12} = d_{3a} = 0$. By Lemma \ref{lemma: non-radial deep quad}, $r_4 = 0$. $\qed$
    	\end{itemize}
    	
    	By Corollary \ref{cor: kill all radii}, we also know that $r_5 \neq 0$ since $p$ does not kill all plain radii. Repeatedly applying the proof of the claim around the puncture, we see that $p$ must kill all even-indexed plain radii and spare all odd-indexed radii. Note that if $n$ is odd, all radii are even-indexed, so all deep points must be totally radial. Hence, under our assumptions, $n$ must be even.
    	
    	If $n=4$, we observe that $r_1 b_2 + r_3 b_1 = 0$ and $r_3 b_4 + r_1 b_3 = 0$, so $r_1 \neq 0$ and we have \[ r_1 = \frac{-b_1}{b_2} r_3 = \frac{b_1 b_3}{b_2 b_4} r_1.\]
    	More generally, all even-indexed radii are killed by $p$, so $ r_{j+2} b_{j+3} + r_j b_{j+2} = 0$. As a result, 
    	\[ r_1 ~=~ (-1)~\frac{b_1}{b_2} r_3 ~=~ (-1)^2~\frac{b_1 b_3}{b_2 b_4} r_5 ~=~ \dotsb ~=~ (-1)^{n/2}~ \frac{b_1 b_3 \dotsm b_{n-1}}{b_2 b_4 \dotsm b_n} r_1.\]
    	
    	Since $r_1 \neq 0$, the values on the boundary edges satisfy the equation
    	\begin{equation}
    		\frac{b_1 b_3 \dotsm b_{n-1}}{b_2 b_4 \dotsm b_n} = (-1)^{n/2}. 
    	\end{equation}
    	As a result, the boundary values on the unpunctured $n$-gon satisfy the equation 
    	\begin{equation}
    		\frac{b_1 b_3 \dotsm b_{n-1}}{c_2 b_4 \dotsm b_n} = (-1)^{\frac{n+2}{2}}, 
    	\end{equation}
    	and $p_n$ is necessarily deep on the unpunctured $n$-gon by Theorem \ref{thm: deeppolygon}. That is, if $n$ is even and $p_1$ is deep, then $p_n$ is deep as well. If $n$ is odd, then Theorem \ref{thm: deeppolygon} says that $p_n$ cannot be deep.
    	
    	If $n$ is even and $p_n$ is deep, Theorem \ref{thm: deeppolygon} gives us the boundary relation \[ \frac{b_1 b_3 \dotsm b_{n-1}}{c_2 b_4 \dotsm b_n} = (-1)^{\frac{n+2}{2}}, \quad \text{and} \quad a = \frac{b_1 b_3}{b_2},\] so $b_1 b_3 - ab_2 = 0$. The same theorem gives us $d_{12} = d_{23} = 0$, and since $r_2 = 0$, we have $d_{23} = -d_{1a}$, so $d_{1a} = 0$ as well. By Lemma \ref{lemma: non-radial deep quad}, we also now have $r_4 = 0$. Since $r_2 = 0$ and $r_3 \neq 0$, Corollary \ref{cor: kill all radii} says that $r_1 \neq 0$ as well.
    	
    	By symmetry (repeating with a cut of $\SS$ along $c_{2k}$ for all $k$), we see that all even-indexed plain radii are killed and all odd-indexed plain radii are spared. By the same argument, all non-radial arcs of even length must be killed, so all $d_{ij} = 0$. Since all non-radial arcs of even length are killed, the relations $r_{2k+1}^{} r_{2k+3}^\vee = 0 + 0$ force all odd-indexed notched radii to be killed. Since $r_2^\vee = 0$, we have adjacent notched radii being killed, so all notched radii are killed by Corollary \ref{cor: kill all radii}. 
    	
    	Note that since all notched radii are killed and $r_2 = 0$, the restriction to $p_1$ is deep as well. That is, if $n$ is even and $p_n$ is deep, then $p_1$ is deep as well.

        To complete the proof, repeat the above argument with the taggings switched.
\end{proof}

%============================

\subsubsection{Classifying deep points}

From the above proofs, we can extract the following porisms.
\begin{porism}\label{porism: kill all opposite tagging}
	Let $\SS$ be a once-punctured disk with $n\geq 4$ marked points on the boundary.
	\begin{enumerate}
		\item Let $p \in V(\CA(\SS),\Bbbk)$ be a deep point such that $r_2 = 0$ and $r_3 \neq 0$. Then $p$ kills all notched radii.
		\item Let $p \in V(\CA(\SS),\Bbbk)$ be a deep point such that $r_2^\vee = 0$ and $r_3^\vee \neq 0$. Then $p$ kills all plain radii.
	\end{enumerate}
\end{porism}

\begin{porism}\label{porism: kill alternating opposite radii}
	Let $\SS$ be a once-punctured disk with $n\geq 4$ marked points on the boundary.
	\begin{enumerate}    
		\item Let $p\in V(\CA (\SS),\Bbbk)$ be a deep point with image as above, such that $p$ kills all notched radii. Then $p$ either kills all radii, or $p$ kills alternating plain radii (that is, all even-indexed or all odd-indexed plain radii). 
		
		\item Let $p\in V(\CA (\SS),\Bbbk)$ be a deep point with image as above, such that $p$ kills all plain radii. Then $p$ either kills all radii, or $p$ kills alternating notched radii (that is, all even-indexed or all odd-indexed notched radii). 
	\end{enumerate}
\end{porism}
\begin{coro}\label{cor: odd-gon kill all radii}
	Let $\SS$ be a once-punctured disk with $n \geq 5$ marked points on the boundary, and let $p\in V(\CA (\SS),\Bbbk)$ be a deep point. If $n$ is odd, then $p$ kills all radii of both taggings; that is, $p$ is a totally radial deep point. 
\end{coro}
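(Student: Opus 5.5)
We argue by contradiction: suppose $n\geq 5$ is odd and $p$ is a deep point which is not totally radial. Since $p$ is deep, it kills at least one plain radius and at least one notched radius; these come from the all-plain and all-notched triangulations. By the symmetry between taggings (the involution $i_p$), we may assume that $p$ does not kill every plain radius. Going around the puncture, the plain radii that $p$ kills are then not all of them, but there is at least one. So we can find a killed plain radius whose clockwise neighbour is spared. We index the radii so that this killed radius is $r_2$, which gives $r_2=0$ and $r_3\neq 0$.

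Porism \ref{porism: kill all opposite tagging}(1) now says that $p$ kills all notched radii. Porism \ref{porism: kill alternating opposite radii}(1) then says that $p$ either kills all radii or kills alternating plain radii. The first option is excluded, because $r_3\neq 0$. So $p$ kills exactly the even-indexed plain radii, or exactly the odd-indexed ones, and spares the others. Both require the indices to split consistently into two parity classes around the cyclic order of $n$ radii. When $n$ is odd this fails: the radius $r_n$ is adjacent to $r_1$, so there are two cyclically adjacent radii of the same parity. Consequently the alternating pattern forces either two adjacent killed plain radii or a contradiction with the spared radius $r_3$.

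To make this step precise, I would rely on the propagation argument from the proof of Proposition \ref{prop: non-totally radial deep-on-components}. The claim there shows that $r_2=0$ and $r_3\neq 0$ force $r_4=0$, and it does not use the parity of $n$. Since $r_4=0$ and $r_5\neq 0$ (by Corollary \ref{cor: kill all radii}), the same argument can be applied again, giving $r_6=0$, and so on. Going once around the puncture with $n$ odd, we reach $r_{n+1}=r_1=0$ as well as $r_2=0$. These are two radially adjacent killed plain radii, so Corollary \ref{cor: kill all radii} forces every plain radius to vanish, contradicting $r_3\neq 0$. The argument in the notched case is identical, using part (2) of each porism. Hence $p$ kills every plain and every notched radius, so $p$ is totally radial.

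The main obstacle is to check that the propagation step, where $r_{2k}=0$ and $r_{2k+1}\neq 0$ imply $r_{2k+2}=0$, is valid at every position around the puncture for $n\geq 5$. In particular, the punctured quadrilateral of Figure \ref{fig: radial quadrilateral} must have a genuine non-boundary arc $a$, and the cut along $c_{2k+1}$ must produce a once-punctured bigon together with an $n$-gon. I would verify that the inputs to the claim (all notched radii killed, $r_{2k+1}\neq 0$, and Lemma \ref{lemma: non-radial deep quad}) are invariant under rotation, so that the step can be repeated $\lceil n/2\rceil$ times until the parity clash appears.
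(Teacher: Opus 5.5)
Your proof is correct and follows the paper's own route: the paper extracts this corollary from the proof of Proposition~\ref{prop: non-totally radial deep-on-components}, where the claim ($r_2=0$, $r_3\neq 0$, all notched radii killed $\Rightarrow r_4=0$) is applied repeatedly around the puncture together with Corollary~\ref{cor: kill all radii}; for odd $n$ the pattern wraps around and forces two adjacent killed plain radii, exactly as in your propagation paragraph. Your earlier heuristic parity paragraph is not needed, and it misses the case where exactly the even-indexed radii are killed, which leaves the adjacent radii $r_n$ and $r_1$ both spared. The propagation argument excludes that case as well, since it derives $r_{n+1}=r_1=0$ from $r_n\neq 0$.
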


\begin{porism}\label{porism: boundary alternating product}
	Let $\SS$ be a once-punctured disk with $n = 2k \geq 4$ marked points on the boundary, and let $p \in V(\CA(\SS),\Bbbk)$ be a deep point such that not all radii are killed; that is, $p$ is not a totally radial deep point. 
	Then the values on the boundary edges satisfy the equation
	\begin{equation}\label{eq: boundary alternating product}
		\frac{b_1 b_3 \dotsm b_{n-1}}{b_2 b_4 \dotsm b_n} = (-1)^{n/2} .
	\end{equation}
\end{porism}

Finally, we can combine these results to complete the classification of deep points of once-punctured disks.
\begin{thm}\label{thm: punctured disk deep types}
	Let $\SS$ be a once-punctured disk with $n\geq 4$ marked points on the boundary, and let $p \in V(\CA(\SS),\Bbbk)$ be a deep point.
	\begin{enumerate}
		\item If $n$ is odd, then $p$ kills all radii of both taggings; that is, $p$ is a totally radial deep point.
		\item If $n$ is even, then $p$ is one of the following three types:
		\begin{enumerate}
			\item $p$ is a totally radial deep point; that is, it kills all radii of both taggings. 
			\item $p$ is a notched deep point; that is, it kills all notched radii, but not all plain radii. In this case, $p$ kills alternating plain radii \textemdash either all odd-indexed plain radii or all even-indexed plain radii \textemdash and the values of the boundary edges satisfy Equation \ref{eq: boundary alternating product}.
			\item $p$ is a plain deep point; that is, it kills all plain radii, but not all notched radii. In this case, $p$ kills alternating notched radii \textemdash either all odd-indexed notched radii or all even-indexed notched radii \textemdash and the values of the boundary edges satisfy Equation \ref{eq: boundary alternating product}.
		\end{enumerate}            
	\end{enumerate}
	
\end{thm}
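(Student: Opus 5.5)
The plan is to assemble the theorem from the porisms and corollaries already extracted in this section, so the argument is mostly bookkeeping. Every deep point $p$ kills at least one plain radius and at least one notched radius, because the all-plain and all-notched triangulations must each contain a killed arc. I will split into cases according to whether $p$ kills all plain radii, all notched radii, both, or neither.

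\emph{Step 1: $p$ kills all radii of at least one tagging.} Suppose $p$ does not kill all plain radii. Some plain radius is killed, so going around the puncture there is an index, relabeled as $2$, with $r_2=0$ and $r_3\neq 0$. Porism \ref{porism: kill all opposite tagging}(1) then says that $p$ kills all notched radii. Symmetrically, if $p$ does not kill all notched radii, Porism \ref{porism: kill all opposite tagging}(2) says it kills all plain radii. Hence it is impossible for $p$ to leave both taggings partially unkilled, and $p$ falls into exactly one of three types: totally radial, notched (all notched radii killed, not all plain), or plain (all plain radii killed, not all notched).

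\emph{Step 2: odd $n$.} If $n$ is odd and $n\geq 5$, Corollary \ref{cor: odd-gon kill all radii} gives part (1) directly. Underlying this is the argument in the proof of Proposition \ref{prop: non-totally radial deep-on-components}: an alternating pattern of killed and spared radii cannot close up around an odd cycle.

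\emph{Step 3: even $n$.} Here I take the notched and plain cases in turn.
\begin{enumerate}
\item In the notched case, Porism \ref{porism: kill alternating opposite radii}(1) says $p$ kills either all radii or alternating plain radii. Since $p$ is not totally radial, the killed plain radii are exactly the even-indexed or exactly the odd-indexed ones.
\item Porism \ref{porism: boundary alternating product} then gives Equation \eqref{eq: boundary alternating product}.
\item The plain case is identical after swapping taggings. I could use part (2) of the same porisms, or equivalently apply the tag-changing automorphism $i_p$ at the puncture. This automorphism preserves deepness, since it permutes triangulations, and it fixes the boundary arcs.
\end{enumerate}

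\emph{Where the care lies.} The main subtlety is Step 1 when $n=4$ or in small configurations. I need the chosen index with $r_2=0$, $r_3\neq0$ to sit inside a punctured quadrilateral as in Figure \ref{fig: radial quadrilateral}, so that the porisms, which were proved under the hypothesis $n\geq4$, actually apply. I also need to confirm that ``not all radii of a tagging killed'' really produces such an adjacent pair; this holds because the radii are cyclically ordered around the puncture and at least one of them is killed.
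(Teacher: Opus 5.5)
Your proposal is correct and follows the paper's approach: the paper also gives no separate argument for this theorem and assembles it directly from Porisms \ref{porism: kill all opposite tagging}, \ref{porism: kill alternating opposite radii}, \ref{porism: boundary alternating product} and Corollary \ref{cor: odd-gon kill all radii}, all of which are extracted from the proof of Proposition \ref{prop: non-totally radial deep-on-components}. Your Step 1 makes explicit what the paper leaves implicit: a cyclic transition $r_2=0$, $r_3\neq 0$ forces every radius of the other tagging to vanish, so the three types (totally radial, notched, plain) are exhaustive.
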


\subsection{Once-punctured quadrilateral}

We will now explicitly compute the deep points corresponding to the once-punctured disk with $n=4$ marked points on the boundary; let $\SS$ denote this surface. Using the quiver in Figure \ref{fig: punctured quadrilateral} for our initial seed, the cluster algebra is generated by $x_1$, $x_2^\vee$, $x_1^\vee$, $x_2$, $x_3$, $x_3^\prime$, $x_4$ and $x_4^\prime$, along with the frozen variables and their inverses; here, $x_2$ denotes the plain radius realized by a flip of $x_1^\vee$ in the given triangulation. The ideal of relations is generated by:
\begin{align*}
	x_1 x_2^\vee &= x_3 + x_5 & x_1^\vee x_2 &= x_3 + x_5 \\
	x_3 x_3^\prime &= x_1 x_1^\vee x_6 + x_4 x_5 & x_4 x_4^\prime &= x_3 x_7 + x_6 x_8.    
\end{align*}

\begin{figure}[htb]
	\centering
	\begin{subfigure}[b]{0.45\textwidth}
		\centering
		\begin{tikzpicture}[scale=2,baseline={(0,0)}]
			\draw[thick,fill=black!5] (0,0) circle (1);
			\node[dot] (2) at (270:1) {};
			\node[dot] (3) at (225:1) {};
			\node[dot] (4) at (150:1) {};
			\node[dot] (1) at (90:1) {};
			\node[dot] (p) at (5:0.6) {};
			\node[frozen] (x5) at (315:1.03) {$x_{5}$};
			\node[frozen] (x6) at (245:1.03) {$x_{6}$};
			\node[frozen] (x7) at (175:1.03) {$x_{7}$};
			\node[frozen] (x8) at (130:1.03) {$x_{8}$};
			\draw[thick,bend right] (1) to node[pos=0.9,sloped,rotate=90]{$\bowtie$} (p) {};
			\draw[thick,bend left] (1) to (p) {};
			\draw[thick,bend right] (1) to (2) {};
			\draw[thick,bend right] (1) to (3) {};
			\node[mutable3] at (45:0.75) {$x_1$};
			\node[mutable3] at (70:0.37) {$x_1^\vee$};
			\node[mutable3] at (255:0.58) {$x_3$};
			\node[mutable3] at (195:0.67) {$x_4$};
		\end{tikzpicture}
	\end{subfigure}
	\begin{subfigure}[b]{0.45\textwidth}
		\centering
		\begin{tikzpicture}[scale=1,baseline={(0,0)}]
			\node[frozen] (x5) at (-3,0) {$x_5$};
			\node[mutable3] (x1) at (-2,1) {$x_1$};
			\node[mutable3] (x2) at (-2,-1) {$x_1^\vee$};
			\node[mutable3] (x3) at (-1,0) {$x_3$};
			\node[frozen] (x6) at (0,-1) {$x_6$};
			\node[mutable3] (x4) at (1,0) {$x_4$};
			\node[frozen] (x7) at (2,1) {$x_7$};
			\node[frozen] (x8) at (2,-1) {$x_8$};
			\draw[-angle 90] (x1) to (x5);
			\draw[-angle 90] (x2) to (x5);
			\draw[-angle 90] (x3) to (x1);
			\draw[-angle 90] (x3) to (x2);
			\draw[-angle 90] (x5) to (x3);
			\draw[-angle 90] (x4) to (x3);
			\draw[-angle 90] (x6) to (x4);
			\draw[-angle 90] (x3) to (x6);
			\draw[-angle 90] (x8) to (x4);
			\draw[-angle 90] (x4) to (x7);
		\end{tikzpicture}
	\end{subfigure}
	\caption{A triangulation of a once-punctured quadrilateral and its associated quiver.}
	\label{fig: punctured quadrilateral}
\end{figure}

From Theorem \ref{thm: punctured disk deep types}, we know that every deep point is one of three types, which we will consider separately. Throughout the following, let $p\in V(\CA(\SS),\Bbbk)$ be deep.  
In relating previous results, the arcs in Figure \ref{fig: radial quadrilateral} correspond to the variables in Figure \ref{fig: punctured quadrilateral} via 
\begin{align*}
	p(x_1, x_2^\vee, x_1^\vee, x_2, x_3, x_3^\prime, x_4, x_4^\prime) &= (r_1, r_2^\vee, r_1^\vee, r_2, c_1, d_{3a}, d_{12}, d_{1a}) \\
	p(x_5, x_6, x_7, x_8) &= (b_1,b_2,b_3,a)
\end{align*}

\begin{enumerate}
	\item First, suppose that $p$ kills all radii of both taggings. Because $p$ kills all radii, we have $0 = p(x_3^\prime) + p(x_4)$ and $p(x_3) + p(x_5) = 0$. We also have the relation \[ p(x_4) p(x_4^\prime) = p(x_3) p(x_7) + p(x_6) p(x_8) = p(x_6) p(x_8) - p(x_5) p(x_7). \] 
	
	\begin{enumerate}
		\item If $p(x_4) \neq 0$, then $p(x_4)$ and the values of the boundary variables determine the values of both $x_4^\prime$ and $x_3^\prime$ (with both non-zero). Thus, such a deep point is of the form
		\begin{align*}
			p(x_1, x_2^\vee, x_1^\vee, x_2, x_3, x_3^\prime, x_4, x_4^\prime) &= (0,0,0,0,-a, -u, u, \frac{bd-ac}{u}) \\
			p(x_5, x_6, x_7, x_8) &= (a, b, c, d)
		\end{align*}
		for some $u,a,b,c,d \in \Bbbk^\times$. These points determine a deep subset of $V(\CA(\SS),\Bbbk)$ isomorphic to $(\Bbbk^\times)^5$. 
		
		\item On the other hand, if $p(x_4) = 0$, then $p(x_3^\prime) = 0$ and \[p(x_6) p(x_8) - p(x_5) p(x_7) = 0,\] in which case $p(x_4^\prime)$ can take any value.
		\begin{align*}
			p(x_1, x_2^\vee, x_1^\vee, x_2, x_3, x_3^\prime, x_4, x_4^\prime) &= (0,0,0,0,-a, 0, 0, u) \\
			p(x_5, x_6, x_7, x_8) &= (a, b, c, \frac{ac}{b})
		\end{align*}
		for some $a,b,c \in \Bbbk^\times$ and $u\in \Bbbk$. These points determine a deep subset of $V(\CA(\SS),\Bbbk)$ isomorphic to $\Bbbk \times (\Bbbk^\times)^3$.
	\end{enumerate}
	
	Further, as a result of Proposition \ref{prop: cutting totally radial}, we know that the union of these deep subsets is isomorphic to the cluster variety $V(\CA(\Delta_4),\Bbbk)$.
	
	\item Second, suppose that $p$ kills all notched radii, but not all plain radii. These points can be divided into two types: those that kill only the odd-indexed plain radii and those that kill only the even-indexed plain radii. 
	
	By Porism \ref{porism: boundary alternating product}, we have $p(x_6) p(x_8) = p(x_5) p(x_7)$. Further, because $p$ kills all notched radii, we have $p(x_3) + p(x_5) = 0$. Finally, by Proposition \ref{prop: non-totally radial deep-on-components}, $p$ restricts to a deep point on any unpunctured quadrilateral $\SS\smallsetminus c_i$. By Theorem \ref{thm: deeppolygon}, $p$ must kill all non-radial arcs of even length; as a result, $p$ kills all of $x_3^\prime$, $x_4$, and $x_4^\prime$.
	
	\begin{enumerate}
		\item Suppose that $p$ kills only the odd-indexed plain radii. Such a deep point is of the form
		\begin{align*}
			p(x_1, x_2^\vee, x_1^\vee, x_2, x_3, x_3^\prime, x_4, x_4^\prime) &= (0,0,0,u,-a, 0, 0, 0) \\
			p(x_5, x_6, x_7, x_8) &= (a, b, c, \frac{ac}{b})
		\end{align*}
		for some $u,a,b,c \in \Bbbk^\times$. These deep points determine a subset of $V(\CA(\SS),\Bbbk)$ isomorphic to $(\Bbbk^\times)^4$.
		
		\item Suppose that $p$ kills only the even-indexed plain radii. Such a deep point is of the form
		\begin{align*}
			p(x_1, x_2^\vee, x_1^\vee, x_2, x_3, x_3^\prime, x_4, x_4^\prime) &= (u,0,0,0,-a, 0, 0, 0) \\
			p(x_5, x_6, x_7, x_8) &= (a, b, c, \frac{ac}{b})
		\end{align*}
		for some $u,a,b,c \in \Bbbk^\times$. These deep points determine a subset of $V(\CA(\SS),\Bbbk)$ isomorphic to $(\Bbbk^\times)^4$.
	\end{enumerate}
	
	\item Third, suppose that $p$ kills all plain radii, but not all notched radii. These points can be divided into two types: those that kill only the odd-indexed notched radii and those that kill only the even-indexed notched radii. 
	
	As in the previous case, Porism \ref{porism: boundary alternating product} implies that $p(x_6) p(x_8) - p(x_5) p(x_7) = 0$. Further, because $p$ kills all plain radii, we have $p(x_3) + p(x_5) = 0$. Finally, by Proposition \ref{prop: non-totally radial deep-on-components}, $p$ restricts to a deep point on any unpunctured quadrilateral $\SS\smallsetminus c_i$. By Theorem \ref{thm: deeppolygon}, $p$ must kill all non-radial arcs of even length; as a result, $p$ kills all of $x_3^\prime$, $x_4$, and $x_4^\prime$.
	
	\begin{enumerate}
		\item Suppose that $p$ kills only the odd-indexed notched radii. Such a deep point is of the form
		\begin{align*}
			p(x_1, x_2^\vee, x_1^\vee, x_2, x_3, x_3^\prime, x_4, x_4^\prime) &= (0,u,0,0,-a, 0, 0, 0) \\
			p(x_5, x_6, x_7, x_8) &= (a, b, c, \frac{ac}{b})
		\end{align*}
		for some $u,a,b,c \in \Bbbk^\times$. These deep points determine a subset of $V(\CA(\SS),\Bbbk)$ isomorphic to $(\Bbbk^\times)^4$.
		
		\item Suppose that $p$ kills only the even-indexed notched radii. Such a deep point is of the form
		\begin{align*}
			p(x_1, x_2^\vee, x_1^\vee, x_2, x_3, x_3^\prime, x_4, x_4^\prime) &= (0,0,u,0,-a, 0, 0, 0) \\
			p(x_5, x_6, x_7, x_8) &= (a, b, c, \frac{ac}{b})
		\end{align*}
		for some $u,a,b,c \in \Bbbk^\times$. These deep points determine a subset of $V(\CA(\SS),\Bbbk)$ isomorphic to $(\Bbbk^\times)^4$.
	\end{enumerate}
\end{enumerate}

Combining the above, we have the following result:

\begin{prop}\label{prop: D4 with boundary}
	Let $\SS$ be a once-punctured disk with $4$ marked points on the boundary. Then the deep locus of $V(\CA(\SS),\Bbbk)$ is comprised of four subsets isomorphic to $(\Bbbk^\times)^4$, one subset isomorphic to $\Bbbk \times (\Bbbk^\times)^3$, and one subset isomorphic to $(\Bbbk^\times)^5$. The intersection of the closures of these six sets is a three-dimensional algebraic torus consisting of deep points of the form 
	\begin{align*}
		p(x_1, x_2^\vee, x_1^\vee, x_2, x_3, x_3^\prime, x_4, x_4^\prime) &= (0,0,0,0,-a, 0, 0, 0) \\
		p(x_5, x_6, x_7, x_8) &= (a, b, c, \frac{ac}{b})
	\end{align*}
	for $a,b,c \in \Bbbk^\times$.
\end{prop}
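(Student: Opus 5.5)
The plan is to assemble the proposition directly from the case analysis carried out just above. By Theorem \ref{thm: punctured disk deep types} with $n=4$, every deep point falls into exactly one of the three types, each with an explicit parametrization in the given initial seed. The first step is to check that the listed families really consist of deep points. For the totally radial families, I would invoke Proposition \ref{prop: cutting totally radial}, which identifies them with $V(\CA(\Delta_4),\Bbbk)$. For the plain and notched families, I would invoke Proposition \ref{prop: non-totally radial deep-on-components} together with Theorem \ref{thm: deeppolygon}, using the fact that the restriction to the bigon kills $\{r_i,r_i^\vee\}$. I would also confirm that each parametrization satisfies the four generating relations, which is a direct substitution; for example, in case (2a) one gets $x_1^\vee x_2 = 0 = x_3+x_5$.

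The second step is to identify each family as a variety. Case (1a) gives free coordinates $(u,a,b,c,d)\in(\Bbbk^\times)^5$. Case (1b) gives $u\in\Bbbk$ and $(a,b,c)\in(\Bbbk^\times)^3$, since $x_8$ is determined by $ac/b$. Each of the four plain and notched cases gives $(u,a,b,c)\in(\Bbbk^\times)^4$. Projecting to these coordinates is an isomorphism onto its image, because every other generator's value is a polynomial in the chosen parameters.

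The third step is to compute the closures and their common intersection; this is the main point needing care. The closure of each $(\Bbbk^\times)^4$ family is obtained by allowing $u=0$, and it lies over the torus $\{(a,b,c,ac/b)\}$ of boundary values. The closure of family (1b) likewise lies over this locus. The closure of (1a) is the set cut out by $x_3=-x_5$, $x_3'=-x_4$, and $x_4x_4'=x_6x_8-x_5x_7$ with all radii zero; it contains all points with $u=0$ and $x_6x_8=x_5x_7$, provided the point $x_4'=0$ is included. Here I must be careful: I would use the closure in $V(\CA(\SS),\Bbbk)$, so the boundary values stay in $\Bbbk^\times$.

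Intersecting all six closures then forces all radii to vanish, since each radius is zero on some family. It also forces $x_3'=x_4=x_4'=0$ by the plain and notched families, and $x_3=-x_5$ together with $x_6x_8=x_5x_7$. Conversely, every such point lies in each closure: take $u\to 0$ in the $(\Bbbk^\times)^4$ families, take $u=0$ in (1b), and observe that it satisfies the closed equations of (1a). This yields the stated three-dimensional torus. The one subtle point is whether the closure of (1a) really contains the points with $x_4'=0$ and $x_4=0$. This holds because $\{x_4x_4'=x_6x_8-x_5x_7\}$ restricted to nonvanishing boundary values is irreducible of dimension $5$ and contains (1a) as a dense open subset.
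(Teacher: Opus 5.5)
Your proposal is correct and follows the paper's own route: the proposition is just the assembly of the preceding $n=4$ case analysis (totally radial points split by whether $x_4$ vanishes, plus the four alternating plain/notched families). Your explicit closure computation, including the density of $\{x_4\neq 0\}$ in the irreducible hypersurface $x_4x_4'=x_6x_8-x_5x_7$ over nonvanishing boundary values, fills in what the paper only asserts, and there is one harmless slip: the remaining coordinates are Laurent rather than polynomial in the parameters (e.g.\ $x_4'=(bd-ac)/u$ and $x_8=ac/b$), but they are still regular on the tori, so the isomorphism claims stand.
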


\begin{rem}\label{rem: D4 deep}
	If we specialize the frozen boundary variables to 1, the deep locus is comprised of six lines which intersect at a single point. We have the constraint \[p(x_6) p(x_8) - p(x_5) p(x_7) = 1 - 1 = 0,\] so away from the point of intersection, each deep point is of one of the forms 
	\begin{align*}
		p(x_1, x_2^\vee, x_1^\vee, x_2, x_3, x_3^\prime, x_4, x_4^\prime) &= (0,0,0,0,-1, -u, u, 0), \\
		p(x_1, x_2^\vee, x_1^\vee, x_2, x_3, x_3^\prime, x_4, x_4^\prime) &= (0,0,0,0,-1, 0, 0, u), \\
		p(x_1, x_2^\vee, x_1^\vee, x_2, x_3, x_3^\prime, x_4, x_4^\prime) &= (0,0,0,u,-1, 0, 0, 0), \\
		p(x_1, x_2^\vee, x_1^\vee, x_2, x_3, x_3^\prime, x_4, x_4^\prime) &= (u,0,0,0,-1, 0, 0, 0), \\
		p(x_1, x_2^\vee, x_1^\vee, x_2, x_3, x_3^\prime, x_4, x_4^\prime) &= (0,u,0,0,-1, 0, 0, 0), \text{ or }\\
		p(x_1, x_2^\vee, x_1^\vee, x_2, x_3, x_3^\prime, x_4, x_4^\prime) &= (0,0,u,0,-1, 0, 0, 0),
	\end{align*}
	and the point of intersection is the point \[(0,0,0,0,-1,0,0,0,1,1,1,1) \in \Bbbk^8 \times (\Bbbk^\times)^4.\] 
	
	An abbreviated version of this specialized result for the $D_4$ cluster variety previously appeared in \cite[Remark 6.11]{CGSS26}, calculated using a different initial seed. We should also remark that this specialized deep locus coincides exactly with the singular locus of $\CA(D_4)$ in part (4)(a) of \cite[Theorem A]{BFMS23}. 
	
	We can also characterize deep points of $V(\CA(D_4), \Bbbk)$ topologically as one of seven types:
	\begin{enumerate}
		\item those that spare the even-indexed plain radii,
		\item those that spare the odd-indexed plain radii, 
		\item those that spare the even-indexed notched radii, 
		\item those that spare the odd-indexed notched radii, 
		\item those that spare the diagonals connecting even-indexed vertices, 
		\item those that spare the diagonals connecting odd-indexed vertices, and  
		\item the point that does not spare any diagonals or radii.
	\end{enumerate}
	Note that all deep points spare the $c_i$, the arcs that cut out once-punctured bigons along with boundary arcs.
\end{rem}

\begin{warn}
	Although Proposition \ref{prop: cutting totally radial} implies that the subset corresponding to totally radial deep points is isomorphic to $V(\CA(\Delta_4),\Bbbk)$, after specializing the boundary variables to 1, the subvariety of totally-radial points is not exactly $V(\CA(A_1),\Bbbk)$. We are specializing the boundary values of the punctured disk, so the value of $c_i$ is -1, and the unpunctured polygon we get after cutting does not have boundary values all equal to 1. 
\end{warn}

%============================

\subsection{Once-punctured disks with at least five boundary marked points}

We now proceed with the explicit computation of deep points for once-punctured disks with $n\geq 5$ marked points on the boundary. 
Using the quiver in Figure \ref{fig: punctured ngon} as our initial seed, the cluster algebra is generated by $x_1$, $x_2^\vee$, $x_1^\vee$, $x_2$, $x_3$, $x_3^\prime, \dotsc, x_n$, $x_n^\prime$, along with the frozen variables and their inverses, where $x_2$ denotes the plain radius realized by a flip of $x_1^\vee$ in the given triangulation. The ideal of relations is generated by:
\begin{align*}
	x_1^{} x_2^\vee &= x_3^{} + x_{n+1}^{} \quad & x_4^{} x_4^\prime &= x_3^{} x_{n+3}^{} + x_5^{} x_{n+2}^{} \\ 
	x_1^\vee x_2^{} &= x_3^{} + x_{n+1}^{} \quad & &\phantom{a}\vdots\\
	x_3^{} x_3^\prime &= x_1^{} x_1^\vee x_{n+2}^{} + x_4^{} x_{n+1}^{} \quad & x_{n-1}^{} x_{n-1}^\prime &= x_{n-2}^{} x_{2n-2}^{} + x_n^{} x_{2n-3}^{} \\
	x_n^{} x_n^\prime &= x_{n-1}^{} x_{2n-1}^{} + x_{2n}^{} x_{2n-2}^{} \quad & &
\end{align*}

\begin{figure}[h!tb]
	\centering
	\begin{subfigure}[b]{0.4\textwidth}
		\centering
		\begin{tikzpicture}[scale=2,baseline={(0,0)}]
			\draw[thick,fill=black!5] (0,0) circle (1);
			\node[dot] (2) at (155:1) {};
			\node[dot] (3) at (190:1) {};
			\node[dot] (1n) at (260:1) {};
			\node[dot] (n) at (320:1) {};
			\node[dot] (1) at (0:1) {};
			\node[dot] (p) at (85:0.6) {};
			\node[frozen] (xn) at (135:1.05) {$x_{n+1}$};
			\node[frozen] (xn1) at (172:1.15) {$x_{n+2}$};
			\node[frozen] (x2n1) at (295:1.1) {$x_{2n-1}$};
			\node[frozen] (x2n) at (340:1.13) {$x_{2n}$};
			\draw[thick,bend right] (1) to (p) {};
			\draw[thick,bend left] (1) to node[pos=0.85,sloped,rotate=90]{$\bowtie$} (p) {};
			\draw[thick,out=200,in=315] (1) to (2) {};
			\draw[thick,bend left] (1) to (3) {};
			\draw[thick,out=230,in=30] (1) to (1n) {};
			\node[mutable3] at (45:0.78) {$x_1$};
			\node[mutable3] at (20:0.42) {$x_1^\vee$};
			\node[mutable3] at (160:0.58) {$x_3$};
			\node[mutable3] at (215:0.58) {$x_4$};
			\node[mutable3] at (300:0.72) {$x_n$};
			\node at (256:0.59) {$\ddots$};
		\end{tikzpicture}
	\end{subfigure}
	\begin{subfigure}[b]{0.55\textwidth}
		\centering
		\begin{tikzpicture}[scale=1,baseline={(0,0)}]
			\node[frozen] (x5) at (-3,0) {$x_{n+1}$};
			\node[mutable] (x1) at (-2,1) {$x_1$};
			\node[mutable] (x2) at (-2,-1) {$x_1^\vee$};
			\node[mutable] (x3) at (-1,0) {$x_3$};
			\node[frozen] (x6) at (0,-1) {$x_{n+2}$};
			\node[mutable] (x4) at (1,0) {$x_4$};
			\node (d1) at (2,0) {$\cdots$};
			\node (d2) at (2,-1) {$\cdots$};
			\node[mutable] (xn) at (3,0) {$x_n$};
			\node[frozen] (x7) at (4,-1) {$x_{2n-1}$};
			\node[frozen] (x8) at (4,1) {$x_{2n}$};
			\draw[-angle 90] (x5) to (x1);
			\draw[-angle 90] (x5) to (x2);
			\draw[-angle 90] (x1) to (x3);
			\draw[-angle 90] (x2) to (x3);
			\draw[-angle 90] (x5) to (x1);
			\draw[-angle 90] (x3) to (x5);
			\draw[-angle 90] (x3) to (x4);
			\draw[-angle 90] (x4) to (x6);
			\draw[-angle 90] (x4) to (1.6,0) {};
			\draw[-angle 90] (1.6,-0.6) to (x4) {};
			\draw[-angle 90] (2.4,0) to (xn) {};
			\draw[-angle 90] (xn) to (2.4,-0.6) {};
			\draw[-angle 90] (x6) to (x3);
			\draw[-angle 90] (xn) to (x8);
			\draw[-angle 90] (x7) to (xn);
		\end{tikzpicture}
	\end{subfigure}
	\caption{A triangulation of a once-punctured $n$-gon and its associated quiver.}
	\label{fig: punctured ngon}
\end{figure}

As stated in Theorem \ref{thm: punctured disk deep types}, if $n$ is odd, then the only deep points are those that kill all radii of both taggings \textemdash the totally radial deep points. If $n$ is even, then we will have all three types of deep points, which we will consider separately. Throughout the following, let $p\in V(\CA(\SS),\Bbbk)$ be deep. 

\begin{enumerate}
	\item First, suppose that $p$ is a totally radial deep point; that is, it kills all radii of both taggings. Since all radii are killed, we have $p(x_3) + p(x_{n+1}) = 0$ and $p(x_3^\prime) + p(x_4) = 0$.
	
	Proposition \ref{prop: cutting totally radial} implies that the totally radial points comprise a subvariety isomorphic to $V(\CA(\Delta_n),\Bbbk)$. Cutting along $c_i$ disconnects the disk into a once-punctured bigon and an unpunctured $n$-gon. As the totally radial deep points do not have a requirement on the boundary values, $c_i$ can take on any non-zero value, and the value of $b_i$ is determined by the value of $c_i$. The restriction to the unpunctured $n$-gon need not be deep, so the remaining arcs can take on any values satisfying the Ptolemy relations. 
    Choosing the values of the initial $n-3$ diagonal arcs of the unpunctured $n$-gon $\SS\smallsetminus c_i$, in addition to the $n$ boundary arcs, determines the values on the other generators via the generating relations. Thus, the subvariety of $V(\CA(\SS), \Bbbk)$ corresponding to totally radial points is of dimension $2n-3$. 
	
	\item Second, suppose that $n$ is even and $p$ is a notched deep point. In this case, $p$ kills alternating plain radii \textemdash either all odd-indexed plain radii or all even-indexed plain radii \textemdash and the values of the boundary edges are subject to Equation \ref{eq: boundary alternating product}.
	
	\begin{enumerate}
		\item Suppose that $p$ kills only the odd-indexed plain radii. Then $p(x_2) \neq 0$, and $p(x_3) = -p(x_{n+1}) \neq 0$, so we can cut along the arc labeled $x_3$. From Proposition \ref{prop: non-totally radial deep-on-components}, $p$ restricts to a deep point on the unpunctured $n$-gon which is determined by its values on the boundary arcs. Since the boundary arcs must satisfy Equation \ref{eq: boundary alternating product}, we can choose non-zero values for $n-1$ of the boundary arcs and the last value is determined. Along with the choice of non-zero value for $x_2$, this gives us a total of $n$ choices. Thus, these deep points constitute a subset of the deep locus of $V(\CA(\SS),\Bbbk)$ isomorphic to $(\Bbbk^\times)^n$.
		
		\item Next, suppose that $p$ kills only the even-indexed plain radii. Then $p(x_1) \neq 0$, and $p(x_3) = -p(x_{n+1}) \neq 0$, so we can cut along the arc labeled $x_3$. Again, $p$ restricts to a deep point on the unpunctured $n$-gon which is determined by its values on the boundary arcs. Thus, these deep points likewise constitute a subset of the deep locus of $V(\CA(\SS),\Bbbk)$ isomorphic to $(\Bbbk^\times)^n$.
	\end{enumerate}
	
	\item Third, suppose that $n$ is even and $p$ is a plain deep point; that is, it kills all plain radii, but not all notched radii. In this case, $p$ kills alternating notched radii \textemdash either all odd-indexed notched radii or all even-indexed notched radii \textemdash and the values of the boundary edges are subject to Equation \ref{eq: boundary alternating product}.
	
	\begin{enumerate}
		\item Suppose that $p$ kills only the odd-indexed notched radii. Then $p(x_2^\vee) \neq 0$, and $p(x_3) = -p(x_{n+1}) \neq 0$, so we can cut along the arc labeled $x_3$. From Proposition \ref{prop: non-totally radial deep-on-components}, $p$ restricts to a deep point on the unpunctured $n$-gon which is determined by its values on the boundary arcs. Since the boundary arcs must satisfy Equation \ref{eq: boundary alternating product}, we can choose non-zero values for $n-1$ of the boundary arcs and the last value is determined. Along with the choice of non-zero value for $x_2^\vee$, this gives us a total of $n$ choices. Thus, these deep points constitute a subset of the deep locus of $V(\CA(\SS),\Bbbk)$ isomorphic to $(\Bbbk^\times)^n$.
		
		\item Suppose that $p$ kills only the even-indexed notched radii. Then $p(x_1^\vee) \neq 0$, and $p(x_3) = -p(x_{n+1}) \neq 0$, so we can cut along the arc labeled $x_3$. Once again, $p$ restricts to a deep point on the unpunctured $n$-gon which is determined by its values on the boundary arcs. Thus, these deep points constitute a subset of the deep locus of $V(\CA(\SS),\Bbbk)$ isomorphic to $(\Bbbk^\times)^n$.
	\end{enumerate}
	
	For both plain and notched radial deep points, cutting along $c_i$ disconnects the disk into a once-punctured bigon and an unpunctured $n$-gon, so we have the relation \[ \frac{b_{i+1} \dotsm b_{i+n-1}}{c_i b_{i+2} \dotsm b_{i+n-2}} = (-1)^{\frac{n+2}{2}}. \]
	Thus, for each fixed choice of non-zero value on the spared radius, we have a subvariety of $V(\CA(\SS \setminus c_i), \Bbbk)$ isomorphic to the deep locus of $V(\CA(\Delta_n), \Bbbk)$, which is of dimension $n-1$. For $n \geq 4$, any deep point $p$ will kill three of the four radii in the punctured bigon, while the fourth can take on any non-zero value. Thus, we have four deep subsets of $V(\CA(\SS),\Bbbk)$, each isomorphic to $(\Bbbk^\times)^n$.
\end{enumerate}

Combining with Propositions \ref{prop: cutting totally radial} and \ref{prop: D4 with boundary}, we have the following.
\begin{thm}\label{thm: Dn with boundary}
	Let $\SS$ be a once-punctured disk with $n\geq 4$ marked points on the boundary. 
	\begin{enumerate}
		\item If $n$ is odd, then the deep locus of of $V(\CA(\SS),\Bbbk)$ is a single component of dimension $2n-3$.
		\item If $n$ is even, then the deep locus of of $V(\CA(\SS),\Bbbk)$ contains one component of dimension $2n-3$ and four distinct subsets isomorphic to $(\Bbbk^\times)^n$. The intersection of the closures of these five sets is the $(n-1)$-dimensional subvariety consisting of deep points of the form 
		\[ p(x_1, x_2^\vee, x_1^\vee, x_2, x_3, x_3^\prime, x_4, x_4^\prime,\dotsc, x_n, x_n^\prime) = (0,0,0,0,-b_1, 0, 0, 0,\dotsc, 0,0) \]
		\[ p(x_{n+1}, x_{n+2}, \dotsc, x_{2n-1}, x_{2n}) = \left( b_1, b_2, \dotsc, b_{n-1}, (-1)^{n/2}~\frac{b_1 b_3 \dotsm b_{n-1}}{b_2 b_4 \dotsm b_{n-2} } \right) \]
		for all $b_1, \dotsc, b_{n-1} \in \Bbbk^\times$.
	\end{enumerate}
\end{thm}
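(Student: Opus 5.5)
The plan is to assemble the theorem from the type classification of Theorem \ref{thm: punctured disk deep types} together with the explicit parametrizations computed just above. By Theorem \ref{thm: punctured disk deep types}, every deep point is totally radial when $n$ is odd. When $n$ is even, every deep point is totally radial, plain, or notched.

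First, I would treat the totally radial locus. By Proposition \ref{prop: cutting totally radial}, cutting along any $c_i$ identifies it with $V(\CA(\Delta_n),\Bbbk)$. The latter is irreducible: it contains the dense cluster torus $(\Bbbk^\times)^{2n-3}$ given by the $n$ boundary values and the $n-3$ initial diagonals. Since the cluster tori are open and the algebra is a domain, this torus is dense. Hence the totally radial locus is a single component of dimension $2n-3$, which proves (1).

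For (2), I would use the four families computed above. These are the notched and plain points killing the odd- or even-indexed radii, and each is parametrized by $(\Bbbk^\times)^n$. The coordinates are the $n-1$ free boundary values satisfying Equation \ref{eq: boundary alternating product} and the nonzero value $u$ on the single spared radius among $x_1,x_2,x_1^\vee,x_2^\vee$. The four families are distinct because each spares a different radius in the initial bigon. None of them is contained in the totally radial component, since each spares a radius.

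Next, I would compute the intersection of the five closures. The closure of each $(\Bbbk^\times)^n$ family is obtained by letting $u\to 0$. In that limit every radius is killed, $x_3=-b_1$, all $d_{ij}$ and even-length arcs vanish by Theorem \ref{thm: deeppolygon}, and Equation \ref{eq: boundary alternating product} still holds. So the closures of the four families share exactly the locus $u=0$. That locus is the stated $(n-1)$-dimensional set parametrized by $b_1,\dots,b_{n-1}$, with $b_n$ determined.

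It remains to check that this locus lies in the closure of the totally radial component, which is closed. This holds because these points kill all radii, so they are totally radial. Under the cutting identification, they correspond precisely to the deep points of $V(\CA(\Delta_n),\Bbbk)$ described in Theorem \ref{thm: deeppolygon}, since $c_1=-b_1$ turns the relation into the polygon's alternating-product condition.

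The main obstacle is showing that the intersection is no larger. One must verify that any point in all four closures has $u=0$, hence kills all radii, and kills the even-length diagonals. This follows because in each closure the radii other than the spared one are identically zero, so the common points have all four initial radii zero. The vanishing of the diagonals is then inherited from each family's closure.
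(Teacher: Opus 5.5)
Your route is the paper's. You assemble the theorem from Theorem~\ref{thm: punctured disk deep types}, Proposition~\ref{prop: cutting totally radial}, and the explicit parametrizations of the four non-totally-radial families. You also supply two things the paper only asserts:
\begin{itemize}
\item irreducibility of the totally radial locus, via a dense cluster torus in $V(\CA(\Delta_n),\Bbbk)$;
\item the computation $\overline{F}=F\sqcup Z$ of each family's closure, where $Z$ is the locus $u=0$.
\end{itemize}
Both additions are sound under the paper's tacit assumptions. First, $\Bbbk$ must be infinite; otherwise the Zariski topology on $\Bbbk$-points is discrete and the closures never meet. Second, the domain you need is $\CA(\Delta_n)\otimes\Bbbk$, not $\CA(\Delta_n)$; this holds here, since it is a localized Pl\"ucker ring of $\mathrm{Gr}(2,n)$.

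The step that fails is the final identification, ``that locus is the stated $(n-1)$-dimensional set.'' For even $n\ge 6$ the displayed tuple is not a point of $V(\CA(\SS),\Bbbk)$ at all. With $x_3=-b_1$ and $x_4=x_4^\prime=0$, the paper's own exchange relation $x_4x_4^\prime=x_3x_{n+3}+x_5x_{n+2}$ forces $x_5=b_1b_3/b_2\neq 0$, whereas the display sets $x_5=0$.

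Your own observation explains why. Cutting along $x_3=c_1$ identifies $Z$ with the deep locus of the $n$-gon with sides $c_1=-b_1,b_2,\dots,b_n$. By \eqref{eq: arbitrary diagonal}, the fan diagonals $x_k$ from vertex $1$ to an even vertex $k-1$ (so $k$ odd, $5\le k\le n-1$) are then nonzero: $x_k=(-1)^{(k-1)/2}\,b_1b_3\cdots b_{k-2}/(b_2b_4\cdots b_{k-3})$. What actually vanishes on $Z$ is exactly the following: the radii, $x_3^\prime$, the $x_k$ with $k$ even, and all $x_k^\prime$ with $k\ge 4$. These are precisely the arcs that are radii or that join marked points of equal parity.

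So the displayed coordinates are correct only for $n=4$ (Proposition~\ref{prop: D4 with boundary}). The paper gives no derivation of the display for general $n$, and you inherit the slip by asserting agreement rather than checking it.

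Your argument does prove the corrected statement. The five closures meet in $Z$, the set of points that kill all radii and all equal-parity arcs, with $x_3=-b_1$, boundary values subject to \eqref{eq: boundary alternating product}, and the remaining arcs given by \eqref{eq: arbitrary diagonal}. This set is $(n-1)$-dimensional and parametrized by $b_1,\dots,b_{n-1}$. You should state the intersection in that intrinsic form rather than claim it matches the displayed tuple.
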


\begin{rem}\label{rem: Dn deep}
	If we specialize the frozen boundary variables to 1, then Equation \ref{eq: boundary alternating product} is satisfied only when $n \equiv 0 \bmod{4}$, or if $\Char (\Bbbk) = 2$ and $n \equiv 0 \bmod{2}$. Thus, we have the following description of the deep locus of $V(\CA(D_n),\Bbbk)$:
	\begin{enumerate}
		\item If $n \equiv 0 \bmod{4}$ or $\Char (\Bbbk) = 2$ and $n \equiv 0\bmod{2}$, then the deep locus is comprised of one $(n-3)$-dimensional component and four copies of $\Bbbk^\times$. The intersection of the closures of these sets is the point 
		\[ p(x_1, x_2^\vee, x_1^\vee, x_2, x_3, x_3^\prime, x_4, x_4^\prime,\dotsc, x_n, x_n^\prime) = (0,0,0,0,-1, 0, 0, 0,\dotsc, 0,0) \]
		\[ p(x_{n+1}, x_{n+2}, \dotsc, x_{2n-1}, x_{2n}) = (1, 1, \dotsc, 1, 1) \]
		in $\Bbbk^{2n} \times (\Bbbk^\times)^n$. 
		
		\item Otherwise, the deep locus is comprised of a single $(n-3)$-dimensional component.
	\end{enumerate}
	
	\noindent
	This coincides exactly with the singular locus of $\CA(D_n)$ in parts (4)(b) and (4)(c) of \cite[Theorem A]{BFMS23}. 
\end{rem}

%============================

\section{Marked disks with multiple punctures}\label{section: deep punctured disks}

In this section, we will classify and parameterize the deep points of cluster varieties associated to disks with multiple punctures. We will assume throughout that our surface is connected, orientable, and triangulable, and will take care to specify the genus, number of boundary components, and number and configuration of marked points at each step. 

Recall that a (tagged) radius is a simple arc with one end at the boundary and the other end at a puncture (tagged plain or notched). In contrast to the once-punctured disks we discussed in Section \ref{section: 1 punctured disks}, in a surface with multiple punctures, not every simple arc emanating from a puncture is a radius. We refer to arcs with both ends at punctures as \textbf{tunnels}. While every triangulation includes at least two arcs emanating from each puncture, a triangulation may not include radii emanating from each puncture, as demonstrated in Figure \ref{fig: no radii at puncture}; 

\begin{figure}[htb]
	\centering
	\begin{tikzpicture}[scale=2,baseline={(0,0)}]
		\draw[thick,fill=black!5] (0,0) circle (1);
		\node[dot] (1) at (90:1) {};
		\node[dot] (2) at (270:1) {};
		\node[dot] (3) at (45:0.5) {};
		\node[dot] (4) at (180:0.2) {};
		\node[dot] (5) at (315:0.5) {};
		\draw[thick] (1) to (3);
		\draw[thick] (3) to (4);
		\draw[thick] (4) to (5);
		\draw[thick] (5) to (3);
		\draw[thick] (5) to [out=180,in=270] (180:0.5) to [out=90,in=180] (3);
		\draw[thick] (1) to [out=225,in=90] (180:0.65) to [out=270,in=225] (5);
		\draw[thick] (2) to [out=40,in=270] (345:0.75) to [out=90,in=315] (3);
		\draw[thick] (5) to (2);
	\end{tikzpicture}
	\caption{A triangulation of a thrice-punctured bigon where not every puncture is connected to the boundary by a radius.}
	\label{fig: no radii at puncture}
\end{figure}

If a deep point $p$ kills all arcs emanating from a puncture $\xi$, we will say that $\xi$ is \textbf{quashed} by $p$. If all punctures are quashed by $p$, we will refer to $p$ as a \textbf{malevolent} deep point; on the other hand, if $p$ does not quash any punctures, we will refer to it as \textbf{benevolent}. Note that because every triangulation will include multiple arcs emanating from each puncture, if any one puncture is quashed by $p$, that is enough to guarantee the deepness of $p$ and we should not place any a priori expectations on what happens at the other punctures. 

\begin{ex}
    In the case of a once-punctured disk, the totally radial points quash the lone puncture, so they are malevolent. The non-totally radial points are benevolent.
\end{ex}

Another difference from the case of once-punctured disks is that a disk with $q\geq 2$ punctures and a single marked point on the boundary is triangulable, as demonstrated in Figure \ref{fig: disk with one marked point}. We will divide our discussion of punctured disks into two parts \textemdash those disks with $n\geq 2$ marked points on the boundary and those with only $n=1$ marked point on the boundary.

\begin{figure}[htb]
	\centering
	\begin{tikzpicture}[scale=2,baseline={(0,0)}]
		\draw[thick,fill=black!5] (0,0) circle (1);
		\node[dot] (1) at (270:1) {};
		\node[dot] (3) at (0:0.7) {};
		\node[dot] (4) at (180:0.2) {};
		\draw[thick] (1) to (3);
		\draw[thick] (1) to (4);
		\draw[thick] (3) to (4);
		\draw[thick] (1) to [out=150,in=225] (150:0.6) to [out=45,in=120] (3);
	\end{tikzpicture}
	\caption{A triangulation of a twice-punctured monogon.}
	\label{fig: disk with one marked point}
\end{figure}

Before moving on to the separate cases, let us prove some results that will apply in both settings.

%============================

\subsection{Cutting along radii}

First, we want to examine what happens when we cut along a radius. Suppose that a deep point $p$ assigns a non-zero value to a radius $r$. Then we can undo the tagging on $r^\vee$, as demonstrated in Figure \ref{fig: cutting radius 2 step}, and cut along $r$ from the boundary up to the puncture. Note that this creates not just two copies of the radius $r$, but also two copies of the boundary marked point.

Observe that if the notched version of that radius $r^\vee$ is instead non-zero, we can apply an isomorphism to the cluster algebra that switches the taggings at a given puncture before performing the cut. 
We will ignore this step in most the following arguments and simply assume that the radius being cut has a plain tagging, but we will need to remember to consider the taggings when counting the number of deep components.

\begin{figure}
	\centering
	$
	\begin{tikzpicture}[xscale=1,scale=1.7,baseline={(0,0)}]
		\draw[thick,fill=black!5] (0,0) circle (1);
		\node[dot] (3) at (0:1) {};
		\node[dot] (p) at (180:0.2) {};
		\draw[thick,bend right] (3) to node[pos=0.85,sloped,rotate=90]{$\bowtie$} (p) {};
		\draw[thick,bend left] (3) to (p) {};
		\node[mutable3] (r1) at (340:0.6) {$r$};
		\node[mutable3] (r2) at (30:0.6) {$r^\vee$};
	\end{tikzpicture}
	\quad
	\longrightarrow
	\quad
	\begin{tikzpicture}[xscale=1,scale=1.7,baseline={(0,0)}]
		\draw[thick,fill=black!5] (0,0) circle (1);
		\node[dot] (3) at (0:1) {};
		\node[dot] (p) at (180:0.2) {};
		\draw[thick] (3) to [out=225,in=270] (180:0.4) to [out=90,in=135] (3) {};
		\draw[thick] (3) to (p) {};
		\node[mutable3] (r1) at (0:0.4) {$r$};
		\node[mutable3] (r2) at (60:0.6) {$r^\vee$};
	\end{tikzpicture}
	\quad
	\longrightarrow
	\quad
	\begin{tikzpicture}[xscale=1,scale=1.7,baseline={(0,0)}]
		\draw[thick,fill=black!5] (0,0) circle (1);
		\node[dot] (3) at (0:1) {};
		\node[dot] (4) at (45:1) {};
		\node[dot] (5) at (315:1) {};
		\draw[thick] (5) to [out=150,in=270] (0:0.2) to [out=90,in=210] (4) {};
		\node[frozen] (r1) at (22:1.05) {$r^{\prime \prime}$};
		\node[frozen] (r2) at (338:1.05) {$r^\prime$};
		\node[mutable3] (r3) at (0:0.2) {$r^\vee$};
	\end{tikzpicture}
	$
	\caption{Cutting along a radius}
	\label{fig: cutting radius 2 step}
\end{figure}

First, a proposition that we could have proven in the previous section: 

\begin{prop}\label{prop: benevolent bijection 1 punc}
	Let $\SS$ be a disk with $n\geq 2$ marked points on the boundary and $q=1$ punctures. There is a bijection 
	\[ 
	\begin{Bmatrix} \text{non-totally radial deep} \\ \text{points in } V(\CA(\SS),\Bbbk) \\ \text{non-zero on a radius }r_i \end{Bmatrix} 
	\longleftrightarrow
	\begin{Bmatrix}\text{deep points in } \\ V(\CA(\SS\smallsetminus r_i),\Bbbk) \\ \text{gluable along } r_i \end{Bmatrix}
	\]    
\end{prop}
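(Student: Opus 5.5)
The plan is to combine the punctured cutting isomorphism of Proposition~\ref{prop: cutting subvarieties punctured} with the polygon classification of Theorem~\ref{thm: deeppolygon}. Applying the tag-switching automorphism $i_p$ if necessary, we may assume the spared radius is plain, so $p(r_i)\neq 0$. As in Figure~\ref{fig: cutting radius 2 step}, cutting along $r_i$ yields an unpunctured disk $\SS\smallsetminus r_i\cong\Delta_{n+2}$. Its boundary consists of the $n$ old boundary arcs together with the two copies $r',r''$ of $r_i$, and the old boundary marked point at the end of $r_i$ is doubled. Under this identification, $r_i^\vee$ becomes the diagonal joining the two copies of that marked point, cutting off the triangle with sides $r',r''$. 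Proposition~\ref{prop: cutting subvarieties punctured} with $S=\{r_i\}$ identifies points of $V(\CA(\SS),\Bbbk)$ non-zero on $r_i$ with points of $V(\CA(\Delta_{n+2}),\Bbbk)$ gluable along $r_i$. Any point with $p(r_i)\neq0$ is automatically non-totally radial, so it remains to show that this bijection matches deep points with deep points.

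\emph{Forward direction.} Let $p$ be deep with $p(r_i)\neq 0$. For any triangulation $T$ of $\Delta_{n+2}$, the set $T\cup\{r_i\}$ is a tagged triangulation of $\SS$. Hence $p$ kills some arc of it, and this arc cannot be $r_i$, so it lies in $T$. (This is the punctured analogue of Proposition~\ref{prop: deep cutting inclusion}; the proof transfers verbatim.) Thus the cut point is deep.

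\emph{Reverse direction.} Start from a deep gluable point $\bar p$ of $\Delta_{n+2}$. Theorem~\ref{thm: deeppolygon} forces $n$ to be even, and it says $\bar p$ kills exactly the diagonals joining vertices of the same parity and spares the rest. In particular, $\bar p$ kills $r_i^\vee$, since that diagonal has even length.

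Let $p$ be the glued point; we must show that every tagged triangulation $T$ of $\SS$ contains an arc killed by $p$. We will split into cases.
\begin{enumerate}
\item If $T\ni r_i$, then $T\smallsetminus\{r_i\}$ is a triangulation of $\Delta_{n+2}$, so it contains a killed arc by deepness of $\bar p$.
\item If $T$ contains $r_i^\vee$, we are done, since $r_i^\vee$ is killed.
\item Otherwise, we first pin down the values of the remaining radii and the non-radial arcs. Each such arc of $\SS$ crosses $r_i$ at most once. Using the Ptolemy and skein relations (Figures~\ref{fig: skeinrel} and \ref{fig: tagged skeinrel}), with $r_i$ inverted, we express $p(x)$ for such an arc $x$ in terms of values of $\bar p$ on the polygon. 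The aim is to show that the vanishing pattern of Theorem~\ref{thm: deeppolygon} lifts to $\SS$: plain radii $r_j$ with $j\not\equiv i \pmod 2$ are killed, all notched radii are killed (via Porism~\ref{porism: kill all opposite tagging} and Corollary~\ref{cor: kill all radii} once two adjacent notched radii are known to vanish), and the arcs $d_{jk}$ with $j\equiv k \pmod 2$ are killed.
\end{enumerate}
With this pattern established, case (3) reduces to a parity argument. If $T$ contains a notched radius, it is killed. If $T$ contains only plain radii, all ending at vertices of the spared parity, then two consecutive such radii bound a region whose remaining sides include a non-radial arc joining two vertices of the same parity. That arc lies in $T$ and is killed.

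The main obstacle is the lifting step in case (3): computing the glued values on arcs crossing $r_i$ and confirming that the parity pattern survives gluing, especially for the notched radii. My first attempt would reuse the quadrilateral relations of Lemma~\ref{lemma: non-radial deep quad} and Lemma~\ref{lemma: opposite radii}, propagating around the puncture as in the proof of Proposition~\ref{prop: non-totally radial deep-on-components}.
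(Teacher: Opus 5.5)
Your framework is the paper's: cut along the spared radius (plain after tag-switching), identify $\SS\smallsetminus r_i$ with $\Delta_{n+2}$, get the forward direction from the cutting inclusion, and for the converse use Theorem~\ref{thm: deeppolygon} and check that every tagged triangulation contains a killed arc. Cases (1) and (2) and your closing parity argument are fine.

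\textbf{The gap is case (3).} The lifting of the vanishing pattern is the only step with real content, and you state it as an aim without proving it. The tools you propose for it cannot be used. Porism~\ref{porism: kill all opposite tagging}, Lemma~\ref{lemma: non-radial deep quad} and Lemma~\ref{lemma: opposite radii} all assume $p$ is deep, which is exactly what you are trying to prove for the glued point, so citing them is circular. Corollary~\ref{cor: kill all radii} has no such hypothesis, but you would still need a second notched radius to vanish. The paper passes over this point quickly too: it notes only that $r_i^\vee$ dies and applies Lemma~\ref{lemma: trianglesurf} to two spared radii inside $\SS\smallsetminus r_i$. So you are right that the notched radii $r_k^\vee$ ($k\neq i$) and arcs crossing $r_i$ need an argument, but you must supply one.

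\textbf{How to close it, using only relations in the polygon.}

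\emph{Notched radii.} Let $\ell_i$ be the diagonal joining the two copies $m_i',m_i''$ of $m_i$. As an element, $\ell_i=r_ir_i^\vee$, which is why your case (2) works. Its endpoints have the same parity, so $\bar p(\ell_i)=0$. For $k\neq i$, let $d,d'$ be the two arcs from $m_i$ to $m_k$, i.e.\ the diagonals $m_i''m_k$ and $m_km_i'$. The Ptolemy relation in the quadrilateral $m_i',\xi',m_i'',m_k$ reads $\bar p(\ell_i)\,p(r_k)=p(r_i)\bigl(p(d)+p(d')\bigr)$, so $p(d)+p(d')=0$. The incompatible-tagging relation $r_i r_k^\vee=d+d'$ then gives $p(r_k^\vee)=0$. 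Hence every notched radius dies, and no deepness of $p$ was used.

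\emph{Arcs crossing $r_i$.} Let $d$ run from $m_j$ to $m_k$ ($j\neq k$, both $\neq i$) and cross $r_i$. The quadrilateral with vertices $m_i,m_j,\xi,m_k$ gives $p(r_i)p(d)=p(e)p(r_k)+p(r_j)p(e')$, where $e,e'$ are its sides through $m_i$. All four arcs on the right are sides or diagonals of the polygon. If $j\equiv k$, each product contains a killed factor: $r_j,r_k$ if $j\not\equiv i$, and $e,e'$ if $j\equiv i$. So $p(d)=0$. This is exactly the lifted parity pattern you wanted, and your final argument then finishes the proof.

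\emph{Alternative for the crossing arcs.} You can avoid them altogether. If $T$ has only plain radii and $r_i\notin T$, then $r_i$ crosses no radius of $T$, so it meets only one of the at least two triangles of $T$ at the puncture. Another such triangle therefore lies in $\SS\smallsetminus r_i$, and if both its radii are spared, Lemma~\ref{lemma: trianglesurf} applied to $\bar p$ kills its third side.
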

\begin{proof}
	Suppose that $p \in V(\CA(\SS),\Bbbk)$ is a deep point such that $p$ spares some radius $r_i$, which connects the puncture $\xi_i$ to the boundary marked point $m_i$. From Proposition \ref{prop: deep cutting inclusion}, we know that its image under the cutting isomorphism $\bar{p} \in V(\CA(\SS\smallsetminus r_i),\Bbbk)$ must be deep as well.
	
	Now, suppose that $\bar{p} \in V(\CA(\SS\smallsetminus r_i),\Bbbk)$ is a deep point that is gluable along $r_i$. That is, two adjacent boundary arcs are assigned the same value $p(r_i)$; denote the marked point between these two boundary arcs as $\xi_i^\prime$.
	
	Since $\SS\smallsetminus r_i$ is an unpunctured $(n+2)$-gon, Theorem \ref{thm: deeppolygon} says that $n$ must be even and the values on the boundary arcs must satisfy \[ \frac{b_1 \dotsm b_{n-1}\, p(r_i)}{b_2 \dotsm b_n\, p(r_i)} ~=~ \frac{b_1 \dotsm b_{n-1}}{b_2 \dotsm b_n} ~=~ (-1)^{\frac{(n+2)+2}{2}} ~=~ (-1)^{n/2}. \]
	By the same theorem, we see that the diagonals from boundary marked points to $\xi_i^\prime$ alternate between zero and non-zero values, while all length 2 arcs that form triangles with two boundary arcs are killed. In particular, the image of $r_i^\vee$ is killed. 
	
	Note that every triangulation of $\SS$ will contain at least two radii. Suppose that $T$ is a triangulation of $\SS$ containing two radii spared by $p$. These radii are arcs in $\SS\smallsetminus r_i$ and are both spared by $\bar{p}$. By Lemma \ref{lemma: trianglesurf}, the arc completing a triangle with these arcs must be killed by $\bar{p}$, and its gluing in $\SS$ is killed by $p$. Thus, $p$ kills at least one arc in every triangulation of $\SS$. Hence, $p \in V(\CA(\SS),\Bbbk)$ is a non-totally radial deep point.
\end{proof}

We already know from Theorem \ref{thm: punctured disk deep types} that if $p$ is not a totally radial deep point, there is a radius that is spared by $p$, so the above holds for all non-totally radial deep points. Before extending this Proposition to allow for multiple punctures, we want to show that a benevolent deep point always spares at least one radius.

\begin{figure}[htb]
	\centering
	$
	\begin{tikzpicture}[scale=2,baseline={(0,0)}]
		\draw[thick,fill=black!5] (0,0) circle (1);
		\node[dot] (1) at (270:1) {};
		\node[dot] (3) at (0:0.33) {};
		\node[dot] (4) at (180:0.33) {};
		\node[dot] (5) at (130:0.5) {};
		\node[dot] (6) at (50:0.5) {};
		\node[dot] (7) at (270:0.3) {};
		\draw[thick,dark red] (4) to [out=45,in=300] (100:0.7) to [out=120,in=90] (180:0.8) to [out=270,in=150] (1);
		\draw[thick,dark red] (3) to [out=135,in=240] (80:0.7) to [out=60,in=90] (0:0.8) to [out=270,in=30] (1);
		\draw[thick] (3) to (4);
	\end{tikzpicture}
	\quad\quad\quad
	\begin{tikzpicture}[scale=2,baseline={(0,0)}]
		\draw[thick,fill=black!5] (0,0) circle (1);
		\node[dot] (1) at (225:1) {};
		\node[dot] (2) at (75:1) {};
		\node[dot] (3) at (0:0.33) {};
		\node[dot] (4) at (180:0.33) {};
		\node[dot] (5) at (130:0.5) {};
		\node[dot] (6) at (50:0.5) {};
		\node[dot] (7) at (270:0.3) {};
		\draw[thick,dark red] (4) to [out=45,in=300] (105:0.7) to [out=120,in=90] (180:0.8) to [out=270,in=100] (1);
		\draw[thick,dark red] (3) to [out=135,in=240] (90:0.7) to [out=60,in=225] (2);
		\draw[thick] (3) to (4);
	\end{tikzpicture}
	$
	\caption{Unpunctured quadrilaterals with a tunnel opposite a boundary component.}
	\label{fig: spared tunnel}
\end{figure}

\begin{lemma}\label{lemma: tunnel quadrilateral}
	Let $\SS$ be a disk with $n\geq 1$ marked points on the boundary and $q\geq 2$ punctures. For any given tunnel $\tau$, there is an unpunctured quadrilateral in $\SS$ bounded by one boundary arc, two radii, and the tunnel $\tau$.
\end{lemma}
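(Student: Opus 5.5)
Let $\tau$ be a tunnel joining punctures $\xi$ and $\xi'$ (distinct, since a simple arc with both ends at the same puncture in a disk either cuts out a once-punctured monogon or encloses further punctures; I would first reduce to the case $\xi\neq\xi'$, and treat a loop-tunnel separately if the paper's notion of tunnel allows it). The construction is direct. Since $\tau$ is simple and $\SS$ is a disk, a regular neighborhood of $\tau$ is a disk $N$ containing no punctures other than $\xi$ and $\xi'$. Its boundary circle is isotopic, in the complement of the punctures, to a closed curve around $\tau$ that stays close to $\tau$.

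Next, fix a boundary arc $b$ with endpoints $m_1,m_2$ on $\partial\SS$; if $n=1$, take $b$ to be the unique boundary loop with $m_1=m_2$. Choose a simple path $\gamma$ in $\SS$ from an interior point of $b$ to an interior point of $\tau$ that avoids all punctures and meets $\tau$ and $\partial\SS$ only at its endpoints. Such a path exists because the complement of $\tau$ together with the punctures is connected: $\tau$ is an arc in the interior of a disk, so removing it does not disconnect the surface.

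Thicken $\gamma\cup\tau\cup b$ to get a region $Q$, and let its two long sides run parallel to $\gamma$ on either side. One side starts at $m_1$, runs alongside $\gamma$, and turns toward $\xi$ along $\tau$. The other starts at $m_2$ and turns toward $\xi'$. These two curves are the radii $\rho_1$ (from $m_1$ to $\xi$) and $\rho_2$ (from $m_2$ to $\xi'$). By construction $\rho_1,\rho_2,\tau,b$ are pairwise non-crossing, and they bound the thin region $Q$, which is homeomorphic to a disk with four marked corners $m_1,\xi,\xi',m_2$ and contains no punctures. So $Q$ is an unpunctured quadrilateral with sides $b,\rho_1,\tau,\rho_2$, which is exactly the configuration of Figure \ref{fig: spared tunnel}. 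When $n=1$ we have $m_1=m_2$, and $Q$ is still a quadrilateral in the sense of a triangulable sub-polygon; this matches the left picture of that figure.

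The main obstacle is checking that $\rho_1$ and $\rho_2$ are genuine simple, non-contractible radii and that they are compatible as tagged arcs, both tagged plain at their punctures, consistent with the plain tagging of $\tau$. This needs $\rho_i$ not to cut out a once-punctured monogon. That holds because $\rho_i$ ends at a puncture and at a boundary point, so it is not a loop. It also needs $\rho_1\neq\rho_2$ up to homotopy, which holds since they end at different punctures. If $\tau$ is notched at an end, I would apply the tag-switching automorphism $i_\xi$ to reduce to the plain case. Finally, $Q$ being a disk with no interior punctures follows from $Q$ lying in a small neighborhood of $\gamma\cup\tau\cup b$, which contains no marked points besides its four corners.
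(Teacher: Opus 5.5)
Your construction is the same as the paper's: two radii leave the ends of $\tau$, run in parallel along a puncture-free path to the boundary, and then split along a boundary arc to reach its two endpoints. This cuts out an unpunctured quadrilateral with sides $b,\rho_1,\tau,\rho_2$, and you simply give more detail than the paper's one-line sketch.

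The one loose end is your parenthetical setting aside tunnels with both ends at the same puncture. It does not justify that reduction, because a loop enclosing at least two punctures is a genuine tunnel once $q\geq 3$. Your construction still covers this case without change: $\gamma$ only needs to reach the side of $\tau$ facing $\partial\SS$, and $\rho_1\neq\rho_2$ then follows from their distinct boundary endpoints, or (when $n=1$) from the punctures enclosed by $\tau$, rather than from their ending at different punctures.
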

\begin{proof}
	As in Figure \ref{fig: spared tunnel}, take two radii emanating from opposite ends of a tunnel and have them travel together in parallel between any other punctures toward the boundary. Before reaching the boundary, have them travel in opposite directions along the boundary until they each reach a marked point. 
\end{proof}

\begin{lemma}\label{lemma: benevolent spares a radius}
	Let $\SS$ be a disk with $n\geq 1$ marked points on the boundary and $q\geq 2$ punctures, and let $p \in V(\CA(\SS),\Bbbk)$ be a benevolent deep point. Then $p$ must spare at least one radius.
\end{lemma}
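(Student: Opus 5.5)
I would argue by contradiction: suppose $p$ is a benevolent deep point that kills every radius, of both taggings, at every puncture. Benevolence says no puncture is quashed. So for each puncture $\xi$ there is some arc emanating from $\xi$ that $p$ spares. Since every radius is killed, that arc must be a tunnel $\tau$, joining $\xi$ to another puncture $\xi'$ with some tagging. Using the tag-switching automorphism $i_\xi$ (and $i_{\xi'}$), I would arrange that $\tau$ is plain at both ends.

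The key step is Lemma \ref{lemma: tunnel quadrilateral}. It gives an unpunctured quadrilateral in $\SS$ whose sides are one boundary arc $b$, two radii $r$ and $r'$ (ending at $\xi$ and $\xi'$, taken with the same taggings as $\tau$), and the tunnel $\tau$. Let $\delta$ and $\delta'$ be the two diagonals of this quadrilateral. Each diagonal joins a puncture to a boundary marked point, so each is a radius of $\SS$ and $p(\delta)=p(\delta')=0$. The Ptolemy relation for this quadrilateral is
\[ p(\delta)p(\delta') = p(b)p(\tau) + p(r)p(r'). \]
The left side and $p(r)p(r')$ both vanish, which forces $p(b)p(\tau)=0$. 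But $p(b)\neq 0$ because $b$ is a boundary arc, and $p(\tau)\neq 0$ by the choice of $\tau$. This is a contradiction, so some radius must be spared.

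The main obstacle is checking that the Ptolemy relation really holds in this form for tagged arcs. The quadrilateral is embedded and unpunctured, so none of its sides cuts out a once-punctured monogon and no product substitution as in \cite[Definition 8.4]{FT18} is needed. The taggings are consistent at the corners: all arcs ending at $\xi$ must carry the same tag for the triangulation containing the quadrilateral to be valid, and the same holds at $\xi'$. The alternative is to derive the relation directly from the skein relation at the crossing of $\delta$ and $\delta'$ in $\TSk(\SS)$, which is valid by Theorem \ref{thm: tagged skein cluster algebra} since $\SS$ is a disk.

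Second, I must make sure the diagonals are genuinely simple non-boundary arcs. A diagonal could fail to be simple only if it bounded a punctured monogon or were contractible, and that cannot happen because the quadrilateral from Lemma \ref{lemma: tunnel quadrilateral} contains no punctures in its interior. If $n=1$, the two radii in the quadrilateral end at the same boundary marked point; the construction in that lemma still yields distinct sides, so the argument goes through unchanged.
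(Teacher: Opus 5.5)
Your proof is correct and follows the paper's argument: you assume all radii are killed, use benevolence to obtain a spared tunnel $\tau$, apply Lemma \ref{lemma: tunnel quadrilateral}, and reach the contradiction $0 = p(b)p(\tau) + 0$ from the Ptolemy relation, whose diagonals are radii. Your remarks on tagging consistency, on why no monogon substitution is needed in the Ptolemy relation, and on the diagonals being radii fill in details the paper leaves implicit.
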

\begin{proof}
	Seeking a contradiction, suppose that $p$ kills all radii. Since $p$ is benevolent by assumption, $p$ must spare a tunnel $\tau$. By Lemma \ref{lemma: tunnel quadrilateral}, there exists a quadrilateral consisting of two radii, a boundary component $b$, and $\tau$, with $\tau$ opposite $b$ as in Figure \ref{fig: spared tunnel}. The Ptolemy relation on such a quadrilateral is then $0 = p(\tau) p(b) + 0$, but both $p(b)$ and $p(\tau)$ are non-zero by assumption. This is a contradiction, so $p$ must spare at least one radius.
\end{proof}

And now, we extend Proposition \ref{prop: benevolent bijection 1 punc} to the current setting. 

\begin{prop}\label{prop: benevolent bijection multiple punctures}
	Let $\SS$ be a disk with $n\geq 1$ marked points on the boundary and $q\geq 2$ punctures. There is a bijection 
	\[ 
	\begin{Bmatrix} \text{benevolent deep points} \\ \text{in } V(\CA(\SS),\Bbbk) \\ \text{non-zero on }r_i \end{Bmatrix} 
	\longleftrightarrow
	\begin{Bmatrix}\text{benevolent deep points} \\ \text{in } V(\CA(\SS\smallsetminus r_i),\Bbbk) \\ \text{gluable along }r_i \end{Bmatrix}
	\]    
\end{prop}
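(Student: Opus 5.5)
Cutting $\SS$ along the radius $r_i$ (after swapping the taggings at the relevant puncture via the involution $i_\xi$ if needed, so that $r_i$ may be taken plain) gives a disk $\SS\smallsetminus r_i$. It has $n+2$ boundary marked points and $q-1$ punctures, and the two new boundary arcs $r'$ and $r''$ are adjacent at the new marked point $\xi_i'$, which is the image of the puncture $\xi_i$. The cutting isomorphism of Proposition \ref{prop: cutting subvarieties punctured} identifies points of $V(\CA(\SS),\Bbbk)$ non-zero on $r_i$ with points of $V(\CA(\SS\smallsetminus r_i),\Bbbk)$ gluable along $r_i$. So the task is to show that this identification preserves both deepness and benevolence in each direction.

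\emph{Forward direction.} Let $p$ be benevolent, deep, and non-zero on $r_i$. By Proposition \ref{prop: deep cutting inclusion}, its image $\bar p$ is deep. For benevolence, note that every arc of $\SS\smallsetminus r_i$ at a remaining puncture $\xi$ glues to an arc of $\SS$ at $\xi$. Since $p$ does not quash $\xi$, there is an arc $\alpha$ at $\xi$ in $\SS$ with $p(\alpha)\neq 0$. I would show that $\alpha$ can be chosen compatible with $r_i$. If $\alpha$ crosses $r_i$, apply Corollary \ref{coro: reducing tagged crossings}: the element $[r_i]^{\mu}[\alpha]$ has zero intersection pairing with $r_i$ and takes a non-zero value under $p$. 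Its support therefore contains a multicurve avoiding $r_i$ on which $p$ is non-zero. Factoring that multicurve into simple arcs and loops, and using the loop relations to rewrite any loops in terms of arcs, should produce an arc at $\xi$ disjoint from $r_i$ on which $p$ is non-zero. This arc lifts to $\SS\smallsetminus r_i$, so $\bar p$ does not quash $\xi$.

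\emph{Converse direction.} Let $\bar p$ be benevolent, deep, and gluable, and let $p$ be the glued point. Punctures $\xi\neq\xi_i$ are spared by $p$, because the spared arcs at $\xi$ glue to spared arcs. The puncture $\xi_i$ is spared because $p(r_i)\neq 0$. Hence $p$ is benevolent, and it remains to show that $p$ is deep. This is the main obstacle, since Corollary \ref{coro: deepcutpoly} is unavailable for punctured surfaces.

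I would imitate the proof of Proposition \ref{prop: benevolent bijection 1 punc}. By induction on $q$ (the base case being Proposition \ref{prop: benevolent bijection 1 punc} together with the later classification), the benevolent deep locus of $\SS\smallsetminus r_i$ is controlled by the same mechanism as in the once-punctured case. In particular, $\bar p$ should kill an odd number of arcs in every triangle of arcs (the analogue of Lemma \ref{lemma: trianglesurf}) whose sides do not bound punctured monogons, and every tagged pair $r_i$, $r_i^\vee$ together with the length-two arcs at $\xi_i'$ behaves as in Theorem \ref{thm: deeppolygon}, so $p(r_i^\vee)=0$.

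Now take an arbitrary tagged triangulation $T$ of $\SS$ and suppose $p$ spares every arc of $T$. Pick a triangle of $T$ that is not self-folded. If it contains $r_i$, it is a triangle of arcs in the cut surface. Otherwise, I would use Lemma \ref{lemma: tunnel quadrilateral}-style quadrilaterals to move to a triangle whose sides are all compatible with $r_i$, after first flipping within $T$ away from $r_i$ using arcs that $p$ spares. In either case this yields a triangle of $\SS\smallsetminus r_i$ all of whose sides are spared by $\bar p$, contradicting the odd-vanishing property.

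Self-folded triangles need separate treatment. There the relevant pair consists of a plain and a notched arc at a puncture, and the skein relation for incompatible taggings (Figure \ref{fig: tagged skeinrel}) reduces that case to an ordinary triangle. I expect establishing the odd-vanishing property for benevolent points on punctured disks, uniformly in the induction, to be the delicate step.
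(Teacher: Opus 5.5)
You have a genuine gap in the converse direction. Your forward direction is sound, and more careful than the paper, which simply asserts that benevolence survives the cut. The one point to make explicit is that resolving crossings and tag mismatches with $r_i$ never removes an end at a puncture $\xi\neq\xi_i$. Hence every multicurve in the support of $[r_i]^{\mu}[\alpha]$ has a component ending at $\xi$, and that component is spared; no rewriting of loops is needed.

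The first problem in the converse is that the odd-vanishing property you want for $\bar p$ is false as stated. By Theorem \ref{thm: punctured disk deep types} and Porism \ref{por: benevolent characterization}, at each puncture $\xi$ of $\SS\smallsetminus r_i$ a benevolent $\bar p$ kills every radius of one tagging, say notched. Take two notched radii from $\xi$ to adjacent boundary marked points, together with the boundary arc between them. This triangle has exactly two killed sides. At best you get parity for triangles carrying the spared tagging at every puncture. Triangulations that use the killed tagging somewhere must then be handled separately, which requires knowing that all arcs with that tagging at $\xi$, tunnels included, vanish.

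The more serious problem is that nothing in your plan controls the arcs of $T$ that cross $r_i$. These are not arcs of $\SS\smallsetminus r_i$, so $\bar p$ gives no direct information about them, and that is exactly where deepness of the glued point must be proved.
\begin{itemize}
\item Flipping within $T$ away from $r_i$ using spared arcs has no mechanism behind it. If every arc of $T$ is spared, an exchange relation $xx'=M_1+M_2$ says nothing about whether $p(x')\neq 0$, so the flips need not stay spared.
\item Lemma \ref{lemma: tunnel quadrilateral} supplies one specific quadrilateral (a boundary arc, two radii and a tunnel). It does not give a way to transport a triangle of $T$ into the cut surface.
\end{itemize}

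The paper avoids this by using the explicit vanishing pattern supplied by the induction, with Theorem \ref{thm: punctured disk deep types} as the base case. At each puncture, $p$ kills all radii of one tagging and alternate radii of the other, and it also kills the length-two diagonals. The paper combines this with the observation that every triangulation of $\SS$ contains either two radially adjacent radii or a length-two diagonal, so every triangulation contains a killed arc.

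For a parity argument like yours to work, you would first need a comparable global statement. Namely, $a\mapsto[p(a)\neq 0]$ should be given by a parity rule on endpoints for \emph{all} correctly tagged arcs of $\SS$, including those crossing $r_i$.
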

\begin{proof}
	Suppose that $p \in V(\CA(\SS),\Bbbk)$ is a benevolent deep point such that $p$ spares the radius $r_i$, with ends at the puncture $\xi_i$ and marked point $m_i$.  From Proposition \ref{prop: deep cutting inclusion}, we know that its image under the cutting isomorphism $\bar{p} \in V(\CA(\SS\smallsetminus r_i),\Bbbk)$ must be deep as well. Since $p$ does not quash any punctures, $\bar{p}$ will not quash any remaining puncture after cutting, so $\bar{p}$ is a benevolent deep point on $\SS\smallsetminus r_i$.
	
	To prove the opposite direction, we use induction on the number of punctures. For the following, assume that $\bar{p} \in V(\CA(\SS\smallsetminus r_i),\Bbbk)$ is a benevolent deep point that is gluable along $r_i$.
	\begin{itemize}
		\item First, suppose that $q=2$. Since $\SS\smallsetminus r_i$ is a once-punctured $(n+2)$-gon, Theorem \ref{thm: punctured disk deep types} says that since $p$ is non-zero on $r_i$, it must kill alternating plain radii and all notched radii, and the boundary values satisfy \[ \frac{b_1 \dotsm b_{n-1}\, r_i}{b_2 \dotsm b_n\, r_i} ~=~ \frac{b_1 \dotsm b_{n-1}}{b_2 \dotsm b_n} ~=~ (-1)^{\frac{n+2}{2}}. \]
		Also, $n$ must be even. 
		Since $\bar{p}$ is gluable along $r_i$, $p$ will have a non-zero value on $r_i$, so $p$ does not quash either puncture. 
		
		Also, observe that any triangulation of $\SS$ must either include radially adjacent radii or a length 2 diagonal. Thus, every triangulation must include an arc that is killed by $p$ as a gluing of $\bar{p}$, and $p$ is therefore a benevolent deep point that spares $r_i$.
		
		\item For our inductive hypothesis, now suppose that $q>2$ and the bijection holds for all $q_0 < q$. 
		Since $\SS\smallsetminus r_i$ is a $(q-1)$-punctured $(n+2)$-gon, our inductive hypothesis says that there exists an ordered sequence of non-boundary arcs $r_1, \dotsc, r_{q-1}$ in $\SS\smallsetminus r_i$ that are spared in $\bar{p}$ such that the sets 
		\[ R_k \coloneqq \{ r_1, \dotsc, r_k \}, \text{ and} \]
		\[ S_k \coloneqq \{ \text{benevolent deep points $p\in V(\CA(\SS\smallsetminus R_{k-1}),\Bbbk)$ non-zero on $R_k$}\} \]
		give us bijections \[ S_2 \leftrightarrow S_3 \leftrightarrow \dotsb \leftrightarrow S_{q-2} \leftrightarrow S_{q-1}. \]
		There is then a benevolent deep point $\widehat{p} \in V(\CA(\SS\smallsetminus R_{q-1}),\Bbbk)$ that glues to a benevolent deep point on $\SS \smallsetminus r_i$. Hence, $\widehat{p}$ kills alternating plain radii and all notched radii in $\SS\smallsetminus R_{q-1}$, and satisfies the boundary relation 
		\[ \frac{b_1 \dotsm b_{n-1}\, r_1 \dotsm r_{q-1}\, r_i}{b_2 \dotsm b_n\, r_1 \dotsm r_{q-1}\, r_i} ~=~ \frac{b_1 \dotsm b_{n-1}}{b_2 \dotsm b_n} ~=~ (-1)^{\frac{n+2q + 2}{2}}. \]
		As a result, $n$ must be even, and alternating radii around each puncture must be killed by $p$. 
		
		Again, observe that any triangulation of $\SS$ must either include radially adjacent radii or a length 2 diagonal. Thus, every triangulation must include an arc that is killed by $p$ as a gluing of $\bar{p}$, and $p$ is therefore a benevolent deep point that spares $r_i$.            
	\end{itemize}
\end{proof}

\begin{rem} When we say \emph{alternating radii around each puncture}, we mean the radii connecting that puncture to either the even-indexed marked points or the odd-indexed marked points. It is possible that a triangulation will contain multiple radii with the same endpoints (e.g. Figure \ref{fig: disk with one marked point}), with both radii killed or both spared. This will be further clarified in Lemma \ref{lemma: radii with same endpoints}.
\end{rem}

Let us now emphasize a few takeaways from the above proof in the form a porism.
\begin{porism}\label{por: benevolent characterization}
	Let $\SS$ be a disk with $n\geq 1$ marked points on the boundary and $q\geq 2$ punctures, and let $p \in V(\CA(\SS),\Bbbk)$ be deep.
	\begin{enumerate}
		\item If $n$ is odd, then $p$ must quash at least one puncture.
		\item If $p$ is benevolent, then $p$ must satisfy the boundary relation 
		\[ \frac{b_1 \dotsm b_{n-1}}{b_2 \dotsm b_n} ~=~ (-1)^{\frac{n+2q + 2}{2}}. \]
		\item If $p$ is benevolent, then around each puncture, $p$ must kill alternating radii of one tagging and all radii of the opposite tagging. 
	\end{enumerate} 
\end{porism}

%=====================

\subsection{Disks with at least 2 marked points on the boundary}

We examine the deep points in three groups: the malevolent points, those that quash a proper subset of the punctures, and those that quash no punctures. 
Let $\SS$ be a disk with $n\geq 2$ marked points on the boundary and $q\geq 2$ punctures, and let $p\in V(\CA(\SS),\Bbbk)$ be deep.

\begin{itemize}
	\item Suppose $p$ is a malevolent deep point, so every puncture is quashed by $p$. Let $c_{i,1}$ be an arc that cuts out a once-punctured bigon opposite the boundary arc $b_i$. For $2 \leq j \leq p$, let $c_{i,j}$ be an arc that cuts out a once-punctured bigon opposite the arc $c_{i,j-1}$. Since every arc emanating from every puncture is killed by $p$, we know that $c_{i,1} = -b_i \neq 0$. Cutting along $c_{i,1}$, $\SS \setminus c_i$ has two components \textemdash a once-punctured bigon and a $(q-1)$-punctured $n$-gon. Repeating this process, we see that $c_{i,j} = (-1)^j~b_i$, so $c_{i,j}\neq 0$ and we can cut along each $c_{i,j}$. The cut surface $\SS \smallsetminus \{c_{i,1}, \dotsc, c_{i,q}\}$ then has $q+1$ components \textemdash an unpunctured $n$-gon and $q$ once-punctured bigons.
	
	The deep point $\bar{p} \in V(\CA(\SS\smallsetminus \{c_{i,1}, \dotsc, c_{i,q} \}), \Bbbk)$ restricts to a deep point on each of once-punctured bigons, so it need not be deep on the unpunctured $n$-gon. Also, the value of the boundary arc $b_i$ is determined by the value of the corresponding boundary arc $c_{i,q}$ in the unpunctured $n$-gon. As such, the set of malevolent deep points is isomorphic to the cluster variety $V(\CA(\Delta_n),\Bbbk)$, which is of dimension $2n-3$.
	
	\item Suppose $p$ is not a malevolent deep point, but at least one puncture $\xi_i$ is quashed by $p$. 
	Let $c_{i,1}$ be an arc that cuts out a once-punctured bigon opposite the boundary arc $b_i$, such that the bigon contains the puncture $\xi_i$. Since all arcs emanating from $\xi_i$ are killed by $p$, we have $c_{i,1} = -b_i \neq 0$. Cutting along $c_{i,1}$, $\SS \setminus c_{i,1}$ has two components \textemdash a once-punctured bigon $\SS_1$ and a $(q-1)$-punctured $n$-gon $\SS^\circ$. The deep point $\bar{p} \in V(\CA(\SS\smallsetminus c_{i,1}), \Bbbk)$ restricts to a deep point on $\SS_1$, so it need not be deep on $\SS^\circ$. Also, the value of the boundary arc $b_i$ is determined by the value of the corresponding boundary arc $c_{i,1}$ in $\SS^\circ$. As such, the set of deep points that quash a given puncture is isomorphic to $V(\CA(\SS^\circ),\Bbbk)$, the cluster variety corresponding to the $(q-1)$-punctured $n$-gon. 
	
	Since there are $q$ punctures, there are $q$ such deep subsets, with intersection equal to the malevolent points described in the previous bullet.
	
	\item Suppose no punctures are quashed by $p$. From Lemma \ref{lemma: benevolent spares a radius}, we know that at least one radius is spared. From Porism \ref{por: benevolent characterization}, we know three things:
	\begin{itemize}
		\item If $n$ is odd, then $p$ must quash at least one puncture.
		\item If $p$ is benevolent, then $p$ must satisfy the boundary relation 
		\[ \frac{b_1 \dotsm b_{n-1}}{b_2 \dotsm b_n} ~=~ (-1)^{\frac{n+2q + 2}{2}}. \]
		\item If $p$ is benevolent, then around each puncture, $p$ kills alternating radii of one tagging and all radii of the opposite tagging.
	\end{itemize} 
	Thus, if $n$ is odd, there are no benevolent deep points. If $n$ is even, then the boundary values must satisfy the given relation, so so choosing $n-1$ of them determines the last one. Also, $p$ kills alternating radii of one tagging at each puncture, so once the boundary values are chosen, a choice of non-zero value for one radius at each puncture determines the rest. Recall that we have 2 possible taggings at each puncture, and a choice as to whether the odd-indexed or even-indexed radii are killed, giving us $4^q$ sets of benevolent deep points, each isomorphic to $(\Bbbk^\times)^{q+n-1}$.
	
\end{itemize}

%=====================

\subsection{Monogons}

We now turn to punctured monogons, again examining the deep points in three separate groups: the malevolent points, those that quash a proper subset of the punctures, and those that quash no punctures. 
Let $\SS$ be a disk with $n=1$ marked points on the boundary and $q\geq 2$ punctures, and let $p\in V(\CA(\SS),\Bbbk)$ be deep.

\begin{itemize}
	\item Suppose $p$ is a malevolent deep point, so every puncture is quashed by $p$. 
	\begin{enumerate} 
		\item If $q=2$, then there is only one arc with a non-zero value \textemdash the boundary arc \textemdash and it will appear in Ptolemy relations only as a product with some other arc, all Ptolemy relations are simplified to $0=0$. Thus, the value on the boundary arc can be any non-zero value, giving us a deep subset of $V(\CA (\SS), \Bbbk)$ isomorphic to $\Bbbk^\times$.
		
		\item If $q \geq 3$, there are arcs which cut out $k$-punctured monogons for $2 \leq k \leq q-1$, which we denote $c_k$. Observe that the arc $c_{q-1}$ forms a once-punctured bigon with the boundary arc $b$. Since all radii are killed, we have the relation $c_{q-1} = -b_1 \neq 0$. Likewise, $c_{j-1} = -c_j$ for all $2 < j \leq q-1$. Cutting along $\{ c_2, \dotsc, c_{q-1} \}$ gives us $q-1$ components: a twice-punctured monogon and $q-2$ once-punctured bigons. Since the value of the boundary arc $b_1$ determines the values of the boundary arcs of the other components, there is a single choice of non-zero value, giving us a single deep subset of $V(\CA(\SS),\Bbbk)$ isomorphic to $\Bbbk^\times$.
	\end{enumerate}
	
	\item Suppose $p$ is not a malevolent deep point, but at least one puncture $\xi_i$ is quashed by $p$. 
	\begin{itemize}
		\item First, suppose that $q=2$. Since $p$ is not malevolent, there is some other puncture $\xi_j$ that is not quashed by $p$. Thus, we know there is at least one arc $\alpha$ emanating from $\xi_j$ that is spared by $p$. Likewise, there is an arc $\beta$ that cuts out a bigon with $\alpha$ containing the quashed puncture $\xi_i$. Since $p(\alpha) \neq 0$, and all arcs emanating from $\xi_i$ are killed by $p$, the relation $p(\alpha) + p(\beta) = 0$ implies that $p(\beta) \neq 0$ as well. 
		
		Cutting along the arcs $\{ \alpha, \beta \}$ produces one of two options, depending on whether the arcs are tunnels or one end is at the boundary. If both ends are at punctures, then we have a disconnected surface with one component a once-punctured bigon and the other a $(q-3)$-punctured $(2,1)$-annulus. If one end is at the boundary, then we have a disconnected surface with one component a once-punctured bigon and the other a $(q-2)$-punctured trigon. Denote the once-punctured bigon as $\SS_1$ and the other component as $\SS^\circ$. Since the restriction of $p$ to $\SS_1$ is deep, the restriction to $\SS^\circ$ need not be deep. Thus, the set of deep points that quash a given puncture is isomorphic to $V(\CA(\SS^\circ), \Bbbk)$, the cluster variety corresponding to the surface with a punctured bigon cut out. 
		
		\item Now, suppose that $q\geq 3$, and let $\xi_1$ be a puncture that is quashed by $p$. Denote by $c_1$ the arc that cuts out a $(q-1)$-punctured monogon not containing $\xi_1$. Along with the boundary arc $b_1$, the arc $c_1$ cuts out a once-punctured bigon containing $\xi_1$. Since all radii from $\xi_1$ are killed by $p$, we have the relation $c_1 = -b_1 \neq 0$; cutting along $c_1$ gives us two connected components \textemdash a once-punctured bigon $\SS_1$ and a $(q-1)$-punctured monogon $\SS^\circ$. Because the restriction of $p$ to $\SS_1$ is deep, the restriction to $\SS^\circ$ need not be deep. Further, the values on the boundary edges of $\SS_1$ are determined by the boundary value on $\SS^\circ$, so the set of deep points that quash a given puncture is isomorphic to $V(\CA(\SS^\circ), \Bbbk)$, the cluster variety corresponding to a $(q-1)$-punctured monogon. 
	\end{itemize}
	
	Finally, since there are $q$ punctures, there are $q$ such deep subsets, with intersection equal to the set of malevolent points.
	
	\item From Porism \ref{por: benevolent characterization}, we know that if $n$ is odd, then $p$ must quash at least one puncture. Thus, there are no benevolent deep points in a punctured monogon.
	
\end{itemize}

%============================

\subsection{Benevolent points and radii}\label{section: benevolent radii}

Now, we want to examine how a benevolent deep point handles radii with the same endpoints. In particular, in a disk with at least 2 punctures, there are radii with the same endpoints that are not homotopic, so it is natural to ask how the values assigned to these radii are related.

\begin{lemma}[The Unicorn Surgery Lemma\footnote{The construction in the proof of the lemma is inspired by the \emph{unicorn paths} of \cite{HPW15} and \emph{surgery paths} of \cite{HatcherTriangulations}.}]\label{lemma: radii with same endpoints}
	Let $\SS$ be a disk with $n\geq 1$ marked points on the boundary and $q\geq 2$ punctures, and let $p\in V(\CA(\SS),\Bbbk)$ be a benevolent deep point. If $r_1$ and $r_2$ are radii with the same endpoints, then \[p(r_1) = (-1)^k ~p(r_2) \] for some $k \in \bbN$.
\end{lemma}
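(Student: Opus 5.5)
Two radii with the same endpoints can be connected by a short chain of radii, each consecutive pair being the two sides of a punctured bigon. I will relate the values along this chain by Ptolemy relations.

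\textbf{Reduction to the same tagging.} Suppose one of $r_1,r_2$ is killed by $p$. Then I expect the other to be killed too, which will follow as a byproduct of the argument below. Since $p$ is benevolent, Porism \ref{por: benevolent characterization} says that at the common puncture $\xi$, $p$ kills all radii of one tagging. So we may assume $r_1$ and $r_2$ carry the tagging that is not uniformly killed, and work only with that tagging.

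\textbf{The unicorn surgery construction.} Put $r_1$ and $r_2$ in minimal position, relative to their common endpoints $m$ on the boundary and $\xi$ at the puncture. I will induct on the number of crossings $\iota(r_1,r_2)$.

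\emph{Base case: $\iota=0$.} Then $r_1\cup r_2$ bounds a disk $B$ containing some punctures, with at least one puncture since $r_1,r_2$ are not homotopic. If $B$ contains exactly one puncture $\eta$, then $r_1,r_2$ form a once-punctured bigon around $\eta$. Its two radii to $\eta$ from $m$ and from $\xi$ are a plain/notched pair at $\eta$. The bigon relation of Figure \ref{fig: bigon relation}, or equivalently the incompatible-taggings skein relation of Figure \ref{fig: tagged skeinrel}, gives
\[
p(\text{radius at }\eta)\,p(\text{tagged partner at }\eta)=p(r_1)+p(r_2).
\]
By Porism \ref{por: benevolent characterization}(3) applied at $\eta$, one of these two arcs has the uniformly killed tagging at $\eta$. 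Here I need to check that both ends of the relevant tagged arcs are at $\eta$ in the opposite tagging, which holds because the relation's left side is the notched-versus-plain pair at $\eta$. Hence the left side is $0$, so $p(r_1)=-p(r_2)$.

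If $B$ contains several punctures, choose an intermediate radius $r_3$ from $m$ to $\xi$ inside $B$, disjoint from $r_1$ and $r_2$, that separates off exactly one puncture. Then induct on the number of punctures in $B$, accumulating one sign per puncture.

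\emph{Inductive step: $\iota>0$.} Follow $r_2$ from $\xi$ until its first crossing with $r_1$. Form the unicorn arc $r_3$ by taking the initial segment of $r_2$ up to that crossing, then continuing along the segment of $r_1$ from there to $m$ (or the reverse concatenation). This $r_3$ is a simple radius from $m$ to $\xi$ with $\iota(r_3,r_1)<\iota(r_2,r_1)$ and $\iota(r_3,r_2)<\iota(r_1,r_2)$. The unicorn path construction of \cite{HPW15} guarantees that such surgeries produce a sequence of simple radii $r_1=s_0,s_1,\dots,s_\ell=r_2$ with consecutive ones disjoint apart from their endpoints. Applying the base case to each consecutive pair gives $p(s_{j+1})=\pm p(s_j)$, which yields $p(r_1)=(-1)^k p(r_2)$. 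In particular, $p(r_1)=0$ exactly when $p(r_2)=0$.

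\textbf{Main obstacle.} The hard part is the base case bookkeeping. First, I must verify that the punctured-bigon relation between two radii with the same endpoints really has the form $x\,y=p(r_1)+p(r_2)$ with $x,y$ oppositely tagged at the interior puncture, and not additional terms from the other enclosed punctures. Second, I must check that the unicorn arcs remain simple and non-contractible, meaning they never cut out an unpunctured monogon. I would handle the second point by discarding any degenerate surgery step, using minimal position of the arcs.
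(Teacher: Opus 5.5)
Your proposal is correct and takes essentially the paper's route. Compatible radii bound a $k$-punctured bigon, which you split into once-punctured bigons, each contributing a sign via the bigon relation. Incompatible radii are joined by unicorn surgery, which strictly lowers the intersection number; you start the surgery at $\xi$ rather than at $m$, the mirror of the paper's construction.

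One small correction: in the once-punctured bigon the arc from $\xi$ to $\eta$ is a tunnel, not a radius, so Porism~\ref{por: benevolent characterization}(3) only lets you kill the radius from $m$ to $\eta$. That suffices, because the bigon relation holds for either choice of which arc is notched at $\eta$, so you can put the uniformly killed tagging on that radius.
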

\begin{proof}
	If $r_1$ and $r_2$ are compatible \textemdash that is, their homotopy classes contain representative arcs that do not intersect except at their endpoints \textemdash then they bound a $k$-punctured bigon for some $k\geq 1$. Since $p$ kills all radii of one of the taggings, if $k=1$, we have the relation $p(r_1) + p(r_2) = 0$. If $k>1$, there are radii $c_1, \dotsc, c_{k-1}$ that cut the $k$-punctured bigon into $k$ once-punctured bigons. The relations $p(r_1) + p(c_1) = 0$, $p(c_{k-1}) + p(r_2) = 0$, and $p(c_i) + p(c_{i+1}) = 0$ for all $1 \leq i \leq k-2$ combine to give us the desired relation. 
	
	If $r_1$ and $r_2$ are not compatible, we can construct a sequence of radii with the same endpoints $r_1 = c_0, c_1, \dotsc, c_\ell, c_{\ell+1} = r_2$ such that $c_j$ is compatible with both $c_{j-1}$ and $c_{j+1}$ for all $1 \leq j < \ell$. To begin, we choose minimally intersecting representatives of $r_1$ and $r_2$ that intersect transversely at each crossing. Next, assign an orientation to each radius so that both radii "flow" from the boundary marked point $m$ to the puncture $\xi$. 
	
	Now, let $c_1$ be the radius that starts at $m$ and follows $r_2$ until its first intersection with with $r_1$, then turns to follow $r_1$ in the direction of the puncture $\xi$. This new radius $c_1$ is such that $\mu(r_1, c_1) = 0$ and $\mu(c_1, r_2) < \mu(r_1, r_2)$.\footnote{This is the intersection pairing of Definition \ref{defn: intersection pairing}.} Since $r_1$ and $c_1$ have non-intersecting homotopy representatives, $p(r_1) = (-1)^{k_1}~p(c_1)$ for some $k_1 \in \bbN$ from the first paragraph of this proof.
	
	If $\mu(c_1, r_2) > 0$, repeat the process, choosing minimally intersecting representatives of $c_1$ and $r_2$ that intersect transversely at each crossing, and assigning an orientation to each radius so that both radii flow from $m$ to $\xi$. Let $c_2$ be the radius that starts at $m$ and follows $r_2$ until its first intersection with with $c_1$, then turns to follow $c_1$ in the direction of the puncture $\xi$. This radius $c_2$ is such that $\mu(c_1, c_2) = 0$ and $\mu(c_2, r_2) < \mu(c_1, r_2)$. Since $c_1$ and $c_2$ have non-intersecting homotopy representatives, $p(c_1) = (-1)^{k_2}~p(c_2)$ for some $k_2 \in \bbN$. If $\mu(c_2, r_2) > 0$, repeat this process until we have a radius $c_\ell$ such that $\mu(c_\ell, r_2) = 0$, so that $p(c_\ell) = (-1)^{k_\ell}~p(r_2)$ for some $k_\ell \in \bbN$. 
	
	Finally, we have a sequence $r_1 = c_0, c_1, \dotsc, c_\ell, c_{\ell+1} = r_2$ of pairwise compatible radii, with each pair giving us a relation $p(c_{j-1}) = (-1)^{k_j}~p(c_j)$. Combining these relations, we have \[p(r_1) = (-1)_{}^{\sum_{i=1}^\ell k_i}~p(r_2).\qedhere\]
\end{proof}

\begin{coro}\label{cor: radii with same endpoints}
	Let $\SS$ be a disk with $n\geq 1$ marked points on the boundary and $q\geq 2$ punctures, and let $\mathfrak{R}_{i,j}$ denote the set of all radii connecting a puncture $\xi_i$ to a marked point $m_j$. If $p\in V(\CA(\SS),\Bbbk)$ is a benevolent deep point, then either $p$ kills every radius in $\mathfrak{R}_{i,j}$ or $p$ spares every radius in $\mathfrak{R}_{i,j}$.  
\end{coro}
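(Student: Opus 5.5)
This is an immediate consequence of the Unicorn Surgery Lemma (Lemma \ref{lemma: radii with same endpoints}), so I would argue directly from it. Fix a puncture $\xi_i$ and a boundary marked point $m_j$, and let $r_1, r_2 \in \mathfrak{R}_{i,j}$ be any two radii, together with a fixed tagging at $\xi_i$. Lemma \ref{lemma: radii with same endpoints} gives $p(r_1) = (-1)^k\, p(r_2)$ for some $k \in \bbN$. Since $(-1)^k$ is a unit in $\Bbbk$, we get $p(r_1) = 0$ if and only if $p(r_2) = 0$. Because $r_1$ and $r_2$ were arbitrary, the vanishing of $p$ is constant on $\mathfrak{R}_{i,j}$: either every radius is killed or every radius is spared.

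The main point to verify is that the tagging is handled consistently, since $\mathfrak{R}_{i,j}$ contains radii of both taggings at $\xi_i$. I would read $\mathfrak{R}_{i,j}$ as consisting of radii with a fixed tagging, so that plain and notched radii form separate classes. This matches the statement, since by Porism \ref{por: benevolent characterization} part (3) a benevolent point kills all radii of one tagging at $\xi_i$ while sparing some radii of the other.

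With the tagging fixed, the intermediate radii $c_1, \dotsc, c_\ell$ built in the proof of the lemma should also be taken with that same tagging at $\xi_i$. Then each step $p(c_{j-1}) = \pm p(c_j)$ comes from once-punctured bigon relations of the form $p(c) + p(c') = 0$. These relations hold because all radii of the opposite tagging at each enclosed puncture are killed, again by Porism \ref{por: benevolent characterization}.

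Given that, the corollary follows immediately, and nothing further needs to be computed.
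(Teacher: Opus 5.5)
Your proposal is correct and matches the paper. The paper presents this corollary as an immediate consequence of the Unicorn Surgery Lemma: $p(r_1)=(-1)^k p(r_2)$ with $(-1)^k$ a unit, so $p(r_1)$ and $p(r_2)$ vanish together. Your point that $\mathfrak{R}_{i,j}$ must be read with a fixed tagging at $\xi_i$ is a worthwhile clarification that the paper leaves implicit. The lemma's proof already needs a fixed tagging, and with both taggings allowed the statement would fail for any $m_j$ whose radii of the spared tagging are nonzero.
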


%============================

\section{Punctured surfaces with boundary}\label{sec: punc surfaces}

Recall from Theorem \ref{thm: tagged skein cluster algebra} that $\CA(\SS) \cong \TSk (\SS)$ if $\SS$ is a triangulable marked surface such that at least one of the following is true:
\begin{enumerate}
	\item $\SS$ is a disk.
	\item $\SS$ embeds into a disk. 
	\item $\SS$ has at least two boundary marked points in each connected component. 
\end{enumerate}
Since we assume that the surface is triangulable, the second type is covered by the first and third types, so we need not address it directly.

We have already addressed punctured marked disks, so we now turn our attention to more general punctured surfaces, possibly with positive genus or multiple boundary components. Because we assume that they are triangulable, we require there be at least one marked point on every boundary component. We will also require at least two marked points in each connected component; if there is only one boundary component, it must contain at least two marked points.

%============================

\subsection{Points that quash at least one puncture}\label{section: surface quash}

First, consider a connected surface $\SS_q$ with genus $g\geq 0$, $b \geq 1$ boundary components, $q\geq 1$ punctures, and $m\geq 2$ marked points, with at least two of the marked points on the same boundary component. Let $p \in V(\CA(\SS),\Bbbk)$ be deep.

\begin{itemize}
	\item Suppose $p$ is a malevolent deep point, so every puncture is quashed by $p$. Let $c_{i,1}$ be an arc that cuts out a once-punctured bigon opposite the boundary arc $b_i$. For $2 \leq j \leq p$, let $c_{i,j}$ be an arc that cuts out a once-punctured bigon opposite the arc $c_{i,j-1}$. Since every arc emanating from every puncture is killed by $p$, we know that $c_{i,1} = -b_i \neq 0$. The cut surface $\SS_q \setminus c_{i,1}$ has two components \textemdash a once-punctured bigon and a $(q-1)$-punctured surface $\SS_{q-1}$. Since $c_{i,j} = (-1)^j~b_i$, we know $c_{i,j}\neq 0$, so we can cut along each $c_{i,j}$. The cut surface $\SS_q \smallsetminus \{c_{i,1}, \dotsc, c_{i,q}\}$ then has $q+1$ components: an unpunctured version of $\SS_q$, which we will call $\SS_0$, and $q$ once-punctured bigons.
	
	The deep point $\bar{p} \in V(\CA(\SS_q\smallsetminus \{c_{i,1}, \dotsc, c_{i,q} \}), \Bbbk)$ restricts to a deep point on each of the once-punctured bigons, so it need not be deep on the unpunctured surface $\SS_0$. Also, the boundary values on the once-punctured bigons are determined by the boundary values on $\SS_0$, so $p$ is determined by its values on $\SS_0$. As such, the set of malevolent points is isomorphic to the cluster variety coming from the unpunctured surface, $V(\CA(\SS_0),\Bbbk)$.
	
	\item Suppose $p$ is not a malevolent deep point, but at least one puncture $\xi_i$ is quashed by $p$. 
	Let $c_{i,1}$ be an arc that cuts out a once-punctured bigon opposite the boundary arc $b_i$, such that the bigon contains the puncture $\xi_i$. Since all arcs emanating from $\xi_i$ are killed by $p$, we have $c_{i,1} = -b_i \neq 0$. The cut surface $\SS_q \setminus c_{i,1}$ has two components \textemdash a once-punctured bigon and a $(q-1)$-punctured surface $\SS_{q-1}$. The deep point $\bar{p} \in V(\CA(\SS_q\smallsetminus c_{i,1}), \Bbbk)$ restricts to a deep point on the once-punctured bigon, so it need not be deep on $\SS_{q-1}$. As such, the set of deep points which quash a given puncture is isomorphic to $V(\CA(\SS_{q-1}),\Bbbk)$, the cluster variety corresponding to the $(q-1)$-punctured surface. 
	
	Since there are $q$ punctures, there are $q$ such sets of deep points, with intersection equal to the malevolent points.
\end{itemize}

%============================

Now, consider a connected surface $\SS$ with genus $g\geq 0$, $b \geq 2$ boundary components, $q\geq 1$ punctures, and $m\geq 2$ marked points, with the assumption that each boundary component contains only a single marked point. Let $p \in V(\CA(\SS),\Bbbk)$ be deep. 

\begin{lemma}\label{lemma: spared arc isolated marked points}
	Let $p$ and $\SS$ be as defined above. Then $p$ spares at least one arc in $\SS$. Further, if $p$ quashes at least one puncture $\xi_i$, then $p$ spares an arc $c_i$ that cuts out a once-punctured bigon containing $\xi_i$ along with a boundary arc. 
\end{lemma}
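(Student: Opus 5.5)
The first claim is trivial. Every boundary arc is a frozen variable, inverted in $\CA(\SS)$, so $p$ sends every boundary arc to a non-zero value; in this sense $p$ spares the $m\geq 2$ boundary arcs. The substantive content is the second claim, together with the reading of the first claim as "there is a spared non-boundary arc." I would derive the latter from the former when some puncture is quashed. When no puncture is quashed, $p$ spares some arc emanating from each puncture, and every such arc is a non-boundary arc. So in either case $p$ spares a non-boundary arc, and it remains to prove the second claim.

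For the second claim, suppose $p$ quashes $\xi_i$. Since each boundary component carries a single marked point, each boundary arc $b_j$ is a loop based at its marked point $m_j$ enclosing a hole. I would construct the arc $c_i$ explicitly. Start at $m_j$, travel around the hole and around $\xi_i$, and return to $m_j$, so that $c_i$ and $b_j$ together bound a once-punctured bigon containing only $\xi_i$. Equivalently, $c_i$ is the loop at $m_j$ homotopic to the concatenation of $b_j$ with a small loop around $\xi_i$. This arc is simple and non-boundary. It is not contractible and does not cut out a once-punctured monogon, because $b\geq 2$ and $m\geq 2$ guarantee that the complement of the bigon is still a nontrivial surface containing another boundary component. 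Inside that bigon, the relevant tagged arcs are the plain and notched radii $r,r^\vee$ from $m_j$ to $\xi_i$. The incompatible-tagging relation (Figure~\ref{fig: tagged skeinrel}, as in the bigon relation of Figure~\ref{fig: bigon relation}) gives
\[ p(r)\,p(r^\vee) = p(c_i) + p(b_j). \]

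Because $\xi_i$ is quashed, $p(r)=p(r^\vee)=0$. Hence $p(c_i) = -p(b_j) \neq 0$, so $p$ spares $c_i$, exactly as in Porism~\ref{por: c_i non-zero} and the malevolent analysis for $\SS_q$.

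The main obstacle is verifying that the skein relation applies in this degenerate configuration, where the two "sides" of the bigon both begin and end at the single marked point $m_j$. Concretely, one must check that resolving the crossing of $r$ and $r^\vee$ at $\xi_i$ yields exactly $c_i$ and $b_j$, and not a contractible arc or a once-punctured monogon, which would change the relation. I would check this directly from the local picture in Figure~\ref{fig: tagged skeinrel}. The two smoothings at the puncture give the two loops at $m_j$ passing on either side of $\xi_i$. One is homotopic to $b_j$, since it encloses only the hole; the other is $c_i$, since it encloses the hole and $\xi_i$. The hypothesis $b\geq 2$ ensures neither of these loops is degenerate, and this is where I would spend the care.
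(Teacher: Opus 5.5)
Your argument is correct. For the second claim you follow the paper's argument. The arc $c_i$ and the boundary loop $b_j$ bound a once-punctured bigon around $\xi_i$. The bigon relation becomes $0=p(c_i)+p(b_j)$ because every arc at $\xi_i$ is killed, so $p(c_i)=-p(b_j)\neq 0$.

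The paper passes silently over the degenerate bigon whose two corners are both $m_j$. Your local check is sound: the skein relations are available by Theorem~\ref{thm: tagged skein cluster algebra}, since $m=b\geq 2$. Alternatively, you can triangulate the bigon by its two radii and read off the exchange relation.

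Fix one piece of notation, though. The two tagged arcs in that relation are the plain radius from one corner of the bigon and the notched radius from the other corner. These are not homotopic, because their concatenation runs around the hole. For the plain and notched versions of a single radius $r$ one gets instead $p(r)p(r^\vee)=p(\ell)$, where $\ell$ is the loop cutting out a once-punctured monogon around $\xi_i$. Since $\xi_i$ is quashed, all of these values vanish, so your conclusion is unaffected.

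For the first claim you take a different route. The paper uses an unpunctured quadrilateral whose opposite sides are the boundary loops $b_1,b_2$ of two distinct boundary components; this is where $b\geq 2$ enters. The Ptolemy relation $d_1d_2=a_qa_0'+b_1b_2$ with $b_1b_2\neq 0$ forces one of the four arcs $a_q,a_0',d_1,d_2$ joining $m_1$ to $m_2$ to be spared. That argument holds for every point $p$, deep or not. It is independent of the second claim and locates a spared arc explicitly.

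Your case split is shorter and fully sufficient for the statement as written. A quashed puncture yields $c_i$ by the second claim; otherwise the definition of an unquashed puncture gives you a spared non-boundary arc at it. However, it relies on $q\geq 1$ from the setup and gives no control over which arc is spared.
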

\begin{proof}
	Consider the arcs as depicted in Figure \ref{fig: claim 1 situation 1}. The arcs $a_q$, $b_1$, $a_0^\prime$, and $b_2$ form an unpunctured quadrilateral with diagonals $d_1$ and $d_2$. This gives us a Ptolemy relation $d_1 d_2 = a_q a_0^\prime + b_1 b_2$. Since $b_1$ and $b_2$ are boundary arcs, we have $b_1 b_2 \neq 0$, so at least one of the other terms must be non-zero as well. Therefore, either $a_q a_0^\prime \neq 0$ or $d_1 d_2 \neq 0$.

    Now, suppose that $p$ quashes a puncture $\xi_i$. Let $c_i$ be an arc that cuts out a once-punctured bigon with boundary arc $b_i$ such that $x_i$ is the puncture contained in that bigon, as depicted in Figure \ref{fig: claim 1 situation 2}. Then $p(c_i) + p(b_i) =0$, so $p(c_i) \neq 0$. 
\end{proof}

\begin{figure}[h!tb]
	\centering
	\begin{tikzpicture}[scale=0.9,baseline={(0,0)}]
		\node[dot] (p1) at (-1,-0.5) {};
		\node[dot] (p2) at (0,-0.5) {};
		\node[dot] (p3) at (1,-0.5) {};
		\node (p4) at (-0.5,-0.5) {$\cdots$};
		\node (p5) at (0.5, -0.5) {$\cdots$};
		\node[dot] (m1) at (-4,0) {};
		\node[dot] (m2) at (4,0) {};
		\draw[thick,out=270,in=0] (m1) to (-4.3,-1);
		\draw[thick,out=180,in=270] (-4.3,-1) to (-4.6,0);
		\draw[thick,out=90,in=180] (-4.6,0) to (-4.3,1) to [out=0,in=90] (m1) {};
		\draw[thick,out=270,in=180] (m2) to (4.3,-1);
		\draw[thick,out=0,in=270] (4.3,-1) to (4.6,0);
		\draw[thick,out=90,in=0] (4.6,0) to (4.3,1) to [out=180,in=90] (m2) {};
		\draw[thick] (-4.3,-1) to [out=15,in=180] (0,-2.5) to [out=0,in=165] (4.3,-1) {};
		\draw[thick,out=345,in=225] (-4.3,1) to (-3,2);
		\draw[thick,out=165,in=315] (4.3,1) to (3,2);
		\draw[dashed,out=330,in=180] (-3,2) to (0,1.5) to [out=0,in=210] (3,2); 
		\draw[dashed,out=30,in=180] (-3,2) to (0,2.5) to [out=0,in=150] (3,2);
		\draw[thick] (m1) to (m2) {};
		\draw[thick,bend right] (m1) to (m2) {};
		\draw[thick,blue,out=30,in=270] (m1) to (-3.48,1.4) {};
		\draw[thick,blue,out=150,in=270] (m2) to (3.55,1.42) {};
		\draw[dashed,out=330,in=180] (-3.5,1.4) to (0,0.5) to [out=0,in=210] (3.5,1.4) {};
		\node[blue] at (3.2,0.9) {$a_0^\prime$};
		\node at (2.2,0.22) {$a_0$};
		\node at (-1.8,-0.74) {$a_q$};
		\node at (-4.82,0) {$b_1$};
		\node at (4.82,0) {$b_2$};
		\draw[thick,dark red,out=315,in=150] (m1) to (1.2,-2.2) {}; 
		\draw[dashed,dark red,out=60,in=240] (1.2,-2.2) to (3.8,1.0) {};
		\draw[thick,dark red,out=270,in=120] (3.9,1.2) to (m2) {};
		\draw[thick,dark purple,out=225,in=30] (m2) to (-1.2,-2.2) {};
		\draw[dashed,dark purple,out=120,in=300] (-1.2,-2.2) to (-3.75,0.95) {};
		\draw[thick,dark purple,out=270,in=60] (-3.85,1.06) to (m1) {};
	\end{tikzpicture}
	\hspace{0.5cm}
	\begin{tikzpicture}[scale=2,baseline={(0,0)}]
		\node[dot] (m1a) at (-1,1) {};
		\node[dot] (m1b) at (-1,-1) {};
		\node[dot] (m2a) at (1,1) {};
		\node[dot] (m2b) at (1,-1) {};
		\draw[thick] (m1a) to (m1b) (m2a) to (m2b);
		\draw[thick,dark blue] (m1b) to (m2b);
		\draw[thick] (m1a) to (m2a);
		\draw[thick] (m1a) to [out=330,in=180] (0,0.7) to [out=0,in=210] (m2a);
		\node[dot] (p1) at (-0.4,0.87) {};
		\node[dot] (p2) at (0,0.87) {};
		\node[dot] (p3) at (0.4,0.87) {};
		\node (p4) at (-0.2,0.87) {$\cdots$};
		\node (p5) at (0.2,0.87) {$\cdots$};
		\draw[thick,dark purple,out=240,in=30] (m2a) to (m1b);
		\draw[thick,dark red,out=300,in=150] (m1a) to (m2b);
		\node at (0,0.57) {$a_q$};
		\node at (0,1.11) {$a_0$};
		\node at (-1.11,0) {$b_1$};
		\node at (1.11,0) {$b_2$};
		\node at (0,-1.15) {$a_0^\prime$};
	\end{tikzpicture}
	\caption{Local picture of arcs forming an unpunctured quadrilateral in the surface with one marked point on each boundary}
	\label{fig: claim 1 situation 1}
\end{figure}

\begin{figure}[h!tb]
	\centering
	\begin{tikzpicture}[scale=0.9,baseline={(0,0)}]
		\node[dot] (p1) at (-1,-0.5) {};
		\node[dot] (p2) at (0,-0.5) {};
		\node[dot] (p3) at (1,-0.5) {};
		\node (p5) at (0.5, -0.5) {$\cdots$};
		\node[dot] (m1) at (-4,0) {};
		\node[dot] (m2) at (4,0) {};
		\draw[thick,out=270,in=0] (m1) to (-4.3,-1);
		\draw[thick,out=180,in=270] (-4.3,-1) to (-4.6,0);
		\draw[thick,out=90,in=180] (-4.6,0) to (-4.3,1) to [out=0,in=90] (m1) {};
		\draw[thick,out=270,in=180] (m2) to (4.3,-1);
		\draw[thick,out=0,in=270] (4.3,-1) to (4.6,0);
		\draw[thick,out=90,in=0] (4.6,0) to (4.3,1) to [out=180,in=90] (m2) {};
		\draw[thick] (-4.3,-1) to [out=15,in=180] (0,-2.5) to [out=0,in=165] (4.3,-1) {};
		\draw[thick,out=345,in=225] (-4.3,1) to (-3,2);
		\draw[thick,out=165,in=315] (4.3,1) to (3,2);
		\draw[dashed,out=330,in=180] (-3,2) to (0,1.5) to [out=0,in=210] (3,2); 
		\draw[dashed,out=30,in=180] (-3,2) to (0,2.5) to [out=0,in=150] (3,2);
		\node at (-4.82,0) {$b_i$};
        \draw[thick,blue] (m1) to [out=0,in=180] (-1,-1.0) to [out=0,in=270] (-0.5,-0.5) to [out=90,in=315] (-3.48,1.4); 
        \draw[thick,blue] (m1) to [out=330,in=105] (-3.3,-1);
        \draw[dashed,blue] (-3.3,-1) to [out=75,in=235] (-3.48,1.4);
        \node[blue] at (-0.55,-1.1) {$c_i$};
        \node at (-1.22,-0.2) {$\xi_i$};
	\end{tikzpicture}
	\caption{Picture of an arc $c_i$ forming a once-punctured bigon with a boundary arc $b_i$ in the surface with one marked point on each boundary}
	\label{fig: claim 1 situation 2}
\end{figure}

As a result of Lemma \ref{lemma: spared arc isolated marked points}, we can cut off once-punctured bigons in the same way as when there are two marked points on the same boundary component. Thus, the deep points can be described in exactly the same way.

\begin{thm}\label{thm: inclusions surfaces}
	Let $\SS_q$ be a connected surface with genus $g\geq 0$, $b\geq 1$ boundary components, $q$ punctures, and $m\geq 2$ marked points on the boundary, with at least $1$ marked point on each boundary component. There are inclusions of the cluster varieties \[\text{$V(\CA(\SS_j),\Bbbk)\;$ into the deep locus of $\;V(\CA(\SS_q),\Bbbk)\;$ for all $\;0\leq j < q$.}\]
	
	In particular, there is an inclusion of the cluster variety of the unpunctured surface, $V(\CA(\SS_0),\Bbbk)$, into the deep locus of the cluster variety of the punctured surface,  $V(\CA(\SS_q),\Bbbk)$.
\end{thm}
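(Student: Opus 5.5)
We will build the inclusions one puncture at a time and then compose them. The basic step is an inclusion of $V(\CA(\SS_{q-1}),\Bbbk)$ into the deep locus of $V(\CA(\SS_q),\Bbbk)$. Iterating it gives a chain
\[ V(\CA(\SS_j),\Bbbk)\hookrightarrow V(\CA(\SS_{j+1}),\Bbbk)\hookrightarrow \dotsb \hookrightarrow V(\CA(\SS_q),\Bbbk). \]
The last map lands in the deep locus. A composite whose final map lands in the deep locus lands in the deep locus, so this yields the claimed inclusion for every $0\le j<q$.

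To construct the basic step, fix a puncture $\xi$ of $\SS_q$ and a boundary arc $b$. By Lemma \ref{lemma: spared arc isolated marked points}, and in the single-marked-point-per-boundary case by the construction of Figure \ref{fig: claim 1 situation 2}, there is an arc $c$ cutting out a once-punctured bigon $\SS_1$ with $b$, with $\xi\in\SS_1$. Cutting along $c$ gives $\SS_q\smallsetminus c=\SS_1\sqcup\SS_{q-1}$, where $c$ becomes a boundary arc of $\SS_{q-1}$.

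Given a point $p'\in V(\CA(\SS_{q-1}),\Bbbk)$, we define $p_1\in V(\CA(\SS_1),\Bbbk)$ as the point of the once-punctured bigon that kills all four radii. We assign it boundary values $p'(c)$ on the copy of $c$ and $-p'(c)$ on $b$. The bigon relations $x_1x_2^\vee=x_3+x_4$ then read $0=p'(c)-p'(c)$, so this is a valid point, and it is deep since the radii form triangulations.

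The pair $(p_1,p')$ is gluable along $c$, because both components assign $c$ the value $p'(c)$. It is also nonzero on $c$, since $c$ is a boundary arc of $\SS_{q-1}$. By Proposition \ref{prop: cutting subvarieties punctured}, it therefore glues to a unique point $p\in V(\CA(\SS_q),\Bbbk)$ with $p(c)\neq0$.

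The map $p'\mapsto p$ is injective. Indeed, restriction recovers $p'$, and the cutting isomorphism is an isomorphism onto its image. It is a morphism of varieties, since $p_1$ depends polynomially on $p'(c)$.

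It remains to show that $p$ is deep, and we expect this to be the main obstacle. The difficulty is that Proposition \ref{prop: deep cutting inclusion} only goes from $\SS$ to the cut surface, and the cut point $(p_1,p')$ is deep merely on one component. We argue instead that $p$ quashes $\xi$, so that deepness is immediate: every triangulation contains an arc incident to $\xi$.

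An arc $\gamma$ at $\xi$ that stays inside $\SS_1$ is one of the radii, and $p$ kills it by construction. For a general arc $\gamma$ at $\xi$, which may cross $c$, we plan to use Corollary \ref{coro: reducing tagged crossings} with $x=c$. This gives $p(c)^{\mu(c,\gamma)}p(\gamma)=p(y)$, where $y=[c]^{\mu}[\gamma]$ is a combination of multicurves with no crossings with $c$. Each multicurve in the support of $y$ is a union of curves in $\SS_1$ and in $\SS_{q-1}$. Because $\gamma$ has an end at $\xi$, every term should retain a component ending at $\xi$ inside $\SS_1$, with taggings consistent with the resolutions of Lemma \ref{lemma: reducing tagged crossings}. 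That component is a radius of $\SS_1$, so $p$ vanishes on every term. Since $p(c)\neq0$, we conclude $p(\gamma)=0$.

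The delicate point is verifying that the skein resolutions never remove the end at $\xi$. In particular, the incompatible-tagging relation in Figure \ref{fig: tagged skeinrel} trades a pair of ends at $\xi$ for arcs passing around $\xi$. We would handle this by checking that such arcs, when confined to the bigon, are boundary-parallel combinations whose contribution cancels using $p(b)=-p(c)$. If this check becomes too messy, we would fall back on the previously computed behaviour for bigons: all four tagged radii of $\SS_1$ vanish, and the once-punctured-disk analysis of Section \ref{section: 1 punctured disks} (Corollary \ref{cor: kill all radii}) propagates vanishing of adjacent radii.

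Once $\xi$ is quashed, $p$ is deep, which finishes the basic step and hence the theorem. The special case $j=0$ is the final composite of the chain.
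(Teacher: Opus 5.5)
Your proposal is correct and has the same skeleton as the paper's proof. You cut off a once-punctured bigon around $\xi$ along an arc $c$, so that $p(c)=-p(b)\neq 0$, pair the totally radial bigon point with an arbitrary point of $V(\CA(\SS_{q-1}),\Bbbk)$, and iterate. The paper packages the iteration as a nested chain $V_q\subset\dotsb\subset V_1$ of points quashing $\xi_1,\dots,\xi_j$, which is equivalent to your composite of injections.

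Where you differ is the step you flag as the main obstacle. The paper's proof only invokes the discussion of Section~\ref{section: surface quash}. That discussion argues the forward direction (a point quashing $\xi$ is nonzero on $c$ and restricts to a point of $\SS_{q-1}$) and then asserts the isomorphism. It never checks that gluing an arbitrary point of $\SS_{q-1}$ to the totally radial bigon point yields a point that quashes $\xi$; for once-punctured disks, that was the job of Corollary~\ref{cor: kill all radii}. Your use of Corollary~\ref{coro: reducing tagged crossings} with $x=c$ supplies exactly this converse.

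It also works more cleanly than you fear. Both ends of $c$ are at boundary marked points, so $D(c,\gamma)=0$. A tagged arc with both ends at $\xi$ carries the same tag at both, so the incompatible-tagging relation is never used in expanding $[c]^{\mu}[\gamma]$. The crossing and loop relations preserve the ends at $\xi$ together with their tags, and contractible arcs vanish. Hence every multicurve in the support has a component at $\xi$ that is disjoint from $c$. That component is one of the four tagged radii of the bigon, since a $\xi$-to-$\xi$ arc inside the bigon is contractible. This gives $p(c)^{\mu}p(\gamma)=0$ without any case analysis.

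Your fallback would not have sufficed on a general surface. Corollary~\ref{cor: kill all radii} only propagates vanishing among radii of a once-punctured disk and says nothing about tunnels or arcs winding around other punctures or handles. A minor point: the existence of $c$ is purely topological, whereas Lemma~\ref{lemma: spared arc isolated marked points} concerns $p(c)\neq 0$, not the existence of $c$.
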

\begin{proof}
Choose an arbitrary ordering of the punctures $\xi_1, \xi_2, \dotsc, \xi_q$. As noted above, the points of $V(\CA(\SS_q),\Bbbk)$ that quash $\xi_1$ correspond to a deep subvariety $V_1$ isomorphic to $V(\CA(\SS_{q-1}),\Bbbk)$. Likewise, the points of $V(\CA(\SS_q),\Bbbk)$ that quash $\xi_1$ and $\xi_2$ correspond to a deep subvariety $V_2$ isomorphic to $V(\CA(\SS_{q-2}),\Bbbk)$, and $V_2 \subset V_1$. Continuing for any $1\leq j < q$, the points of $V(\CA(\SS_q),\Bbbk)$ that quash the first $j+1$ punctures correspond to a deep subvariety $V_{j+1}$ which is isomorphic to $V(\CA(\SS_{q-j-1}),\Bbbk)$, and $V_{j+1} \subset V_j$. This gives us a nested inclusion of deep subvarieties
\[ V_q \coloneqq V(\CA(\SS_0),\Bbbk) ~\subset~ V_{q-1} ~\subset~ \dotsb ~\subset~ V_2 ~\subset~ V_1.\qedhere\]
\end{proof}

%============================

\subsection{Cutting along radii}\label{section: cutting punc surf}

Before examining the benevolent points, we want to extend previous results regarding cutting along radii.\footnote{The results of Lemmata \ref{lemma: spared radius surface} and \ref{lemma: cutting radii for unpunctured surface} have also been developed independently in \cite{tyler}.}

\begin{lemma}\label{lemma: tunnel quadrilateral surface}
	Let $\SS$ be a surface with genus $g\geq 0$, $b \geq 1$ boundary components, $m \geq 2$ marked points on the boundary, and $q \geq 2$ punctures. For any given tunnel $\tau$, there is an unpunctured quadrilateral in $\SS$ bounded by one boundary arc, two radii, and the tunnel $\tau$. 
\end{lemma}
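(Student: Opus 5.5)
The plan is to mimic the construction in the proof of Lemma \ref{lemma: tunnel quadrilateral} and build the quadrilateral by hand. Write $\tau$ as a simple arc joining two punctures $\xi$ and $\xi'$; these are distinct, since a tunnel with both ends at one puncture can be handled by the same construction after a small modification. Fix any boundary arc $\beta$ of $\SS$ with endpoints $m_1,m_2$. Since $\SS$ is connected, choose a simple path $\gamma$ in the interior of $\SS$ from an interior point of $\tau$ to a point near the interior of $\beta$, avoiding all punctures and meeting $\tau$ only at its starting point. The regular neighborhood $N$ of $\tau\cup\gamma\cup\beta$ is then a disk, because $\tau\cup\gamma\cup\beta$ is a tree, hence contractible. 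Shrink $N$ so that the only marked points it contains are $\xi,\xi',m_1,m_2$.

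Inside $N$ we take two radii. Let $\rho_1$ start at $\xi$ and run parallel to $\tau$ on one side until it reaches $\gamma$. It then follows $\gamma$ on one side and, near $\beta$, turns toward $m_1$. Let $\rho_2$ do the same from $\xi'$, following $\tau$ and $\gamma$ on the other side, and turn toward $m_2$. Because the two radii run on opposite sides of $\gamma$ and turn in opposite directions along $\beta$, they do not cross each other, $\tau$, or $\beta$. This is exactly the picture in Figure \ref{fig: spared tunnel}: the two radii travel together in parallel and then separate toward the two ends of the boundary arc.

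The region bounded by $\tau,\rho_1,\beta,\rho_2$ is a thin strip inside the disk $N$. It is therefore an embedded disk containing no punctures, and its only marked points are its four corners. So it is an unpunctured quadrilateral with $\tau$ opposite $\beta$.

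The main thing to check is that every side is a genuine simple, non-contractible arc, so that the quadrilateral can be completed to a triangulation and its Ptolemy relation holds. The radii are simple by construction, and they are not contractible because their endpoints are distinct (one is a puncture, one is on the boundary). The sides must also be distinct arcs. Here we use the hypothesis $m\ge 2$: when $m_1\neq m_2$ the two radii are clearly distinct. If the chosen boundary component has only one marked point, so that $m_1=m_2$, the two radii still end at different punctures, so they remain distinct. In that case $\beta$ is a loop-like boundary arc, and we must verify that the strip does not wrap around the boundary component in a way that makes the region non-disk. Taking $N$ to be a regular neighborhood of a tree, as above, should settle this, and I expect this verification to be the only delicate step.
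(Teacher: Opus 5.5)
Your construction is the paper's. Its proof refers back to Lemma \ref{lemma: tunnel quadrilateral}: two radii leave the ends of $\tau$, run in parallel toward the boundary, and then split in opposite directions along a boundary arc. Your regular-neighborhood argument makes this rigorous when $m_1\neq m_2$. Note that both radii must run along the side of $\tau$ from which $\gamma$ departs, and on opposite sides of $\gamma$.

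The step you leave open, $m_1=m_2$, is a real gap as written, and your proposed fix does not work. When $m_1=m_2$, the arc $\beta$ is the whole boundary circle. Then $\tau\cup\gamma\cup\beta$ contains a circle, so it is not a tree, and its regular neighborhood is an annulus rather than a disk.

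You cannot avoid this case by choosing $\beta$ carefully. The hypotheses allow every boundary component to carry exactly one marked point (e.g.\ $g=0$, $b=m=2$), and then every boundary arc is a loop.

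The case is harmless once you stop asking for an embedded closed disk:
\begin{itemize}
\item Take a collar $\{(\theta,r):\theta\in[0,2\pi],\ 0\le r\le\epsilon\}$ of that boundary component, with $m_1$ at $\theta=0\equiv 2\pi$ and $\gamma$ landing at $\theta=\pi$.
\item The part of your region inside the collar is $\{0\le r\le \epsilon f(\theta)\}$. Here $f$ vanishes only at $\theta=0,2\pi$, and the strip along $\gamma$ is attached near $\theta=\pi$.
\item This is a closed disk whose two corners $\theta=0$ and $\theta=2\pi$ both map to $m_1$. Gluing on the strip along $\gamma$ and $\tau$ along an arc preserves this.
\end{itemize}
So your region is the image of a genuine quadrilateral, injective except at the two corners sent to $m_1$, and its interior is an open disk with no punctures. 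This is exactly the left picture in Figure \ref{fig: spared tunnel}. It is also all that the Ptolemy relation in Lemma \ref{lemma: spared radius surface} needs, since quadrilaterals in a triangulation may have coincident vertices.

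Finally, the hypothesis $m\ge 2$ plays no role in this topological lemma. The two radii are distinct because $\xi\neq\xi'$. The assumption on $m$ is needed elsewhere in the paper, for Theorem \ref{thm: tagged skein cluster algebra}.
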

\begin{proof}
	The proof is the same as in Lemma \ref{lemma: tunnel quadrilateral}. 
\end{proof}

\begin{lemma}\label{lemma: spared radius surface}
	Let $\SS$ be a surface with genus $g\geq 0$, $b \geq 1$ boundary components, $m \geq 2$ marked points on the boundary, and $q \geq 1$ punctures. If $p \in V(\CA(\SS),\Bbbk)$ does not quash any punctures, then it spares at least one radius. 
\end{lemma}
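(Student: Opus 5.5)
We argue by contradiction, mirroring the proof of Lemma \ref{lemma: benevolent spares a radius}. Suppose $p$ is deep, quashes no puncture, and kills every radius of both taggings. Note that $p$ is deep only in the sense needed to run the argument; the key hypothesis is that no puncture is quashed. The proof splits according to whether $q=1$ or $q\geq 2$.

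Case $q\geq 2$. Fix a puncture $\xi$. Since $\xi$ is not quashed, some arc emanating from $\xi$ is spared. Every arc at $\xi$ is either a radius or a tunnel (loops based at $\xi$ that are simple arcs count as tunnels with both ends at $\xi$). All radii are killed, so the spared arc is a tunnel $\tau$. By Lemma \ref{lemma: tunnel quadrilateral surface}, there is an unpunctured quadrilateral bounded by a boundary arc $b$, two radii $\rho_1,\rho_2$, and $\tau$, with $\tau$ opposite $b$. Its two diagonals are again radii, each running from a marked point of $b$ to an endpoint of $\tau$, so $p$ kills them.

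The Ptolemy relation then reads $0 = p(\tau)p(b) + p(\rho_1)p(\rho_2) = p(\tau)p(b)$. This contradicts $p(b)\neq 0$ and $p(\tau)\neq 0$.

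The main point to check here is that the construction in Lemma \ref{lemma: tunnel quadrilateral} still applies. This includes the case where $\tau$ has both ends at the same puncture. There the two "radii" must be taken as distinct non-homotopic radii to that puncture, and we need the quadrilateral to be genuinely unpunctured so that the plain Ptolemy relation holds without punctured-monogon corrections. If the degenerate loop case is problematic, I would instead pick a spared tunnel between two distinct punctures when one exists, and otherwise handle a self-tunnel by noting that it cuts out a region containing another puncture, then apply the argument to that puncture.

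Case $q=1$. Here every arc at the puncture $\xi$ is a radius (plain or notched), or a loop at $\xi$. Loops at $\xi$ as tagged arcs must have both ends tagged the same way. Killing all radii of both taggings then means every arc incident to $\xi$ is killed, except possibly such loops. If there are none, $\xi$ is quashed, contradicting the hypothesis.

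If a loop $\lambda$ at $\xi$ is spared (possible when $g\geq 1$ or $b\geq 2$), I would run the same quadrilateral trick. Take radii from the two ends of $\lambda$ travelling in parallel to a boundary arc $b$ and splitting along it, as in Lemma \ref{lemma: tunnel quadrilateral}. This produces an unpunctured quadrilateral with sides $b$, two radii, and $\lambda$, with diagonals that are radii. The same Ptolemy relation $p(\lambda)p(b)=0$ gives the contradiction.

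The expected main obstacle is justifying the quadrilateral for loops and self-tunnels. That requires checking that the two radii are non-homotopic, compatible with $\lambda$, and bound a disk with no puncture inside. I would verify this by cutting the surface along $\lambda$ and a path to the boundary, to reduce to a local disk picture like Figure \ref{fig: spared tunnel}.
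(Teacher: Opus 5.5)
Your proof is correct and follows the paper's argument: assuming every radius is killed, a spared tunnel $\tau$ at an unquashed puncture lies opposite a boundary arc $b$ in the quadrilateral of Lemma \ref{lemma: tunnel quadrilateral surface}, and the Ptolemy relation collapses to $0 = p(\tau)p(b)$. Your treatment of $q=1$ is in fact more careful than the paper's, which simply asserts the puncture is quashed and overlooks arcs from the puncture to itself (these exist whenever $g\geq 1$ or $b\geq 2$). Your loop quadrilateral handles these arcs, and the skein relation at the crossing of the diagonals holds even if the two radii happen to be homotopic. So you need neither that check nor your fallback about self-tunnels enclosing another puncture, which is false in general (e.g.\ for a loop around a handle).
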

\begin{proof}
	Seeking a contradiction, suppose that $p$ kills all radii. If $q=1$, then $p$ quashes the puncture so we have a contradiction.
	
	Now, suppose that $q\geq 2$. By supposition, $p$ does not quash any punctures, so there must exist a tunnel $\tau$ that is spared by $p$. By Lemma \ref{lemma: tunnel quadrilateral surface}, there exists a quadrilateral consisting of two radii, a boundary component $b$, and $\tau$, with $\tau$ opposite $b$ as in Figure \ref{fig: spared tunnel}. The Ptolemy relation on such a quadrilateral is then $0 = p(\tau) p(b) + 0$, but both $p(b)$ and $p(\tau)$ are non-zero, so we have a contradiction.
\end{proof}

\begin{lemma}\label{lemma: cutting radii for unpunctured surface}
	Let $\SS$ be a surface with genus $g\geq 0$, $b \geq 1$ boundary components, $m \geq 2$ marked points on the boundary, and $q \geq 1$ punctures. If $p \in V(\CA(\SS),\Bbbk)$ does not quash any punctures, then there exist a collection of arcs $r_1, \dotsc, r_q$ such that $p(r_i) \neq 0$ for all $1\leq i \leq q$, and $\widehat{\SS} \coloneqq \SS\smallsetminus \{r_1, \dotsc, r_q\}$ is an unpunctured surface with genus $g$, $b$ boundary components, and $m+2q$ marked points on the boundary. 
\end{lemma}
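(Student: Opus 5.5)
I will argue by induction on the number of punctures $q$, cutting along one spared radius at a time. In each step I will use the cutting isomorphism of Proposition \ref{prop: cutting subvarieties punctured} and the inclusion of deep points under cutting from Proposition \ref{prop: deep cutting inclusion}. The topological point is that cutting a connected surface along a radius $r$ joining a boundary marked point $m_r$ to a puncture $\xi$ removes $\xi$ as a puncture. The arc $r$ is replaced by two boundary arcs $r'$ and $r''$, and $m_r$ splits into two copies, while $\xi$ becomes a new marked point on the same boundary component. This is the picture in Figure \ref{fig: cutting radius 2 step}. So $\SS\smallsetminus r$ has genus $g$, $b$ boundary components, $q-1$ punctures and $m+2$ boundary marked points. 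Euler characteristic confirms the genus and the number of boundary components are unchanged: we cut along an arc from the boundary to an interior point, which just slits the surface open from the boundary.

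\emph{Step 1 (first radius).} By Lemma \ref{lemma: spared radius surface}, $p$ spares some radius $r_1$. If the spared arc is a notched radius, I first apply the tag-switching involution $i_\xi$ at its puncture (the automorphism from \cite[Lemma 3.10]{mandel2023braceletsbasesthetabases}), so we may assume $r_1$ is plain. Cutting along $r_1$ then gives $\SS^{(1)}=\SS\smallsetminus r_1$ together with the corresponding point $\bar p_1\in V(\CA(\SS^{(1)}),\Bbbk)$. This point is non-zero on the new boundary arcs $r_1',r_1''$ and is gluable along $r_1$. The surface $\SS^{(1)}$ still satisfies the hypotheses: it has $m+2\ge 2$ boundary marked points, all on boundary components, so Theorem \ref{thm: tagged skein cluster algebra} still applies.

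\emph{Step 2 (induction).} The key claim is that $\bar p_1$ again quashes no puncture of $\SS^{(1)}$. An arc in $\SS^{(1)}$ emanating from a remaining puncture $\xi'$ maps to an arc in $\SS$ emanating from $\xi'$, and $\bar p_1$ takes the same value on it as $p$ does on its image. Conversely, take an arc $\alpha$ of $\SS$ at $\xi'$ that $p$ spares. I need to find such an arc that is disjoint from $r_1$, or can be made so. For this I use Corollary \ref{coro: reducing tagged crossings}: the product $[r_1]^{\mu}[\alpha]$ is a combination of multicurves avoiding $r_1$, and it is nonzero under $p$ since $p(r_1)\neq0$. Each multicurve in that combination is a product of arcs, so some summand must contain an arc at $\xi'$ that $p$ spares and that lies in $\SS^{(1)}$. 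The sticking point is that a surviving summand need not contain an arc ending at $\xi'$ at all. The fallback is the tunnel-quadrilateral argument of Lemma \ref{lemma: tunnel quadrilateral surface} applied directly in $\SS^{(1)}$: if $\bar p_1$ quashed $\xi'$, then $p$ would kill every arc at $\xi'$ disjoint from $r_1$. Ptolemy relations in quadrilaterals that straddle $r_1$, analogous to those in Lemma \ref{lemma: non-radial deep quad}, should then force $p$ to kill the arcs crossing $r_1$ as well, so $p$ would quash $\xi'$, a contradiction. Given the claim, Lemma \ref{lemma: spared radius surface} applies to $\bar p_1$ on $\SS^{(1)}$ whenever $q-1\ge1$, which produces $r_2$, and so on.

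\emph{Step 3 (conclusion).} After $q$ cuts we obtain arcs $r_1,\dots,r_q$, where each $r_i$ is a radius in the successive cut surface. These arcs are pairwise compatible in $\SS$, because each later one lives in the complement of the earlier cuts. Each is spared by $p$, since values are preserved by the cutting isomorphism. The final surface $\widehat\SS$ has no punctures, genus $g$, $b$ boundary components, and $m+2q$ boundary marked points. I expect the main obstacle to be Step 2, namely showing that non-quashing survives the cut.
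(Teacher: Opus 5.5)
Your overall route is the paper's. Its proof takes a spared radius from Lemma~\ref{lemma: spared radius surface}, cuts along it, and says to repeat. But repeating requires the cut point $\bar p_1$ to still quash no puncture of $\SS\smallsetminus r_1$, because that is the hypothesis of Lemma~\ref{lemma: spared radius surface}. The paper passes over this point silently. Your Step~2 correctly isolates it but does not prove it: you abandon the skein-product argument, and the fallback (``Ptolemy relations in quadrilaterals that straddle $r_1$ \dots should then force'') is never carried out. So as written, the induction is not justified.

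Your worry about the skein argument is unfounded. Crossing resolutions and the incompatible-tagging relation at $\xi$ never delete an end at $\xi'\neq\xi$. Still, a single relation suffices and avoids the bookkeeping of $\Supp([r_1]^{\mu}[\alpha])$.

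Take $\alpha$ to be a radius at $\xi'$ spared by $p$. One exists by the Ptolemy argument of Lemma~\ref{lemma: spared radius surface}, run at $\xi'$ alone: if every radius at $\xi'$ were killed, the quadrilateral of Lemma~\ref{lemma: tunnel quadrilateral surface} would give $p(\tau)p(b)=0$ for every tunnel $\tau$ at $\xi'$, so $\xi'$ would be quashed. Choosing a radius also avoids tag conflicts at $\xi$.

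If $\alpha$ misses $r_1$, you are done. Otherwise, put $\alpha$ and $r_1$ in minimal position, let $c$ be the first crossing with $r_1$ met when travelling along $\alpha$ from $\xi'$, and resolve only at $c$:
\[ p(r_1)\,p(\alpha) \;=\; p(\beta_m)\,p(\gamma) + p(\beta_\xi)\,p(\gamma'). \]
Here $\beta_m$ (resp.\ $\beta_\xi$) runs along $\alpha$ from $\xi'$ to $c$ and then along $r_1$ to $m$ (resp.\ to $\xi$, keeping $r_1$'s plain tag), and $\gamma,\gamma'$ are the leftover curves. Since $\alpha$ meets $r_1$ nowhere before $c$, both $\beta_m$ and $\beta_\xi$ are embedded arcs at $\xi'$ that can be pushed off $r_1$. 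They are therefore radii of $\SS\smallsetminus r_1$ at $\xi'$ (recall $\xi$ becomes a boundary point), and $\bar p_1$ takes the same values on them as $p$. The left side is non-zero, so one of them is spared.

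This is the same surgery as in Lemma~\ref{lemma: radii with same endpoints}. With it in place, your Steps~1 and~3 complete the proof.
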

\begin{proof}
	By Lemma \ref{lemma: spared radius surface}, there exists a radius $r_1$ such that $p(r_1) \neq 0$. Cutting along $r_1$, we get a surface $\SS_1$ with genus $g$, $b$ boundary components, $m+2$ marked points, and $q-1$ punctures.
	
	Repeatedly applying Lemma \ref{lemma: spared radius surface}, after each cut we have a new surface with two additional marked points and one fewer puncture. After $q$ steps, we have an unpunctured surface $\widehat{\SS}$ with genus $g$, $b$ boundary components, and $m+2q$ marked points on the boundary.
\end{proof}

Now, we wish to extend Corollary \ref{coro: deeppolydiss} to the setting of punctured surfaces.

\begin{prop}\label{prop: punc surf poly diss}
	Let $\SS$ be a connected surface with genus $g\geq 0$, $b\geq 1$ boundary components, $m \geq 2$ marked points on the boundary, and $q\geq 1$ punctures. If $p\in V(\CA(\SS),\Bbbk)$ is a benevolent deep point, then there is a polygonal dissection $D$ of $\SS$ on which $p$ is non-zero.
\end{prop}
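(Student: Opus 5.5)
The plan is to reduce to the unpunctured case by cutting along spared radii, apply Corollary \ref{coro: deeppolydiss} on the cut surface, and then reglue. Since $p$ is benevolent, Lemma \ref{lemma: cutting radii for unpunctured surface} gives radii $r_1,\dotsc,r_q$ (after applying the tag-switching automorphisms $i_\xi$ at punctures where only a notched radius is spared, as in the discussion before Proposition \ref{prop: benevolent bijection 1 punc}) with $p(r_i)\neq 0$. Cutting along them yields an unpunctured, connected, triangulable surface $\widehat{\SS}$ with genus $g$, $b$ boundary components and $m+2q$ boundary marked points. By Proposition \ref{prop: cutting subvarieties punctured}, $p$ corresponds to a point $\widehat{p}\in V(\CA(\widehat{\SS}),\Bbbk)$ that is gluable along $\{r_1,\dotsc,r_q\}$. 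Connectedness of $\widehat{\SS}$ needs a brief check: each cut runs from the boundary to a puncture, so it only slits the surface and cannot disconnect it.

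Next I would apply Corollary \ref{coro: deeppolydiss} to $\widehat{p}$ on $\widehat{\SS}$. This requires no deepness, so it gives a polygonal dissection $\widehat{D}$ of $\widehat{\SS}$ on which $\widehat{p}$ is non-zero. The main obstacle is that an arc of $\widehat{D}$ may end at one of the two new marked points created by cutting $r_i$. Each such point is a copy of the boundary point $m_i$, not of the puncture, so its image in $\SS$ is an arc between marked points of $\SS$; the gluing map sends simple arcs of $\widehat{\SS}$ to simple arcs of $\SS$. I would therefore take $D \coloneqq \{\text{images of arcs of } \widehat{D}\}\cup\{r_1,\dotsc,r_q\}$ and verify three things:
\begin{enumerate}
\item $D$ is a compatible collection in $\SS$.
\item $\SS\smallsetminus D = \widehat{\SS}\smallsetminus\widehat{D}$ is a polygon.
\item $p$ is non-zero on every arc of $D$.
\end{enumerate}
The third holds because $p$ agrees with $\widehat{p}$ on the glued images and $p(r_i)\neq 0$. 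The second is immediate from the order of cutting. For the first, distinct arcs of $\widehat{D}$ are disjoint in the interior of $\widehat{\SS}$, and their images remain disjoint from each other and from the $r_i$ away from endpoints. Two arcs in $\widehat{D}$ may become homotopic after gluing; in that case I would discard duplicates, since a polygonal dissection is only a set of arcs and cutting along homotopic copies just produces degenerate bigons, which I would rule out by a count.

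To make that count rigorous, I would use Proposition \ref{prop: polydiss arcs}. It gives $|\widehat{D}| = 2g+b-1$ and $\delta(\widehat{\SS}) = 4g+2b+m+2q-2$. Adding the $q$ radii gives $2g+b+q-1 = d(\SS)$ arcs, and the polygon has $\delta(\SS)$ sides. These agree, which confirms that no duplication or degeneration occurs and that $D$ is a genuine polygonal dissection of $\SS$.

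Finally, the tagging must be consistent. All radii cut at a given puncture should carry the same tag, and any arc of $\widehat{D}$ ending at a copy of $m_i$ is plain at a boundary point, so the tag compatibility conditions of Definition \ref{def: tagged arc compatability} hold automatically. If tag switches were applied, I would pull $D$ back through the corresponding automorphisms $i_\xi$. These preserve non-vanishing and send polygonal dissections to polygonal dissections, which completes the argument.
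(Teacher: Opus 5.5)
Your proposal is correct and is essentially the paper's proof: cut along the spared arcs $r_1,\dotsc,r_q$ from Lemma \ref{lemma: cutting radii for unpunctured surface}, take a polygonal dissection of the unpunctured cut surface on which the cut point is non-zero, and adjoin the $r_i$. The paper invokes Lemma \ref{lemma: polydissavoid} for the middle step, of which Corollary \ref{coro: deeppolydiss} is the relevant special case, and, like you, it does not actually need deepness there. One small correction to your extra checks: of the two new marked points created by cutting $r_i$, one is the former puncture $\xi_i$ rather than a second copy of $m_i$. So arcs of $\widehat{D}$ can glue to arcs ending at $\xi_i$, and these must carry the same tag as $r_i$ at $\xi_i$. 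The gluing provides this automatically, so the argument still goes through.
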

\begin{proof}
	First, let $p\in V(\CA(\SS),\Bbbk)$ be a benevolent deep point. By Lemma \ref{lemma: cutting radii for unpunctured surface}, there exist arcs $r_1, \dotsc, r_q$ such that $p(r_i) \neq 0$ for $1\leq i \leq q$, and the cut surface $\widehat{\SS} \coloneqq \SS\smallsetminus \{r_1, \dotsc, r_q\}$ is an unpunctured surface with genus $g$, $b$ boundary components, and $m+2q$ marked points on the boundary. 
	By Proposition \ref{prop: deep cutting inclusion}, the image of $p$ under the cutting isomorphism $\bar{p} \in V(\CA(\widehat{\SS}),\Bbbk)$ is deep. From Lemma \ref{lemma: polydissavoid}, there exists a polygonal dissection $D^\prime$ of $\widehat{\SS}$ that is non-zero under $\bar{p}$. Lifting, we have that $D \coloneqq D^\prime \cup \{r_1, \dotsc, r_q\}$ is a polygonal dissection of $\SS$ on which $p$ is non-zero. 
\end{proof}

Next, we wish to extend Proposition \ref{prop: benevolent bijection multiple punctures} to the current setting.

\begin{prop}\label{prop: benevolent bijection surface}
	Let $\SS$ be a connected surface with genus $g\geq 0$, $b\geq 1$ boundary components, $m\geq 2$ marked points on the boundary, and $q\geq 1$ punctures. Let $D$ be a polygonal dissection of $\SS$. There is a bijection 
	\[ 
	\begin{Bmatrix} \text{benevolent deep points} \\ \text{in } V(\CA(\SS),\Bbbk) \\ \text{non-zero on } D \end{Bmatrix} 
	\longleftrightarrow
	\begin{Bmatrix}\text{deep points in } \\ V(\CA(\SS \smallsetminus D),\Bbbk) \\ \text{gluable along } D \end{Bmatrix}
	\]    
\end{prop}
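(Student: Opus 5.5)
The map is the cutting isomorphism of Proposition \ref{prop: cutting subvarieties punctured} applied with $S=D$. The work is to check that it matches benevolent deep points on one side with deep points on the other. Since $\SS\smallsetminus D$ is a connected unpunctured polygon, deep and deep-on-components coincide there. So I expect to be able to argue much as in Corollary \ref{coro: deepcutpoly}, with the punctures requiring extra care.

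For the forward direction, let $p$ be a benevolent deep point with $p$ non-zero on $D$. Proposition \ref{prop: deep cutting inclusion} says the image $\bar p$ in $V(\CA(\SS\smallsetminus D),\Bbbk)$ is deep. It is gluable along $D$ because it comes from a point of $\SS$. No benevolence hypothesis is needed for this direction.

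For the reverse direction, let $\bar p$ be a deep point of the polygon $\SS\smallsetminus D$ that is gluable along $D$, and let $p$ be the glued point. By Theorem \ref{thm: deeppolygon} the polygon has an even number $\delta(\SS)$ of sides. The same theorem says $\bar p$ kills exactly the diagonals joining vertices of equal parity and spares the others. In particular $\bar p$ kills an odd number of arcs in every triangle of the polygon. Two things must be shown.

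First, $p$ is benevolent. Each puncture $\xi$ is cut open by $D$. Since $D$ is a polygonal dissection, at least one arc of $D$ ends at $\xi$, and $p$ is non-zero on it. So no puncture is quashed.

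Second, $p$ is deep, meaning every tagged triangulation $T$ of $\SS$ contains an arc killed by $p$. I would split into cases.

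If $T$ contains a notched end, then by Porism \ref{por: benevolent characterization}(3) (extended from disks to surfaces by cutting along the spared radii of Lemma \ref{lemma: cutting radii for unpunctured surface}) I expect the radii of one tagging at each puncture to be killed. To make this rigorous, I would look at a once-punctured bigon around $\xi$ whose bounding arcs are images of polygon arcs. The relation $r\,r'^\vee=c+b$ should force all arcs of the non-$D$ tagging at $\xi$ to vanish.

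Otherwise, choose a triangle of $T$ made of plain arcs. Represent it as a triangle of arcs and lift it to $\SS\smallsetminus D$ in the sense of Lemma \ref{lemma: trianglesurf}. More concretely, take a triangle of $T$ that does not cross $D$, or pass to a flip-equivalent configuration. Its lift is a triangle in the polygon, so one of its arcs is killed.

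I expect the main obstacle to be the gluing direction, namely that triangles of $T$ crossing $D$ do not lift directly. The fallback is an induction on the number of punctures, as in Proposition \ref{prop: benevolent bijection multiple punctures}. Cut first along the radii of $D$ at each puncture, one at a time, reducing to the unpunctured case of Corollary \ref{coro: deepcutpoly}. At each step, use that any triangulation near a puncture contains either two radially adjacent radii or a length-two arc, and the parity pattern forces one of these to be killed.
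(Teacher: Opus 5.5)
Your forward direction and your argument that the glued point $p$ quashes no puncture are fine and agree with the paper. The gap is the proof that $p$ is deep, which is the real content of the reverse direction.

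\textbf{Case 1 is circular.} Porism \ref{por: benevolent characterization}(3) is a property of benevolent \emph{deep} points, and it is proved only for disks, so it cannot be used to show $p$ is deep. The once-punctured bigon relation you offer instead only kills the few notched radii adjacent to a $D$-radius at $\xi$. It says nothing about every arc carrying the opposite tag at $\xi$.

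\textbf{Case 2 fails.} In general the interior arcs of $T$ cross $D$, and nothing guarantees that $T$ has a triangle whose three sides avoid $D$. So there is nothing to ``lift'' into the polygon $\SS\smallsetminus D$. Passing to a flip-equivalent configuration does not help, because a killed arc in some other triangulation says nothing about $T$.

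\textbf{The fallback induction meets the same obstacle.} Theorem \ref{thm: deeppolygon} only controls diagonals of the polygon, so the parity pattern gives no information about arcs of $T$ that cross the cut radii. The dichotomy ``two radially adjacent radii or a length-two arc'' is specific to disks and has no meaning once there is genus or more than one boundary component.

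What is missing is a way to control the arcs of $T$ that cross $D$; the paper supplies it by a covering argument.
\begin{enumerate}
\item Use Corollary \ref{coro: deepcutpoly} to reduce to gluing along the arcs $R\subset D$ that end at punctures.
\item Fix a puncture $\xi$. Either $T$ contains an arc at $\xi$ killed by $p$, or $T$ has a triangle with two spared sides at $\xi$ and third side $\gamma$.
\item To get $p(\gamma)=0$ even when this triangle crosses $D$, lift it to a cover tiled by copies of $\SS\smallsetminus D$. The paper uses the universal cover of the unpunctured surface $\SS\smallsetminus R$.
\item By compactness the lifted triangle lies in a finite union $\SS'$ of copies, which is again a polygon. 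The copies of $\bar p$ glue to a deep point of $\SS'$, as in the proof of Lemma \ref{lemma: deepcut}.
\item Lemma \ref{lemma: trianglesurf} applied on $\SS'$ kills the lift of $\gamma$, and this descends to $p(\gamma)=0$.
\end{enumerate}
Without an argument of this kind, your proposal does not establish that the glued point is deep.
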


\begin{proof}
	First, let $p\in V(\CA(\SS),\Bbbk)$ be a benevolent deep point that is non-zero on $D$. By Proposition \ref{prop: deep cutting inclusion}, its image under the cutting isomorphism $\bar{p} \in V(\CA(\SS\smallsetminus D),\Bbbk)$ is deep. Since $\SS \smallsetminus D$ is an unpunctured polygon, Theorem \ref{thm: deeppolygon} tells us that $m$ must be even. Further, combining with Proposition \ref{prop: polydiss arcs}, the alternating product of the boundary arcs of $\SS\smallsetminus D$ is \[(-1)^{\frac{4g+2b+m+2q}{2}}_{} = (-1)^{b+q+\frac{m}{2}}_{}. \]
	
	Now suppose that $\bar{p} \in V(\CA(\SS\smallsetminus D),\Bbbk)$ is a deep point that is gluable along $D$. Since $D$ is a polygonal dissection of $\SS$, there is a subset of $q$ arcs $R \coloneqq \{r_1, \dotsc, r_q\} \subset D$ such that each $r_i \in R$ has at least one end at a puncture, $\SS\smallsetminus R$ is an unpunctured surface, and $D^\prime \coloneqq D\smallsetminus R$ is a polygonal dissection of $\SS\smallsetminus R$.     
	By Corollary \ref{coro: deepcutpoly}, we have a bijection between 
	\begin{itemize}
		\item deep points of $V(\CA(\SS\smallsetminus R), \Bbbk)$ which are non-zero on $D^\prime$, and
		\item deep points of $V(\CA(\SS\smallsetminus D), \Bbbk)$ which are gluable along $D^\prime$.
	\end{itemize}
	Thus, we only need to consider the gluing along the arcs of $R$. 
	
	By our assumption of gluability, there are two boundary edges of $\SS\smallsetminus D$ that glue to give a radius $r_q$ from the boundary of $\SS$ to a puncture $\xi_q$. 
	By Lemma \ref{lemma: trianglesurf}, $\bar{p}$ kills an odd number of arcs in every triangle of $\SS\smallsetminus D$. Considering a fan triangulation of $\SS\smallsetminus D$ with all diagonals emanating from the marked point (or a choice of one of the marked points) corresponding to $\xi_q$, Theorem \ref{thm: deeppolygon} says that $\bar{p}$ will kill alternating diagonals; as a result, $p$ will kill alternating radii emanating from $\xi_q$ in $\SS$. Every triangulation of $\SS$ will contain at least two arcs emanating from $\xi_q$, but does not necessarily kill at least one such arc in every triangulation; suppose that $T$ is a triangulation of $\SS$ containing two arcs emanating from $\xi_q$ that are spared by $p$, and denote these two spared arcs as $r_1$ and $r_2$. If $r_1$ and $r_2$ are arcs in $\SS\smallsetminus D$, then they are spared by $\bar{p}$ as well. Let $\gamma$ be the arc in $\SS \smallsetminus D$ that completes a triangle with $r_1$ and $r_2$, and observe that $\gamma$ is also in $T$. Since $\bar{p}$ kills an odd number of arcs in every triangle, the arc $\gamma$ is necessarily killed by $\bar{p}$. Thus, $p$ must also kill $\gamma$, so $p$ kills at least one arc in $T$. Thus, $p$ must kill at least one arc in every triangulation of $\SS$, so $p$ is deep. 
	
	Now, suppose that $r_1, r_2$ in $T$ are not arcs in $\SS \smallsetminus D$, and let $\gamma$ denote an arc in $T$ that completes a triangle with $r_1$ and $r_2$. Since $\SS\smallsetminus R$ is an unpunctured surface, its universal cover $\widehat{\SS}$ is covered by lifts of the polygon $\SS\smallsetminus D$. Choose lifts $\widehat{r_1}$, $\widehat{r_2}$, and $\widehat{\gamma}$ that bound a triangle in $\widehat{\SS}$. Since this triangle is compact, it is contained in the union of finitely many lifts of $\SS\smallsetminus D$; let $\SS^\prime$ denote this union. As $\SS^\prime$ is a union of polygons in a simply connected space, it is again a polygon. Since $\bar{p}$ is deep, it gives a deep point on each lift of $\SS\smallsetminus D$. As in the proof of \ref{lemma: deepcut}, these deep points glue to a deep point $\widehat{p} \in V(\CA(\SS^\prime), \Bbbk)$. By Lemma \ref{lemma: trianglesurf}, $\widehat{p}$ must kill an odd number of arcs in the triangle $\{ \widehat{r_1}, \widehat{r_2}, \widehat{\gamma}\}$, so $\widehat{p}(\widehat{\gamma}) = 0$.       
	This descends to the deep point $\bar{p}$, which glues along $D$ to give the point $p$. As a result, $p$ must kill $\gamma$ as well, so $p$ kills at least one arc of $T$. Hence, $p$ must kill at least one arc in every triangulation of $\SS$, so $p$ is deep.
	
	Finally, since $\bar{p}$ is gluable along $D$, it is gluable along $R$, so there is at least one arc emanating from each puncture that is spared by $p$. Therefore, $p$ is a benevolent deep point.
\end{proof}

%============================

\subsection{Benevolent points}

By the bijection in Proposition \ref{prop: benevolent bijection surface}, we know that the benevolent deep points non-zero on a polygonal dissection $D$ are in correspondence with the deep points of a polygon with $4g + 2b + 2q + m -2$ sides, with each of the arcs in $D$ corresponding to two sides of the polygon. Since both edges corresponding to an arc in $D$ take the same value, we only have $2g+b+q+m-1$ choices of values for the edges of the polygon. Following from Theorem \ref{thm: deeppolygon}, we have the following:
\begin{prop}
	Let $\SS$ be a connected surface with genus $g\geq 0$, $b\geq 1$ boundary components, $m\geq 2$ marked points on the boundary, and $q\geq 1$ punctures. Let $D$ be a polygonal dissection of $\SS$. 
	\begin{enumerate}
		\item If $m$ is odd, then $V(\CA(\SS),\Bbbk)$ has no benevolent deep points. 
		\item If $m$ is even, then $p \in V(\CA(\SS),\Bbbk)$ is deep if and only if the alternating product around the boundary edges of $\SS\smallsetminus D$ equals
		\[ (-1)^{\frac{4g+2b+2q+m}{2}}_{} = (-1)^{b+q+\frac{m}{2}}_{}. \]
		As a consequence, these benevolent deep points are determined by the values on all but one boundary edge of $\SS\smallsetminus D$, so they determine $2^q$ deep subsets isomorphic to $(\Bbbk^\times)^{2g+b+q+m-2}$, one for each choice of tagging at each puncture.
	\end{enumerate}
\end{prop}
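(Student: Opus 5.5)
The plan is to read everything off the bijection of Proposition \ref{prop: benevolent bijection surface} together with Theorem \ref{thm: deeppolygon}. Fix a polygonal dissection $D$ of $\SS$. By Proposition \ref{prop: polydiss arcs}, the cut surface $\SS\smallsetminus D$ is a polygon with $\delta(\SS)=4g+2b+2q+m-2$ sides. Of these sides, $m$ are the original boundary arcs, and each of the $d(\SS)=2g+b+q-1$ arcs of $D$ contributes two sides.

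For part (1), note that $\delta(\SS)\equiv m \pmod 2$. If $m$ is odd, Theorem \ref{thm: deeppolygon}(1) gives no deep points on $\SS\smallsetminus D$. Hence the benevolent deep points non-zero on $D$ form the empty set. By Proposition \ref{prop: punc surf poly diss}, every benevolent deep point is non-zero on some polygonal dissection. So there are no benevolent deep points at all.

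For part (2), let $m$ be even, and consider a gluable point $\bar p$ of $V(\CA(\SS\smallsetminus D),\Bbbk)$ that is non-zero on the sides. By Theorem \ref{thm: deeppolygon}(2), $\bar p$ is deep exactly when the alternating product of its side values equals $(-1)^{(\delta(\SS)+2)/2}=(-1)^{2g+b+q+m/2}=(-1)^{b+q+\frac m2}$, and in that case $\bar p$ is uniquely determined by its side values. Transporting this through the bijection of Proposition \ref{prop: benevolent bijection surface} gives the stated criterion for $p$. Counting parameters, the free data is one non-zero value per boundary arc and one per arc of $D$, which is $m+2g+b+q-1$ values. Imposing the single alternating-product constraint solves for one of them. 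This gives a copy of $(\Bbbk^\times)^{2g+b+q+m-2}$ for each $D$.

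The main obstacle is justifying the count of $2^q$ subsets, one for each tagging at each puncture. For each puncture $\xi$, the arcs of $D$ at $\xi$ have a fixed tagging. By Porism \ref{por: benevolent characterization}(3) and its surface analogue, a benevolent point spares radii of only one tagging at each puncture. So switching the tags at $\xi$ via the automorphism $i_\xi$ of Lemma 3.10 of \cite{mandel2023braceletsbasesthetabases} produces a disjoint set of benevolent points. This gives at least $2^q$ pieces for a fixed congruence class of $D$. I expect this step to need the most care. Showing that different dissections with the same taggings parametrize the same set, up to congruence, requires the cohomology-class argument of Proposition \ref{prop: polydisscount}, which I would defer, as the paper does, to the final count in Theorem \ref{thm: deepsurface punctured all}.
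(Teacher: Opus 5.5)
Your proposal follows essentially the same route as the paper: transport Theorem~\ref{thm: deeppolygon} through the bijection of Proposition~\ref{prop: benevolent bijection surface} for the $\delta(\SS)$-gon $\SS\smallsetminus D$. For part (1) you use $\delta(\SS)\equiv m \pmod 2$ together with Proposition~\ref{prop: punc surf poly diss}; for part (2) you count $2g+b+q+m-1$ edge values less one sign constraint, and you get the $2^q$ pieces from the tag-switching automorphisms $i_\xi$. Two caveats apply, and the paper shares both of them:
\begin{itemize}
\item[(a)] The criterion in (2) should be read as the extension statement of Theorem~\ref{thm: deeppolygon}: side values satisfying the sign condition extend to a unique deep point, and that is exactly what your count uses. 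It should not be read as saying that every gluable point with those side values is deep, which is what your ``deep exactly when'' literally says, and that is false once $\delta(\SS)\ge 5$.
\item[(b)] The disjointness of the $2^q$ tag-switched pieces rests on a surface analogue of Porism~\ref{por: benevolent characterization}(3) that is asserted rather than proved. When a radius $r\in D$ is the only arc of $D$ at $\xi$, it does follow directly: $r\,r^\vee$ becomes a length-two diagonal of $\SS\smallsetminus D$, which is killed, so points of the original piece kill $r^\vee$ while points of the tag-switched piece spare it.
\end{itemize}
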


It remains to count the number of distinct polygonal dissections of $\SS$. By Proposition \ref{prop: polydisscount}, each unpunctured surface has $2^{2g+b-1}$ polygonal dissections, up to congruence. Thus, for each collection $R$ of arcs such that $\SS\smallsetminus R$ is an unpunctured surface with the same genus and number of boundary components as $\SS$, there are $2^{2g+b-1}$ distinct polygonal dissections of $\SS\smallsetminus R$. The arcs of each polygonal dissection $D^\prime$ are disjoint from $R$ except at the boundary, so $\SS\smallsetminus D^\prime$ is a $q$-punctured polygon \textemdash a disk with $q$ punctures and $4g + 2b + m -2$ marked points on the boundary. 

From Proposition \ref{prop: benevolent bijection multiple punctures} and the discussion on punctured disks, we know that a benevolent deep point kills alternating radii around each puncture in $\SS\smallsetminus D^\prime$, and choosing a non-zero value for one arc emanating from each puncture determines the values of the other arcs emanating from that puncture. Further clarifying this point, Corollary \ref{cor: radii with same endpoints} tells us that a benevolent deep point either kills all radii with the same endpoints in the punctured polygon $\SS \smallsetminus D^\prime$ or spares all of them. That is, at each puncture, either the even-indexed radii are killed or the odd-indexed radii are killed, and a choice of non-zero value on one radius determines all remaining values around a given puncture. As such, there are $2^q$ ways to choose which radii receive non-zero values; hence, there are $2^q$ ways to choose the collection $R$, up to equivalence. This gives us a total of $2^{2g+b+q+1}$ distinct polygonal dissections. Combining with the above proposition, we have the following result: 

\begin{thm}\label{thm: deepsurface punctured benevolent}
	Let $\SS$ be a connected surface with genus $g\geq 0$, $b\geq 1$ boundary components, $m\geq 2$ marked points on the boundary, and $q\geq 1$ punctures. 
	\begin{enumerate}
		\item If $m$ is odd, then $V(\CA(\SS),\Bbbk)$ has no benevolent deep points. 
		\item If $m$ is even, then the deep locus of $V(\CA(\SS),\Bbbk)$ contains $2^{2g+b+2q+1}$ disjoint sets of benevolent points, each isomorphic to $(\Bbbk^\times)^{2g+b+q+m-2}$.
	\end{enumerate}
\end{thm}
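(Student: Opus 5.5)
The plan is to assemble Theorem \ref{thm: deepsurface punctured benevolent} from three ingredients established above: the covering result of Proposition \ref{prop: punc surf poly diss}, the gluing bijection of Proposition \ref{prop: benevolent bijection surface}, and a count of congruence classes of polygonal dissections.

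\emph{Part (1).} Let $p$ be a benevolent deep point. By Proposition \ref{prop: punc surf poly diss} there is a polygonal dissection $D$ on which $p$ is non-zero. By Proposition \ref{prop: benevolent bijection surface}, $p$ corresponds to a deep point of the polygon $\SS\smallsetminus D$. By Proposition \ref{prop: polydiss arcs}, this polygon has $\delta(\SS)=4g+2b+2q+m-2$ sides, which has the parity of $m$. If $m$ is odd, Theorem \ref{thm: deeppolygon} says the polygon has no deep points, so no benevolent deep point exists.

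\emph{Part (2), one chart.} Assume $m$ is even and fix $D$. The benevolent deep points non-zero on $D$ are exactly the gluable deep points of $\SS\smallsetminus D$. By Theorem \ref{thm: deeppolygon}, each such point is uniquely determined by its boundary values subject to one alternating-product equation. Gluability identifies the two sides coming from each arc of $D$, so there are $m+d(\SS)=2g+b+q+m-1$ free non-zero edge values. The alternating-product constraint removes one of them, so the chart is a copy of $(\Bbbk^\times)^{2g+b+q+m-2}$. I need to check that the two sides of each glued arc enter the alternating product with opposite signs in the exponent, since otherwise the constraint would degenerate. This follows because the constraint is $(-1)^{b+q+m/2}$ rather than a square condition, as in the unpunctured Theorem 7.12 of \cite{BM25}.

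\emph{Part (2), counting and disjointness.} Two charts either coincide or are disjoint, according to whether the patterns of killed and spared arcs agree. The pattern is the $\mathbb{Z}_2$-valued function $p\mapsto\chi_p$ on arcs. On the unpunctured surface $\SS\smallsetminus R$ this is governed by Proposition \ref{prop: polydisscount}, giving $2^{2g+b-1}$ classes. At each puncture there are further choices:
\begin{itemize}
\item which tagging is entirely killed (Porism \ref{por: benevolent characterization}(3)), giving $2$ choices;
\item which alternating family of radii is spared (even- or odd-indexed, made well defined by Corollary \ref{cor: radii with same endpoints}), giving $2$ choices.
\end{itemize}
The total is $2^{2g+b-1}\cdot 4^q=2^{2g+b+2q-1}$.

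I expect the main obstacle to be reconciling this count with the claimed $2^{2g+b+2q+1}$, which is four times larger. This is exactly where the paper's preceding paragraph asserts $2^{2g+b+q+1}$ dissection classes together with $2^q$ taggings. To resolve it, I would recount by checking the disk case: for $g=0$, $b=1$, this count gives $4^q/\ldots$, which should match the $4^q$ benevolent sets found for punctured disks. I would then track the additional factor that arises from the extra boundary-component and puncture classes in $H^1(\SS,\MM;\mathbb{Z}_2)$, now taken with $\MM$ including the punctures, which has rank $2g+b+q+\ldots$. Concretely, I would identify a benevolent class with an element of $H^1(\SS,\MM;\mathbb{Z}_2)$ that is $1$ on boundary arcs, times a tagging choice at each puncture. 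I would then count these relative classes directly via the long exact sequence, rather than through $R$.

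Finally, disjointness of distinct classes follows as in Theorem 7.12 of \cite{BM25}: a point's class $\chi_p$ together with its taggings is intrinsic, so a point cannot lie in two different classes.
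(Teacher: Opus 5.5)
Your route is the paper's: cover the benevolent locus by charts indexed by polygonal dissections (Proposition \ref{prop: punc surf poly diss}), identify each chart with gluable deep points of $\SS\smallsetminus D$ (Proposition \ref{prop: benevolent bijection surface}), read off a torus from Theorem \ref{thm: deeppolygon}, then count charts. Part (1) and the one-chart computation are fine. Your worry about signs in the alternating product is unnecessary: each boundary arc of $\SS$ occurs exactly once on the boundary of $\SS\smallsetminus D$, with exponent $\pm1$. So the single equation can always be solved for one of them, however the paired sides of $D$ enter.

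The gap is the final count. You get $2^{2g+b-1}\cdot 4^q=2^{2g+b+2q-1}$ and then promise to find a missing factor of $4$, but give no mechanism, and none exists. Your proposed recount via $H^1(\SS,\MM;\mathbb{Z}_2)$, with the punctures included in $\MM$, returns the same number:
\begin{itemize}
\item this group has rank $2g+b+q+m-2$;
\item evaluation on the $m$ boundary arcs has image exactly the even-weight hyperplane, since the sum over all boundary arcs is the pairing with $[\partial\SS]$, a relative boundary;
\item so for $m$ even exactly $2^{2g+b+q-1}$ classes take value $1$ on every boundary arc, and the $2^q$ tagging choices give $2^{2g+b+2q-1}$ again.
\end{itemize}

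The number in the statement is in fact inconsistent with the paper's own earlier results. The paper's derivation contains an arithmetic slip: it multiplies $2^{2g+b-1}$ congruence classes of dissections of $\SS\smallsetminus R$ by $2^q$ choices of $R$ and records the product as $2^{2g+b+q+1}$ instead of $2^{2g+b+q-1}$. Two checks show the stated formula is wrong:
\begin{itemize}
\item for $g=0$, $b=1$, $q=1$ it predicts $2^4=16$ benevolent tori, whereas Theorem \ref{thm: Dn with boundary} and the explicit $n=4$ computation give exactly four;
\item for $q$-punctured disks it predicts $4^{q+1}$, against the $4^q$ found in Section \ref{section: deep punctured disks}.
\end{itemize}
So your proposal cannot be completed into a proof of the statement as written. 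What your argument and the paper's ingredients actually establish is $2^{2g+b+2q-1}$ disjoint tori, each of dimension $2g+b+q+m-2$. Stop hunting for the extra factor and state the corrected count.

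One caution for the write-up. Corollary \ref{cor: radii with same endpoints} is a disk result, and the paper's closing example shows it fails on the once-punctured $(1,1)$-annulus. So the even-versus-odd choice at each puncture is well defined only relative to the punctured polygon $\SS\smallsetminus D'$, equivalently as part of the class in $H^1(\SS,\MM;\mathbb{Z}_2)$. It is not a global label on radii of $\SS$.
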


Combined with the characterization of deep points that quash punctures in Section \ref{section: surface quash}, we have the following description of the deep locus. In particular, the deep locus can be described as a union of cluster varieties and algebraic tori.
\begin{thm}\label{thm: deepsurface punctured all}
	Let $\SS$ be a connected surface with genus $g\geq 0$, $b\geq 1$ boundary components, $m\geq 2$ marked points on the boundary, and $q\geq 1$ punctures. 
	\begin{enumerate}
		\item If $m$ is odd, then the deep locus of $V(\CA(\SS),\Bbbk)$ is the union of $q$ non-disjoint sets, each isomorphic $V(\CA(\SS_{q-1}),\Bbbk)$. 
		\item If $m$ is even, then the deep locus of $V(\CA(\SS),\Bbbk)$ is the union of: 
        \begin{itemize}
            \item $q$ non-disjoint sets, each isomorphic to $V(\CA(\SS_{q-1}),\Bbbk)$, and
            \item $2^{2g+b+2q+1}$ disjoint sets, each isomorphic to $(\Bbbk^\times)^{2g+b+q+m-2}$.
        \end{itemize}
	\end{enumerate}
\end{thm}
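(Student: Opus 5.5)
The proof splits the deep locus by how many punctures a deep point quashes. Every deep point $p$ either quashes at least one puncture or quashes none, and in the latter case it is benevolent. So the deep locus is the union of the sets $Q_i=\{p \text{ deep} : p \text{ quashes } \xi_i\}$ for $1\le i\le q$, together with the set of benevolent deep points. The proof describes each piece separately and then assembles them.

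For the quashing part, I will invoke the discussion of Section \ref{section: surface quash}. If there are at least two marked points on some boundary component, the discussion there applies directly. Otherwise every boundary component carries a single marked point, and Lemma \ref{lemma: spared arc isolated marked points} supplies a spared arc $c_i$ that cuts out a once-punctured bigon containing $\xi_i$. In either case $p(c_i)=-p(b_i)\neq 0$, so Proposition \ref{prop: cutting subvarieties punctured} lets us cut along $c_i$. The cut point restricts to a deep point on the bigon, since all radii of the bigon are killed, so its restriction to $\SS_{q-1}$ is unconstrained. Conversely, starting from any point of $V(\CA(\SS_{q-1}),\Bbbk)$, set the bigon values so that all radii vanish and $c_i=-b_i$; this is gluable, and the glued point quashes $\xi_i$, hence is deep. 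This gives $Q_i\cong V(\CA(\SS_{q-1}),\Bbbk)$. The sets $Q_i$ are pairwise non-disjoint because they all contain the malevolent points, and the image of $V(\CA(\SS_0),\Bbbk)$ from Theorem \ref{thm: inclusions surfaces} is nonempty.

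For the benevolent part, Theorem \ref{thm: deepsurface punctured benevolent} applies. If $m$ is odd, there are no benevolent deep points, which gives part (1). If $m$ is even, it gives the $2^{2g+b+2q+1}$ disjoint tori $(\Bbbk^\times)^{2g+b+q+m-2}$. Since every benevolent point is non-zero on some polygonal dissection by Proposition \ref{prop: punc surf poly diss}, these tori exhaust the benevolent locus.

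The main obstacle is making sure the two families really cover the deep locus and that the descriptions are exact. A non-benevolent deep point quashes some $\xi_i$ by definition, so coverage is immediate. The delicate step is exactness in the benevolent count: I need that two polygonal dissections with different cohomology class, or with different choices of spared radii or taggings at a puncture, parametrize disjoint sets. The plan is to derive this from the unpunctured disjointness statement (\cite[Proposition 7.8]{BM25}) applied to $\SS\smallsetminus R$, together with Corollary \ref{cor: radii with same endpoints}. That corollary implies that the pattern of killed and spared radii at each puncture is an invariant of $p$, so different choices at a puncture yield disjoint sets.
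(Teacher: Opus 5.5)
Your decomposition is exactly how the paper assembles this theorem: the sets $Q_i\cong V(\CA(\SS_{q-1}),\Bbbk)$ from Section~\ref{section: surface quash}, plus the benevolent tori from Theorem~\ref{thm: deepsurface punctured benevolent}. That part is fine. The step you flag as delicate is where the proposal breaks.

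Corollary~\ref{cor: radii with same endpoints} is proved only for punctured \emph{disks}. The paper's closing example shows it fails in general: in the once-punctured $(1,1)$-annulus (Figure~\ref{fig: radii different values}), a benevolent point spares $r_0$ and kills $r_1$, although the two radii have the same endpoints. So the pattern ``which radii at a puncture are killed'' is not an invariant indexed by endpoints once $2g+b>1$. You cannot use it to separate the tori coming from different choices of $R$.

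What is invariant is a mod-$2$ function on \emph{homotopy classes} of arcs. You would need a punctured analogue of \cite[Proposition 7.8]{BM25}: a class $\chi_p\in H^1(\SS,\MM;\mathbb{Z}_2)$, with $\MM$ including the punctures and evaluated on arcs carrying the surviving tagging. You would pair it with the record of which tagging survives at each puncture. Both are read off from $p$, so distinct data give disjoint sets. Two radii with the same endpoints differ by an absolute cycle, and $H_1$ of a disk vanishes; this is exactly why the endpoint version holds there and nowhere else.

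If you carry this out, you will not recover the number in the statement. The classes with value $1$ on every boundary arc form an affine space of dimension $(m+q-1)+(2g+b-1)-(m-1)=2g+b+q-1$. Equivalently, this is the $2^{2g+b-1}$ of Proposition~\ref{prop: polydisscount} times $2^q$ parity choices. Taggings contribute another $2^q$, giving $2^{2g+b+2q-1}$ tori.

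This matches the paper's explicit disk computations: four tori for the once-punctured $n$-gon (Theorem~\ref{thm: Dn with boundary}) and $4^q$ for the $q$-punctured disk. By contrast, $2^{2g+b+2q+1}$ would give sixteen in the once-punctured case. The paper's own tally multiplies $2^{2g+b-1}$ by $2^q$ and records the product as $2^{2g+b+q+1}$. The exponent in the statement therefore appears to be an arithmetic slip. Citing Theorem~\ref{thm: deepsurface punctured benevolent} inherits that slip, and a correct disjointness argument will certify $2^{2g+b+2q-1}$ rather than the stated count.
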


\subsubsection{Benevolent points and radii}

Unfortunately, Lemma \ref{lemma: radii with same endpoints} and Corollary \ref{cor: radii with same endpoints} do not extend to more general punctured surfaces. That is, there may exist radii $r_1$ and $r_2$ with the same endpoints such that a benevolent deep point kills $r_1$ and spares $r_2$, or vice versa.

\begin{figure}[htb]
	\centering
	\begin{tikzpicture}[scale=3]
		\draw [fill=black!10, thick, even odd rule] (0,0) circle (1) (0,0) circle (0.3);
		\draw[thick] (-0.25,0) circle (0.75);
		\node[dot] (m1) at (-0.3,0) {};
		\node[dot] (m2) at (-1,0) {};
		\node[dot] (p) at (0.5,0) {};
		\node at (0,0.8) {$r_0$};
		\node at (0,-0.82) {$r_1$};
		\node at (0:0.22) {$b_1$};
		\node at (0:1.1) {$b_2$};
	\end{tikzpicture}
	\hspace{1cm}
	\begin{tikzpicture}[scale=3]
		\draw [fill=black!10, thick, even odd rule] (0,0) circle (1) (0,0) circle (0.3);
		\node[dot] (m1) at (180:0.3) {};
		\node[dot] (m2a) at (240:1) {};
		\node[dot] (p) at (180:1) {};
		\node[dot] (m2b) at (120:1) {};
		\node at (210:1.1) {$r_0$};
		\node at (150:1.1) {$r_0$};
		\draw[thick,out=0,in=270] (m2a) to (0:0.75);
		\draw[thick,out=90,in=0] (0:0.75) to (90:0.75);
		\draw[thick,out=180,in=45] (90:0.75) to (p);
		\node at (300:0.86) {$r_1$};
		\draw[thick] (p) to node [below] {$c_1$} (m1);
		\draw[thick] (m2a) to node [left] {$c_2$} (m1);
		\node at (0:0.22) {$b_1$};
		\node at (0:1.1) {$b_2$};
		\draw[thick,out=30,in=270] (m2a) to node [above] {$c_3$} (0:0.5);
		\draw[thick,out=90,in=0] (0:0.5) to (90:0.5);
		\draw[thick,out=180,in=120] (90:0.5) to (m1);
	\end{tikzpicture}
	\caption{A once-punctured $(1,1)$-annulus (left) and the result of cutting along $r_0$ (right)}
	\label{fig: radii different values}
\end{figure}

\begin{ex}
	Let $\SS$ be a $(1,1)$-annulus with a single puncture \textemdash that is, the once-punctured genus 0 surface with one marked point on each of its two boundary components. Let $p\in V(\CA(\SS),\Bbbk)$ be a benevolent deep point; by Lemma \ref{lemma: spared radius surface}, at least one radius $r_0$ must be spared by $p$. Cutting along $r_0$ gives us an unpunctured $(1,3)$-annulus $\SS^\prime$, and the deep points on $\SS^\prime$ with the same value on $r_0$ descend to deep points on $\SS$ which are non-zero on $r_0$ by Proposition \ref{prop: benevolent bijection surface}, so we know that such a $p$ exists.        
	
	Suppose that $r_0$ is as pictured in Figure \ref{fig: radii different values}, and $r_1$ is a radius with the same endpoints as in the same figure. After cutting along $r_0$, we have the image on the right. From Lemma \ref{lemma: trianglesurf}, we know that $p$ kills an odd number of arcs in each triangle of the unpunctured surface. Since $r_0 \neq 0$, we know that exactly one of $c_1$ and $c_2$ is killed by $p$. If $p(c_2) = 0$ and $p(c_1) \neq 0$, then $p(c_3)\neq 0$, so $p(r_1) = 0$. If instead $p(c_1) = 0$ and $p(c_2) \neq 0$, then $p(c_3) = 0$, so again $p(r_1) = 0$. In particular, $r_1$ is killed by $p$. 
	As a result, $r_0$ and $r_1$ are an example of radii with the same endpoints where $p$ kills one and spares the other.
\end{ex}

%============================

\bibliographystyle{alpha}
\bibliography{main}
	
\end{document}